\documentclass[11pt,reqno]{amsart}

\usepackage{ifpdf}
\usepackage{amsmath,amsfonts,amsthm,amsopn,amssymb}
\usepackage{verbatim,wasysym,mathrsfs}
\usepackage[margin=2.75cm, heightrounded]{geometry}
\usepackage{cite,marginnote}
\usepackage{color,enumitem,graphicx}
\usepackage[colorlinks=true,urlcolor=blue, citecolor=red,linkcolor=blue,
linktocpage,pdfpagelabels, bookmarksnumbered,bookmarksopen]{hyperref}
\usepackage[english]{babel}
\usepackage[T1]{fontenc}
\usepackage{lmodern}

\newtheorem{Thm}{Theorem}[section]
\newtheorem{Lem}[Thm]{Lemma}
\newtheorem{Assum}[Thm]{Assumption}
\newtheorem{Cor}[Thm]{Corollary}
\newtheorem{Prop}[Thm]{Proposition}

\theoremstyle{definition}
\newtheorem{Def}[Thm]{Definition}
\newtheorem{Rem}{Remark}[section]

\newtheorem*{Def*}{Definition}

\newcommand{\R}{{\mathbb R}}

\allowdisplaybreaks

\numberwithin{equation}{section}

\begin{document}

\title[upper critical Choquard equation in dimension three]{Existence and asymptotics for the upper critical Choquard equation in dimension three}

\author[]{Jinkai Gao}
\address[Jinkai Gao]{\newline\indent School of Mathematical Sciences,
\newline\indent Nankai University,
\newline\indent Tianjin, 300071, China.}
    \email{\href{mailto:jinkaigao@mail.nankai.edu.cn}{jinkaigao@mail.nankai.edu.cn}}

\date{\today}

\subjclass[2020]{Primary: 35J15, Secondary: 35B09, 35B33, 35B40, 35B44.}

\keywords{Upper Critical Choquard equation, Existence, Asymptotic behavior, Green's function.}

\begin{abstract}
In this paper, we are interested in the existence and asymptotic behavior of least energy solutions to the upper critical Choquard equation
\begin{equation*}
    \begin{cases}
        -\Delta u+au=\displaystyle\left(\int_{\Omega}\frac{u^{6-\alpha}(y)}{|x-y|^\alpha}dy\right)u^{5-\alpha}&\mbox{in}\ \Omega,\\
         u>0 \ \  &\mbox{in}\ \Omega,\\
 u=0 \ \  &\mbox{on}\ \partial \Omega,
    \end{cases}
\end{equation*}
where $\Omega \subset \R^{3}$ is a bounded domain with a $C^{2}$ boundary, $\alpha \in (0,3)$, $a \in C(\overline{\Omega}) \cap C^{1}(\Omega)$, and the operator $-\Delta + a$ is coercive. We first establish that the following three properties are equivalent: the existence of least energy solutions, the validity of a strict inequality in the associated minimization problem, and the positivity of the Robin function somewhere in the domain. This leads naturally to the definition of a critical function $a$. Under the perturbation $a \mapsto a + \varepsilon V$ with $a$ critical and $V \in L^{\infty}(\Omega)$, we prove that least energy solutions exist. Furthermore, we establish a refined energy estimate and describe their asymptotic profile.
\end{abstract}

\maketitle

\section{Introduction}
In this paper, we are concerned with the following Choquard equation
\begin{equation}\label{main choquard equation with varepsilon}
    \begin{cases}
        -\Delta u+au=\displaystyle\left(\int_{\Omega}\frac{u^{6-\alpha}(y)}{|x-y|^\alpha}dy\right)u^{5-\alpha} &\mbox{in}\ \Omega,\\
  u>0
  \ \  &\mbox{in}\ \Omega,\\
  u=0
  \ \  &\mbox{on}\ \partial \Omega,
    \end{cases}
\end{equation}
where $\Omega\subset\R^{3}$ is a bounded domain with a $C^{2}$ boundary, $a\in C(\bar{\Omega
})\cap C^{1}(\Omega)$, $\alpha\in (0,3)$, and $6-\alpha$ is the upper critical exponent in the sense of the Hardy-Littlewood-Sobolev inequality. Furthermore, the operator $-\Delta+a$ is assumed to be coercive on $H^{1}_{0}(\Omega)$, that is, there exists a constant $c>0$ such that\footnote{This coercivity condition is equivalent to the positivity of the first Dirichlet eigenvalue of the operator $-\Delta+a$.}
\begin{equation*}
    \int_{\Omega}|\nabla u|^2+au^{2}dx\geq c\int_{\Omega}|\nabla u|^{2}dx,\text{~for all~}u\in H^{1}_{0}(\Omega),
\end{equation*}
which is a necessary condition for the solvability of \eqref{main choquard equation with varepsilon} (see \cite[Lemma C.1]{Konig2022FineMA}). In particular, the case $a \equiv -\lambda$ is admissible for any $\lambda \in (0, \lambda_1)$, where $\lambda_1$ is the first Dirichlet eigenvalue of $-\Delta$ on $\Omega$.

The Choquard equation has appeared in various physical contexts. It was first introduced by Fr\"{o}hlich \cite{FrohlichHerbert} and Pekar \cite{Pekar} in the context of polaron modeling, and later by Choquard for modelling a one-component plasma \cite{Lieb1976SAM}. It also arises as the Schr\"{o}dinger--Newton equation in models coupling the Schr\"{o}dinger equation of quantum physics with nonrelativistic Newtonian gravity \cite{Bahrami-2014,Penrose1998QuantumCE}. For further discussion on the physical backgrounds of this equation, we refer to the survey by Moroz and Van Schaftingen \cite{Moroz2017JFPTA} and references therein.

An important equation closely related to \eqref{main choquard equation with varepsilon} is the upper critical Choquard equation in the whole space
\begin{equation}\label{critical choquard equation}
    -\Delta u=\left(\int_{\R^{3}}\frac{u^{6-\alpha}(y)}{|x-y|^{\alpha}}dy\right)u^{5-\alpha}\quad\text{in~}\ \R^3.
\end{equation}
Recall the family of Aubin-Talenti functions
\begin{equation*}
    U_{\xi, \lambda}(x) = \frac{\lambda^{1/2}}{(1 + \lambda^2|x-\xi|^2)^{1/2}},\quad \xi\in\R^{3},\lambda\in\R^{+}
\end{equation*}
and the sharp Sobolev constant
\begin{equation}\label{best sobolev constant}
    S=3 \left( \frac\pi 2 \right)^{4/3}.  
\end{equation}
It is known \cite{Miao-2015,Lei-2018,Du2019,Guo2019,LePhuong-2020} that all positive solutions to equation \eqref{critical choquard equation} are given by
\begin{equation}\label{definition-of-U-bar}
	\bar{U}_{\xi, \lambda}(x)=3^{\frac{1}{4}}S^{-\frac{3-\alpha}{4(5-\alpha)}}C_{\alpha}^{-\frac{1}{2(5-\alpha)}}U_{\xi,\lambda}(x):=\bar{C}_{\alpha}U_{\xi,\lambda}(x),\quad \xi\in\R^{3},\lambda\in\R^{+},
\end{equation}
where $C_{\alpha}$ is the sharp constant for the Hardy–Littlewood–Sobolev inequality defined in \eqref{definition of C alpha}. Furthermore, the upper critical Choquard equation \eqref{critical choquard equation} arises as the Euler--Lagrange equation of the minimization problem 
\begin{equation}\label{optimization problem for shl}
    S_{HL}:=\inf_{u\in\dot{H}^{1}(\R^{3}), u\not\equiv 0}\frac{\int_{\R^3} |\nabla u|^2dx}{\left(\int_{\R^{3}}\int_{\R^{3}}\frac{|u(x)|^{6-\alpha}|u(y)|^{6-\alpha}}{|x-y|^{\alpha}}dxdy\right)^{\frac{1}{6-\alpha}}}.
\end{equation}
This problem is equivalent to the Hardy-Littlewood-Sobolev-type inequality
\begin{equation*}
    \int_{\R^{3}}|\nabla u|^{2}dx\geq S_{HL}\left(\int_{\R^{3}}\int_{\R^{3}}\frac{|u(x)|^{6-\alpha}|u(y)|^{6-\alpha}}{|x-y|^{\alpha}}dxdy\right)^{\frac{1}{6-\alpha}},\quad\forall u\in\dot{H}^{1}(\R^{3}),
\end{equation*}
and the sharp constant is given by
\begin{equation}\label{definition of SHL}
    S_{HL}=SC_{\alpha}^{-\frac{1}{6-\alpha}}.
\end{equation}
In addition, the moving sphere method yields that the function $U_{\xi, \lambda}$ satisfies
\begin{equation}\label{U-x-lamda for choquard equation}
    -\Delta U_{\xi,\lambda}=\bar{C}_{\alpha}^{2(5-\alpha)}\left(\int_{\R^{3}}\frac{U_{\xi,\lambda}^{6-\alpha}(y)}{|x-y|^{\alpha}}dy\right)U_{\xi,\lambda}^{5-\alpha}\quad\text{in}\ \R^3,
\end{equation}
and
\begin{equation}\label{important-identity-1}
    \int_{\R^{3}}\frac{U_{\xi,\lambda}^{6-\alpha}(y)}{|x-y|^{\alpha}}dy=\frac{3}{\bar{C}_{\alpha}^{2(5-\alpha)}}U_{\xi,\lambda}^{\alpha}(x)\quad\text{in}\ \R^3,
\end{equation}
where $\bar{C}_{\alpha}$ is the constant defined in \eqref{definition-of-U-bar}.

To find solutions of \eqref{main choquard equation with varepsilon}, a natural approach is to consider the minimization problem
\begin{equation}\label{minimizing-problem}
    S_{HL}(a) :=\inf_{u\in H^1_0(\Omega), \|u\|_{HL}=1}\int_\Omega |\nabla u|^2+ a u^2dx,
\end{equation}
where \begin{equation}\label{defin-norm-HL}
    \|u\|_{HL}:=\left(\int_{\Omega}\int_{\Omega}\frac{|u(x)|^{6-\alpha}|u(y)|^{6-\alpha}}{|x-y|^{\alpha}}dxdy\right)^{\frac{1}{2(6-\alpha)}}.
\end{equation} 
Indeed, if $v_{a}$ is a minimizer of $S_{HL}(a)$ with $\|v_{a}\|_{HL}=1$, then a standard variational argument shows that $u_{a}:=S_{HL}(a)^{\frac{1}{2(5-\alpha)}}|v_{a}|$ is a solution of \eqref{main choquard equation with varepsilon} and, in fact, a least energy solution. However, when $a \equiv 0$, it is known that $S_{HL}(0) = S_{HL}$ and that $S_{HL}(0)$ is never achieved unless $\Omega = \mathbb{R}^{3}$, as shown in \cite[Lemma 1.3]{Gao2016TheBT}. On the other hand, the Pohozaev identity \eqref{pohozaev-identiy} yields that if
\begin{equation*}
    a+\frac{(x,\nabla a)}{2}\geq 0
\end{equation*}
and $\Omega$ is a strictly star-shaped domain, then \eqref{main choquard equation with varepsilon} has no solutions. {A natural question arises: Under what conditions on the function $a$ is $S_{HL}(a)$ achieved ?} Problems of this type were first studied by Brezis and Nirenberg \cite{Brezis1983CPAM} concerning the following equation
\begin{equation}\label{brezis-nirenberg-problem}
    \begin{cases}
        -\Delta u+au=u^{2^*-1} &\mbox{in}\ \Omega,\\
  u>0
  \ \  &\mbox{in}\ \Omega,\\
  u=0
  \ \  &\mbox{on}\ \partial \Omega,
    \end{cases}
\end{equation}
where $N\geq3$, $\Omega\subset\R^{N}$ is a bounded domain with a $C^{2}$ boundary, $a\in C(\bar{\Omega})\cap C^{1}(\Omega)$, and $2^*:=\frac{2N}{N-2}$ is the Sobolev critical exponent.
Let 
\begin{equation*}
S(a):=\inf_{u\in H^1_0(\Omega), \|u\|_{L^{2^*}(\Omega)}=1}\int_\Omega |\nabla u|^2+ a u^2dx.
\end{equation*}
When $N\geq4$, Brezis and Nirenberg showed that the following properties are equivalent
\begin{enumerate}[label=\upshape(\arabic*)]
    \item There is $x\in\Omega$ such that ${a}(x)<0$.
    \item $S(a)<S$.
    \item $S(a)$ is achieved by some function $u_{a}$.
\end{enumerate}  
When $N=3$, the situation is more subtle. For $a\equiv-\lambda$ with $\lambda$ being a positive constant, Brezis and Nirenberg proved that there exists a constant $\lambda^*\in(0,\lambda_{1})$ such that
\begin{equation*}
    S(-\lambda)=S\text{~~if~~}\lambda\in(0,\lambda^*],\quad  S(-\lambda)<S\text{~~if~~}\lambda\in(\lambda^*,\lambda_{1}),
\end{equation*}
and $S(-\lambda)$ is not achieved for $\lambda\in(0,\lambda^*)$. In the case of a ball, they further established that the threshold is $\lambda^{*}=\frac{1}{4}\lambda_{1}$, and that even at the endpoint, $S(-\frac{1}{4}\lambda_{1})$ is also not achieved. Subsequently, Druet \cite{Druet2002EllipticEW} (see also Esposito \cite{Esposito}) extended the above results to the general case where $a$ is a function and $\Omega$ is a general domain, thereby positively answering two conjectures previously proposed by Brezis \cite{Brezis1986}. Their results show that the condition that the Robin function $\phi_{a}$ (see \eqref{defin-robin-function}) is positive somewhere in $\Omega$ plays the same role as the condition that $a$ is negative somewhere in $\Omega$ in the case $N\geq 4$. Therefore, unlike in higher dimensions, the existence of solutions to \eqref{brezis-nirenberg-problem} in three dimensions is global in nature, depending on the values of $a$ throughout $\Omega$ and the geometry of $\Omega$. Following the definition of Pucci and Serrin \cite{Pucci, Pucci-1}, $N=3$ is referred to as a {critical dimension} for problem \eqref{brezis-nirenberg-problem}. Very recently, Druet's results were extended to the fractional Laplace equation and the $p$-Laplace equation by De Nitti and K\"{o}nig \cite{DeNitti-Konig} and by Angeloni and Esposito \cite{Angeloni-Esposito} respectively. We also refer to \cite{Collion, Druet-indaian} for analogous results on the Riemannian manifolds.

In recent years, there has been considerable interest in the following Choquard equation
\begin{equation}\label{choquard-2}
    \begin{cases}
        -\Delta u-\lambda u=\displaystyle\left(\int_{\Omega}\frac{u^{2^*_{\alpha}}(y)}{|x-y|^\alpha}dy\right)u^{2^*_{\alpha}-1} &\mbox{in}\ \Omega,\\
  u=0
  \ \  &\mbox{on}\ \partial \Omega,
    \end{cases}
\end{equation}
where $N\geq3$, $\Omega\subset\R^{N}$ is a bounded domain with a $C^{2}$ boundary, $\lambda>0$ is a constant, and $2^*_{\alpha}:=\frac{2N-\alpha}{N-2}$ is the upper critical exponent in the sense of the Hardy-Littlewood-Sobolev inequality. The existence of solutions to \eqref{choquard-2} can be traced back to Gao and Yang \cite{Gao2016TheBT}. They proved that if $\lambda$ is not an eigenvalue of $-\Delta$ (and is sufficiently large for $N=3$), then \eqref{choquard-2} admits a nontrivial solution. Furthermore, by employing the reduction method, the authors of \cite{Yang2023ExistenceOC, chen2024blowingupsolutionschoquardtype, chen2025choquard} established the existence of single-bubble solutions for \eqref{choquard-2} in the limit $\lambda \to 0$ when $N \geq 4$, and $\lambda \to \lambda^*>0$ when $N=3$. Sign-changing solutions have also been recently obtained in \cite{LIU2025128726}. However, to the best of our knowledge, the characterization of positive least energy solutions arising from the minimization problem \eqref{minimizing-problem} in three dimensions remains open. This paper addresses this gap by focusing on the three-dimensional case \eqref{main choquard equation with varepsilon}. Before stating our main results, we first recall the definitions of the Green's function and the Robin function.


Since the operator $-\Delta+a$ is assumed to be coercive, it has a Green's function $G_a$ satisfying, in the sense of distributions, for each fixed $y\in\Omega$,
\begin{equation*} 
\left\{
\begin{array}{l@{\quad}l}
-\Delta_x\, G_a(x,y) + a(x)\, G_a(x,y) = \, \delta_y & \quad \text{in} \ \Omega, \\
G_a(x,y) = 0  & \quad \text{on} \  \partial\Omega,
\end{array}
\right.  
\end{equation*}
where $\delta_{y}$ is the Dirac measure in $y$. Note that $G_{a}(x,y)$ is positive for every $x\neq y\in\Omega$ and is symmetric with respect to the two variables. The regular part $H_a$ of $G_a$ is defined by 
\begin{equation*} 
H_a(x,y) :=G_a(x,y)-\frac{1}{4\pi|x-y|}.
\end{equation*}
Then $H_a$ is a distributional solution for
\begin{equation} \label{Ha-pde}
\left\{
\begin{array}{l@{\quad}l}
-\Delta_x\, H_a(x,y) + a(x)\, G_a(x,y)= 0 &\text{in}  \ \Omega, \\
H_a(x,y) =-\frac{1}{4\pi|x-y|}  &\text{on}  \ \partial\Omega.
\end{array}
\right.  
\end{equation}
It is well-known that for each $y\in\Omega$, the function $H_a(\cdot,y)$, which is defined in $\Omega\setminus\{y\}$, admits a continuous extension to $\Omega$. Hence, one can define a function on $\Omega$ by taking its values on the diagonal, which is referred to as the Robin function
\begin{equation}\label{defin-robin-function}
    \phi_a(x) := H_a(x,x),\quad x\in\Omega.
\end{equation}
Further properties of the Green's function are discussed in Section \ref{section-2}.

The first main result of this paper is the following. 

\begin{Thm}\label{thm-00}
Assume $\alpha\in(0,3)$ is sufficiently small. The following properties are equivalent
\begin{enumerate}[label=\upshape(\arabic*)]
    \item There is $x\in\Omega$ such that $\phi_{a}(x)>0$.
    \item $S_{HL}(a)<S_{HL}$.
    \item $S_{HL}(a)$ is achieved by some function $u_{a}$.
\end{enumerate}  
\end{Thm}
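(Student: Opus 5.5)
The plan is to prove the cycle (1)$\Rightarrow$(2)$\Rightarrow$(3)$\Rightarrow$(1). We follow Druet's scheme for the Brezis--Nirenberg problem in dimension three, with the Hardy--Littlewood--Sobolev structure handled through the identity \eqref{important-identity-1}.

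\emph{(1)$\Rightarrow$(2).} Suppose $\phi_a(x_0)>0$. We take Druet's test function
\[
u_\lambda=\eta\,U_{x_0,\lambda}+\lambda^{-1/2}\bigl(4\pi G_a(\cdot,x_0)-\eta|\cdot-x_0|^{-1}\bigr),
\]
with $\eta$ a cutoff near $x_0$. By construction $u_\lambda\approx U_{x_0,\lambda}$ near $x_0$ and $u_\lambda\approx 4\pi\lambda^{-1/2}G_a(\cdot,x_0)$ away from $x_0$. Integrating by parts against the equation for $H_a$ gives
\[
\int_\Omega|\nabla u_\lambda|^2+au_\lambda^2=\int_{\R^3}|\nabla U|^2-4\pi^2\phi_a(x_0)\,c\,\lambda^{-1}+o(\lambda^{-1}).
\]
For the denominator we write $u_\lambda=U_{x_0,\lambda}+\lambda^{-1/2}4\pi H_a(x_0,x_0)+\dots$ near $x_0$. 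Expanding $\|u_\lambda\|_{HL}^{2(6-\alpha)}$ to first order and using \eqref{important-identity-1}, the Riesz potential of $U^{6-\alpha}$ is a multiple of $U^{\alpha}$, so the cross term becomes $2(6-\alpha)\cdot\frac{3}{\bar C_\alpha^{2(5-\alpha)}}\int U^{5}\cdot\lambda^{-1/2}4\pi\phi_a(x_0)\sim\lambda^{-1}\phi_a(x_0)$ after scaling. This makes the computation formally identical to the local Sobolev case: both the numerator gain and the denominator gain are of order $\lambda^{-1}\phi_a(x_0)$ with a favourable sign. The remainder terms should be $o(\lambda^{-1})$: the quadratic terms in the HLS expansion are $O(\lambda^{-2}\log\lambda)$, and the far-field contributions decay like $\lambda^{-(6-\alpha)/2}$ or faster. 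Assembling gives $S_{HL}(a)\le S_{HL}-c'\phi_a(x_0)\lambda^{-1}+o(\lambda^{-1})<S_{HL}$.

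\emph{(2)$\Rightarrow$(3).} This is the standard concentration-compactness argument. Take a minimizing sequence; the HLS--Sobolev inequality and a Brezis--Lieb lemma for the nonlocal term (known from Gao--Yang) show that if the weak limit $v$ were not a minimizer, then $S_{HL}(a)\ge \min\{S_{HL}(a),S_{HL}\}$ with strict splitting, forcing $S_{HL}(a)\ge S_{HL}$. Hence strict inequality gives compactness and a minimizer.

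\emph{(3)$\Rightarrow$(1).} This is the main obstacle. We argue by contradiction: assume $\phi_a\le 0$ everywhere and that $S_{HL}(a)$ is achieved. We first show $S_{HL}(a)=S_{HL}$ in this case, by Druet's blow-up method. Consider $a_\varepsilon=a-\varepsilon$, for which $S_{HL}(a_\varepsilon)<S_{HL}(a)$ is attained, provided either $S_{HL}(a)<S_{HL}$ already or the minimizer perturbs. The minimizers $u_\varepsilon$ are then shown to blow up at a point $x_0$. Pointwise estimates $u_\varepsilon\lesssim$ bubble then yield $\mu_\varepsilon^{-1/2}u_\varepsilon\to cG_a(\cdot,x_0)$, and a Pohozaev identity on small balls forces $\phi_a(x_0)\ge0$ in the limit, with strictness coming from the $\varepsilon$ term. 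The smallness of $\alpha$ is needed exactly here: the nonlocal term must be treated as a perturbation of $u^5$ when proving uniform pointwise bubble bounds and controlling the Pohozaev boundary terms, where the convolution is not localized. Finally, if $S_{HL}(a)=S_{HL}$ were achieved by some $u_a$, then for the slightly smaller potential $a-\varepsilon$ we would get $S_{HL}(a-\varepsilon)<S_{HL}$ from the test function $u_a$, while $\phi_{a-\varepsilon}$ is still small. By monotonicity $\phi_{a-\varepsilon}>\phi_a$, and the limiting argument then gives $\phi_a(x_0)=0$ together with the Pohozaev contradiction, exactly as in Druet and Esposito. In executing this step we would first try to reduce all nonlocal quantities to local ones via \eqref{important-identity-1} plus an $O(\alpha)$ error.
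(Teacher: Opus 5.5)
Your (1)$\Rightarrow$(2) and (2)$\Rightarrow$(3) follow the paper: a Brezis--Druet test function equal to $U_{x_0,\lambda}+4\pi\lambda^{-1/2}H_a(\cdot,x_0)$ near $x_0$, and the Brezis--Lieb splitting. There is one sign slip. The numerator actually \emph{increases} by a positive multiple of $\phi_a(x_0)\lambda^{-1}$. It is the denominator's relative increase, twice as large, that produces the net gain $-\frac{64}{3}S_{HL}\phi_a(x_0)\lambda^{-1}$.

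The genuine gap is in (3)$\Rightarrow$(1). Any route to it must contain the fact that $S_{HL}(a)=S_{HL}$ is never attained, i.e.\ the paper's (3)$\Rightarrow$(2) (Proposition~\ref{assumption on a}). A critical $a$, with $S_{HL}(a)=S_{HL}$ and $\max_\Omega\phi_a=0$, is compatible with everything else you establish. Your blow-up sketch cannot supply this fact. If $u_a$ attains $S_{HL}(a)=S_{HL}$, the minimizers of $S_{HL}(a-\varepsilon)$ are bounded in $H^1_0(\Omega)$. By the very splitting you use in (2)$\Rightarrow$(3), they may simply converge to a minimizer of $S_{HL}(a)$. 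Nothing forces concentration, so there is no blow-up point, no Green's-function limit and no Pohozaev identity to exploit. In fact, concentration ($u_\varepsilon\rightharpoonup0$) is precisely what requires non-attainment at the limiting potential (compare Proposition~\ref{lemma-1}), so the argument is circular.

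The paper proves non-attainment without any blow-up:
\begin{itemize}
\item It takes the second variation at $u_a$ along $u_a(1+\varepsilon\varphi)$, with $\varphi$ ranging over the four stereographic coordinate functions.
\item A Brouwer argument balances these so that the first-order terms $A_1$ vanish.
\item Using $A_1^2\le A_2$ (positive definiteness of the Riesz kernel), $\sum_i\varphi_i^2=1$ and $\sum_i|\nabla\varphi_i|^2=12t^2(1+t^2|x-z|^2)^{-2}$, it gets $4-\alpha\le 12S_{HL}^{-1}\int_\Omega t^2(1+t^2|x-z|^2)^{-2}u_a^2\,dx$.
\item H\"older and the reversed HLS inequality then show the right-hand side tends to a number strictly below $4$ as $\alpha\to0$.
\end{itemize}
This is where smallness of $\alpha$ is used. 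The blow-up part inherits it only through non-attainment; it does not enter through pointwise bubble bounds or Pohozaev terms.

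The first half of your (3)$\Rightarrow$(1), namely (2)$\Rightarrow$(1), also uses the wrong family. If $S_{HL}(a)<S_{HL}$, minimizers for $a-\varepsilon$ converge to minimizers for $a$ and do not blow up. You have to shift \emph{up} to the threshold $\bar a=a+B(a)$ of Proposition~\ref{coro for S-HL-a-delta}, where $S_{HL}(\bar a)=S_{HL}$ but $S_{HL}(\bar a-\varepsilon)<S_{HL}$. Then:
\begin{itemize}
\item Non-attainment at $\bar a$ gives $u_\varepsilon\rightharpoonup0$.
\item One proves $S_{HL}(\bar a-\varepsilon)=S_{HL}-\frac{64}{3}S_{HL}\lambda_\varepsilon^{-1}\phi_{\bar a}(\xi_0)+o(\lambda_\varepsilon^{-1})$, which gives $\phi_{\bar a}(\xi_0)\ge0$.
\item The contrapositive of (1)$\Rightarrow$(2) for $\bar a$ then forces $\phi_{\bar a}(\xi_0)=0$.
\item Strict monotonicity of the Robin function in the potential gives $\phi_a(\xi_0)>\phi_{\bar a}(\xi_0)=0$, since $a<\bar a$.
\end{itemize}
The paper derives the expansion from the decomposition $u_\varepsilon=\mu_\varepsilon(P\bar U_{\xi_\varepsilon,\lambda_\varepsilon}+w_\varepsilon)$. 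It uses coercivity of the linearized operator via nondegeneracy, the bound $d_\varepsilon^{-1}=O(1)$, and identification of the weak limit of $\lambda_\varepsilon^{1/2}w_\varepsilon$. It applies \eqref{important-identity-1} only to exact bubble terms, so no pointwise estimates or Pohozaev identity are needed.

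Your fallback of replacing nonlocal terms by local ones up to an $O(\alpha)$ error could not work in any case. For fixed $\alpha$ such an error does not vanish as $\varepsilon\to0$, while the argument needs the exact sign of a quantity of order $\lambda_\varepsilon^{-1}$.
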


Theorem \ref{thm-00} motivates the following definition, which is in the spirit of the work of Hebey and Vaugon \cite{Hebey2001FromBC}.

\begin{Def}
We say that a function $a$ is critical if $S_{HL}(a)=S_{HL}$ and $S_{HL}(\tilde{a})<S_{HL}$ for every $\tilde{a}$ satisfying $\tilde{a}(x) \leq a(x)$ for all $x \in \Omega$ with $\tilde{a} \not\equiv a$.
\end{Def}

Define the zero set of $\phi_a$ by
\begin{equation*}
    \begin{aligned}
        \mathcal{N}_{a}:=\{x\in\Omega:\phi_{a}(x)=0\}.
    \end{aligned}
\end{equation*}

\begin{Thm}\label{thm-a}
The condition that $a$ is critical is equivalent to
\begin{equation*}
    \max_{\Omega}\phi_{a}(x)=0.
\end{equation*}
Consequently, if $a$ is critical, then $\mathcal{N}_{a} \neq \emptyset$ and every point in $\mathcal{N}_{a}$ is a critical point of $\phi_{a}$.
\end{Thm}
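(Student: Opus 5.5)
The plan is to combine Theorem \ref{thm-00} with monotonicity and continuity properties of the Robin function with respect to $a$. By Theorem \ref{thm-00}, $S_{HL}(a)=S_{HL}$ is equivalent to $\phi_a\le 0$ on $\Omega$; note that $S_{HL}(a)\le S_{HL}$ always holds by concentrating Aubin--Talenti bubbles. So $a$ is critical if and only if $\phi_a\le0$ everywhere, while for every $\tilde a\le a$ with $\tilde a\not\equiv a$ there is a point where $\phi_{\tilde a}>0$. I will relate the two conditions through two facts about the Green's function.

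\emph{Fact 1 (strict monotonicity).} If $\tilde a\le a$ and $\tilde a\not\equiv a$, with both operators coercive, then $G_{\tilde a}(x,y)>G_a(x,y)$ for all $x\ne y$, and $\phi_{\tilde a}>\phi_a$ on $\Omega$. To prove it, note that $w=G_{\tilde a}(\cdot,y)-G_a(\cdot,y)=H_{\tilde a}(\cdot,y)-H_a(\cdot,y)$ is continuous, vanishes on $\partial\Omega$, and satisfies
\[
-\Delta w+\tilde a w=(a-\tilde a)G_a(\cdot,y)\ge0,\ \not\equiv0 .
\]
Coercivity of $-\Delta+\tilde a$ gives the weak maximum principle, and the strong maximum principle then gives $w>0$ in $\Omega$; evaluating at $x=y$ yields the claim for $\phi$. \emph{Fact 2 (continuity).} If $a_n\to a$ uniformly, then $\phi_{a_n}\to\phi_a$ locally uniformly. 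For this I write $H_{a_n}-H_a$ as the solution of a Dirichlet problem with right-hand side $(a-a_n)G_{a_n}$, which lies in $L^p$ for $p<3$ uniformly in $n$ and $y$ on compacts, and then apply elliptic estimates.

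\emph{($\Rightarrow$)} Suppose $a$ is critical. Then $\phi_a\le0$. If $\max\phi_a<0$, then, since $\phi_a(x)\to-\infty$ as $x\to\partial\Omega$ (because $H_a(x,x)\sim -1/(8\pi d(x))$), we have $\phi_a\le-\delta$ on $\Omega$. Take $\tilde a=a-\varepsilon$ with small $\varepsilon>0$; coercivity is preserved. Fact 2 near the boundary needs more care, but monotonicity together with the boundary asymptotics of $\phi_{\tilde a}$ (uniform in $\varepsilon$, since $H_{\tilde a}-H_0$ is bounded) shows $\phi_{\tilde a}<0$ everywhere. Then $S_{HL}(\tilde a)=S_{HL}$, contradicting criticality. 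Hence $\max\phi_a=0$, and the maximum is attained in the interior.

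\emph{($\Leftarrow$)} Suppose $\max\phi_a=0$. Then $S_{HL}(a)=S_{HL}$. For any $\tilde a\le a$ with $\tilde a\not\equiv a$, coercivity of $-\Delta+\tilde a$ is needed for $\phi_{\tilde a}$ to make sense; if it fails, then $S_{HL}(\tilde a)\le0<S_{HL}$ trivially. Otherwise Fact 1 gives $\phi_{\tilde a}(x_0)>\phi_a(x_0)=0$ at a maximum point $x_0$, so Theorem \ref{thm-00} gives $S_{HL}(\tilde a)<S_{HL}$. Consequently $\mathcal N_a$ is the set of maximum points of $\phi_a$, which is nonempty and interior. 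Since $a\in C^1(\Omega)$, $\phi_a$ is $C^1$ in $\Omega$, and every point of $\mathcal N_a$ is an interior maximum, hence a critical point.

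The main obstacle is the uniform control used in ($\Rightarrow$): showing that $\phi_{a-\varepsilon}$ stays negative up to the boundary, and establishing the $C^1$ regularity of $\phi_a$. For both I will rely on the decomposition of $H_a$ into $H_0$ plus a correction $\int_\Omega G_0\,aG_a$, whose regularity the paper develops in Section \ref{section-2}.
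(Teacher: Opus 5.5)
Your proof is correct. The $(\Leftarrow)$ direction is essentially the paper's argument: strict monotonicity of $\phi$ (your Fact 1 is Lemma~\ref{lem-H-a}) applied at a zero of $\phi_a$, followed by Theorem~\ref{thm-00}. Your remark about non-coercive $\tilde a$ settles a point the paper passes over.

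The $(\Rightarrow)$ direction takes a different route.

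\emph{The paper's route.} It gets $\phi_a\le 0$ from Corollary~\ref{corollary-phi-a}, then produces a zero by re-entering the proof of Theorem~\ref{thm-00}. Applying the construction \eqref{assum-1} to $a-\varepsilon_0$ gives $\bar a=a$ when $a$ is critical. The concentration point $\xi_0\in\Omega$ of minimizers of $S_{HL}(a-\varepsilon)$ then satisfies $\phi_a(\xi_0)\ge 0$ by Proposition~\ref{prop-energy-expansion}; see \eqref{energy-critical-1}.

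\emph{Your route.} You argue by contradiction with a soft perturbation: if $\sup\phi_a<0$, then $\phi_{a-\varepsilon}<0$ for small $\varepsilon$. Theorem~\ref{thm-00}, used as a black box, then forces $S_{HL}(a-\varepsilon)=S_{HL}$, contradicting criticality.

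\emph{Comparison.} The hard input is the same in both: the implication (2)$\Rightarrow$(1), i.e.\ the blow-up analysis together with the small-$\alpha$ non-attainment result. Your version is modular, needing only the equivalence (1)$\Leftrightarrow$(2) and continuity of $a\mapsto\phi_a$. The paper's version additionally locates the zero at the blow-up point of minimizers of $S_{HL}(a-\varepsilon)$, which is the picture reused in Section~\ref{section-4}.

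\emph{The step you flag as the main obstacle.} It is easier than you expect. The resolvent identity
\[
H_{a-\varepsilon}(x,y)-H_a(x,y)=\varepsilon\int_\Omega G_a(x,z)\,G_{a-\varepsilon}(z,y)\,dz
\]
can be combined with the bound $G_{a-\varepsilon}\le G_{a-\varepsilon_0}\lesssim|z-y|^{-1}$ for $0<\varepsilon\le\varepsilon_0<\lambda_1(a)$. Together these give $0<\phi_{a-\varepsilon}-\phi_a\le C\varepsilon$ uniformly on all of $\Omega$. So no separate boundary-layer argument is needed.
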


\begin{Rem}
\begin{enumerate}
\item The assumption that $\alpha>0$ is sufficiently small is only used to prove the implication $(3) \Rightarrow (2)$; see Proposition \ref{assumption on a}.
\item For the unit ball $\Omega = B_1(0)$, it is known from \cite{Brezis1986} that the constant function $a = -\frac{\pi^{2}}{4}$ is critical, with the corresponding zero set $\mathcal{N}_a = \{0\}$ and the Green's function given by $G_a(0, y) = \frac{1}{|y|} \cos\left(\frac{\pi |y|}{2}\right)$.
\end{enumerate}
\end{Rem}

Note that when $a$ is a critical function, $S_{HL}(a)=S_{HL}$ is not achieved. It is therefore natural to ask whether $S_{HL}(a+\varepsilon V)$ becomes achievable under such a perturbation, and if so, what the asymptotic behavior of the corresponding minimizers is as $\varepsilon \to 0^{+}$. Research in this area dates back at least to \cite{Atkinson-jde,Atkinson-2,Brezispletier1989,Budd} on the Brezis-Nirenberg problem \eqref{brezis-nirenberg-problem} and near-critical problems \footnote{The {near-critical problem} refers to the situation where the critical exponent $2^*-1$ in \eqref{brezis-nirenberg-problem} is replaced by $2^*-1-\varepsilon$, with $\varepsilon > 0$ being sufficiently small.}, and has seen a surge of interest in recent years. To be more precise, when $\Omega$ is the unit ball in $\R^{3}$, Brezis and Peletier \cite{Brezispletier1989} established the asymptotics of solutions to \eqref{brezis-nirenberg-problem} with $a=-(\frac{\pi^{2}}{4}+\varepsilon)$ as $\varepsilon\to 0^{+}$. They also proposed three conjectures on the asymptotic behavior of the energy-minimizing solutions for both the Brezis-Nirenberg problem and the near-critical problem on general domains in $\R^{N}$ with $N\geq3$. Subsequently, the first two of these conjectures—concerning the asymptotics in general domains for $N \geq 4$—were proved independently by Han \cite{Han1991} and Rey \cite{Rey1989ProofOT} (see also \cite{JunchengWei1998, Takahashi-2004,Frank2019Energyhigher-dimensional} and references therein for further related results). More recently, Frank, K\"{o}nig, and Kova\v{r}\'{i}k \cite{Frank2021BlowupOS} proved the third conjecture, which concerns the asymptotics of the near-critical problem on general domains in $\R^{3}$ as $\varepsilon \to 0^{+}$. Additionally, in \cite{Frank2019Energythree-dimensional, Frank2021BlowupOS}, the same authors established the refined energy asymptotics and asymptotic profiles of the energy-minimizing solutions to \eqref{brezis-nirenberg-problem} under the perturbation $a \mapsto a+ \varepsilon V$ for a critical function $a$ and a perturbation $V\in L^{\infty}(\Omega)$. Furthermore, the asymptotics for multi-bubble solutions, which may blow-up and concentrate at several distinct points, have been investigated very recently by Cao, Luo, and Peng \cite{Cao2021Trans} and by K\"{o}nig and Laurain \cite{könig2023multibubbleblowupanalysisbrezisnirenberg, Konig2022FineMA}.

On the other hand, for the upper critical Choquard equation \eqref{choquard-2}, the asymptotic behavior of energy-minimizing solutions as $\lambda \to 0^{+}$ has been studied recently in dimensions $N \geq 5$ by Yang and Zhao \cite{Yang2023BlowUpBO} and by Pan, Wen, and Yang \cite{Pan-2025}. In contrast, the cases $N=3$ and $N=4$ remain open. This paper addresses the three-dimensional case. In the final part, we study the asymptotics of $S_{HL}(a+\varepsilon V)$ for a critical function $a$ and a perturbation $V \in L^{\infty}(\Omega)$, as well as the behavior of the corresponding minimizers.

In what follows, we work under the following assumption
\begin{Assum}\label{assumption-a}
    a is a critical function, and $a(x)<0$ for all $x\in\mathcal{N}_{a}$.
\end{Assum}

We first define 
\begin{equation*}
    Q_{V}(x):=\int_{\Omega}V(y)G_{a}(x,y)^{2}dy,~~\forall x\in\Omega\text{~~and~~} \mathcal{N}_{a}(V):=\{x\in\mathcal{N}_{a}:Q_{V}(x)<0\}.
\end{equation*}

Under the condition $\mathcal{N}_{a}(V) \neq \emptyset$, the asymptotic behavior of the perturbed minimal energy $S_{HL}(a+\varepsilon V)$ is given as follows.

\begin{Thm}\label{thm-3}
If $\mathcal{N}_{a}(V)\neq\emptyset$, then $S_{HL}(a+\varepsilon V)<S_{HL}$ for any $\varepsilon>0$ and
     \begin{equation}\label{eq-energy-asym}
        \lim_{\varepsilon\to0^{+}}\frac{S_{HL}-S_{HL}(a+\varepsilon V)}{\varepsilon^{2}}=\frac{128}{3}S_{HL}\sup_{\xi\in\mathcal{N}_{a}(V)}\frac{Q_{V}(\xi)^{2}}{|a(\xi)|}.
    \end{equation}
\end{Thm}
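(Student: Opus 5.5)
We split the proof of Theorem \ref{thm-3} into an upper bound for $S_{HL}(a+\varepsilon V)$, obtained from explicit test functions, and a matching lower bound, obtained by blow-up analysis of minimizers. This follows the strategy of Frank, K\"onig and Kova\v{r}\'ik for the local Brezis--Nirenberg problem. The Choquard structure enters only through the identity \eqref{important-identity-1}, which turns the nonlocal term evaluated on bubbles into a local one.

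\emph{Upper bound.} Fix $\xi\in\mathcal{N}_a(V)$. We use the test function
\begin{equation*}
\psi_{\xi,\lambda}=PU_{\xi,\lambda}-\lambda^{-1/2}\bigl(H_a(\cdot,\xi)-H_0(\cdot,\xi)\bigr)+\text{(cut-off correction)},
\end{equation*}
which is essentially $U_{\xi,\lambda}$ with the singular part replaced by $4\pi\lambda^{-1/2}G_a(\cdot,\xi)$ away from $\xi$. We then expand numerator and denominator of the quotient in $\lambda\to\infty$. For the numerator we expect
\begin{equation*}
\int|\nabla\psi|^2+(a+\varepsilon V)\psi^2=\|\nabla U\|_2^2-4\pi\phi_a(\xi)\lambda^{-1}+\varepsilon\,(4\pi)^2Q_V(\xi)\lambda^{-1}+c\,a(\xi)\lambda^{-2}+o(\lambda^{-2})+o(\varepsilon\lambda^{-1}).
\end{equation*}
The $\lambda^{-2}$ term carries $a(\xi)$ through the next-order local correction; this is exactly where $\phi_a(\xi)=0$ and $\nabla\phi_a(\xi)=0$ from Theorem \ref{thm-a} are used. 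For the denominator $\|\psi\|_{HL}^{2(6-\alpha)}$, we expand the convolution around $U$ using \eqref{important-identity-1}. The linear term reduces to $\int U^{5}\cdot(\text{correction})$, so its coefficients match the local Sobolev case up to the factor $S_{HL}$. Since $\phi_a(\xi)=0$, the quotient becomes
\begin{equation*}
S_{HL}\bigl(1+A\,\varepsilon Q_V(\xi)\lambda^{-1}+B\,|a(\xi)|\lambda^{-2}\bigr)+\text{errors},
\end{equation*}
with $A>0$ and $B>0$; the sign of $B$ is where $a(\xi)<0$ is used. Optimizing in $\lambda\sim\varepsilon^{-1}$ with $Q_V(\xi)<0$ gives $S_{HL}-S_{HL}(a+\varepsilon V)\ge \varepsilon^2\frac{A^2}{4B}S_{HL}\frac{Q_V(\xi)^2}{|a(\xi)|}(1+o(1))$, and we verify that $A^2/(4B)=128/3$. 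The same computation with a fixed large $\lambda$ gives $S_{HL}(a+\varepsilon V)<S_{HL}$ for every $\varepsilon>0$. Alternatively, this strict inequality follows because $a+\varepsilon V$ is not pointwise above $a$ near the relevant region, so Theorem \ref{thm-00} applies once we show $\phi_{a+\varepsilon V}(\xi)>0$ via $\partial_\varepsilon\phi=-Q_V$.

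\emph{Lower bound.} By Theorem \ref{thm-00}, $S_{HL}(a+\varepsilon V)$ is achieved by some $u_\varepsilon$. Since $S_{HL}(a)=S_{HL}$ is not achieved, the minimizers blow up: $u_\varepsilon$ concentrates at a single point $x_0$ as one bubble. We write
\begin{equation*}
u_\varepsilon=\alpha_\varepsilon\bigl(PU_{x_\varepsilon,\lambda_\varepsilon}+\lambda_\varepsilon^{-1/2}(H_0-H_a)(\cdot,x_\varepsilon)+q_\varepsilon\bigr),
\end{equation*}
with $q_\varepsilon$ orthogonal to the tangent space. The nondegeneracy of $\bar U$ for \eqref{critical choquard equation}, known from the uniqueness and classification references, gives coercivity of the second variation on this complement. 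We first show $x_0$ lies in the interior, using boundary Green's function estimates. A first expansion then gives
\begin{equation*}
S_{HL}(a+\varepsilon V)\ge S_{HL}\bigl(1+c\,\phi_a(x_\varepsilon)\lambda_\varepsilon^{-1}\cdot(-1)+\dots\bigr).
\end{equation*}
This forces $\phi_a(x_\varepsilon)\to0$, hence $x_0\in\mathcal N_a$, and also $\lambda_\varepsilon^{-1}\to0$ at a rate comparable to $\varepsilon$. The refined expansion to order $\lambda^{-2}$ then reproduces the upper-bound quadratic form in $\lambda_\varepsilon^{-1}$, with $\|\nabla q_\varepsilon\|=O(\lambda^{-1})$ controlled by coercivity. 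Minimizing this quadratic form over $\lambda$ gives the reverse inequality, and also shows $Q_V(x_0)<0$ and that $x_0$ attains the supremum.

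\emph{Main obstacle.} The hardest step is the refined lower-bound expansion. We need $q_\varepsilon$ small enough that its contribution, including the nonlocal cross terms in the Hardy--Littlewood--Sobolev functional, is $o(\lambda^{-2})$. We also need $\phi_a(x_\varepsilon)\lambda^{-1}$ to be controlled by the quadratic order, using $\phi_a\le0$, so it only helps. For the nonlocal error terms we would first attempt to linearize via \eqref{important-identity-1} and bound the remaining quadratic terms with the HLS inequality. The sign of $\phi_a$ matters here: because $\phi_a\le0$ with maximum $0$, the term $-\phi_a(x_\varepsilon)\lambda^{-1}$ is nonnegative in the lower bound and can be dropped.
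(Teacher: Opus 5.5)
Your overall architecture matches the paper's. For the upper bound you use the test function $\psi_{\xi,\lambda}$ at $\lambda\sim\varepsilon^{-1}$, and indeed $A^{2}/(4B)=(64/3)^{2}/(4\cdot 8/3)=128/3$. For the lower bound you use minimizers, single-bubble blow-up at an interior point of $\mathcal{N}_{a}$, a refined decomposition, coercivity and completing the square. Your argument for $S_{HL}(a+\varepsilon V)<S_{HL}$ at \emph{every} $\varepsilon>0$ is valid and replaces the paper's concavity argument: the quotient is affine in $\varepsilon$, so fixing $\varepsilon$ and letting $\lambda\to\infty$ makes $\frac{64}{3}S_{HL}Q_{V}(\xi)\varepsilon\lambda^{-1}$ dominate the $\lambda^{-2}$ term.

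\textbf{The gap.} The lower bound breaks at ``with $q_\varepsilon$ orthogonal to the tangent space''. The blow-up analysis produces parameters with $w_\varepsilon=u_\varepsilon/\mu_\varepsilon-P\bar{U}_{\xi_\varepsilon,\lambda_\varepsilon}\in T^{\perp}_{\xi_\varepsilon,\lambda_\varepsilon}$. But $H_a(\xi_\varepsilon,\cdot)-H_0(\xi_\varepsilon,\cdot)$ is not orthogonal to $T_{\xi_\varepsilon,\lambda_\varepsilon}$. So $q_\varepsilon=w_\varepsilon-4\pi\bar{C}_\alpha\lambda_\varepsilon^{-1/2}(H_a-H_0)(\xi_\varepsilon,\cdot)$ has a tangential part $s_\varepsilon$. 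Its $P\bar{U}$-coefficient is $\lambda_\varepsilon^{-1}\beta$ with $\beta=\frac{64}{3}(\phi_a-\phi_0)(\xi_\varepsilon)+O(\lambda_\varepsilon^{-1})\not\to0$, because $\phi_0<0$.

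Hence $\|\nabla s_\varepsilon\|_{L^2}\sim\lambda_\varepsilon^{-1}$. It is in fact the dominant part of $q_\varepsilon$, since in the end $\|\nabla r_\varepsilon\|_{L^2}=o(\lambda_\varepsilon^{-1})$. The second variation is not coercive on $T$; it is negative along $P\bar U$. In the paper's notation, the $s_\varepsilon$-terms in $N_1-\frac{D_1}{6-\alpha}$ add up to $-(10-2\alpha)S_{HL}^{\frac{6-\alpha}{5-\alpha}}\beta^{2}\lambda_\varepsilon^{-2}$. This is compensated only by the term $\frac{(5-\alpha)D_1^{2}}{2(6-\alpha)^{2}D_0}$ from expanding the denominator of the quotient.

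So bounding the $q_\varepsilon$-contribution by ``$\ge\rho\|\nabla q_\varepsilon\|^2$'' does not work. Any error here shifts exactly the $\lambda^{-2}$ coefficient that produces $\frac{128}{3}$. The paper handles this in Lemma \ref{estimate-s}, Lemma \ref{lem-energy-expansion-2} and Proposition \ref{prop-lower-bound}. It computes $N_1$ and $D_1$ to order $\lambda_\varepsilon^{-2}$ and checks that all $\beta$-, $\gamma$- and $\gamma^{2}$-contributions cancel up to $o(\lambda_\varepsilon^{-2})$. Only then does it apply coercivity (Lemma \ref{lem-corecivity-2}), and only to $r_\varepsilon\in T^{\perp}_{\xi_\varepsilon,\lambda_\varepsilon}$.

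\textbf{A possible fix.} Your sentence becomes true under a different modulation: choose $(\mu,\xi,\lambda)$ minimizing $\|\nabla(u_\varepsilon-\mu\psi_{\xi,\lambda})\|_{L^2}$, so that $q_\varepsilon\perp\mathrm{span}\{\psi_{\xi,\lambda},\partial_\lambda\psi_{\xi,\lambda},\partial_{\xi_i}\psi_{\xi,\lambda}\}$. You would then need two things.
(i) Transfer Lemma \ref{lemma coercivity} to this perturbed complement; the normalized tangent vectors differ from those of $P\bar{U}_{\xi,\lambda}$ by $O(\lambda^{-1/2})$.
(ii) Show the linear term in $q_\varepsilon$ is at most $\delta\|\nabla q_\varepsilon\|_{L^2}^2+o(\lambda_\varepsilon^{-2})+o(\varepsilon\lambda_\varepsilon^{-1})$. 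This uses $-\Delta\psi_{\xi,\lambda}+a\psi_{\xi,\lambda}=3\bar{C}_\alpha U_{\xi,\lambda}^{5}-\bar{C}_\alpha a(g_{\xi,\lambda}+f_{\xi,\lambda})$, \eqref{important-identity-1}, and $\phi_a(\xi_\varepsilon)\to0$ as in Lemma \ref{lem-corecivity-2}.
This route would avoid computing $s_\varepsilon$, but your proposal does neither it nor the paper's computation.

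\textbf{Smaller points.}
(1) Your formulas for $\psi_{\xi,\lambda}$, the decomposition and the numerator use the opposite sign convention for $H_a$ and drop the factor $4\pi\bar{C}_\alpha$. Here the correction is $+4\pi\bar{C}_\alpha\lambda^{-1/2}(H_a-H_0)$ and the numerator carries $+16\pi^{2}\bar{C}_\alpha^{2}\phi_a(\xi)\lambda^{-1}$. Yet your lower-bound sign argument uses this paper's $\phi_a\le0$.
(2) $\nabla\phi_a(\xi)=0$ is not needed, because odd Taylor terms vanish against the radial bubble.
(3) The first-order expansion does not give $\lambda_\varepsilon^{-1}\sim\varepsilon$. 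That rate only comes out after matching the two bounds, and the lower bound does not need it.
(4) The alternative via $\phi_{a+\varepsilon V}$ yields only small $\varepsilon$. Also, Theorem \ref{thm-00} is stated only for potentials in $C(\bar{\Omega})\cap C^{1}(\Omega)$, which $a+\varepsilon V$ need not be.
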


If $\mathcal{N}_{a}(V)\neq\emptyset$, then Theorems \ref{thm-00} and \ref{thm-3} yield that $S_{HL}(a+\varepsilon V)$ is achieved by a minimizer, denoted $u_{\varepsilon}$. After a suitable scaling, $u_{\varepsilon}$ satisfies

\begin{equation}\label{main choquard equation with varepsilon and V}
    \begin{cases}
        -\Delta u+(a+\varepsilon V)u=\displaystyle\left(\int_{\Omega}\frac{u^{6-\alpha}(y)}{|x-y|^\alpha}dy\right)u^{5-\alpha}&\mbox{in}\ \Omega,\\
  u>0
  \ \  &\mbox{in}\ \Omega,\\
  u=0
  \ \  &\mbox{on}\ \partial \Omega.
    \end{cases}
\end{equation}
We now turn to studying the asymptotic profiles of the sequence of minimizers $\{u_{\varepsilon}\}$. Before stating our results, we need to introduce some more notation. Let $P:H^{1}(\Omega)\to H^{1}_{0}(\Omega)$ be the orthogonal projection defined for every $\varphi\in H^{1}(\Omega)$ by
\begin{equation*}
    \int_{\Omega}\nabla P\varphi\cdot\nabla\psi dx= \int_{\Omega}\nabla \varphi\cdot\nabla\psi dx,\quad\forall\psi\in H^{1}_{0}(\Omega).
\end{equation*}
Then function $P\bar{U}_{\xi,\lambda}$ satisfies the following equation
\begin{equation}\label{eq-pU}
\begin{cases}
    -\Delta P\bar{U}_{\xi,\lambda}= -\Delta \bar{U}_{\xi,\lambda}&\text{~in~}\Omega,\\
    \quad\ \ P\bar{U}_{\xi,\lambda}=0&\text{~on~}\partial\Omega.
\end{cases}
\end{equation}
Moreover, we define the space
\begin{equation*}
    T_{\xi,\lambda}:=\text{~Span~}\left\{P\bar{U}_{\xi,\lambda},\frac{\partial P\bar{U}_{\xi,\lambda}}{\partial\lambda},\frac{\partial P\bar{U}_{\xi,\lambda}}{\partial {\xi}_{i}}, i=1,2,3.\right\}\subset H_{0}^{1}(\Omega)
\end{equation*}
and denote by $T_{\xi,\lambda}^{\bot}$ its orthogonal complement in $H_{0}^{1}(\Omega)$ with respect to the product $(u,v):=\int_{\Omega}\nabla u\cdot \nabla v$. We also denote by $\Pi_{\xi,\lambda}$ and $\Pi_{\xi,\lambda}^{\bot}$ the projections onto $T_{\xi,\lambda}$ and $T_{\xi,\lambda}^{\bot}$, respectively.

\begin{Thm}\label{thm-6}
Assume that $\mathcal{N}_{a}(V)\neq\emptyset$. Let $u_{\varepsilon}$ be a minimizer for $S_{HL}(a+\varepsilon V)$ solving \eqref{main choquard equation with varepsilon and V}. There are sequences $\{\mu_{\varepsilon}\}\subset \R^{+}$, $\{\xi_{\varepsilon}\}\subset\Omega$, $\{\lambda_{\varepsilon}\}\subset \R^{+}$ and $\{r_{\varepsilon}\}\subset T_{\xi,\lambda}^{\bot}$ such that
\begin{equation*}
    \begin{aligned}
        u_{\varepsilon}=\mu_{\varepsilon}\left(P\bar{U}_{\xi_{\varepsilon},\lambda_{\varepsilon}}+4\pi\bar{C}_{\alpha}\lambda_{\varepsilon}^{-\frac{1}{2}}\Pi^{\bot}_{\xi_{\varepsilon},\lambda_{\varepsilon}}(H_{a}(\xi_{\varepsilon},\cdot)-H_{0}(\xi_{\varepsilon},\cdot))+r_{\varepsilon}\right).
    \end{aligned}
\end{equation*}
Moreover, as $\varepsilon\to0$, we have
\begin{equation*}
    \xi_{\varepsilon}\to\xi_{0}\text{~for some~}\xi_{0}\in\mathcal{N}_{a}(V) \text{~such that~}  \frac{Q_{V}(\xi_{0})^{2}}{|a(\xi_{0})|}=\sup_{\xi\in\mathcal{N}_{a}(V)}\frac{Q_{V}(\xi)^{2}}{|a(\xi)|},
\end{equation*}
\begin{equation*}
    \mu_{\varepsilon}=1+\frac{256}{3}\phi_{0}(\xi_{0})\frac{|Q_{V}(\xi_{0})|}{|a(\xi_{0})|}\varepsilon+o(\varepsilon),
\end{equation*}
\begin{equation*}
   \varepsilon\lambda_{\varepsilon}\to\frac{|a(\xi_{0})|}{4|Q_{V}(\xi_{0})|},
\end{equation*}
and
\begin{equation*}
    \phi_{a}(\xi_{\varepsilon})=o(\varepsilon),\quad \|\nabla r_{\varepsilon}\|_{L^{2}(\Omega)}=o(\varepsilon).
\end{equation*}
\end{Thm}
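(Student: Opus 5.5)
We follow the strategy of Frank--K\"onig--Kova\v{r}\'{i}k for the Brezis--Nirenberg problem, adapted to the nonlocal term. The argument has four stages: (i) a qualitative blow-up description, (ii) a refined decomposition with a first-order correction, (iii) an energy expansion of the quotient in the parameters, and (iv) a comparison with the energy asymptotics of Theorem \ref{thm-3} that forces all parameters to their stated limits.

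\textbf{Step 1: qualitative blow-up.} By Theorem \ref{thm-3}, $S_{HL}(a+\varepsilon V)\to S_{HL}$, so the normalized minimizers $u_\varepsilon$ form a minimizing sequence for $S_{HL}(a)=S_{HL}$. Since $a$ is critical, $S_{HL}(a)$ is not achieved. A concentration-compactness argument (profile decomposition for the Hardy--Littlewood--Sobolev quotient) therefore gives $u_\varepsilon\rightharpoonup 0$ and a single bubble concentrating at an interior point. Concretely, this yields
\begin{equation*}
u_\varepsilon=\mu_\varepsilon\left(P\bar U_{\xi_\varepsilon,\lambda_\varepsilon}+w_\varepsilon\right),\qquad w_\varepsilon\in T_{\xi_\varepsilon,\lambda_\varepsilon}^\bot,
\end{equation*}
with $\|\nabla w_\varepsilon\|_2\to0$, $\lambda_\varepsilon\to\infty$ and $\lambda_\varepsilon d(\xi_\varepsilon,\partial\Omega)\to\infty$. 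The parameters are chosen by minimizing the distance to the bubble manifold, using nondegeneracy of $\bar U$ for the Choquard equation. Boundary concentration is excluded by comparing with the expansion in Step 3, since $\phi_a\to-\infty$ near $\partial\Omega$.

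\textbf{Step 2: refined decomposition.} Set $w_\varepsilon=4\pi\bar C_\alpha\lambda_\varepsilon^{-1/2}\Pi^\bot(H_a(\xi_\varepsilon,\cdot)-H_0(\xi_\varepsilon,\cdot))+r_\varepsilon$. This first-order correction is natural because $P\bar U+\lambda^{-1/2}4\pi\bar C_\alpha(H_a-H_0)$ approximately solves $-\Delta u+au=$ RHS away from $\xi$, matching $\bar C_\alpha\lambda^{-1/2}G_a(\xi,\cdot)$. The first task is a coercivity estimate for the second variation of the quotient on $T^\bot$. This follows from the nondegeneracy of $\bar U$, the Choquard analogue of Rey's inequality, together with identity \eqref{important-identity-1} to linearize the convolution term. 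Combined with the equation, it gives
\begin{equation*}
\|\nabla r_\varepsilon\|_2\lesssim \varepsilon\lambda^{-1/2}+\lambda^{-1}+|\phi_a(\xi)|\lambda^{-1/2}.
\end{equation*}

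\textbf{Step 3: energy expansion.} Expand the quotient as
\begin{equation*}
S_{HL}-c_1\phi_a(\xi)\lambda^{-1}+c_2\varepsilon Q_V(\xi)\lambda^{-1}-c_3 a(\xi)\lambda^{-2}+o(\lambda^{-2}+\varepsilon\lambda^{-1})+O(\|\nabla r\|^2).
\end{equation*}
The $a(\xi)\lambda^{-2}$ term comes from $\int a\,(\text{correction})\cdot U$ in the region $|x-\xi|\sim\lambda^{-1}$, as in dimension three for Brezis--Nirenberg. The nonlocal terms are handled by expanding $(P\bar U+w)^{6-\alpha}$ and using \eqref{important-identity-1}, so that the linear terms reduce to local integrals against $U^{5}$. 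I expect this precise second-order expansion of the Hardy--Littlewood--Sobolev denominator, with the explicit constants $128/3$ and $256/3$, to be the main obstacle.

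\textbf{Step 4: comparison with Theorem \ref{thm-3}.} Since $\phi_a\le0$, the minimal energy requires $\phi_a(\xi_\varepsilon)=o(\varepsilon)$ and $\lambda\sim\varepsilon^{-1}$. Optimizing the quadratic $c_2\varepsilon Q_V t-c_3|a|t^2$ in $t=\lambda^{-1}$ gives $\varepsilon\lambda_\varepsilon\to|a(\xi_0)|/(4|Q_V(\xi_0)|)$. Matching this with the lower bound in \eqref{eq-energy-asym} forces $\xi_0$ to be a maximizer of $Q_V^2/|a|$ over $\mathcal N_a(V)$. It also improves the bound to $\|\nabla r_\varepsilon\|=o(\varepsilon)$, via $\lambda^{-1}\sim\varepsilon$ and the refined coercivity applied to the correction. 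Finally, $\mu_\varepsilon$ is obtained by testing the equation against $P\bar U$ and using the normalization. The resulting $\varepsilon$-term involves $\phi_0(\xi_0)\lambda^{-1}$, since the projection $P$ contributes $-4\pi\bar C_\alpha\lambda^{-1/2}H_0$. This gives the stated formula for $\mu_\varepsilon$.
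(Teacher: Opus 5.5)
Your architecture matches the paper's: a first expansion, the refined decomposition with the correction $4\pi\bar{C}_{\alpha}\lambda_\varepsilon^{-1/2}\Pi^{\bot}_{\xi_\varepsilon,\lambda_\varepsilon}(H_a(\xi_\varepsilon,\cdot)-H_0(\xi_\varepsilon,\cdot))$, an energy expansion, and comparison with the test-function bound of Proposition~\ref{cor-energy-upper}. But your treatment of $r_\varepsilon$ leaves a genuine gap.

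In Step~3 you carry the $r_\varepsilon$-contribution as an unsigned error $O(\|\nabla r_\varepsilon\|_2^2)$. Your Step~2 bound contains the term $\lambda_\varepsilon^{-1}$, so this error is only $O(\lambda_\varepsilon^{-2})$. That is exactly the order of every quantity Step~4 relies on: $\frac{8}{3}S_{HL}|a(\xi_0)|\lambda_\varepsilon^{-2}$, $\frac{64}{3}S_{HL}Q_V(\xi_0)\varepsilon\lambda_\varepsilon^{-1}$, and the gap $S_{HL}-S_{HL}(a+\varepsilon V)\sim\varepsilon^2$. With an error of unknown sign at that order, you cannot show $Q_V(\xi_0)<0$, cannot identify $\lim\varepsilon\lambda_\varepsilon$, and cannot force $\xi_0$ to maximize $Q_V^2/|a|$. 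Nor does $\|\nabla r_\varepsilon\|_2=o(\varepsilon)$ follow: with $\lambda_\varepsilon^{-1}\sim\varepsilon$ your bound only gives $O(\varepsilon)$.

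The missing ingredient is that the part of the expanded quotient which is linear in $r_\varepsilon$ is small in dual norm on $T^{\bot}_{\xi_\varepsilon,\lambda_\varepsilon}$:
\begin{itemize}
\item By \eqref{important-identity-1}, its leading piece reduces to $3\bar{C}_{\alpha}\int_\Omega U^5_{\xi_\varepsilon,\lambda_\varepsilon}r_\varepsilon$, which vanishes by orthogonality.
\item The piece $\lambda_\varepsilon^{-1/2}\int_\Omega U^4_{\xi_\varepsilon,\lambda_\varepsilon}H_a(\xi_\varepsilon,\cdot)r_\varepsilon$ is of size $\lambda_\varepsilon^{-1}\|\nabla r_\varepsilon\|_2$ when estimated crudely. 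In fact it is at most $\delta\|\nabla r_\varepsilon\|_2^2+C\lambda_\varepsilon^{-1}\int_\Omega U^4_{\xi_\varepsilon,\lambda_\varepsilon}H_a^2(\xi_\varepsilon,\cdot)=\delta\|\nabla r_\varepsilon\|_2^2+o(\lambda_\varepsilon^{-2})$, precisely because $\phi_a(\xi_\varepsilon)\to0$ (Lemma~\ref{lem-corecivity-2}).
\item The cross terms with $s_\varepsilon$ are likewise $o(\lambda_\varepsilon^{-2})+o(\|\nabla r_\varepsilon\|_2^2)$ by orthogonality (Lemma~\ref{lem-energy-expansion-2}).
\end{itemize}
Together with Lemma~\ref{lemma coercivity}, this makes the $r_\varepsilon$-part at least $\rho\|\nabla r_\varepsilon\|_2^2-o(\lambda_\varepsilon^{-2}+\varepsilon\lambda_\varepsilon^{-1})$. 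That produces \eqref{eq-energy-low-bound} with the nonnegative remainder $R=-\frac{64}{3}S_{HL}\phi_a(\xi_\varepsilon)\lambda_\varepsilon^{-1}+\rho S_{HL}^{-\frac{1}{5-\alpha}}\|\nabla r_\varepsilon\|_2^2$. Completing the square against Proposition~\ref{cor-energy-upper} gives $R=o(\varepsilon^2)$, and this is what yields $\|\nabla r_\varepsilon\|_2=o(\varepsilon)$ and $\phi_a(\xi_\varepsilon)=o(\varepsilon)$ as outputs rather than inputs. Alternatively, feeding the same cancellation into your equation-based estimate should give $\|\nabla r_\varepsilon\|_2\lesssim\varepsilon\lambda_\varepsilon^{-1/2}+o(\lambda_\varepsilon^{-1})$, after which an unsigned $O(\|\nabla r_\varepsilon\|_2^2)$ really is negligible. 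The bound you state is not enough.

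Two smaller points. First, your Step~3 formula shows no contribution from $s_\varepsilon$, the $T_{\xi_\varepsilon,\lambda_\varepsilon}$-component separating $\psi_{\xi_\varepsilon,\lambda_\varepsilon}$ from $u_\varepsilon/\mu_\varepsilon-r_\varepsilon$. By Lemma~\ref{estimate-s}, its $P\bar U_{\xi_\varepsilon,\lambda_\varepsilon}$-coefficient $\beta\lambda_\varepsilon^{-1}$ is of exact order $\lambda_\varepsilon^{-1}$. So it moves numerator and denominator separately at orders $\lambda_\varepsilon^{-1}$ and $\lambda_\varepsilon^{-2}$, and you must verify that these contributions cancel in the quotient up to $o(\lambda_\varepsilon^{-2})$; this is Proposition~\ref{prop-lower-bound}.

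Second, the same $\beta$ is where the $\phi_0(\xi_0)$-term in $\mu_\varepsilon$ comes from. The paper reads $\mu_\varepsilon$ off the normalization $\int_\Omega\int_\Omega u_\varepsilon^{6-\alpha}(x)u_\varepsilon^{6-\alpha}(y)|x-y|^{-\alpha}dxdy=S_{HL}(a+\varepsilon V)^{\frac{6-\alpha}{5-\alpha}}$. Using $D_1=2(6-\alpha)S_{HL}^{\frac{6-\alpha}{5-\alpha}}\beta\lambda_\varepsilon^{-1}+O(\lambda_\varepsilon^{-2})$, this gives $\mu_\varepsilon=1-\beta\lambda_\varepsilon^{-1}+o(\varepsilon)$. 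Testing the equation against $P\bar U_{\xi_\varepsilon,\lambda_\varepsilon}$, as you propose, is a viable alternative. However, the order-$\lambda_\varepsilon^{-1}$ term $\int_\Omega a\,u_\varepsilon P\bar U_{\xi_\varepsilon,\lambda_\varepsilon}$ is then nonlocal in $a$. It only collapses to a multiple of $(\phi_a-\phi_0)(\xi_\varepsilon)\lambda_\varepsilon^{-1}$ after using the equation for $H_a-H_0$, as in the proof of Proposition~\ref{prop-energy-expansion}.
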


Finally, in the degenerate case $\mathcal{N}_{a}(V)=\emptyset$, we have the following result.
\begin{Thm}\label{thm-4}
If $\mathcal{N}_{a}(V)=\emptyset$, then one of the following holds: 
    \begin{enumerate}[label=\upshape(\arabic*)]
        \item  $S_{HL}(a+\varepsilon V)<S_{HL}$ for all sufficiently small $\varepsilon>0$, and moreover, $S_{HL}(a+\varepsilon V)=S_{HL}+o(\varepsilon^{2})$ as $\varepsilon\to0^{+}$.
        \item  $S(a+\varepsilon V)=S_{HL}$ for all sufficiently small $\varepsilon>0$.
    \end{enumerate}
     If, in addition, $Q_{V}(x)>0$ for all $x\in \mathcal{N}_{a}$, then case $(1)$ cannot occur.
\end{Thm}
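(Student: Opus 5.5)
The plan is to split according to whether $S_{HL}(a+\varepsilon V)<S_{HL}$ holds along a sequence or eventually fails. Monotonicity is the basic tool. If $V_1\le V_2$ then $S_{HL}(a+\varepsilon V_1)\le S_{HL}(a+\varepsilon V_2)$. We also always have $S_{HL}(b)\le S_{HL}$ for every coercive $b$, which follows by testing with concentrating bubbles $P\bar U_{\xi,\lambda}$ as $\lambda\to\infty$.

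Suppose first that $S_{HL}(a+\varepsilon_0 V)<S_{HL}$ for some small $\varepsilon_0$. By Theorem \ref{thm-00}, $\max\phi_{a+\varepsilon_0V}>0$. I claim that $\varepsilon\mapsto\max_\Omega\phi_{a+\varepsilon V}$ is convex, or at least has the right structure, so that strictness holds for all $\varepsilon\in(0,\varepsilon_0]$. The justification is the resolvent identity
\[
\frac{d}{d\varepsilon}\phi_{a+\varepsilon V}(x)=-\int_\Omega V(y)G_{a+\varepsilon V}(x,y)^2\,dy .
\]
Differentiating once more gives $\frac{d^2}{d\varepsilon^2}\phi=2\int\!\!\int VVGGG\ge$? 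This sign is not obviously positive. A cleaner route is to use the variational characterisation in the statement directly. If $S_{HL}(a+\varepsilon V)=S_{HL}$ only along a sequence $\varepsilon_k\to0$, we can place the dichotomy as follows. Either there are arbitrarily small $\varepsilon$ with strict inequality, or there are not, and in the second case (2) holds. So to get (1) I will only need the following: if $S_{HL}(a+\varepsilon V)<S_{HL}$ along a sequence, then it holds for all small $\varepsilon$. For this I use the fact that $\{b:\max\phi_b\le 0\}$ is convex, since it equals $\{b:S_{HL}(b)=S_{HL}\}$ and $b\mapsto S_{HL}(b)$ is an infimum of affine functions, hence concave. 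Together with $S_{HL}(b)\le S_{HL}$, the set where $S_{HL}(b)=S_{HL}$ is convex. The segment $\{a+\varepsilon V\}$ has $a$ in this set, so the set of $\varepsilon\ge0$ with equality is an interval $[0,\varepsilon_1]$. Thus either $\varepsilon_1>0$, which is case (2), or $\varepsilon_1=0$, which gives strict inequality for all $\varepsilon>0$.

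The remaining task in case (1) is the estimate $S_{HL}-S_{HL}(a+\varepsilon V)=o(\varepsilon^2)$. Here I repeat the blow-up analysis behind Theorems \ref{thm-3} and \ref{thm-6}, which does not use $\mathcal N_a(V)\ne\emptyset$ except in the final optimisation. Minimizers $u_\varepsilon$ exist by Theorem \ref{thm-00}, and they must blow up at some $\xi_0\in\mathcal N_a$ because $a$ is critical. The expansion then gives
\[
S_{HL}(a+\varepsilon V)\ge S_{HL}-C\bigl(4\pi\phi_a(\xi_\varepsilon)\lambda_\varepsilon^{-1}-\varepsilon Q_V(\xi_\varepsilon)\lambda_\varepsilon^{-1}\bigr)+c|a(\xi_0)|\lambda_\varepsilon^{-2}+o(\lambda_\varepsilon^{-2}+\varepsilon\lambda_\varepsilon^{-1}).
\]
Since $\phi_a\le0$, the deficit is bounded by $\sup_{\lambda}\bigl(-C\varepsilon Q_V(\xi_\varepsilon)\lambda^{-1}-c|a|\lambda^{-2}\bigr)$ plus small errors. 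We have $Q_V(\xi_\varepsilon)\to Q_V(\xi_0)\ge0$ because $\mathcal N_a(V)=\emptyset$. Hence the deficit is at most $\varepsilon^2\,(\max(-Q_V(\xi_\varepsilon),0))^2/|a|+o(\varepsilon^2)=o(\varepsilon^2)$. This lower bound is the main obstacle. It requires controlling the remainder $r_\varepsilon$ and $\lambda_\varepsilon^{-1}\lesssim\varepsilon$ without the nondegeneracy supplied by $Q_V<0$. I would first establish $\lambda_\varepsilon^{-1}=O(\varepsilon)$ from the quadratic coercivity in $\lambda^{-1}$ given by $a(\xi_0)<0$.

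Finally, suppose $Q_V>0$ on $\mathcal N_a$ and that (1) holds. Minimizers then exist and concentrate at $\xi_0\in\mathcal N_a$. The expansion above gives $S_{HL}(a+\varepsilon V)\ge S_{HL}+C\varepsilon Q_V(\xi_0)\lambda_\varepsilon^{-1}(1+o(1))+c|a|\lambda_\varepsilon^{-2}>S_{HL}$, and this contradicts the strict inequality.
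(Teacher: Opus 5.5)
Your final argument is correct and is essentially the paper's proof. The dichotomy comes from concavity of $\varepsilon\mapsto S_{HL}(a+\varepsilon V)$ together with $S_{HL}(a+\varepsilon V)\le S_{HL}=S_{HL}(a)$, so the equality set is an interval $[0,\varepsilon_1]$. Both the $o(\varepsilon^{2})$ estimate and the exclusion of case (1) when $Q_V>0$ on $\mathcal{N}_{a}$ come from the lower bound \eqref{eq-energy-low-bound}, using $R\ge 0$, $a(\xi_0)<0$ and $Q_V(\xi_0)\ge 0$ (resp.\ $>0$).

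Two small remarks. Your planned preliminary step $\lambda_\varepsilon^{-1}=O(\varepsilon)$ is unnecessary, because Young's inequality absorbs the $o(\varepsilon\lambda_\varepsilon^{-1})$ error into the positive $|a(\xi_0)|\lambda_\varepsilon^{-2}$ term at the cost of $o(\varepsilon^{2})$. Also, your opening claim that strictness at a single $\varepsilon_0$ propagates to all of $(0,\varepsilon_0]$ is false in general: concavity allows equality on $[0,\varepsilon_1]$ followed by a drop. Drop it in favour of the ``along a sequence'' version you end up using.
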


\begin{Rem}
    \begin{enumerate}
        \item Note that if $a$ is a constant and is critical, then $a$ must be negative. Moreover, for a general critical function $a$, Corollary \ref{corollary-phi-a} yields $a \leq 0$ in $\mathcal{N}_{a}$. Thus, Assumption~\ref{assumption-a} is not very restrictive.
        \item The situation in dimension three differs from the higher-dimensional case. In higher dimensions, it suffices to consider an initial expansion of the form $u_{\varepsilon} = \mu_{\varepsilon}(P\bar{U}_{\xi_{\varepsilon},\lambda_{\varepsilon}} + w_{\varepsilon})$, with $w_{\varepsilon} \in T_{\xi_{\varepsilon},\lambda_{\varepsilon}}^{\bot}$, which can be obtained via a concentration-compactness argument, see \cite{Yang2023BlowUpBO}. However, in dimension three, as shown in \cite{Brezispletier1989,Frank2019Energythree-dimensional, Frank2021BlowupOS}, a finer analysis is needed. Specifically, to achieve a better approximation for $u_{\varepsilon}$, we need to further decompose the remainder term $w_{\varepsilon}$ as
\begin{equation}
    w_{\varepsilon} = 4\pi\bar{C}_{\alpha}\lambda_{\varepsilon}^{-\frac{1}{2}}\Pi^{\bot}_{\xi_{\varepsilon},\lambda_{\varepsilon}}\big(H_{a}(\xi_{\varepsilon},\cdot)-H_{0}(\xi_{\varepsilon},\cdot)\big) + r_{\varepsilon}.
\end{equation}
This finer decomposition, however, leads to greater technical challenges. Moreover, the nonlocal term introduces further difficulties, which require us to develop new estimates. Here, the symmetry of the double integrals and the application of both the Hardy-Littlewood-Sobolev and the reversed Hardy-Littlewood-Sobolev inequalities are crucial.
\end{enumerate}
\end{Rem}

The paper is organized as follows. Section~\ref{section-2} reviews preliminary results. The proof of Theorem~\ref{thm-00} is presented in Section~\ref{section-3}. In Section~\ref{section-4}, we first establish a sharper upper bound for $S_{HL}(a+\varepsilon V)$, leading to Theorem~\ref{thm-a}. Furthermore, by employing a refined expansion of the minimizers $u_{\varepsilon}$, we derive a lower bound for $S_{HL}(a+\varepsilon V)$, from which Theorems~\ref{thm-3}--\ref{thm-4} follow.

\medskip

\noindent\textbf{Notations.}
Throughout this paper, we adopt the following notations.
\begin{enumerate}
    \item The homogeneous Sobolev space $\dot{H}^{1}(\R^{N})$ is defined as $$\dot{H}^{1}(\R^{N}):=\left\{u\in L^{\frac{2N}{N-2}}(\R^{N}):~\nabla u\in L^{2}(\R^{N})\right\}.$$ For a domain $\Omega$, the norm in $H^{1}_{0}(\Omega)$ is given by $\|u\|_{H^{1}_{0}(\Omega)}:=\left(\int_{\Omega}|\nabla u|^{2}dx\right)^{1/2}$.  
    \item We use $C$ to denote various positive constants whose value may change from line to line. The notation $C_{1}=o(\varepsilon)$ means that $C_{1}/\varepsilon\to0$ as $\varepsilon\to0$, and $C_{2}=O(\varepsilon)$ means that $|C_{2}/\varepsilon|\leq C $ for some constant $C>0$ as $\varepsilon\to0$.
     \item  Let $f,g: X\to \R^{+}$ be two nonnegative functions defined on some set $X$. We write $f\lesssim g$ (or equivalently, $g\gtrsim f$) if there exists a constant $C>0$, independent of $x\in X$, such that $f(x)\leq C g(x)$ for all $x\in X$. Furthermore, we write $f\sim g$ if both $f\lesssim g$ and $g\lesssim f$ hold.
\end{enumerate}

\section{Preliminaries}\label{section-2}
In this section, we give some preliminaries. First, we recall the following critical Lane-Emden-Fowler equation
\begin{equation}\label{Emden-Fowler equation}
   -\Delta u = 3u^5
\quad\text{in}\ \R^3. 
\end{equation}
It is well-known \cite{AUBIN,Talenti} that all positive solutions to \eqref{Emden-Fowler equation} are precisely the Aubin-Talenti functions $\{U_{\xi,\lambda}\}$ with parameters $\xi \in \R^3$ and $\lambda > 0$. This equation arises as the Euler-Lagrange equation of the minimization problem 
\begin{equation}\label{optimization problem for s}
    S=\inf_{u\in\dot{H}^{1}(\R^{3}), \|u\|_{L^{6}(\R^3)}=1}\int_{\R^3} |\nabla u|^2dx,
\end{equation}
which is related to the Sobolev inequality
\begin{equation*}
  \int_{\R^3} |\nabla u|^2 \geq S \left( \int_{\R^3} u^6 \right)^{1/3},\quad\forall u\in\dot{H}^{1}(\R^{3}),  
\end{equation*}
where $S$ is the sharp Sobolev constant.

For the nonlocal problem with convolution, the Hardy-Littlewood-Sobolev (HLS for short) inequality (see \cite{Lieb2001}) and the reversed Hardy-Littlewood-Sobolev (RHLS for short) inequality (see \cite{Dou-2015-imrn,Beckner,Nguyen-2017-Israel}) play an important role.
\begin{Thm}\label{HL}
Let $1<\theta,r<\infty$ and $0<\alpha<3$ with $\frac{1}{\theta}+\frac{1}{r}=2-\frac{\alpha}{3}$. If $f\in L^\theta(\mathbb{R}^3)$ and $g\in L^r(\mathbb{R}^3)$, then there exists a constant $C_{\theta,r,\alpha}>0$, independent of $f,g$, such that
\begin{equation}\label{HLS}
  \left|\displaystyle\int_{\mathbb{R}^3}\int_{\mathbb{R}^3}\frac{f(x)g(y)}{|x-y|^\alpha}dxdy\right|\leq C_{\theta,r,\alpha}\|f\|_{L^\theta(\mathbb{R}^3)}\|g\|_{L^r(\mathbb{R}^3)}.
\end{equation}
If $\theta=r=\frac{6}{6-\alpha}$, then 
\begin{equation}\label{definition of C alpha}
    C_{\theta,r,\alpha}=C_{\alpha}=\pi^{\frac{\alpha}{2}}\frac{\Gamma(\frac{3-\alpha}{2})}{\Gamma(3-\frac{\alpha}{2})}\left(\frac{\Gamma(3)}{\Gamma(\frac{3}{2})}\right)^{\frac{3-\alpha}{3}}.
\end{equation}
In this case, the equality in \eqref{HLS} holds if and only if $f\equiv(const.)g$, where 
\begin{equation*}
  g(x)=A(a^2+|x-b|^2)^{-\frac{6-\alpha}{2}},\text{~for some~}A\in \mathbb{C},0\neq a\in \mathbb{R}\text{~and~}b\in \mathbb{R}^3.
\end{equation*}
\end{Thm}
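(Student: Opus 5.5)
We plan to prove the general inequality \eqref{HLS} and the sharp diagonal case separately, since they need quite different tools.

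\emph{General inequality.} It suffices to take $f,g\ge 0$. Set $I_\alpha g:=|\cdot|^{-\alpha}*g$. The kernel $|x|^{-\alpha}$ lies in the weak space $L^{3/\alpha,\infty}(\R^3)$. By the weak-type Young inequality, which we would get from the Marcinkiewicz interpolation theorem, $I_\alpha$ maps $L^r(\R^3)$ boundedly into $L^{\theta'}(\R^3)$ whenever $1+\frac1{\theta'}=\frac1r+\frac{\alpha}{3}$ and $1<r<\theta'<\infty$. Concretely, for fixed $g$ we split the kernel at a radius $R$ into $|x|^{-\alpha}\chi_{|x|<R}\in L^1$ and $|x|^{-\alpha}\chi_{|x|\ge R}\in L^{r'}$. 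Optimizing in $R$ bounds the distribution function of $I_\alpha g$, giving a weak-type estimate at every admissible pair. Interpolating between two such pairs upgrades this to a strong-type estimate. Hölder's inequality then yields $\int f\, I_\alpha g\le \|f\|_{\theta}\|I_\alpha g\|_{\theta'}$. The exponent relation $\frac1\theta+\frac1r=2-\frac\alpha3$ is exactly the scaling condition, which is why no other exponents are allowed.

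\emph{Sharp constant for $\theta=r=\frac{6}{6-\alpha}$.} We follow Lieb's strategy. First, the Riesz rearrangement inequality shows that replacing $f,g$ by their symmetric decreasing rearrangements does not decrease the double integral and preserves the norms, so we may restrict to radial decreasing functions. Second, in the diagonal case the functional is invariant under translations, dilations, rotations and inversion $x\mapsto x/|x|^2$ with the weight $|x|^{-(6-\alpha)}f(x/|x|^2)$. Hence it is invariant under the full conformal group. Via stereographic projection, we transfer the problem to $\mathbb{S}^3$, where the extremal candidate corresponds to constants. To show that a maximizer exists and is (after conformal transformation) constant, we would use the competing symmetries argument of Carlen--Loss. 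One alternately applies rearrangement and a fixed rotation on $\mathbb{S}^3$ that moves the north pole. The iterates converge in $L^{6/(6-\alpha)}$ to the function $h(x)=(1+|x|^2)^{-\frac{6-\alpha}{2}}$, while the functional never decreases along the iteration. This gives $\sup=\mathcal{D}(h,h)/\|h\|^2$, and evaluating it on $\mathbb{S}^3$ produces \eqref{definition of C alpha}. The value on $\mathbb{S}^3$ comes from the Funk--Hecke formula, or from the known eigenvalue of $|\xi-\eta|^{-\alpha}$ on constants, using $|\mathbb{S}^3|=2\pi^2$ and Beta/Gamma identities.

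\emph{Equality cases.} Here we would show that any optimizer $f$ must satisfy equality in the Riesz rearrangement inequality for every conformal image of $f$. Strict Riesz (Lieb's strict rearrangement for strictly decreasing kernels) forces each such image to be a translate of a symmetric decreasing function. We then need a rigidity lemma: a function all of whose conformal images are symmetric decreasing about some point is of the form $A(a^2+|x-b|^2)^{-(6-\alpha)/2}$. We would prove this by writing the Euler--Lagrange equation $f^{\theta-1}=\lambda I_\alpha f$ and using the method of moving spheres, or via Li's characterization of functions invariant under inversions. The same Euler--Lagrange equation, together with equality in Hölder's inequality, shows that $g$ is proportional to $f$.

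We expect this equality/rigidity step to be the main obstacle, since strictness of the rearrangement and the characterization of the extremal family are delicate. Since the result is classical \cite{Lieb2001}, in the paper we would simply invoke it as stated.
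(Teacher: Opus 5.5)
Your outline is correct and is essentially the argument behind the result the paper uses. The paper gives no proof of its own and simply cites \cite{Lieb2001}, where the sharp diagonal case is obtained as you describe: Riesz rearrangement, conformal invariance with stereographic projection, Carlen--Loss competing symmetries, and then strict rearrangement plus a rigidity argument for the equality cases. Your plan to invoke the classical theorem as stated is exactly what the paper does; the only cosmetic difference I recall is that the reference proves the non-sharp bound by a layer-cake argument rather than by weak-type Young plus Marcinkiewicz interpolation.
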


\begin{Thm}\label{RHL}
Let $0<\theta,r<1$ and $\alpha >0$ with $\frac{1}{\theta}+\frac{1}{r}=2+\frac{\alpha}{3}$. If $f\in L^\theta(\mathbb{R}^3)$ and $g\in L^r(\mathbb{R}^3)$, then there exists a constant $\tilde{C}_{\theta,r,\alpha}>0$ independent of $f,g$, such that
\begin{equation}\label{RHLS}
  \displaystyle\int_{\mathbb{R}^3}\int_{\mathbb{R}^3}|f(x)g(y)||x-y|^{\alpha}dxdy\geq\tilde{C}_{\theta,r,\alpha}\|f\|_{L^\theta(\mathbb{R}^3)}\|g\|_{L^r(\mathbb{R}^3)}.
\end{equation}
If $\theta=r=\frac{6}{6+\alpha}$, then 
\begin{equation}\label{Rdefinition of C alpha}
    \tilde{C}_{\theta,r,\alpha}=\tilde{C}_{\alpha}=\pi^{-\frac{\alpha}{2}}\frac{\Gamma(\frac{3+\alpha}{2})}{\Gamma(3+\frac{\alpha}{2})}\left(\frac{\Gamma(3)}{\Gamma(\frac{3}{2})}\right)^{\frac{3+\alpha}{3}}.
\end{equation}
In this case, the equality in \eqref{RHLS} holds if and only if $f\equiv(const.)g$, where 
\begin{equation*}
  g(x)=A(a^2+|x-b|^2)^{-\frac{6+\alpha}{2}},\text{~for some~}A\in \mathbb{C},0\neq a\in \mathbb{R}\text{~and~}b\in \mathbb{R}^3.
\end{equation*}
\end{Thm}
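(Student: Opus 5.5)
Theorem \ref{RHL} is a known result, the reversed Hardy--Littlewood--Sobolev inequality; we would follow the strategy of Dou--Zhu \cite{Dou-2015-imrn} and Beckner \cite{Beckner}. First reduce to $f,g\ge0$, which is harmless since only $|f|$ and $|g|$ appear. Next we would show that symmetric decreasing rearrangement can only decrease the left-hand side of \eqref{RHLS}. To do this, truncate the kernel as $\min\{|x-y|^\alpha,R\}=R-(R-|x-y|^\alpha)_+$.
\begin{itemize}
\item The constant part contributes $R\|f\|_{L^1}\|g\|_{L^1}$, which is invariant under rearrangement.
\item The kernel $(R-|x-y|^\alpha)_+$ is symmetric decreasing in $|x-y|$, so by the Riesz rearrangement inequality its contribution increases under rearrangement. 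Since it enters with a minus sign, the truncated integral decreases.
\end{itemize}
This step needs $f,g\in L^1$; we would first work with bounded, compactly supported $f,g$ and recover the general case by monotone convergence. Letting $R\to\infty$ then gives $\int\int fg|x-y|^\alpha\ge\int\int f^*g^*|x-y|^\alpha$. Hence it suffices to bound the functional from below on radially symmetric nonincreasing functions.

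For the existence of a positive constant with general exponents, we would work with radial decreasing $f,g$ normalized by $\|f\|_{L^\theta}=\|g\|_{L^r}=1$. Such functions satisfy $f(x)\le C|x|^{-3/\theta}$, so a fixed fraction of the $L^\theta$ mass lies in a ball $B_\rho$, and similarly for $g$. Dilation invariance, which is guaranteed by the relation $\frac1\theta+\frac1r=2+\frac\alpha3$, lets us fix the scale. The double integral over $\{|x-y|\ge 1\}$ is then bounded below by a positive constant, by a direct estimate using only the mass of $f,g$ near the origin compared with their mass far out. Since $\theta,r<1$, Hölder goes the wrong way, and these quantities have to be compared by hand rather than through duality.

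In the diagonal case $\theta=r=\frac6{6+\alpha}$ we would identify the sharp constant and the extremals. Existence of a minimizer among radial decreasing functions follows from Helly's selection theorem together with the pointwise bound above. Conformal (inversion) invariance of the functional, which follows from the exponent choice, is used to rule out vanishing and concentration: we normalize each minimizing sequence so that half of the $L^\theta$ mass lies in the unit ball. By the equality case one then reduces to $f=g$, with Euler--Lagrange equation
\begin{equation*}
f(x)^{\theta-1}=c\int_{\R^3}f(y)|x-y|^\alpha dy .
\end{equation*}
We would classify its positive solutions by the method of moving spheres in integral form, due to Li. These solutions are exactly $A(a^2+|x-b|^2)^{-\frac{6+\alpha}2}$. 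Evaluating the functional on them, for instance after stereographic projection to $S^3$ where they become constants, gives $\tilde C_\alpha$ in \eqref{Rdefinition of C alpha}.

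The main obstacle is the compactness step. Because $L^\theta$ with $\theta<1$ is not a Banach space, weak compactness and duality are unavailable, so we would rely entirely on monotonicity, pointwise decay, and the conformal normalization to pass to the limit. Fatou's lemma applies in the favorable direction for the double integral. The constraint is preserved in the limit by dominated convergence, using the uniform tail bound $f\le C|x|^{-3/\theta}$ together with the conformal normalization near the origin.
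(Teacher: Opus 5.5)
The paper gives no proof of this theorem. It is quoted from Dou--Zhu, Beckner and Ng\^o--Nguyen, so a citation is all the paper needs. Read as a self-contained argument, your rearrangement step is correct (truncate to $\min\{|x-y|^\alpha,R\}=R-(R-|x-y|^\alpha)_+$ and apply Riesz). Li's integral moving spheres is also the right tool for $u=c\int|x-y|^\alpha u^{-(6+\alpha)/\alpha}\,dy$ with $u=f^{\theta-1}$. However, the two analytic steps in between fail as written.

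First, the bound $f\le C|x|^{-3/\theta}$ only gives $f^\theta\le C|x|^{-3}$, which is integrable neither at $\infty$ nor at $0$. So it does not place a fixed fraction of the mass in a fixed ball, and it is not a majorant for dominated convergence. The half-mass normalization does not fix this. Take $f_k=a_k\chi_{B_1}+c_k\chi_{B_k}$, with each piece carrying half the $L^\theta$-mass: it is radially decreasing, normalized, obeys the bound, and loses half its mass in the a.e.\ limit. Dilation also fixes only one of the two scales, whereas the first assertion needs a bound uniform in the ratio of the scales of $f$ and $g$. Comparing masses on $\{|x-y|\ge1\}$ cannot give that. Indeed, $\epsilon\chi_{B_L}$ with $\epsilon^\theta|B_L|=1$ has $L^1$-norm $\epsilon^{1-\theta}\to0$, so the growth of the kernel has to be used.

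The missing point is that kernel growth beats spreading exactly because $b_\theta:=\frac3\alpha(\frac1\theta-1)$ and $b_r:=\frac3\alpha(\frac1r-1)$ lie in $(0,1)$ and sum to $1$. Concretely:
\begin{itemize}
\item For radial $g$, one has $|x-y|^2\ge|x|^2+|y|^2$ on the half of each sphere $|y|=t$ where $x\cdot y\le0$. Hence the double integral is $\gtrsim\|g\|_1\int f|x|^\alpha+\|f\|_1\int g|y|^\alpha$.
\item H\"older with exponents $\frac1\theta,\frac1{1-\theta}$ against the weight $1+(|x|/R)^\alpha$, optimized in $R$, gives $\|f\|_\theta\lesssim\|f\|_1^{1-b_\theta}(\int f|x|^\alpha)^{b_\theta}$. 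Here H\"older goes the \emph{useful} way.
\item The same bound holds for $g$, and Young's inequality finishes.
\end{itemize}

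These two estimates also give the tightness you need. Mass of $f_k$ escaping to $\infty$ forces $\int f_k|x|^\alpha\to\infty$. Mass concentrating in $B_\delta$ with $\delta\to0$ forces $\|f_k\|_{L^1(B_\delta)}\ge|B_\delta|^{1-1/\theta}\|f_k\|_{L^\theta(B_\delta)}\to\infty$. Once $g_k$ is normalized to carry half its mass inside $B_1$ and half outside, either event blows up the double integral. So a minimizing sequence cannot leak mass.

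Second, the step ``by the equality case one reduces to $f=g$'' is unjustified. In Lieb's HLS argument this reduction is Cauchy--Schwarz for the positive definite kernel $|x-y|^{-\alpha}$, but $|x-y|^{\alpha}$ is not positive definite. For $\alpha\le2$ it is conditionally negative definite, and applying this to $f/\|f\|_1-g/\|g\|_1$ gives a reversed Cauchy--Schwarz. The statement allows every $\alpha>0$, though, and for $\alpha>2$ this route is unavailable. You have to keep the pair and write the coupled Euler--Lagrange system $f^{\theta-1}=c_1\int g(y)|x-y|^\alpha\,dy$, $g^{\theta-1}=c_2\int f(y)|x-y|^\alpha\,dy$, then classify that system by moving spheres. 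The relation $f\equiv\mathrm{const}\cdot g$ in the equality statement comes out of this classification; it cannot be assumed going in.
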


\begin{Rem}\label{remark of HLS}
\begin{enumerate}
    \item By the HLS inequality and the Sobolev inequality, we have, for any $u\in \dot{H}^{1}(\R^{3})$,
\begin{equation*}
\begin{split}
   \left(\displaystyle{\int_{\R^3}}\displaystyle{\int_{\R^3}}\frac{|u(x)|^{6-\alpha}|u(y)|^{6-\alpha}}{|x-y|^{\alpha}}dxdy\right)^{\frac{1}{6-\alpha}}&\leq C_{\alpha}^{\frac{1}{6-\alpha}}\left(\displaystyle{\int_{\R^3}}|u(x)|^{6}dx\right)^{\frac{1}{3}}\\
   &\leq C_{\alpha}^{\frac{1}{6-\alpha}} S^{-1}\int_{\R^{3}}|\nabla u(x)|^{2}dx.
\end{split} 
\end{equation*}
    \item From the HLS inequality, the  functional
$$u\mapsto\displaystyle{\int_{\R^3}}\displaystyle{\int_{\R^3}}\frac{|u(x)|^{q}|u(y)|^{q}}{|x-y|^{\alpha}}dxdy$$
is well-defined on $H^{1}(\R^3)\times H^{1}(\R^3)$ provided $\frac{6-\alpha}{3}\leq q\leq6-\alpha$. Hence, it is natural to call $2_{\alpha}:=\frac{6-\alpha}{3}$ the lower Hardy-Littlewood-Sobolev critical exponent and $2^*_{\alpha}:=6-\alpha$ the upper Hardy-Littlewood-Sobolev critical exponent. 
\end{enumerate} 
\end{Rem}

We recall the following nondegeneracy property for the upper critical Choquard equation, which was established by Li et al. \cite[Theorem 1.5]{lixuemei2023nondegeneracy}.
\begin{Thm}\label{non-degeneracy property}
If $v\in \dot{H}^{1}(\mathbb{R}^3)$ is a solution of the following equation
\begin{equation*}
-\Delta v-(6-\alpha)\left(\int_{\mathbb{R}^3}\frac{\bar{U}_{\xi,\lambda}^{5-\alpha}(y)v(y)}{|x-y|^\alpha}dy\right)\bar{U}_{\xi,\lambda}^{5-\alpha}-
 (5-\alpha)\left(\int_{\mathbb{R}^3}\frac{\bar{U}_{\xi,\lambda}^{6-\alpha}(y)}{|x-y|^\alpha}dy\right)\bar{U}_{\xi,\lambda}^{4-\alpha}v=0,
\end{equation*}
then there exist constants $a_{0},a_{i}\in \R$, $i=1,2,3$ such that
\begin{equation*}
  v= a_{0}\frac{\partial\bar{U}_{\xi,\lambda}}{\partial\lambda}+\sum_{i=1}^{3}a_{i}\frac{\partial\bar{U}_{\xi,\lambda}}{\partial \xi_{i}}.
\end{equation*}
\end{Thm}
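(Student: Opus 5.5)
We follow the strategy of Li et al.: transfer the linearized equation to $S^{3}$ by stereographic projection, where both the Laplacian part and the nonlocal part become diagonal in spherical harmonics. Nondegeneracy then reduces to comparing two explicit sequences of eigenvalues. By translation and scaling invariance we may assume $\xi=0$, $\lambda=1$, and we write $U=\bar U_{0,1}=\bar C_\alpha U_{0,1}$.

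\emph{Step 1: simplify the local term.} By \eqref{important-identity-1},
\[
\int_{\R^3}\frac{U^{6-\alpha}(y)}{|x-y|^{\alpha}}\,dy=3\,U^{\alpha}(x),
\]
so the second term in the equation is $3(5-\alpha)U^{4}v$. The equation therefore reads
\[
-\Delta v=3(5-\alpha)U^{4}v+(6-\alpha)\Big(\int_{\R^3}\frac{U^{5-\alpha}(y)v(y)}{|x-y|^\alpha}dy\Big)U^{5-\alpha}.
\]
As a sanity check, the functions $\partial_\lambda U$ and $\partial_{\xi_i}U$ solve it, since they arise by differentiating \eqref{critical choquard equation} along the family of solutions.

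\emph{Step 2: pass to the sphere.} Let $\mathcal S:\R^3\to S^3$ be the inverse stereographic projection with Jacobian $J(x)=\big(\tfrac{2}{1+|x|^2}\big)^3$, so that $U^{6}\sim J$ up to constants. Set $v(x)=J(x)^{1/6}w(\mathcal S x)$. We use three standard facts:
\begin{itemize}
\item the conformal covariance $-\Delta(J^{1/6}w\circ\mathcal S)=J^{5/6}\big((-\Delta_{S^3}+\tfrac34)w\big)\circ\mathcal S$;
\item the chordal identity $|x-y|=|\mathcal Sx-\mathcal Sy|\,J(x)^{-1/6}J(y)^{-1/6}$;
\item $U^{5-\alpha}\,dy\sim J^{(5-\alpha)/6}\,dy$.
\end{itemize}
With these, every weight cancels exactly: the powers of $J$ match precisely because $6-\alpha$ is the upper critical exponent. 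The equation becomes
\[
\Big(-\Delta_{S^3}+\tfrac34\Big)w=c_1 w+c_2\int_{S^3}\frac{w(\eta)}{|\zeta-\eta|^\alpha}\,d\eta,
\]
with explicit constants $c_1,c_2>0$ determined by $\bar C_\alpha$, $S$ and $C_\alpha$. The hypothesis $v\in\dot H^1(\R^3)$ gives $w\in H^1(S^3)$, so we may expand $w=\sum_k Y_k$ in spherical harmonics.

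\emph{Step 3: diagonalize.} By the Funk--Hecke formula, convolution with $|\zeta-\eta|^{-\alpha}$ acts on degree-$k$ harmonics as multiplication by
\[
\mu_k=c\,\frac{\Gamma(k+\tfrac{\alpha}{2})}{\Gamma(k+3-\tfrac{\alpha}{2})},
\]
which is positive and strictly decreasing in $k$ because $\alpha<3$. So $Y_k\neq0$ is possible only if
\[
k(k+2)+\tfrac34=c_1+c_2\mu_k .
\]
The left side is strictly increasing in $k$ and the right side is strictly decreasing, so there is at most one solution $k$. Since the derivatives of $U$ correspond to $k=1$ harmonics (four of them, matching the dimension of the $k=1$ eigenspace on $S^3$), $k=1$ is that solution. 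Hence $w$ is a degree-one harmonic, and pulling back gives $v=a_0\partial_\lambda U+\sum_i a_i\partial_{\xi_i}U$.

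The main obstacle is the bookkeeping in Steps 2 and 3: getting the conformal weights and the constants $c_1,c_2,\mu_k$ exactly right. In particular one must make sure $k=0$ does \emph{not} satisfy the relation; the corresponding radial solution $U$ itself solves the linear equation only with the coefficient $1$ instead of $(6-\alpha)+(5-\alpha)$. I would avoid computing $c_1,c_2$ directly and instead determine them from two known identities: the $k=1$ relation, which holds because $\partial_\lambda U$ solves the equation, and the $k=0$ relation with coefficient $1$, which comes from $U$ solving \eqref{critical choquard equation}. These let us eliminate $c_1,c_2$ in terms of $\mu_0,\mu_1$. The strict monotonicity argument then excludes every $k\neq1$, including $k=0$ and all $k\ge 2$.
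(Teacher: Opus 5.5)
Your argument is correct and is essentially the stereographic-projection/Funk--Hecke proof of Li et al.; the paper does not reprove the statement but only cites it as \cite[Theorem 1.5]{lixuemei2023nondegeneracy}. There is one harmless slip: by \eqref{important-identity-1}, $\int_{\R^3}\bar{U}_{0,1}^{6-\alpha}(y)|x-y|^{-\alpha}\,dy=3\bar{C}_{\alpha}^{-4}\bar{U}_{0,1}^{\alpha}(x)$, so the local term is $3(5-\alpha)U_{0,1}^{4}v$ rather than $3(5-\alpha)\bar{U}_{0,1}^{4}v$. This changes only the value of $c_1$, which your monotonicity argument never uses since it needs only $c_2>0$ and the $k=1$ solution.
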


\begin{Lem} 
Let $u$ be a solution of \eqref{main choquard equation with varepsilon} and $x_{0}\in\Omega$. Then
    \begin{equation}\label{pohozaev-identiy}
        -\frac{1}{2}\int_{\partial\Omega}(x-x_{0},\nu)|\nabla u|^{2}d\sigma=\int_{\Omega}\left(a+\frac{(x,\nabla a)}{2}\right)u^{2}dx,
    \end{equation}
where $\nu=\nu(x)$ denotes the unit outward normal to the boundary $\partial\Omega$.
\end{Lem}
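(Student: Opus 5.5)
The plan is to derive the Pohozaev identity \eqref{pohozaev-identiy} in the standard way: multiply the equation by $(x-x_0)\cdot\nabla u$ and by $u$, integrate over $\Omega$, and combine the two identities so that the nonlocal term cancels. Since the equation is invariant under translation, we may take $x_0=0$; the term involving $(x,\nabla a)$ then stays as written, and for general $x_0$ one only shifts the boundary weight. Elliptic regularity (using $a\in C^1(\Omega)$, the $C^2$ boundary, and the fact that the Riesz potential of $u^{6-\alpha}$ is bounded and Hölder continuous once $u\in L^\infty$ by a Moser/Brezis--Kato iteration) gives $u\in C^2(\Omega)\cap C^1(\bar\Omega)$. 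This justifies the integrations by parts; otherwise one approximates on subdomains.

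\textbf{Local terms.} For the Laplacian, the classical computation with $u=0$ on $\partial\Omega$ (so $\nabla u=\partial_\nu u\,\nu$) gives
\[
\int_\Omega -\Delta u\,(x\cdot\nabla u)\,dx=-\frac12\int_{\partial\Omega}(x,\nu)|\nabla u|^2d\sigma-\frac12\int_\Omega|\nabla u|^2dx .
\]
For the linear term, integrating by parts,
\[
\int_\Omega a u\,(x\cdot\nabla u)\,dx=-\frac12\int_\Omega (3a+x\cdot\nabla a)u^2dx .
\]

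\textbf{Nonlocal term.} This is the key step. Write $I=\int\!\!\int u^{6-\alpha}(x)u^{6-\alpha}(y)|x-y|^{-\alpha}$. Using $u^{5-\alpha}\nabla u=\frac1{6-\alpha}\nabla u^{6-\alpha}$, symmetrize in $x,y$:
\[
\int_\Omega\!\int_\Omega \frac{u^{6-\alpha}(y)\,x\cdot\nabla_x u^{6-\alpha}(x)}{(6-\alpha)|x-y|^\alpha}.
\]
Integrate by parts in $x$; there is no boundary term because $u=0$ on $\partial\Omega$. The derivative falls on $x|x-y|^{-\alpha}$, producing $3|x-y|^{-\alpha}-\alpha\,x\cdot(x-y)|x-y|^{-\alpha-2}$. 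Averaging with the $x\leftrightarrow y$ swap turns $x\cdot(x-y)$ into $\frac12|x-y|^2$. The nonlocal term therefore equals $-\frac{6-\alpha}{2(6-\alpha)}I=-\frac12 I$.

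\textbf{Combining.} Testing the equation with $u$ gives $\int|\nabla u|^2+au^2=I$. The multiplier identity reads
\[
-\frac12\int_{\partial\Omega}(x,\nu)|\nabla u|^2-\frac12\int|\nabla u|^2-\frac12\int(3a+x\cdot\nabla a)u^2=-\frac12 I .
\]
Substituting $I$ from the previous relation, the gradient terms cancel and the $u^2$ terms combine into $-\frac12\int(2a+x\cdot\nabla a)u^2$. Multiplying through then yields \eqref{pohozaev-identiy}, with the sign convention as written.

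\textbf{Main obstacle.} The main difficulty is the rigorous handling of the singular kernel in the integration by parts. I would handle it by regularizing the kernel to $(|x-y|^2+\delta^2)^{-\alpha/2}$ and passing to the limit by dominated convergence, using HLS and $u\in L^\infty$.
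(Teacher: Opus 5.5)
Your argument is the paper's own: set $x_0=0$, test with $x\cdot\nabla u$, and treat the Laplacian and $a$-terms in the standard way. You then show that the nonlocal term equals $-\frac12\int_\Omega\int_\Omega u^{6-\alpha}(x)u^{6-\alpha}(y)|x-y|^{-\alpha}dx\,dy$, and eliminate $\int_\Omega|\nabla u|^2$ using the equation tested against $u$. The one difference is that the paper cites Squassina's estimate for the nonlocal term, while you derive it by symmetrization. Your regularization of the kernel is the right way to make that step rigorous. For $\alpha\geq 2$ the unsymmetrized integrand $x\cdot(x-y)|x-y|^{-\alpha-2}$ is not absolutely integrable near the diagonal, so the $x\leftrightarrow y$ averaging must be done before letting $\delta\to 0$.

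One side remark of yours is wrong: for general $x_0$ it is not only the boundary weight that shifts.
\begin{itemize}
\item With the multiplier $(x-x_0)\cdot\nabla u$, the interior weight also becomes $(x-x_0,\nabla a)$.
\item Testing the equation with $\partial_i u$ shows the correction is genuinely nonzero. The nonlocal term drops out, because $\nabla_x$ of the (regularized) kernel is odd in $x-y$. What remains is $-\frac12\int_{\partial\Omega}\nu_i|\nabla u|^2d\sigma=\frac12\int_\Omega\partial_i a\,u^2dx$, which need not vanish when $a$ is not constant.
\end{itemize}
So the identity as printed, with $(x-x_0,\nu)$ on the left but $(x,\nabla a)$ on the right, is exact only for $x_0=0$. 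What your proof actually establishes, and likewise the paper's ``without loss of generality'' reduction, is the translated form with $(x-x_0,\nabla a)$ on the right-hand side.
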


\begin{proof}
Without loss of generality, we may suppose that $x_{0}=0$. Multiplying both sides of \eqref{main choquard equation with varepsilon} by $(x,\nabla u)$ and integrating on $\Omega$, we obtain
\begin{equation*}
    -\int_{\Omega}\Delta u(x,\nabla u)dx+\int_{\Omega}au(x,\nabla u)dx=\int_{\Omega}(x,\nabla u)\left(\int_{\Omega}\frac{u^{6-\alpha}(y)}{|x-y|^{\alpha}}dy\right)u^{5-\alpha}dx.
\end{equation*}
By the divergence theorem, we see that
\begin{equation*}
\begin{aligned}
  -\int_{\Omega}\Delta u(x,\nabla u)dx&=-\frac{1}{2}\int_{\partial\Omega}(x,\nu)|\nabla u|^{2}d\sigma-\frac{1}{2}\int_{\Omega}|\nabla u|^{2}  \\
  &=-\frac{1}{2}\int_{\partial\Omega}(x,\nu)|\nabla u|^{2}d\sigma-\frac{1}{2}\int_{\Omega}\int_{\Omega}\frac{u^{6-\alpha}(y)u^{6-\alpha}(x)}{|x-y|^{\alpha}}dxdy+\frac{1}{2}\int_{\Omega}au^{2}dx
\end{aligned}   
\end{equation*}
and
\begin{equation*}
    \begin{aligned}
        \int_{\Omega}au(x,\nabla u)dx=-\frac{3}{2}\int_{\Omega}au^{2}dx-\frac{1}{2}\int_{\Omega}(x,\nabla a)u^{2}dx.
    \end{aligned}
\end{equation*}
On the other hand, similar to the estimate (2.5) in \cite{Squassina}, we obtain
\begin{equation*}
   \int_{\Omega}(x,\nabla u)\left(\int_{\Omega}\frac{u^{6-\alpha}(y)}{|x-y|^{\alpha}}dy\right)u^{5-\alpha}dx=-\frac{1}{2} \int_{\Omega}\int_{\Omega}\frac{u^{6-\alpha}(y)u^{6-\alpha}(x)}{|x-y|^{\alpha}}dxdy.
\end{equation*}
Combining the above estimates, we conclude that \eqref{pohozaev-identiy} holds.
\end{proof}

We now present further properties of the Green's function, as given in the appendix of \cite{Rey1990TheRO,Frank2021BlowupOS}.
\begin{Lem}\label{lem-H-0}
Let $x\in\Omega$ and $d_{x}:=\text{dist}(x,\partial\Omega)$. We have
\begin{equation*}
    |H_{0}(x,y)|\lesssim d_{x}^{-1},\quad |\nabla_{y}H_{0}(x,y)|\lesssim d_{x}^{-2},\quad\forall y\in\Omega.
\end{equation*}
Moreover, as $d_{x}\to 0$, 
\begin{equation*}
\phi_{0}(x)=-\frac{1}{8\pi d_{x}}(1+O(d_{x}))    
\end{equation*}
and 
\begin{equation*}
    \nabla \phi_{0}(x)=-\frac{1}{8\pi d_{x}^{2}}\frac{x^{\prime}-x}{d_{x}}+O(d_{x}^{-1}),
    \end{equation*}
where $x'\in\partial\Omega$ is the unique point satisfying $d(x,\partial\Omega)=|x-x'|$.
\end{Lem}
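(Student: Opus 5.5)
This lemma concerns only the Green's function of $-\Delta$ on $\Omega$, so I would treat it as a statement about harmonic functions and prove it with the maximum principle, reflection across the boundary and elliptic estimates. The plan below is the standard argument behind \cite{Rey1990TheRO,Frank2021BlowupOS}.

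\emph{Step 1: global bounds.} For fixed $x$, the function $H_0(x,\cdot)$ is harmonic in $\Omega$ and equals $-\frac{1}{4\pi|x-y|}$ on $\partial\Omega$. On $\partial\Omega$ we have $|x-y|\ge d_x$, so the maximum principle gives
\begin{equation*}
-\frac{1}{4\pi d_x}\le H_0(x,y)\le 0 .
\end{equation*}
For the gradient I would use interior estimates for harmonic functions.
\begin{enumerate}
\item On $B_{d_x/2}(x)$ they give $|\nabla_y H_0|\lesssim d_x^{-2}$ from the sup bound.
\item For $y$ away from $x$, write $H_0(x,y)=G_0(x,y)-\frac{1}{4\pi|x-y|}$. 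The second term has gradient at most $|x-y|^{-2}\lesssim d_x^{-2}$ when $|x-y|\ge d_x/2$.
\item For $G_0(x,\cdot)$, which vanishes on $\partial\Omega$, I would use the standard estimate $|\nabla_y G_0(x,y)|\lesssim |x-y|^{-2}$, valid for $C^2$ domains via boundary Schauder estimates after rescaling. This bounds it by $d_x^{-2}$ on that region.
\end{enumerate}

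\emph{Step 2: expansion of $\phi_0$ as $d_x\to0$.} Flatten the boundary near $x'$ with a $C^2$ chart, and let $x^*=2x'-x$ be the reflected point. In the half-space the regular part is exactly $-\frac{1}{4\pi|y-x^*|}$. So I would set
\begin{equation*}
R(y):=H_0(x,y)+\frac{1}{4\pi|y-x^*|}.
\end{equation*}
This $R$ is harmonic in $\Omega$ (since $x^*\notin\Omega$ for small $d_x$), and on $\partial\Omega$ it equals $\frac{1}{4\pi}\bigl(|y-x^*|^{-1}-|y-x|^{-1}\bigr)$. Using that $\partial\Omega$ is $C^2$, i.e. it deviates from the tangent plane at $x'$ by $O(|y-x'|^2)$, this boundary difference is $O(1)$ uniformly in $y$: it is bounded by $\frac{d_x|y-x'|^2}{|y-x|^3}$ for nearby $y$ and is trivially bounded for far $y$. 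By the maximum principle $|R|\lesssim 1$ on $\Omega$. Evaluating at $y=x$, where $|x-x^*|=2d_x$, gives
\begin{equation*}
\phi_0(x)=-\frac{1}{8\pi d_x}+O(1).
\end{equation*}

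\emph{Step 3: gradient of $\phi_0$ (main obstacle).} By symmetry of $H_0$, $\nabla\phi_0(x)=2\nabla_y H_0(x,y)|_{y=x}$. Hence
\begin{equation*}
\nabla\phi_0(x)=2\nabla_y\Bigl(-\frac{1}{4\pi|y-x^*|}\Bigr)\Big|_{y=x}+2\nabla R(x).
\end{equation*}
The first term equals $\frac{1}{2\pi}\frac{x-x^*}{|x-x^*|^3}=\frac{1}{2\pi}\frac{2(x-x')}{8d_x^3}$. Since $x-x'=-(x'-x)$, this is $-\frac{1}{8\pi d_x^2}\frac{x'-x}{d_x}$. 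The delicate part is showing $|\nabla R(x)|=O(d_x^{-1})$. An interior gradient estimate on $B_{d_x/2}(x)$ with $|R|\lesssim1$ only yields $O(d_x^{-1})$, which is exactly what is needed. So the bound from Step 2 suffices, and the key technical point is the uniform $O(1)$ bound on the boundary data of $R$. I would verify that bound carefully in the local chart, splitting $\partial\Omega$ into $|y-x'|\le\delta$ and $|y-x'|>\delta$.
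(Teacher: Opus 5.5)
Your proof is correct. The maximum-principle bounds, the comparison function $R(y)=H_0(x,y)+\frac{1}{4\pi|y-x^*|}$ with uniformly bounded boundary data (via $|y-x|\sim|y-x^*|\sim d_x+|y-x'|$ on $\partial\Omega$ near $x'$), and the symmetry identity $\nabla\phi_0(x)=2\nabla_yH_0(x,y)|_{y=x}$ together give all four estimates; the symmetry identity lets you differentiate only in $y$, never the reflection map $x\mapsto x^*$.

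The paper gives no proof of its own and simply imports the lemma from \cite{Rey1990TheRO,Frank2021BlowupOS}, and your reflection argument is the standard way these estimates are established there.
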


\begin{Lem}\label{lem-H-a}
Let $x\in\Omega$ and $d_{x}:=\text{dist}(x,\partial\Omega)$. We have
\begin{equation*}
    |H_{a}(x,y)|\lesssim d_{x}^{-1},\quad\forall y\in\Omega, 
\end{equation*}
and, for $0<\mu<1$
    \begin{equation*}
        H_{a}(x,y)=\phi_{a}(x)+\frac{1}{2}\nabla\phi_{a}\cdot(y-x)+\frac{1}{8\pi}a(x)|y-x|+O(|y-x|^{1+\mu})\text{~as~}y\to x.
    \end{equation*}
Moreover, if $\tilde{a}\geq a$ and $\tilde{a}\not\equiv a$, then
\begin{equation*}
    \phi_{a}(x)>\phi_{\tilde{a}}(x),\text{~for any~}x\in\Omega.
\end{equation*}    
\end{Lem}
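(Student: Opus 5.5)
This lemma is quoted from the appendices of \cite{Rey1990TheRO,Frank2021BlowupOS}, and I would reconstruct its proof in three steps. First the uniform bound on $H_a$, then the local expansion near the diagonal, then the strict monotonicity in $a$. Throughout I use the decomposition
\[
H_a(x,y)=H_0(x,y)+\bigl(G_a(x,y)-G_0(x,y)\bigr).
\]
The difference $D(x,\cdot):=G_a(x,\cdot)-G_0(x,\cdot)$ solves
\[
-\Delta_y D + a\,D = -a\,G_0(x,\cdot)\quad\text{in }\Omega,\qquad D=0\ \text{on }\partial\Omega .
\]
The source term is $O(|x-y|^{-1})$, so it lies in $L^p$ for every $p<3$.

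For the uniform bound, coercivity of $-\Delta+a$ and elliptic $W^{2,p}$ regularity for some $p\in(3/2,3)$ show that $D(x,\cdot)$ is bounded in $L^\infty$, uniformly in $x$. Here I use that $\|G_0(x,\cdot)\|_{L^p}$ is uniformly bounded, since $0\le G_0\le (4\pi|x-y|)^{-1}$. Combined with $|H_0(x,y)|\lesssim d_x^{-1}$ from Lemma \ref{lem-H-0}, this gives $|H_a(x,y)|\lesssim d_x^{-1}+C\lesssim d_x^{-1}$, because $\Omega$ is bounded.

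For the expansion, I isolate the singular part of $D$. Set
\[
R(x,y):=H_a(x,y)-\frac{a(x)}{8\pi}|x-y|.
\]
Since $\Delta_y|x-y| = 2/|x-y|$ in $\R^3$, the function $\frac{a(x)}{8\pi}|x-y|$ satisfies $-\Delta_y\bigl(\frac{a(x)}{8\pi}|x-y|\bigr)=-\frac{a(x)}{4\pi|x-y|}$. Using the equation \eqref{Ha-pde} (with the roles of the variables exchanged by symmetry), this gives
\[
-\Delta_y R = -a(y)G_a(x,y)+\frac{a(x)}{4\pi|x-y|} = -\frac{a(y)-a(x)}{4\pi|x-y|} - a(y)H_a(x,y).
\]
Because $a\in C^1(\Omega)$, the right-hand side is locally bounded near $x$. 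Hence $R(x,\cdot)\in W^{2,p}_{loc}$ for all $p<\infty$, and therefore $R(x,\cdot)\in C^{1,\mu}$ near $x$. Taylor expansion then gives
\[
R(x,y)=R(x,x)+\nabla_yR(x,x)\cdot(y-x)+O(|y-x|^{1+\mu}),\qquad R(x,x)=\phi_a(x).
\]
It remains to identify the gradient, $\nabla_y H_a(x,y)|_{y=x}=\tfrac12\nabla\phi_a(x)$, which follows from the symmetry of $H_a$. Differentiating $\phi_a(x)=H_a(x,x)$ gives $\nabla\phi_a=(\nabla_1+\nabla_2)H_a|_{y=x}$, and symmetry makes the two partial gradients equal. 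This step needs care, because $H_a$ is only $C^{1,\mu}$ off the non-smooth term $|x-y|$. However, $|x-y|$ is symmetric and its contributions to the two one-sided derivatives cancel in the symmetric sum. I expect this to be the main technical point: one must justify that $R$ is jointly $C^1$ near the diagonal. I would do this with interior Schauder estimates applied to $R$ in both variables, using the uniform bounds on the right-hand side.

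For monotonicity, let $\tilde a\ge a$ with $\tilde a\not\equiv a$, and set $w:=G_a(x,\cdot)-G_{\tilde a}(x,\cdot)=H_a(x,\cdot)-H_{\tilde a}(x,\cdot)$. The singular parts cancel, and $w$ solves
\[
-\Delta w+\tilde a\,w=(\tilde a-a)G_a(x,\cdot)\ge0,\ \not\equiv0,\qquad w=0\ \text{on }\partial\Omega .
\]
The right-hand side is nonnegative and not identically zero because $G_a>0$. Coercivity of $-\Delta+\tilde a$, which holds since $\tilde a\ge a$, gives the weak maximum principle, so $w\ge0$. The strong maximum principle, applied to $-\Delta w+\tilde a^+w\ge \tilde a^- w\ge0$, then gives $w>0$ in $\Omega$. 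The function $w$ is continuous at $y=x$, so evaluating there yields $\phi_a(x)-\phi_{\tilde a}(x)=w(x)>0$.
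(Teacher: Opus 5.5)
The paper gives no proof of this lemma. It quotes it from the appendices of \cite{Rey1990TheRO,Frank2021BlowupOS}, so there is no internal argument to compare with. Your reconstruction follows the standard route: a uniform $W^{2,p}$ bound on $G_a-G_0$, subtracting $\frac{a(x)}{8\pi}|x-y|$ to get local $C^{1,\mu}$ regularity, symmetry for the gradient, and comparison for monotonicity. It is correct in substance, but two steps need a different justification from the one you give.

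\textbf{The step you flag.} Schauder estimates for $R$ in the \emph{first} variable are not available. As a function of $x$, $R(x,y)=H_a(x,y)-\frac{a(x)}{8\pi}|x-y|$ solves an equation containing $\frac{1}{8\pi}|x-y|\,\Delta a$, which is not a function when $a$ is only $C^1$.

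Use symmetry instead: $R(x,y)=R(y,x)+\frac{a(y)-a(x)}{8\pi}|x-y|$. The last term is $C^1$ in both variables with zero gradient on the diagonal, so all regularity can be taken from the second variable.

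What you then need is that $x'\mapsto R(x',\cdot)$ is continuous into $C^1(B)$ on a small ball $B$ around $x$. This follows from three facts:
\begin{itemize}
\item your uniform $L^\infty$ bound on $-\Delta_yR(x',\cdot)$, which gives uniform $C^{1,\mu}(B)$ bounds;
\item joint continuity of $H_a$, obtained by applying your Step 1 estimate to $D(x',\cdot)-D(x,\cdot)$, whose source $-a\,(G_0(x',\cdot)-G_0(x,\cdot))$ tends to $0$ in $L^p$;
\item Arzel\`a--Ascoli.
\end{itemize}
This gives $\phi_a\in C^1$ and $\nabla\phi_a(x)=2\nabla_yR(x,y)|_{y=x}$. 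Note that the quantity to identify is $\nabla_y R$, not $\nabla_y H_a$: $H_a(x,\cdot)$ is not differentiable at $x$ when $a(x)\neq0$.

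\textbf{The monotonicity step.} When $\tilde a(x)\neq a(x)$, the source behaves like $(\tilde a-a)(x)/(4\pi|x-y|)$ near the pole, so $w$ is not in $W^{2,3}_{\mathrm{loc}}$ there. The strong maximum principle for strong solutions therefore only gives $w>0$ on $\Omega\setminus\{x\}$. Yet $w(x)=\phi_a(x)-\phi_{\tilde a}(x)$ is precisely the value you need.

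There are two ways to close this:
\begin{itemize}
\item Invoke the strong maximum principle for $H^1$ weak supersolutions (Gilbarg--Trudinger, Theorem 8.19, or the weak Harnack inequality). This covers the point $x$, since $w\in H^1_0$ is continuous.
\item Bypass it with the representation $w(y)=\int_\Omega G_{\tilde a}(y,z)\,(\tilde a-a)(z)\,G_a(z,x)\,dz$. This is strictly positive for every $y\in\Omega$, including $y=x$, because both Green's functions are positive and the continuous function $\tilde a-a\ge0$ is positive on an open set.
\end{itemize}
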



\begin{Lem}\label{lemma PU}
Let $\xi\in\Omega$, $d_{\xi}:=\text{dist}(\xi,\partial\Omega)$ and $\varphi_{\xi,\lambda}:=U_{\xi,\lambda}-PU_{\xi,\lambda}$. We have
    \begin{equation*}
        \quad\| \varphi_{\xi,\lambda}\|_{L^{6}(\Omega)} \lesssim \lambda^{-\frac{1}{2}} d_{\xi}^{-\frac{1}{2}},\quad \|\partial_{\lambda}\varphi_{\xi,\lambda}\|_{L^{6}(\Omega)} \lesssim \lambda^{-\frac{3}{2}} d_{\xi}^{-\frac{1}{2}},\quad \|\partial_{\xi_{i}}\varphi_{\xi,\lambda}\|_{L^{6}(\Omega)} \lesssim \lambda^{-\frac{1}{2}} d_{\xi}^{-\frac{1}{2}},
    \end{equation*}
    and
    \begin{equation*}
        \| \varphi_{\xi,\lambda}\|_{L^{\infty}(\Omega)}\lesssim \lambda^{-\frac{1}{2}}d_{\xi}^{-1},\quad  \|\partial_{\lambda} \varphi_{\xi,\lambda}\|_{L^{\infty}(\Omega)}\lesssim \lambda^{-\frac{3}{2}}d_{\xi}^{-1},\quad  \|\partial_{\xi_{i}} \varphi_{\xi,\lambda}\|_{L^{\infty}(\Omega)}\lesssim \lambda^{-\frac{1}{2}}d_{\xi}^{-2}.
    \end{equation*}
    Moreover,  $0\leq \varphi_{\xi,\lambda}\leq U_{\xi,\lambda}$ and
    \begin{equation*}
        \varphi_{\xi,\lambda}=-4\pi\lambda^{-\frac{1}{2}}H_{0}(\xi,\cdot)+f_{\xi,\lambda},
    \end{equation*}
    where
    \begin{equation*}
        \|f_{\xi,\lambda}\|_{L^{\infty}(\Omega)} \lesssim \lambda^{-\frac{5}{2}} d_{\xi}^{-3},\quad\|\partial_{\lambda}f_{\xi,\lambda}\|_{L^{\infty}(\Omega)} \lesssim \lambda^{-\frac{7}{2}} d_{\xi}^{-3},\quad\|\partial_{\xi_{i}}f_{\xi,\lambda}\|_{L^{\infty}(\Omega)} \lesssim \lambda^{-\frac{5}{2}} d_{\xi}^{-4}.
    \end{equation*}
\end{Lem}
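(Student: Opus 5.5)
The statement to prove is Lemma \ref{lemma PU}: estimates for $\varphi_{\xi,\lambda}=U_{\xi,\lambda}-PU_{\xi,\lambda}$. The plan is to use that $\varphi_{\xi,\lambda}$ is harmonic in $\Omega$ with boundary values $U_{\xi,\lambda}|_{\partial\Omega}$, and to compare it with the harmonic function $h:=4\pi\lambda^{-1/2}\bigl(-H_{0}(\xi,\cdot)\bigr)$. By \eqref{Ha-pde} with $a=0$, $h$ has boundary values $\lambda^{-1/2}|x-\xi|^{-1}$.

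\textbf{Sign bounds.} Since $U_{\xi,\lambda}>0$ on $\partial\Omega$, the maximum principle gives $\varphi_{\xi,\lambda}\ge0$. Since $U_{\xi,\lambda}-\varphi_{\xi,\lambda}=PU_{\xi,\lambda}$ satisfies $-\Delta PU_{\xi,\lambda}=3U_{\xi,\lambda}^5\ge0$ with zero boundary data, we also get $\varphi_{\xi,\lambda}\le U_{\xi,\lambda}$.

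\textbf{Expansion and $f_{\xi,\lambda}$ bounds.} The difference $f_{\xi,\lambda}=\varphi_{\xi,\lambda}+4\pi\lambda^{-1/2}H_0(\xi,\cdot)$ is harmonic. On $\partial\Omega$ it equals
\[
U_{\xi,\lambda}-\lambda^{-1/2}|x-\xi|^{-1}=\lambda^{-1/2}|x-\xi|^{-1}\Bigl[(1+\lambda^{-2}|x-\xi|^{-2})^{-1/2}-1\Bigr],
\]
which is $O(\lambda^{-5/2}d_\xi^{-3})$ because $|x-\xi|\ge d_\xi$ there. The maximum principle then gives $\|f_{\xi,\lambda}\|_\infty\lesssim\lambda^{-5/2}d_\xi^{-3}$.

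For the derivatives, note that $\partial_\lambda f_{\xi,\lambda}$ and $\partial_{\xi_i}f_{\xi,\lambda}$ are again harmonic. Their boundary values are $\partial_\lambda$ and $\partial_{\xi_i}$ of the explicit bracket above, which are $O(\lambda^{-7/2}d_\xi^{-3})$ and $O(\lambda^{-5/2}d_\xi^{-4})$ respectively. Here one uses that $H_0$ is $\lambda$-independent for the $\lambda$-derivative, and that the $\xi$-derivative costs one extra factor $|x-\xi|^{-1}\le d_\xi^{-1}$.

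\textbf{$L^\infty$ bounds for $\varphi_{\xi,\lambda}$.} These follow the same way, directly from its boundary values. On $\partial\Omega$,
\[
|U_{\xi,\lambda}|\le\lambda^{-1/2}d_\xi^{-1},\qquad |\partial_\lambda U_{\xi,\lambda}|\lesssim\lambda^{-3/2}d_\xi^{-1},\qquad |\nabla_\xi U_{\xi,\lambda}|\lesssim\lambda^{-1/2}d_\xi^{-2}.
\]
Each derivative is harmonic in $x$, so the maximum principle transfers these bounds to all of $\Omega$. (If one prefers, the first bound also follows from Lemma \ref{lem-H-0} combined with the $f$-bound; the direct argument is simpler.)

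\textbf{$L^6$ bounds.} This is the only step needing care, since the sup bound times $|\Omega|^{1/6}$ loses a power of $d_\xi$. Instead, use $0\le \varphi_{\xi,\lambda}\le h+|f_{\xi,\lambda}|$ and the pointwise bound $|H_0(\xi,y)|\lesssim |y-\xi'|^{-1}$, where $\xi'$ is the reflection of $\xi$ across the boundary. I would try first to justify this bound by comparison with the half-space Green's function after flattening the $C^2$ boundary, as in Rey's appendix. Given it,
\[
\|H_0(\xi,\cdot)\|_{L^6}^6\lesssim\int_{|y-\xi'|\ge d_\xi}|y-\xi'|^{-6}\,dy\sim d_\xi^{-3},
\]
so $\|H_0(\xi,\cdot)\|_{L^6}\lesssim d_\xi^{-1/2}$ and hence $\|\varphi_{\xi,\lambda}\|_{L^6}\lesssim\lambda^{-1/2}d_\xi^{-1/2}$.

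The $\partial_\lambda$ bound is analogous: $\partial_\lambda\varphi_{\xi,\lambda}$ is harmonic with boundary values $\lambda^{-1}\times$ those of $\varphi_{\xi,\lambda}$ up to a constant factor. Its $L^6$ norm is therefore controlled by $\lambda^{-1}\|h\|_{L^6}$ plus the $f$-term, giving $\lambda^{-3/2}d_\xi^{-1/2}$.

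For $\partial_{\xi_i}\varphi_{\xi,\lambda}$, do not expand via $\nabla_\xi H_0$, whose $L^6$ norm scales like $d_\xi^{-3/2}$ and would lose a power of $d_\xi$. Instead, use the $L^6$–boundary estimate for harmonic functions by duality: for harmonic $w$ with boundary data $g$,
\[
\|w\|_{L^6(\Omega)}\lesssim\|g\|_{L^{4}(\partial\Omega)},
\]
a standard trace-type estimate. With $g=\partial_{\xi_i}U_{\xi,\lambda}|_{\partial\Omega}$, which is bounded by $\lambda^{-1/2}|x-\xi|^{-2}$, we get
\[
\int_{\partial\Omega}|x-\xi|^{-8}\,d\sigma\lesssim d_\xi^{-6},
\]
so $\|g\|_{L^4(\partial\Omega)}\lesssim\lambda^{-1/2}d_\xi^{-3/2}$. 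This gives $\lambda^{-1/2}d_\xi^{-3/2}$, which is worse than the stated $\lambda^{-1/2}d_\xi^{-1/2}$. I therefore regard this bound as the main obstacle. The same trace approach applied to $\varphi_{\xi,\lambda}$ itself gives $\int_{\partial\Omega}|x-\xi|^{-4}\,d\sigma\lesssim d_\xi^{-2}$, hence the consistent rate $\lambda^{-1/2}d_\xi^{-1/2}$. So for the $\xi$-derivative the stated exponent is what one must match, and in the regime of use ($d_\xi$ bounded below) the bound is immediate. I would consult the cited appendix for the sharp form, or accept it as quoted from \cite{Rey1990TheRO,Frank2021BlowupOS}, as the paper does.
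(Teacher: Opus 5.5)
The paper does not prove this lemma; it quotes it from Rey's appendix and from Frank--K\"onig--Kova\v{r}\'ik. Your maximum-principle argument is the standard one. The following parts are correct as written: $0\le\varphi_{\xi,\lambda}\le U_{\xi,\lambda}$, the expansion together with the bounds on $f_{\xi,\lambda}$, $\partial_\lambda f_{\xi,\lambda}$ and $\partial_{\xi_i}f_{\xi,\lambda}$, and the three $L^\infty$ bounds. Two points need repair.

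\textbf{$L^6$ bounds for $\varphi_{\xi,\lambda}$ and $\partial_\lambda\varphi_{\xi,\lambda}$.} Bounding $\varphi_{\xi,\lambda}\le h+|f_{\xi,\lambda}|$ and then dropping $f_{\xi,\lambda}$ is not uniform. The only bound you have is $\|f_{\xi,\lambda}\|_{L^6}\lesssim\lambda^{-5/2}d_\xi^{-3}$. This exceeds $\lambda^{-1/2}d_\xi^{-1/2}$ by the factor $(\lambda d_\xi)^{-2}d_\xi^{-1/2}$, which is unbounded for example when $d_\xi=\lambda^{-9/10}$, even though $\lambda d_\xi\to\infty$.

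The fix costs nothing. The boundary data of $f_{\xi,\lambda}$ that you computed are $\le 0$, so $f_{\xi,\lambda}\le 0$ and hence $0\le\varphi_{\xi,\lambda}\le h$. Moreover, $0\le -4\pi H_0(\xi,\cdot)\le\min\{|\cdot-\xi|^{-1},d_\xi^{-1}\}$ follows from $G_0\ge 0$ and the maximum principle, so you need no reflection point and no boundary flattening. Thus $\varphi_{\xi,\lambda}\le\lambda^{-1/2}\min\{|x-\xi|^{-1},d_\xi^{-1}\}$, whose $L^6$ norm is $\lesssim\lambda^{-1/2}d_\xi^{-1/2}$.

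For $\partial_\lambda$, the boundary data are not a constant multiple of those of $\varphi_{\xi,\lambda}$. However, $|\partial_\lambda U_{\xi,\lambda}|=\frac{|1-\lambda^2|x-\xi|^2|}{2\lambda(1+\lambda^2|x-\xi|^2)}U_{\xi,\lambda}\le\frac{1}{2\lambda}U_{\xi,\lambda}$, so comparison gives $|\partial_\lambda\varphi_{\xi,\lambda}|\le\frac{1}{2\lambda}\varphi_{\xi,\lambda}$ pointwise. Your $L^4(\partial\Omega)\to L^6(\Omega)$ trace estimate also gives these bounds uniformly, but it is a heavier tool.

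\textbf{$L^6$ bound for $\partial_{\xi_i}\varphi_{\xi,\lambda}$.} Here the obstacle lies in the statement, not in your argument. Your rate $\lambda^{-1/2}d_\xi^{-3/2}$ is sharp, and the claimed $\lambda^{-1/2}d_\xi^{-1/2}$ fails as $d_\xi\to 0$.

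To see this, take a $C^2$ domain $\Omega\subset\{x_3>0\}$ whose boundary contains the disc $\{x_3=0,\ |x'|\le 1\}$, and let $\xi=(0,0,d)$ with $\lambda d\ge 1$ and $d$ small. On the disc one has exactly $U_{\xi,\lambda}(x)=\lambda^{-1/2}|x-\tilde\xi|^{-1}$, where $\tilde\xi=(\xi_1,\xi_2,-(\xi_3^2+\lambda^{-2})^{1/2})$. Hence $\varphi_{\xi,\lambda}-\lambda^{-1/2}|x-\tilde\xi|^{-1}$ is harmonic, vanishes on the disc for all $\xi$, and has $\xi_3$-derivative $O(\lambda^{-1/2})$ on the rest of $\partial\Omega$. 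By the maximum principle, $\partial_{\xi_3}\varphi_{\xi,\lambda}=\partial_{\xi_3}(\lambda^{-1/2}|x-\tilde\xi|^{-1})+O(\lambda^{-1/2})$. The main term has size $\gtrsim\lambda^{-1/2}d^{-2}$ on $B_{d/2}(\xi)$, so $\|\partial_{\xi_3}\varphi_{\xi,\lambda}\|_{L^6}\gtrsim\lambda^{-1/2}d^{-3/2}$.

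The correct bound has a one-line proof in your framework. On $\partial\Omega$, $|\partial_{\xi_i}U_{\xi,\lambda}|\le\lambda^{-1/2}|x-\xi|^{-2}\le d_\xi^{-1}\lambda^{-1/2}|x-\xi|^{-1}$. Comparison then gives $|\partial_{\xi_i}\varphi_{\xi,\lambda}|\le d_\xi^{-1}h\le\lambda^{-1/2}d_\xi^{-1}\min\{|x-\xi|^{-1},d_\xi^{-1}\}$. This yields both the stated $L^\infty$ bound $\lambda^{-1/2}d_\xi^{-2}$ and $\|\partial_{\xi_i}\varphi_{\xi,\lambda}\|_{L^6}\lesssim\lambda^{-1/2}d_\xi^{-3/2}$.

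So rather than accepting the quoted exponent, you should prove and record the corrected one. It agrees with the stated bound whenever $d_\xi$ is bounded below. That is the situation in Section 4, where $\xi_\varepsilon\to\xi_0\in\Omega$ and where the paper actually uses the $\xi$-derivative estimates.
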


\begin{Lem}\label{lemma-g}
We define the function
\begin{equation*}
    g_{\xi,\lambda}(x):=\frac{\lambda^{-\frac{1}{2}}}{|\xi-x|}-U_{\xi,\lambda}(x).
\end{equation*}
As $\lambda\to\infty$, we have
\begin{equation*}
    \|g_{\xi,\lambda}\|_{L^{p}(\R^{3})}\lesssim\lambda^{\frac{1}{2}-\frac{3}{p}},\quad \|\partial_{\lambda}g_{\xi,\lambda}\|_{L^{p}(\R^{3})}\lesssim\lambda^{-\frac{1}{2}-\frac{3}{p}}
\end{equation*}
for all $1\leq p<3$. Moreover, $\nabla g_{\xi,\lambda}\in L^{p}(\R^{3})$ for all $1\leq p<\frac{3}{2}$.
\end{Lem}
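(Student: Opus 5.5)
The plan is to reduce everything to a single profile by scaling. Since $U_{\xi,\lambda}(x)=\lambda^{1/2}U_{0,1}(\lambda(x-\xi))$ and $\lambda^{-1/2}|x-\xi|^{-1}=\lambda^{1/2}\,|\lambda(x-\xi)|^{-1}$, we have
\begin{equation*}
g_{\xi,\lambda}(x)=\lambda^{\frac12}\,g(\lambda(x-\xi)),\qquad g(y):=\frac{1}{|y|}-\frac{1}{(1+|y|^2)^{1/2}}.
\end{equation*}
The change of variables $z=\lambda(x-\xi)$ then gives $\|g_{\xi,\lambda}\|_{L^p(\R^3)}=\lambda^{\frac12-\frac3p}\|g\|_{L^p(\R^3)}$. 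So the first bound reduces to showing $g\in L^p(\R^3)$.

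\textbf{Integrability of the profile $g$.} Near the origin, $g(y)=|y|^{-1}+O(1)$, which is in $L^p_{loc}$ exactly when $p<3$. For large $|y|$ we expand $(1+|y|^2)^{-1/2}=|y|^{-1}\bigl(1-\tfrac12|y|^{-2}+O(|y|^{-4})\bigr)$, so $0<g(y)=\tfrac12|y|^{-3}+O(|y|^{-5})$. This is integrable at infinity in $L^p$ for every $p>1$.

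At $p=1$ the tail $\int_{|y|>1}|y|^{-3}dy$ diverges logarithmically. So on all of $\R^3$ the argument gives the stated bound only for $1<p<3$. For $p=1$ I would restrict to a bounded set such as $\Omega$, which is where the lemma is used. There, integrating over $|z|\le\lambda\,\mathrm{diam}\,\Omega$ gives the bound $\lambda^{-5/2}\log\lambda$, a harmless logarithmic loss. I expect this endpoint bookkeeping to be the only delicate point.

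\textbf{The $\lambda$-derivative.} Differentiating $\lambda^{1/2}g(\lambda(x-\xi))$ in $\lambda$ gives
\begin{equation*}
\partial_\lambda g_{\xi,\lambda}(x)=\lambda^{-\frac12}\,h(\lambda(x-\xi)),\qquad h(z):=\tfrac12 g(z)+z\cdot\nabla g(z).
\end{equation*}
The same change of variables yields $\|\partial_\lambda g_{\xi,\lambda}\|_{L^p}=\lambda^{-\frac12-\frac3p}\|h\|_{L^p}$. From the explicit formula, $z\cdot\nabla g(z)=-|z|^{-1}+|z|^2(1+|z|^2)^{-3/2}$. Hence $h=O(|z|^{-1})$ near $0$ and $h=O(|z|^{-3})$ at infinity, which is the same integrability range as for $g$, including the same caveat at $p=1$.

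\textbf{The gradient.} We have $\nabla g(y)=-y|y|^{-3}+y(1+|y|^2)^{-3/2}$. This is $O(|y|^{-2})$ near $0$, which lies in $L^p_{loc}$ iff $p<\tfrac32$. At infinity the leading terms cancel and $\nabla g=O(|y|^{-4})$, which is in $L^p$ for all $p\ge1$. Scaling then gives $\nabla g_{\xi,\lambda}\in L^p(\R^3)$ for $1\le p<\tfrac32$.
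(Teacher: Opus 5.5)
Your scaling reduction to the single profile $g(y)=|y|^{-1}-(1+|y|^2)^{-1/2}$ is correct and is the standard argument. The paper itself gives no proof; it only quotes the lemma from the appendices of Rey and Frank--K\"onig--Kova\v{r}\'{\i}k.

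Your caveat at $p=1$ corrects the statement rather than exposing a gap in your proof. Since $g>0$ with $g(y)=\tfrac12|y|^{-3}+O(|y|^{-5})$ and $h(z)=-\tfrac54|z|^{-3}+O(|z|^{-5})$, neither $g_{\xi,\lambda}$ nor $\partial_\lambda g_{\xi,\lambda}$ lies in $L^1(\R^3)$. On a bounded domain the true orders are $\lambda^{-5/2}\log\lambda$ and $\lambda^{-7/2}\log\lambda$. This does not affect the paper's explicit use of the lemma in the proof of Proposition \ref{prop-energy-expansion}, which only needs $\|\lambda^{1/2}g_{\xi,\lambda}\|_{L^p}\lesssim\lambda^{1-3/p}\to0$ for some $p\in(1,3)$, e.g.\ $p=2$.
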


Finally, we recall some elementary inequalities from \cite{Iacopetti-2016-CCM}.
\begin{Lem}\label{lem inequality 1}
Let $\alpha$ be a positive real number. If $\alpha \leq 1$, there holds, for all $x, y >0$,
\begin{equation*}
    (x+y)^\alpha \leq x^\alpha + y^\alpha.
\end{equation*}
If $\alpha \geq 1$, we have, for all $x, y >0$,
\begin{equation*}
    (x+y)^\alpha \leq 2^{\alpha-1} (x^\alpha + y^\alpha).
\end{equation*}
 \end{Lem}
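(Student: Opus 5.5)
The statement to prove is Lemma \ref{lem inequality 1}: for $x,y>0$, $(x+y)^\alpha\le x^\alpha+y^\alpha$ when $\alpha\le1$, and $(x+y)^\alpha\le 2^{\alpha-1}(x^\alpha+y^\alpha)$ when $\alpha\ge1$. Both sides are homogeneous of degree $\alpha$ in $(x,y)$, so I will reduce to a one-variable problem and then use concavity in the first case and convexity in the second.

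For $\alpha\le1$, divide by $(x+y)^\alpha$ and set $t=x/(x+y)\in(0,1)$. The claim becomes $1\le t^\alpha+(1-t)^\alpha$. This holds because $t^\alpha\ge t$ and $(1-t)^\alpha\ge 1-t$ for $t\in(0,1)$ and $0<\alpha\le1$. Adding these two inequalities gives $t^\alpha+(1-t)^\alpha\ge1$.

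For $\alpha\ge1$, the function $s\mapsto s^\alpha$ is convex on $(0,\infty)$. Jensen's inequality with equal weights gives
\begin{equation*}
\left(\frac{x+y}{2}\right)^\alpha\le\frac{x^\alpha+y^\alpha}{2}.
\end{equation*}
Multiplying by $2^\alpha$ yields $(x+y)^\alpha\le2^{\alpha-1}(x^\alpha+y^\alpha)$.

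Neither step is a real obstacle. The only point needing care is the elementary inequality $t^\alpha\ge t$ on $(0,1)$, which follows from $t^{\alpha-1}\ge1$ when $\alpha\le1$.
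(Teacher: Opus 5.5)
Your proof is correct. The reduction to $t^\alpha+(1-t)^\alpha\ge 1$ via $t^\alpha\ge t$ on $(0,1)$ settles $\alpha\le 1$, and midpoint convexity of $s\mapsto s^\alpha$ settles $\alpha\ge 1$.

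The paper gives no proof of its own; it quotes the lemma from \cite{Iacopetti-2016-CCM} as an elementary fact, so there is nothing to compare against. One small addition: both inequalities extend trivially to $x,y\ge 0$. That is the form the paper actually uses, for example in the proof of Proposition \ref{lemma-1}, where $\|u_0\|_{HL}$ may vanish.
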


\begin{Lem}\label{lem inequality 2}
Let $q$ be a positive real number. There exists a positive constant $c$, depending only on $q$, such that for any $a, b\in \mathbb R$,
\begin{equation*}
||a+b|^q-|a|^q| \leq
\begin{cases}
c(q) \min\{|b|^q, |a|^{q-1}|b|\}, &\ \hbox{if}\ 0<q<1,\\
c(q) (|a|^{q-1}|b|+|b|^q), & \ \hbox{if}\ q\geq1.
\end{cases}
\end{equation*}
Moreover, if $q>2$ then
\begin{equation*}
\left||a+b|^q-|a|^q-q |a|^{q-2}ab\right|\leq c(q)\left(|a|^{q-2}|b|^2+|b|^q\right).
\end{equation*}
\end{Lem}


\section{Proof of Theorem \ref{thm-00}}\label{section-3}

\begin{Prop}\label{prop of SHL and SHLa}
It holds that $0<S_{HL}(a)\leq S_{HL}(0)=S_{HL}.$
\end{Prop}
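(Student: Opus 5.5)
Three things have to be shown: $S_{HL}(a)>0$, $S_{HL}(0)\le S_{HL}$, and $S_{HL}(a)\le S_{HL}(0)$ together with $S_{HL}(0)\ge S_{HL}$, which gives the equality. All of them follow from coercivity, the HLS/Sobolev chain in Remark~\ref{remark of HLS}, and a concentration test with cut-off bubbles.

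\emph{Positivity.} Take $u\in H^1_0(\Omega)$ with $\|u\|_{HL}=1$ and extend it by zero to $\R^3$. Remark~\ref{remark of HLS}(1) gives
\[
1=\|u\|_{HL}^{2}\le C_\alpha^{\frac{1}{6-\alpha}}S^{-1}\int_\Omega|\nabla u|^2,
\]
that is, $\int_\Omega|\nabla u|^2\ge S_{HL}$ by \eqref{definition of SHL}. Coercivity then yields
\[
\int_\Omega|\nabla u|^2+au^2\ge c\int_\Omega|\nabla u|^2\ge cS_{HL}>0 .
\]
Taking the infimum gives $S_{HL}(a)\ge cS_{HL}>0$. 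The same computation with $a\equiv0$, together with the zero extension $H^1_0(\Omega)\subset\dot H^1(\R^3)$, gives $S_{HL}(0)\ge S_{HL}$.

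\emph{Upper bound.} Fix $\xi\in\Omega$, a cut-off $\eta\in C_c^\infty(\Omega)$ with $\eta\equiv1$ near $\xi$, and set $u_\lambda=\eta U_{\xi,\lambda}$. The standard Brezis--Nirenberg computations in dimension three give
\[
\int_\Omega|\nabla u_\lambda|^2=\int_{\R^3}|\nabla U_{\xi,\lambda}|^2+O(\lambda^{-1}),\qquad \int_\Omega u_\lambda^2=O(\lambda^{-1}).
\]
Since $|U_{\xi,\lambda}|\lesssim \lambda^{-1/2}$ away from $\xi$, we also have $\|u_\lambda-U_{\xi,\lambda}\|_{L^6(\R^3)}\to0$.

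For the denominator, write the double integral as the bilinear HLS form evaluated at $(|u_\lambda|^{6-\alpha},|u_\lambda|^{6-\alpha})$. Lemma~\ref{lem inequality 2}, H\"older, and \eqref{HLS} with $\theta=r=\frac{6}{6-\alpha}$ bound the difference from the corresponding form for $U_{\xi,\lambda}$ by $C\|u_\lambda-U_{\xi,\lambda}\|_{L^6}\|U_{\xi,\lambda}\|_{L^6}^{11-2\alpha}+\dots$, so $\|u_\lambda\|_{HL}\to\|U_{\xi,\lambda}\|_{HL,\R^3}$. By scaling invariance the latter is independent of $\lambda$.

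Since $U_{\xi,\lambda}$ (a multiple of $\bar U_{\xi,\lambda}$) is an extremal for \eqref{optimization problem for shl}, the quotient satisfies
\[
\frac{\int_\Omega|\nabla u_\lambda|^2+a u_\lambda^2}{\|u_\lambda\|_{HL}^2}\longrightarrow S_{HL}\quad\text{as }\lambda\to\infty,
\]
using $\|a\|_\infty<\infty$. This gives $S_{HL}(a)\le S_{HL}$; with $a\equiv0$ it gives $S_{HL}(0)\le S_{HL}$, so $S_{HL}(0)=S_{HL}$.

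\emph{Main obstacle.} The only delicate point is the convergence of the nonlocal denominator. It is handled by the HLS continuity estimate above, which shows that $u\mapsto\|u\|_{HL}$ is continuous on $L^6$. Alternatively, dominated convergence for the double integral combined with HLS uniform integrability would do. Everything else is routine.
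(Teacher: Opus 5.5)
Your proof is correct and follows the paper's route. Positivity comes from coercivity plus the HLS--Sobolev bound: you give the direct estimate $S_{HL}(a)\ge cS_{HL}$, where the paper runs the same reasoning by contradiction. The upper bound comes from cut-off bubbles $\eta U_{\xi,\lambda}$ with $\int u_\lambda^2=O(\lambda^{-1})$. The only difference is that you prove $S_{HL}(0)=S_{HL}$ and the convergence of the nonlocal denominator yourself, using HLS continuity on $L^6$ and the tail bound $U_{\xi,\lambda}(x)\le\lambda^{-1/2}|x-\xi|^{-1}$. The paper instead cites Gao--Yang's Lemma 1.3 and their estimates (3.2) and (3.9). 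Use that decay bound, rather than a uniform pointwise bound, to justify $\|u_\lambda-U_{\xi,\lambda}\|_{L^6(\mathbb{R}^3)}\to0$.
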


\begin{proof}
First, by \cite[ Lemma 1.3]{Gao2016TheBT}, we know that $S_{HL}(0)=S_{HL}$ and $S_{HL}(0)$ is never achieved unless $\Omega=\R^{3}$. Furthermore, it follows from \eqref{optimization problem for shl} and \eqref{optimization problem for s} that
$U(x)=\frac{3^{\frac{1}{4}}}{(1+|x|^{2})^{\frac{1}{2}}}$ is a minimizer for both $S_{HL}$ and $S$.
Without loss of generality, we may assume that $0\in\Omega$ and $B_{2\delta}(0)\subset\Omega$ for some $\delta>0$. Let $\psi\in C_{c}^{\infty}(\Omega)$ be a cut-off function such that
\begin{equation*}
    \left\{\begin{array}{l}
\displaystyle \psi(x)=\left\{\begin{array}{l}
\displaystyle 1\qquad \mbox{if}\quad x\in B_{\delta}(0),\\
\displaystyle 0\qquad \mbox{if} \quad x\in \mathbb{R}^3 \setminus B_{2\delta}(0),\\
\end{array}
\right.\\
\displaystyle 0\leq\psi(x)\leq1,~~|\nabla\psi(x)|\leq C, \hspace{2.14mm} \forall x\in \mathbb{R}^3.\\
\end{array}
\right.
\end{equation*}
For $\varepsilon > 0$, we define
\begin{equation*}
    U_{\varepsilon}(x):=\varepsilon^{-\frac{1}{2}}U\left(\frac{x}{\varepsilon}\right)\text{~~and~~}
u_{\varepsilon}(x):=\psi(x)U_{\varepsilon}(x).\\
\end{equation*}
Using estimates (3.2) and (3.9) in \cite{Gao2016TheBT}, we have, as $\varepsilon\rightarrow0$,
\begin{equation}\label{energy estimate proof 1}
\int_{\Omega}|\nabla u_{\varepsilon}|^{2}dx=S^{\frac{3}{2}}+O(\varepsilon)
=C_{\alpha}^{\frac{1}{6-\alpha}\cdot\frac{3}{2}}S_{HL}^{\frac{3}{2}}+O(\varepsilon)
\end{equation}
and
\begin{equation}\label{energy estimate proof 2}
\begin{aligned}
    \left(\int_{\Omega}\int_{\Omega}\frac{u_{\varepsilon}(x)^{6-\alpha}u_{\varepsilon}(y)^{6-\alpha}}
{|x-y|^{\alpha}}dxdy\right)^{\frac{1}{6-\alpha}}\geq\left(C_{\alpha}^{\frac{3}{2}}S_{HL}^{\frac{6-\alpha}{2}}+O(\varepsilon^{\frac{6-\alpha}{2}})\right)^{\frac{1}{6-\alpha}}.
\end{aligned}
\end{equation}
On the other hand, a direct computation yields that
\begin{equation}\label{energy estimate proof 3}
\int_{\Omega}u_{\varepsilon}^{2}dx\leq \int_{B_{2\delta}(0)}U_{\varepsilon}^{2}dx=O(\varepsilon).
\end{equation}
Combining \eqref{energy estimate proof 1}--\eqref{energy estimate proof 3}, we obtain
\begin{equation*}
    \begin{aligned}
    S_{HL}(a)&\leq\frac{C_{\alpha}^{\frac{1}{6-\alpha}\cdot\frac{3}{2}}S_{HL}^{\frac{3}{2}}+O(\varepsilon)}
{\left(C_{\alpha}^{\frac{3}{2}}S_{HL}^{\frac{6-\alpha}{2}}+O(\varepsilon^{\frac{6-\alpha}{2}})\right)^{\frac{1}{6-\alpha}}
}
\to S_{HL},\quad \text{~as~}\varepsilon\to0.
    \end{aligned}
\end{equation*}
Since the operator $-\Delta + a$ is coercive, we have $S_{HL}(a) \geq 0$. We now claim that $S_{HL}(a) > 0$. Otherwise, there exists a sequence $\{u_n\} \subset H_0^1(\Omega)$ such that
\begin{equation*}
    \|u_n\|_{HL} = 1 \quad \text{and} \quad \int_\Omega |\nabla u_n|^2 + a u_n^2 \, dx = o(1).
\end{equation*}
By the coercivity of the operator $-\Delta + a$, it follows that $u_n \to 0$ in $H_0^1(\Omega)$. However, this contradicts $\|u_n\|_{HL} = 1$ due to the HLS inequality. Thus, $S_{HL}(a) > 0$ and the proof is complete.
\end{proof}

\begin{Prop}\label{coro for S-HL-a-delta}
There exists a unique constant $B(a)\in \R$ such that 
\begin{equation*}
S_{HL}(a+\varepsilon)<S_{HL}\text{~ for~} \varepsilon<B(a) \text{~and~} S_{HL}(a+\varepsilon)=S_{HL} \text{~for~}\varepsilon\geq B(a).    
\end{equation*}
\end{Prop}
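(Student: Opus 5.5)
The natural approach is to study the function $f(\varepsilon):=S_{HL}(a+\varepsilon)$, defined for real $\varepsilon$ such that $-\Delta+a+\varepsilon$ is coercive, and show that it is monotone, continuous, equal to $S_{HL}$ for large $\varepsilon$ and below $S_{HL}$ for very negative $\varepsilon$. We then set
\[
B(a):=\inf\{\varepsilon: S_{HL}(a+\varepsilon)=S_{HL}\}.
\]

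\textbf{Monotonicity and upper bound.} Since $\int_\Omega|\nabla u|^2+(a+\varepsilon)u^2\,dx$ is nondecreasing in $\varepsilon$ for each fixed $u$, taking the infimum shows that $f$ is nondecreasing. Proposition \ref{prop of SHL and SHLa}, applied to $a+\varepsilon$, gives $f(\varepsilon)\le S_{HL}$ wherever $-\Delta+a+\varepsilon$ is coercive.

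\textbf{Equality for large $\varepsilon$.} For $\varepsilon\ge\|a\|_\infty$ we have $a+\varepsilon\ge0$. For $u\in H_0^1(\Omega)$, extend $u$ by zero to $\R^3$; by the definition \eqref{optimization problem for shl} of $S_{HL}$,
\[
\int_\Omega|\nabla u|^2+(a+\varepsilon)u^2\,dx\ \ge\ \int_{\R^3}|\nabla u|^2\,dx\ \ge\ S_{HL}\|u\|_{HL}^2 .
\]
Hence $f(\varepsilon)\ge S_{HL}$, so $f(\varepsilon)=S_{HL}$, and the set $\{\varepsilon: f(\varepsilon)=S_{HL}\}$ is nonempty.

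\textbf{Strict inequality for negative $\varepsilon$.} As $\varepsilon$ decreases toward $-\lambda_1(-\Delta+a)$, the boundary of coercivity, take $e_1$ the first eigenfunction normalized so that $\|e_1\|_{HL}=1$. Then
\[
f(\varepsilon)\le \bigl(\lambda_1(-\Delta+a)+\varepsilon\bigr)\int_\Omega e_1^2\,dx \to 0<S_{HL}.
\]
So the infimum is finite; here the admissible range of $\varepsilon$ is $(-\lambda_1(-\Delta+a),\infty)$. With monotonicity, this gives $f<S_{HL}$ for $\varepsilon<B(a)$ and $f=S_{HL}$ for $\varepsilon>B(a)$. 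Uniqueness of $B(a)$ is then immediate from the dichotomy.

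\textbf{The endpoint (main obstacle).} It remains to show $f(B(a))=S_{HL}$, i.e.\ that the set $\{f=S_{HL}\}$ is closed. I would prove that $f$ is continuous, in fact locally Lipschitz. For $u$ with $\|u\|_{HL}=1$ that is near-optimal for $f(\varepsilon_1)$, coercivity of $-\Delta+a+\varepsilon_1$ bounds $\|\nabla u\|_2$ by $C f(\varepsilon_1)\le CS_{HL}$, and Poincaré then bounds $\int_\Omega u^2\,dx$ uniformly. Testing $f(\varepsilon_2)$ with the same $u$ gives
\[
f(\varepsilon_2)\le f(\varepsilon_1)+C|\varepsilon_2-\varepsilon_1|,
\]
and the reverse inequality comes from monotonicity. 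Concretely, if $\varepsilon_n\downarrow B(a)$ then $f(\varepsilon_n)=S_{HL}$, and $f(B(a))\ge f(\varepsilon_n)-C(\varepsilon_n-B(a))\to S_{HL}$. The care needed is only to keep the constant $C$ uniform on a compact subinterval of the coercivity range, which the coercivity bound provides.
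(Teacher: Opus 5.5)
Your proposal is correct and follows essentially the same route as the paper: monotonicity in $\varepsilon$, Lipschitz continuity, $S_{HL}(a+\varepsilon)=S_{HL}$ once $a+\varepsilon\ge 0$, and $S_{HL}(a+\varepsilon)\to 0$ as $\varepsilon\downarrow-\lambda_1(a)$. The differences are minor. The paper gets a global Lipschitz constant from $\int_\Omega u^2\,dx\lesssim\|u\|_{HL}^2$ (H\"older plus reversed HLS) and uses it starting from $S_{HL}(a-\lambda_1(a))=0$, whereas you get a local constant from coercivity and Poincar\'e and test directly with the first eigenfunction; both suffice, and your explicit closedness argument for $S_{HL}(a+B(a))=S_{HL}$ makes precise a step the paper leaves implicit (values $\varepsilon\le-\lambda_1(a)$, outside your admissible range, follow trivially by monotonicity).
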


\begin{proof}
From the definition of $S_{HL}(a+\varepsilon)$, we see that $S_{HL}(a+\varepsilon)$ is monotonically increasing in $\varepsilon$. Moreover, we claim that $S_{HL}(a+\varepsilon)$ is Lipschitz continuous with respect to $\varepsilon$. Indeed, for any $u \in H_0^1(\Omega)$, it follows from the H\"{o}lder inequality and the RHLS inequality that
\begin{equation*}
    \begin{aligned}
      \int_{\Omega}u^{2}(x)dx&\leq \left(\int_{\Omega}|u(x)|^{5-\alpha}dx\right)^{\frac{2}{5-\alpha}}|\Omega|^{\frac{3-\alpha}{5-\alpha}}\\
      &\lesssim \left(\int_{\Omega}\int_{\Omega}|u(x)|^{6-\alpha}|u(y)|^{6-\alpha}|x-y|^{\frac{6}{5-\alpha}}dxdy\right)^{\frac{1}{6-\alpha}}\\
      &\lesssim \left(\int_{\Omega}\int_{\Omega}\frac{|u(x)|^{6-\alpha}|u(y)|^{6-\alpha}}{|x-y|^{\alpha}}dxdy\right)^{\frac{1}{6-\alpha}}.
    \end{aligned}
\end{equation*}
This yields that
\begin{equation}\label{energy estimate proof 14}
    \begin{aligned}
        |S_{HL}(a+\varepsilon)-S_{HL}(a)|\leq C(\Omega,\alpha)|\varepsilon|.
    \end{aligned}
\end{equation}

Suppose $\varepsilon + \min_{x \in \bar{\Omega}} a(x) \geq 0$. Then by Proposition \ref{prop of SHL and SHLa}, we have $S_{HL}(a+\varepsilon) = S_{HL}(0) = S_{HL}$. Let $\lambda_{1}(a) > 0$ be the first eigenvalue of $-\Delta + a$. For $\varepsilon$ satisfying $-\lambda_{1}(a) < \varepsilon < -\lambda_{1}(a) + \delta$ with sufficiently small $\delta > 0$, estimate \eqref{energy estimate proof 14} gives $S_{HL}(a+\varepsilon)\leq C(\Omega,\alpha) \delta < S_{HL}$. This completes the proof.
\end{proof}

In particular, for $a$ being a negative constant, we have the following proposition.
\begin{Prop}\label{prop sharp soblev inequality}
There exists a unique constant $0<\lambda^*(\Omega)<\lambda_{1}$ such that for $\lambda\in\R$ we have
$S_{HL}(-\lambda)=S_{HL}\text{~ for every~}\lambda\leq \lambda^{*} \text{~and~}S_{HL}(-\lambda)<S_{HL}\text{~ for every~} \lambda^{*}<\lambda.$   
\end{Prop}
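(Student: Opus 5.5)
This is the special case $a\equiv 0$ of Proposition \ref{coro for S-HL-a-delta}, after rewriting the shift $\varepsilon$ as $-\lambda$ and using the coercivity range $\lambda<\lambda_1$. Apply Proposition \ref{coro for S-HL-a-delta} with $a\equiv 0$, which is admissible since $-\Delta$ is coercive on $H^1_0(\Omega)$. This gives a unique $B(0)$ with $S_{HL}(\varepsilon)<S_{HL}$ for $\varepsilon<B(0)$ and $S_{HL}(\varepsilon)=S_{HL}$ for $\varepsilon\geq B(0)$. Setting $\lambda=-\varepsilon$ and $\lambda^*:=-B(0)$ gives exactly the stated dichotomy: $S_{HL}(-\lambda)=S_{HL}$ for $\lambda\le\lambda^*$ and $S_{HL}(-\lambda)<S_{HL}$ for $\lambda>\lambda^*$. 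Here $S_{HL}(-\lambda)$ is understood as the infimum \eqref{minimizing-problem}. Uniqueness of $\lambda^*$ is inherited from uniqueness of $B(0)$, since the two regimes are disjoint and monotone in $\lambda$.

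It remains to show $0<\lambda^*<\lambda_1$.

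\emph{Lower bound $\lambda^*\ge 0$.} For $\lambda\le 0$ we have $-\lambda\ge0$, so $S_{HL}(-\lambda)\ge S_{HL}(0)=S_{HL}$ by monotonicity. Proposition \ref{prop of SHL and SHLa} gives the reverse inequality, and the test-function argument there does not use coercivity. Hence $S_{HL}(-\lambda)=S_{HL}$ for $\lambda\le0$, so $\lambda^*\ge0$.

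\emph{Upper bound $\lambda^*<\lambda_1$.} This follows from the last step of the proof of Proposition \ref{coro for S-HL-a-delta}. By the Lipschitz estimate \eqref{energy estimate proof 14}, $S_{HL}(-\lambda)\le C(\Omega,\alpha)(\lambda_1-\lambda)$ as $\lambda\to\lambda_1^-$, because $S_{HL}(-\lambda_1)=0$. The latter holds since inserting the first eigenfunction into the quotient gives zero. Therefore $S_{HL}(-\lambda)<S_{HL}$ for $\lambda$ close to $\lambda_1$, and so $\lambda^*<\lambda_1$.

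\emph{Strict positivity $\lambda^*>0$.} This is the main obstacle, since it is the three-dimensional phenomenon. A minimizer at $\lambda=0$ does not exist, so one must argue by contradiction and rule out $S_{HL}(-\lambda)<S_{HL}$ for arbitrarily small $\lambda>0$.

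First I would try a direct comparison with the local Sobolev problem. The HLS step in Remark \ref{remark of HLS} gives $\|u\|_{HL}^2\le C_\alpha^{1/(6-\alpha)}\|u\|_{L^6}^2$. Hence $S_{HL}(-\lambda)\ge C_\alpha^{-1/(6-\alpha)}S(-\lambda)$, where $S(-\lambda)$ is the Brezis--Nirenberg constant with zero-order coefficient $-\lambda$. Brezis and Nirenberg proved $S(-\lambda)=S$ for $\lambda\le\lambda^*_{BN}(\Omega)$, with $\lambda^*_{BN}(\Omega)>0$. Combining this with \eqref{definition of SHL}, $S_{HL}=SC_\alpha^{-1/(6-\alpha)}$, yields $S_{HL}(-\lambda)\ge S_{HL}$ for $0<\lambda\le\lambda^*_{BN}(\Omega)$. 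Hence $\lambda^*\ge\lambda^*_{BN}(\Omega)>0$.

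If a self-contained argument is preferred, the fallback is the Robin-function criterion. Since $H_{-\lambda}\to H_0$ uniformly on compacts as $\lambda\to0$ and $\phi_0<0$, with $\phi_0\to-\infty$ at $\partial\Omega$, we get $\phi_{-\lambda}<0$ everywhere for small $\lambda$. The implication $(2)\Rightarrow(1)$ of Theorem \ref{thm-00} would then give $S_{HL}(-\lambda)=S_{HL}$. However, that implication is proved later in the paper, so the Sobolev comparison is the route I would use here.
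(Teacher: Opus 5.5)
Your proposal is correct and follows essentially the same route as the paper. The paper also combines the HLS bound $\|u\|_{HL}^2\le C_\alpha^{1/(6-\alpha)}\|u\|_{L^6(\Omega)}^2$ with the three-dimensional Brezis--Nirenberg inequality $\|\nabla u\|_{L^2}^2\ge S\|u\|_{L^6}^2+\lambda_0\|u\|_{L^2}^2$, which is equivalent to your $S(-\lambda_0)=S$, and reads off the rest from Propositions~\ref{prop of SHL and SHLa} and~\ref{coro for S-HL-a-delta}; your explicit check of $\lambda^*<\lambda_1$ via $S_{HL}(-\lambda_1)=0$ and the Lipschitz estimate just spells out what the paper leaves implicit, and you are right not to use the Robin-function fallback, which depends on later results.
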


\begin{proof}
First, it follows from the HLS inequality that $\|u\|_{HL}^{2}\leq C_{\alpha}^{\frac{1}{6-\alpha}}\|u\|_{L^{6}(\Omega)}^{2}$. This, together with \cite[Corollary 1.1]{Brezis1983CPAM} and \eqref{definition of SHL}, implies that there exists a constant $\lambda_{0}(\Omega)$ with $0<\lambda_{0}<\lambda_{1}$ such that
\begin{equation*}
\begin{aligned}
   \|\nabla u\|_{L^{2}(\Omega)}^{2}&\geq S\|u\|_{L^{6}(\Omega)}^{2}+\lambda_{0}\|u\|_{L^{2}(\Omega)}^{2}\\
   &\geq S_{HL}\|u\|_{HL}^{2}+\lambda_{0}\|u\|_{L^{2}(\Omega)}^{2}\quad\text{~for all~}u\in H^{1}_{0}(\Omega).
\end{aligned}
\end{equation*}
The desired conclusion then follows from Proposition \ref{prop of SHL and SHLa} and Proposition \ref{coro for S-HL-a-delta}.
\end{proof}

\begin{Cor}
  If $\|a\|_{L^{\infty}(\Omega)}$ is sufficiently small, then $S_{HL}(a) = S_{HL}$. 
\end{Cor}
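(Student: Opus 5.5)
The corollary follows from Proposition \ref{prop sharp soblev inequality} together with the monotonicity of $a\mapsto S_{HL}(a)$. The idea is to bound $a$ from below by a constant that lies in the range where that proposition already gives equality.

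Let $\lambda^*=\lambda^*(\Omega)\in(0,\lambda_1)$ be the constant from Proposition \ref{prop sharp soblev inequality}. Assume $\|a\|_{L^{\infty}(\Omega)}\leq\lambda^*$. Then $a(x)\geq-\lambda^*$ for all $x\in\Omega$, so for every $u\in H^1_0(\Omega)$ with $\|u\|_{HL}=1$,
\begin{equation*}
\int_\Omega|\nabla u|^2+au^2\,dx\geq\int_\Omega|\nabla u|^2-\lambda^*u^2\,dx .
\end{equation*}
Taking the infimum over all such $u$ gives $S_{HL}(a)\geq S_{HL}(-\lambda^*)$.

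By Proposition \ref{prop sharp soblev inequality}, $S_{HL}(-\lambda^*)=S_{HL}$. Hence $S_{HL}(a)\geq S_{HL}$. The reverse inequality $S_{HL}(a)\leq S_{HL}$ is Proposition \ref{prop of SHL and SHLa}, so $S_{HL}(a)=S_{HL}$.

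It remains to check that this choice of $a$ is admissible, i.e. that $-\Delta+a$ is coercive. Since $\lambda^*<\lambda_1$ and $a\geq-\lambda^*$,
\begin{equation*}
\int_\Omega|\nabla u|^2+au^2\,dx\geq\left(1-\frac{\lambda^*}{\lambda_1}\right)\int_\Omega|\nabla u|^2\,dx,
\end{equation*}
so coercivity holds automatically. The only delicate point is that the endpoint $\lambda=\lambda^*$ is included in the equality case of Proposition \ref{prop sharp soblev inequality}, and the proposition states this explicitly. Even without the endpoint, the condition $\|a\|_\infty<\lambda^*$ alone would suffice. So there is no real obstacle here.
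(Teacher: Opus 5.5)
Your argument is correct and matches the reasoning the paper leaves implicit: the paper states the corollary without proof, immediately after Proposition~\ref{prop sharp soblev inequality}. Monotonicity of $a\mapsto S_{HL}(a)$ gives $S_{HL}(a)\geq S_{HL}(-\lambda^*)=S_{HL}$ once $\|a\|_{L^\infty(\Omega)}\leq\lambda^*$, Proposition~\ref{prop of SHL and SHLa} gives the reverse inequality, and your coercivity check is a correct extra detail. An equivalent route is to apply the inequality $\|\nabla u\|_{L^2(\Omega)}^2\geq S_{HL}\|u\|_{HL}^2+\lambda_0\|u\|_{L^2(\Omega)}^2$ from that proposition's proof directly when $\|a\|_{L^\infty(\Omega)}\leq\lambda_0$; this is the same idea.
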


\begin{Thm}\label{thm-1}
The implications $(1)\Rightarrow (2)\Rightarrow(3)$ hold.
\end{Thm}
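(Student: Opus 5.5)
The two implications are proved separately: (1)$\Rightarrow$(2) by a test-function computation in the spirit of Brezis--Nirenberg and Druet, and (2)$\Rightarrow$(3) by a compactness argument for minimizing sequences below the threshold $S_{HL}$.

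\emph{(1)$\Rightarrow$(2).} Let $\xi\in\Omega$ with $\phi_a(\xi)>0$. I would use the test function
\[
\psi_\lambda:=P\bar U_{\xi,\lambda}+4\pi\bar C_\alpha\lambda^{-1/2}\bigl(H_a(\xi,\cdot)-H_0(\xi,\cdot)\bigr),
\]
which satisfies $\psi_\lambda=0$ on $\partial\Omega$ and behaves like $4\pi\bar C_\alpha\lambda^{-1/2}G_a(\xi,\cdot)$ away from $\xi$. By Lemma \ref{lemma PU},
\[
\psi_\lambda=\bar U_{\xi,\lambda}+4\pi\bar C_\alpha\lambda^{-1/2}H_a(\xi,\cdot)+O(\lambda^{-5/2}).
\]
I would then expand numerator and denominator of the Rayleigh quotient in \eqref{minimizing-problem} as $\lambda\to\infty$.

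For the numerator, integrate by parts using \eqref{U-x-lamda for choquard equation} and \eqref{Ha-pde}. The term $\int a\bar U^2$ is not integrable at order $\lambda^{-1}$ on its own; it must be combined with the $H_a$ contribution. Using $-\Delta H_a+aG_a=0$, the $a$-terms recombine and, together with Lemma \ref{lemma-g}, give
\[
\int_\Omega|\nabla\psi_\lambda|^2+a\psi_\lambda^2
=\bar C_\alpha^2\Bigl(\int_{\R^3}|\nabla U|^2+4\pi\lambda^{-1}\cdot 4\pi\,\phi_a(\xi)\cdot c_1\Bigr)+o(\lambda^{-1}),
\]
with $c_1>0$ explicit. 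I will track the signs carefully against \eqref{important-identity-1}.

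For the denominator $\|\psi_\lambda\|_{HL}^{2(6-\alpha)}$, expand the double integral to first order in the perturbation $h=4\pi\bar C_\alpha\lambda^{-1/2}H_a(\xi,\cdot)$. The linear term is
\[
2(6-\alpha)\iint\frac{\bar U^{6-\alpha}(y)\bar U^{5-\alpha}(x)h(x)}{|x-y|^\alpha}.
\]
By \eqref{important-identity-1} this reduces to
\[
\mathrm{const}\cdot\int\bar U^5 h=\mathrm{const}\cdot\lambda^{-1}\phi_a(\xi)\int U^5+o(\lambda^{-1}).
\]
The truncation outside $\Omega$ costs only $O(\lambda^{-3})$, and the quadratic terms are $O(\lambda^{-2})$ by Lemma \ref{lem inequality 2} together with HLS.

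Combining the two expansions, the quotient should come out as
\[
S_{HL}-c\,\phi_a(\xi)\lambda^{-1}+o(\lambda^{-1})
\]
with $c>0$, because the denominator gain, after normalising by the power $1/(6-\alpha)$, dominates the numerator gain exactly as in the local case. Hence $S_{HL}(a)<S_{HL}$ for $\lambda$ large.

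\emph{(2)$\Rightarrow$(3).} Take a minimizing sequence $u_n\ge0$ with $\|u_n\|_{HL}=1$. It is bounded in $H_0^1$ by coercivity, so $u_n\rightharpoonup u$ weakly in $H^1_0$ and strongly in $L^2$. Set $w_n=u_n-u$. A Brezis--Lieb lemma for the nonlocal term (as in \cite{Gao2016TheBT}) gives
\[
1=\|u\|_{HL}^{2(6-\alpha)}+\|w_n\|_{HL}^{2(6-\alpha)}+o(1).
\]
Moreover,
\[
S_{HL}(a)=\int|\nabla u|^2+au^2+\int|\nabla w_n|^2+o(1)\ \ge\ S_{HL}(a)\,t^{1/(6-\alpha)}+S_{HL}(1-t)^{1/(6-\alpha)},
\]
where $t=\|u\|_{HL}^{2(6-\alpha)}$. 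Since $S_{HL}>S_{HL}(a)>0$ and $s\mapsto s^{1/(6-\alpha)}$ is concave, this forces $t=1$. Thus $u$ is a minimizer, and $|u|$ is a nonnegative one.

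\emph{Main obstacle.} The hardest step is the precise $\lambda^{-1}$ coefficient in (1)$\Rightarrow$(2). Its sign must be exactly $-\phi_a(\xi)$, and this requires keeping the cross terms between $\bar U$ and $\lambda^{-1/2}H_a$ in both the Dirichlet energy and the nonlocal term. I would first do the computation with $a$ replaced by $0$ to check that it reproduces the known cancellation.
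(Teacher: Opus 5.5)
Your proposal is correct and follows essentially the paper's route. For (1)$\Rightarrow$(2) you use a bubble corrected by $4\pi\lambda^{-1/2}H_a(\xi,\cdot)$, which yields $S_{HL}-\frac{64}{3}S_{HL}\phi_a(\xi)\lambda^{-1}+o(\lambda^{-1})$; for (2)$\Rightarrow$(3) you use a Brezis--Lieb splitting of a minimizing sequence together with the subadditivity of $s\mapsto s^{1/(6-\alpha)}$. The one difference is your test function: in Step 1 the paper takes $\phi_\varepsilon$ solving $-\Delta\phi_\varepsilon+a\phi_\varepsilon=-\Delta U_\varepsilon$, so the numerator equals $\int_\Omega(-\Delta U_\varepsilon)\phi_\varepsilon$ exactly, and the cancellation of the order-$\lambda^{-1}$ $a$-terms that you flag as the main obstacle never has to be done by hand (your $\psi_\lambda$ is the test function the paper uses later, in Proposition \ref{prop-upper-estimate}).
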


\begin{proof}
\textbf{Step 1.}
In this step, we prove $(1)\Rightarrow(2)$. Without loss of generality, we may assume that $0\in\Omega$ and $\phi_{a}(0)>0$. We consider the solutions $\phi_{\varepsilon}$ of the following equation
    \begin{equation}\label{equivlent theorem 1-1}
    \begin{cases}
        -\Delta \phi_{\varepsilon}+a\phi_{\varepsilon}=-\Delta U_{\varepsilon}&\quad\text{~in~}\Omega,\\
        \phi_{\varepsilon}=0&\quad\text{~on~}\partial\Omega,\\
    \end{cases}
    \end{equation}
where $U_{\varepsilon}(x)$ is defined by
\begin{equation*}
    U_{\varepsilon}=\frac{\varepsilon^{\frac{1}{2}}}{(\varepsilon^{2}+|x|^{2})^{1/2}}.
\end{equation*}  
We now claim that, as $\varepsilon\to0$,
\begin{equation}\label{equivlent theorem 1-3}
    J(\phi_{\varepsilon})=S_{HL}-C\phi_{a}(0)\varepsilon+o(\varepsilon),
\end{equation}
where $C$ is some positive constant and
\begin{equation*}
J(\phi_{\varepsilon})=\frac{\int_\Omega |\nabla \phi_{\varepsilon}|^2+ a \phi_{\varepsilon}^2dx}{\left(\int_{\Omega}\int_{\Omega}\frac{|\phi_{\varepsilon}(x)|^{6-\alpha}|\phi_{\varepsilon}(y)|^{6-\alpha}}{|x-y|^{\alpha}}dxdy\right)^{\frac{1}{6-\alpha}}}.
\end{equation*}
Therefore, the condition $\phi_{a}(0) > 0$ implies that $S_{HL}(a) < S_{HL}$, and the proof is complete. It remains to show that \eqref{equivlent theorem 1-3} holds. Let
\begin{equation*}
    h_{\varepsilon}=(\phi_{\varepsilon}-U_{\varepsilon})/\sqrt{\varepsilon}.
\end{equation*}
Then by \eqref{equivlent theorem 1-1}, we have
\begin{equation*}
    \begin{cases}
        -\Delta h_{\varepsilon}+a h_{\varepsilon}=-\frac{a(x)}{(\varepsilon^2+|x|^{2})^{1/2}}&\quad\text{~in~}\Omega,\\
        h_{\varepsilon}=-\frac{1}{(\varepsilon^2+|x|^{2})^{1/2}}&\quad\text{~on~}\partial\Omega.
    \end{cases}
    \end{equation*}
Since $\frac{a(x)}{(\varepsilon^2+|x|^{2})^{1/2}}$ remains bounded in $L^{2}(\Omega)$, we deduce from standard elliptic estimate that $h_{\varepsilon}\to h_{0}$ uniformly on $\bar{\Omega}$, where $h_{0}$ is the solution of 
\begin{equation*}
    \begin{cases}
        -\Delta h_{0}+a h_{0}=-\frac{a(x)}{|x|}&\quad\text{~in~}\Omega,\\
        h_{0}=-\frac{1}{|x|}&\quad\text{~on~}\partial\Omega.
    \end{cases}
\end{equation*}
Then \eqref{Ha-pde} gives that $h_{0}(x)=4\pi H_{a}(x,0)$. On the other hand, a direct computation yields that
\begin{equation}\label{equivlent theorem 1-8}
\int_{\Omega}U_{\varepsilon}^{6}=\kappa+O(\varepsilon^{3})\text{~and~} \frac{1}{\sqrt{\varepsilon}}U_{\varepsilon}^{5}\to \kappa'\delta_{0} \text{~weakly in the sense of measure}, 
\end{equation}
where 
\begin{equation*}
    \kappa=\int_{\R^{3}}\frac{dx}{(1+|x|^{2})^{3}}=\frac{\pi^{2}}{4}\text{~and~}\kappa'=\int_{\R^{3}}\frac{dx}{(1+|x|^{2})^{5/2}}=\frac{4}{3}\pi.
\end{equation*}
It then follows that
\begin{equation}\label{equivlent theorem 1-10}
\begin{aligned}
    \int_\Omega |\nabla \phi_{\varepsilon}|^2+ a \phi_{\varepsilon}^2dx&=\int_\Omega (-\Delta
\phi_{\varepsilon}+a \phi_{\varepsilon})\phi_{\varepsilon}dx=\int_\Omega  (-\Delta U_{\varepsilon})\phi_{\varepsilon}dx\\
&=3\int_\Omega U_{\varepsilon}^{5}(U_{\varepsilon}+\sqrt{\varepsilon}h_{\varepsilon})dx=3\kappa+3\kappa'4\pi\phi_{a}(0)\varepsilon+o(\varepsilon).
\end{aligned}
\end{equation}
Furthermore, by Lemma \ref{lem inequality 2} and the HLS inequality, we obtain
\begin{equation}\label{equivlent theorem 1-11}
\begin{aligned}
&\int_{\Omega}\int_{\Omega}\frac{|\phi_{\varepsilon}(x)|^{6-\alpha}|\phi_{\varepsilon}(y)|^{6-\alpha}}{|x-y|^{\alpha}}dxdy=    \int_{\Omega}\int_{\Omega}\frac{|U_{\varepsilon}+\sqrt{\varepsilon}h_{\varepsilon}|^{6-\alpha}(x)|U_{\varepsilon}+\sqrt{\varepsilon}h_{\varepsilon}|^{6-\alpha}(y)}{|x-y|^{\alpha}}dxdy\\
&=\int_{\Omega}\int_{\Omega}\frac{U_{\varepsilon}^{6-\alpha}(x)U_{\varepsilon}^{6-\alpha}(y)}{|x-y|^{\alpha}}dxdy+2(6-\alpha)\int_{\Omega}\int_{\Omega}\frac{U_{\varepsilon}^{6-\alpha}(x)U_{\varepsilon}^{5-\alpha}(y)\sqrt{\varepsilon}h_{\varepsilon}(y)}{|x-y|^{\alpha}}dxdy+o(\varepsilon)\\
&:=I_{1}+I_{2}.
\end{aligned}
\end{equation}
We now estimate the terms $I_{1}$ and $I_{2}$. First, by the HLS inequality, we have
\begin{equation*}
\int_{\R^{3}\setminus\Omega}\int_{\R^{3}}\frac{U_{\varepsilon}^{6-\alpha}(x)U_{\varepsilon}^{6-\alpha}(y)}{|x-y|^{\alpha}}dxdy\leq C_{\alpha}\left(\int_{\R^{3}\setminus\Omega}U_{\varepsilon}^{6}dx\right)^{\frac{6-\alpha}{6}}\left(\int_{\R^{3}}U_{\varepsilon}^{6}dx\right)^{\frac{6-\alpha}{6}}=O(\varepsilon^{\frac{6-\alpha}{2}}).
\end{equation*}
This, together with \eqref{U-x-lamda for choquard equation} and \eqref{Emden-Fowler equation}, gives that
\begin{equation}\label{equivlent theorem 1-13}
\begin{aligned}
I_{1}&=\int_{\R^{3}}\int_{\R^{3}}\frac{U_{\varepsilon}^{6-\alpha}(x)U_{\varepsilon}^{6-\alpha}(y)}{|x-y|^{\alpha}}-\int_{\R^{3}\setminus\Omega}\int_{\R^{3}}\frac{U_{\varepsilon}^{6-\alpha}(x)U_{\varepsilon}^{6-\alpha}(y)}{|x-y|^{\alpha}}-\int_{\Omega}\int_{\R^{3}\setminus\Omega}\frac{U_{\varepsilon}^{6-\alpha}(x)U_{\varepsilon}^{6-\alpha}(y)}{|x-y|^{\alpha}}\\
&=\frac{1}{\bar{C}_{\alpha}^{2(5-\alpha)}}\int_{\R^{3}}|\nabla U_{\varepsilon}|^{2}dx+o(\varepsilon)=\frac{3}{\bar{C}_{\alpha}^{2(5-\alpha)}}\int_{\R^{3}} U_{\varepsilon}^{6}dx+o(\varepsilon)=\frac{3\kappa}{\bar{C}_{\alpha}^{2(5-\alpha)}}+o(\varepsilon).
\end{aligned}    
\end{equation}
On the other hand, by \eqref{important-identity-1} and the HLS inequality, we obtain
\begin{equation}\label{equivlent theorem 1-15}
    I_{2}=\frac{6(6-\alpha)}{\bar{C}_{\alpha}^{2(5-\alpha)}}\int_{\Omega
    }U_{\varepsilon}^{5}\sqrt{\varepsilon}h_{\varepsilon}dx+o(\varepsilon)=\frac{6(6-\alpha)}{\bar{C}_{\alpha}^{2(5-\alpha)}}\kappa'4\pi\phi_{a}(0)\varepsilon+o(\varepsilon).
\end{equation}
Combining \eqref{equivlent theorem 1-11}--\eqref{equivlent theorem 1-15}, we have
\begin{equation*}
\left(\int_{\Omega}\int_{\Omega}\frac{|\phi_{\varepsilon}(x)|^{6-\alpha}|\phi_{\varepsilon}(y)|^{6-\alpha}}{|x-y|^{\alpha}}dxdy\right)^{\frac{1}{6-\alpha}}=\left(\frac{3\kappa}{\bar{C}_{\alpha}^{2(5-\alpha)}}\right)^{\frac{1}{6-\alpha}}\left(1+\frac{k'}{k}8\pi\phi_{a}(0)\varepsilon\right)+o(\varepsilon).
\end{equation*}
This together with \eqref{equivlent theorem 1-10} yields that
\begin{equation*}
    J(\phi_{\varepsilon})=\frac{3\kappa(1+\frac{\kappa'}{\kappa}4\pi\phi_{a}(0)\varepsilon)+o(\varepsilon)}{\left(\frac{3\kappa}{\bar{C}_{\alpha}^{2(5-\alpha)}}\right)^{\frac{1}{6-\alpha}}\left(1+\frac{k'}{k}8\pi\phi_{a}(0)\varepsilon\right)+o(\varepsilon)}.
\end{equation*}
Moreover, by \eqref{best sobolev constant}, \eqref{definition of SHL} and \eqref{equivlent theorem 1-8}, we obtain
\begin{equation*}
    J(\phi_{\varepsilon})=S_{HL}-C\phi_{a}(0)\varepsilon+o(\varepsilon),
\end{equation*}
where $C=4\pi\kappa'S_{HL}/\kappa=\frac{64}{3}S_{HL}$. This completes the proof of this step.

\textbf{Step 2.} In this step, we prove $(2)\Rightarrow(3)$. Suppose that $S_{HL}(a)<S_{HL}$, and let $\{u_{\varepsilon}\}\subset H_{0}^{1}(\Omega)$ be a minimizing sequence for \eqref{minimizing-problem} such that, as $\varepsilon\to0$,
\begin{equation}\label{energy estimate proof 4}
    \|u_{\varepsilon}\|_{HL}=1\text{~and~}\int_\Omega |\nabla u_{\varepsilon}|^2+ a u_{\varepsilon}^2dx=S_{HL}(a)+o(1). 
\end{equation}
Since the operator $-\Delta+a$ is coercive, the sequence $u_{\varepsilon}$ is bounded in $H_{0}^{1}(\Omega)$. Thus, we can extract a subsequence, still denoted by $u_{\varepsilon}$, such that
\begin{equation*}
\begin{aligned}
    u_{\varepsilon}&\to u\text{~weakly in~}H_{0}^{1}(\Omega),\\
     u_{\varepsilon}&\to u\text{~strongly in~}L^{2}(\Omega),\\
      u_{\varepsilon}&\to u\text{~a.e. on~}\Omega,
\end{aligned}
\end{equation*}
and consequently $\|u\|_{HL}\leq 1.$ Set $v_{\varepsilon}=u_{\varepsilon}-u$, then
\begin{equation}\label{energy estimate proof 6}
    \begin{aligned}
        v_{\varepsilon}&\to 0\text{~weakly in~}H_{0}^{1}(\Omega),\\
     v_{\varepsilon}&\to 0\text{~strongly in~}L^{2}(\Omega),\\
      v_{\varepsilon}&\to 0\text{~a.e. on~}\Omega.
    \end{aligned}
\end{equation}
Moreover, since, $S_{HL}(0)=S_{HL}$ and $\|u_{\varepsilon}\|_{HL}=1$, we have $\|\nabla u_{\varepsilon}\|_{2}^{2}\geq S_{HL}.$ Combining this with \eqref{energy estimate proof 4}, we find that 
\begin{equation*}
    -\int_{\Omega} au_{\varepsilon}^{2}dx\geq S_{HL}+o(1)-S_{HL}(a)>0,
\end{equation*}
for sufficiently small $\varepsilon$. Hence, $u \not\equiv 0$. Furthermore, combining \eqref{energy estimate proof 4}, \eqref{energy estimate proof 6} and Brezis-Lieb lemma \cite{BrezisLieblemma,Gao2016TheBT} yields
\begin{equation}\label{energy estimate proof 7}
    \int_\Omega |\nabla v_{\varepsilon}|^2+|\nabla u|^2+a u^2dx=S_{HL}(a)+o(1)\text{~and~}1=\|v_{\varepsilon}\|_{HL}^{2(6-\alpha)}+\|u\|_{HL}^{2(6-\alpha)}+o(1). 
\end{equation}
Consequently,
\begin{equation*}
    1\leq\|v_{\varepsilon}\|_{HL}^{2}+\|u\|_{HL}^{2}+o(1). 
\end{equation*}
Since $S_{HL}(0)=S_{HL}$ and $S_{HL}(a)<S_{HL}$, we deduce that
\begin{equation*}
    S_{HL}(a)\leq  S_{HL}(a)\|u\|_{HL}^{2}+\frac{S_{HL}(a)}{S_{HL}}\|\nabla v_{\varepsilon}\|_{2}^{2}+o(1)\leq S_{HL}(a)\|u\|_{HL}^{2}+ \|\nabla v_{\varepsilon}\|_{L^{2}(\Omega)}^{2}+o(1).
\end{equation*}
This together with \eqref{energy estimate proof 7} yields that
\begin{equation*}
    \int_\Omega |\nabla u|^2+a u^2dx\leq S_{HL}(a)\|u\|_{HL}^{2}. 
\end{equation*}
Therefore, $u$ is a minimizer of $S_{HL}(a)$, which completes the proof.
\end{proof}

\begin{Prop}\label{assumption on a}
Assume $\alpha\in(0,3)$ is sufficiently small. Then $(3)\Rightarrow(2)$ holds. 
\end{Prop}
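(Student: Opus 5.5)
We argue by contradiction and reduce to the local (Sobolev) Brezis--Nirenberg problem in dimension three, where Druet's characterization \cite{Druet2002EllipticEW} is available. Suppose that $S_{HL}(a)$ is achieved by some $u_a\in H^1_0(\Omega)$ with $\|u_a\|_{HL}=1$, but $S_{HL}(a)\ge S_{HL}$. By Proposition \ref{prop of SHL and SHLa} this means $S_{HL}(a)=S_{HL}$. Let
\begin{equation*}
S(a)=\inf_{u\in H^1_0(\Omega),\,\|u\|_{L^6(\Omega)}=1}\int_\Omega|\nabla u|^2+au^2\,dx
\end{equation*}
be the local quantity, and recall $S(a)\le S$.

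\textbf{Step 1: $S(a)=S$.} If instead $S(a)<S$, then Druet's theorem \cite{Druet2002EllipticEW} (the three-dimensional equivalence for \eqref{brezis-nirenberg-problem}, see also \cite{Esposito}) gives a point $x\in\Omega$ with $\phi_a(x)>0$. The implication $(1)\Rightarrow(2)$ of Theorem \ref{thm-1} then yields $S_{HL}(a)<S_{HL}$, a contradiction. Hence $S(a)=S$.

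\textbf{Step 2: an equality chain.} By Remark \ref{remark of HLS} (HLS with $\theta=r=\frac{6}{6-\alpha}$ applied to $f=g=|u|^{6-\alpha}$), we have $\|u\|_{HL}^2\le C_\alpha^{\frac1{6-\alpha}}\|u\|_{L^6}^2$ for every $u\in H^1_0(\Omega)$, extended by zero to $\R^3$. Using this, the definition of $S(a)$, Step 1 and \eqref{definition of SHL}, we obtain
\begin{equation*}
S_{HL}=\int_\Omega|\nabla u_a|^2+au_a^2\,dx\ \ge\ S(a)\|u_a\|_{L^6}^2\ \ge\ S\,C_\alpha^{-\frac1{6-\alpha}}\|u_a\|_{HL}^2=S_{HL}.
\end{equation*}
Hence every inequality is an equality. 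In particular equality holds in the HLS inequality \eqref{HLS} for $f=g=|u_a|^{6-\alpha}$.

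\textbf{Step 3: contradiction.} By the equality characterization in Theorem \ref{HL}, $|u_a|^{6-\alpha}=A(c^2+|x-b|^2)^{-\frac{6-\alpha}{2}}$ on $\R^3$ for some $A\neq0$, $c\neq0$ and $b\in\R^3$. Such a function is strictly positive everywhere. This is impossible, since $u_a\equiv0$ outside the bounded set $\Omega$ and $\R^3\setminus\overline\Omega$ has positive measure. Therefore $S_{HL}(a)<S_{HL}$, which is $(2)$.

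The main point to check is Step 1, that is, that Druet's equivalence applies to our class of coefficients: $a\in C(\overline\Omega)\cap C^1(\Omega)$ with $-\Delta+a$ coercive, $\Omega$ with $C^2$ boundary, and the same Robin function $\phi_a$ built from $G_a$. These are exactly the hypotheses in \cite{Druet2002EllipticEW,Esposito}, so the step is legitimate. If one prefers not to rely on Druet, the fallback is to show directly that $S(a)<S$ forces $\phi_a>0$ somewhere, via the blow-up analysis in the local case; this is the harder route, and it is presumably where a smallness assumption on $\alpha$ could enter if one instead perturbs from the case $\alpha=0$. The argument above does not seem to use smallness of $\alpha$: it only uses the sharp HLS inequality together with its rigidity statement in Theorem \ref{HL}, and Theorem \ref{thm-1}.
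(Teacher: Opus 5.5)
Your proof is correct, and it follows a genuinely different route from the paper.

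The paper never compares with the local quotient $S(a)$. Assuming $S_{HL}(a)=S_{HL}$ is attained by $u_a$, it proceeds as follows:
\begin{enumerate}
\item It expands the quotient to second order along $u_a(1+\varepsilon\varphi)$ and uses $A_1^2\le A_2$ to get $(4-\alpha)A_3\le(4-\alpha)A_1^2+S_{HL}^{-1}\int_\Omega u_a^2|\nabla\varphi|^2dx$.
\item A Brouwer argument then selects four conformal functions $\varphi_1,\dots,\varphi_4$ with $\sum_i\varphi_i^2=1$ and $A_1(\varphi_i)=0$. This yields $4-\alpha\le 12S_{HL}^{-1}\int_\Omega t^2(1+t^2|x-z|^2)^{-2}u_a^2\,dx$.
\item The right-hand side is bounded via H\"older and the reversed HLS inequality with $\alpha$-dependent exponents. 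It drops below $4$ only in the limit $\alpha\to0$ of those constants.
\end{enumerate}
That limit is where smallness of $\alpha$ enters. It does not come, as you guess at the end, from perturbing a blow-up analysis from $\alpha=0$.

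You instead isolate the one non-elementary step, $S_{HL}(a)=S_{HL}\Rightarrow S(a)=S$. You obtain it from Druet's theorem together with $(1)\Rightarrow(2)$ of Theorem \ref{thm-1}, which is proved there for all $\alpha$. You then finish with a rigidity argument: the sharp Sobolev and sharp HLS inequalities squeeze $u_a$ into an HLS extremal, and such an extremal cannot vanish outside $\Omega$.

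What your route buys:
\begin{itemize}
\item It is valid for every $\alpha\in(0,3)$.
\item It avoids the passage $\alpha\to0$. In the paper that limit would have to be uniform in the $\alpha$-dependent minimizer $u_a$ and balancing point $(z,t)$. The H\"older/RHLS bound there is also not scale invariant in $t$.
\item Since trivially $S_{HL}(a)\ge C_\alpha^{-\frac{1}{6-\alpha}}S(a)$, your Step 1 gives $S_{HL}(a)<S_{HL}\iff S(a)<S$. Together with Druet, all of Theorem \ref{thm-00} then holds without the smallness assumption.
\end{itemize}

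The price is reliance on Druet's blow-up analysis for the local problem. You should confirm that the version you cite covers $a\in C(\overline{\Omega})\cap C^1(\Omega)$. The paper's argument, by contrast, is intrinsic: it uses only HLS-type inequalities and a degree argument.
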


\begin{proof}
We argue by contradiction. Suppose that $S_{HL}(a)=S_{HL}$ and that $S_{HL}(a)$ is achieved. Then there exists $u_{a}$ such that
\begin{equation*}
    \begin{cases}
        -\Delta u_{a}+au_{a}=S_{HL}\left(\int_{\Omega}\frac{u_{a}^{6-\alpha}(y)}{|x-y|^\alpha}dy\right)u_{a}^{5-\alpha}
   &\mbox{in}\ \Omega,\\
  u_{a}>0
  \ \  &\mbox{in}\  \Omega,\\
  u_{a}=0
  \ \  &\mbox{on}\ \partial \Omega,
    \end{cases}
\end{equation*}
and
\begin{equation}\label{assumption on a-eq-2}
    \int_{\Omega}\int_{\Omega}\frac{u_{a}^{6-\alpha}(x)u_{a}^{6-\alpha}(y)}{|x-y|^{\alpha}}dxdy=1.
\end{equation}
Since $S_{HL}(a)=S_{HL}$, for any $\varphi\in C^{\infty}(\R^3)$ and $\varepsilon>0$, it holds that
\begin{equation*}
\begin{aligned}
&\left(\int_{\Omega}\int_{\Omega}\frac{(u_{a}(1+\varepsilon\varphi))^{6-\alpha}(x)(u_{a}(1+\varepsilon\varphi))^{6-\alpha}(y)}{|x-y|^{\alpha}}dxdy\right)^{\frac{1}{6-\alpha}}\\
&\quad\leq S_{HL}^{-1} \int_\Omega |\nabla( u_{a}(1+\varepsilon\varphi))|^2+ a (u_{a}(1+\varepsilon\varphi))^2dx .
\end{aligned}
\end{equation*}
By \eqref{assumption on a-eq-2}, the Taylor's expansion and direct computations, we obtain 
\begin{equation*}
\begin{aligned}
&\left(\int_{\Omega}\int_{\Omega}\frac{|u_{a}(x)(1+\varepsilon\varphi(x))|^{6-\alpha}|u_{a}(y)(1+\varepsilon\varphi(y))|^{6-\alpha}}{|x-y|^{\alpha}}dxdy\right)^{\frac{1}{6-\alpha}}\\
&\quad=1+2A_{1}\varepsilon+\left((6-\alpha)A_{2}+(5-\alpha)(A_{3}-2A_{1}^{2})\right)\varepsilon^{2}+o(\varepsilon^2)
\end{aligned}
\end{equation*}
and
\begin{equation*}
\begin{aligned}
&S_{HL}^{-1}\int_\Omega |\nabla( u_{a}(1+\varepsilon\varphi))|^2+ a (u_{a}(1+\varepsilon\varphi))^2dx \\   
&\quad=S_{HL}^{-1}\int_\Omega u_{a}(-\Delta
u_{a}+au_{a})(1+\varepsilon\varphi)^2+\varepsilon^2 u_{a}^2|\nabla \varphi|^{2}dx\\
&\quad=\int_{\Omega}\left(\int_\Omega\frac{u_{a}^{6-\alpha}(y)}{|x-y|^\alpha}dy\right)u_{a}^{6-\alpha}(x)(1+\varepsilon\varphi(x))^2dx+S_{HL}^{-1}\varepsilon^2\int_{\Omega}u_{a}^2|\nabla \varphi|^{2}dx\\
&\quad=1+2A_{1}\varepsilon+A_{3}\varepsilon^2+S_{HL}^{-1}\varepsilon^2\int_{\Omega}u_{a}^2|\nabla \varphi|^{2}dx,
\end{aligned}
\end{equation*}
where
\begin{equation*}
\begin{aligned}
A_{1}&:=\int_{\Omega}\int_{\Omega}\frac{u_{a}^{6-\alpha}(x)u_{a}^{6-\alpha}(y)\varphi(y)}{|x-y|^{\alpha}}dxdy,\\
A_{2}&:=\int_{\Omega}\int_{\Omega}\frac{u_{a}^{6-\alpha}(x)\varphi(x)u_{a}^{6-\alpha}(y)\varphi(y)}{|x-y|^{\alpha}}dxdy, \\ 
A_{3}&:=\int_{\Omega}\int_{\Omega}\frac{u_{a}^{6-\alpha}(x)u_{a}^{6-\alpha}(y)\varphi^2(y)}{|x-y|^{\alpha}}dxdy.
\end{aligned}
\end{equation*}
It then follows that for any $\varphi\in C^{\infty}(\R^3)$
\begin{equation*}
(6-\alpha)A_{2}+(4-\alpha)A_{3}\leq 2(5-\alpha)A_{1}^{2}+S_{HL}^{-1}\int_{\Omega}u_{a}^2|\nabla \varphi|^{2}dx.
\end{equation*}
Observe that by \eqref{assumption on a-eq-2}, the semigroup property of the Riesz potential and the Cauchy–Schwarz inequality, we have
\begin{equation*}
    \begin{aligned}
        A_{1}^{2}=\left(\int_{\Omega}\int_{\Omega}\frac{u_{a}^{6-\alpha}(x)u_{a}^{6-\alpha}(y)\varphi(y)}{|x-y|^{\alpha}}dxdy\right)^{2}&\leq \int_{\Omega}\int_{\Omega}\frac{u_{a}^{6-\alpha}(x)\varphi(x)u_{a}^{6-\alpha}(y)\varphi(y)}{|x-y|^{\alpha}}dxdy\\\
        &\quad\times \int_{\Omega}\int_{\Omega}\frac{u_{a}^{6-\alpha}(x)u_{a}^{6-\alpha}(y)}{|x-y|^{\alpha}}dxdy\\
        &=A_{2}.
    \end{aligned}
\end{equation*}
Thus for any $\varphi\in C^{\infty}(\R^3)$
\begin{equation}\label{assumption on a-eq-57}
(4-\alpha)A_{3}\leq (4-\alpha)A_{1}^{2}+S_{HL}^{-1}\int_{\Omega}u_{a}^2|\nabla \varphi|^{2}dx.
\end{equation}

For $(z,t)\in\R^{3}\times(0,\infty)$, we define 
\begin{equation*}
    \begin{aligned}
        F(z,t):=\int_{\Omega}\int_{\Omega}\frac{u_{a}^{6-\alpha}(x)u_{a}^{6-\alpha}(y)2t(y-z)}{|x-y|^{\alpha}(1+t^{2}|y-z|^{2})}dxdy
    \end{aligned}
\end{equation*}
and
\begin{equation*}
    \begin{aligned}
        G(z,t):=\int_{\Omega}\int_{\Omega}\frac{u_{a}^{6-\alpha}(x)u_{a}^{6-\alpha}(y)(1-t^{2}|y-z|^{2})}{|x-y|^{\alpha}(1+t^{2}|y-z|^{2})}dxdy.
    \end{aligned}
\end{equation*}
Moreover, we define the function $H:\R^{3}\times\R\to \R^{4}$ by 
\begin{equation*}
    \begin{aligned}
        H(z,s):=\left(F\left(z,\frac{s+\sqrt{s^{2}+4}}{2}\right)+z,G\left(z,\frac{s+\sqrt{s^{2}+4}}{2}\right)+s\right).
    \end{aligned}
\end{equation*}
It follows from \eqref{assumption on a-eq-2} that
\begin{equation*}
    |H(z,s)|^{2}\leq |z|^{2}+s^{2}
\end{equation*}
for $|z|^{2}+s^{2}$ sufficiently large. Let $t:=\frac{s+\sqrt{s^2+4}}{2}>0$ and define
\begin{equation*}
    \varphi_{i}(x)=\frac{2t(x_{i}-z_{i})}{1+t^{2}|x-z|^{2}},~i=1,2,3~,\quad \varphi_{4}(x)=\frac{1-t^{2}|x-z|^{2}}{1+t^{2}|x-z|^{2}}.
\end{equation*}
Then, by the Brouwer fixed point theorem, there exists $(z,t)\in\R^{3}\times(0,\infty)$ with $|z|^{2}+(t-t^{-1})^{2}$ sufficiently large such that for every $i=1,\cdots,4$,
\begin{equation*}
    \begin{aligned}
         \int_{\Omega}\int_{\Omega}\frac{u_{a}^{6-\alpha}(x)u_{a}^{6-\alpha}(y)\varphi_{i}(y)}{|x-y|^{\alpha}}dxdy=0.
    \end{aligned}
\end{equation*}
On the other hand, a direct computation yields that 
\begin{equation*}
    \begin{aligned}
        \sum_{i=1}^{4}\varphi_{i}^{2}=1,\quad \sum_{i=1}^{4}|\nabla\varphi_{i}|^{2}=\frac{12t^{2}}{(1+t^{2}|x-z|^{2})^{2}}.
    \end{aligned}
\end{equation*}
This together with \eqref{assumption on a-eq-2} and \eqref{assumption on a-eq-57} yields that
\begin{equation}\label{assumption on a-eq-67}
    \begin{aligned}
        (4-\alpha)\leq12S_{HL}^{-1}\int_{\Omega}\frac{t^{2}}{(1+t^{2}|x-z|^{2})^{2}}u_{a}^{2}dx.
    \end{aligned}
\end{equation}

For any $1<r<\frac{r_{1}}{2}<3-\frac{\alpha}{2}$ with $\theta=\frac{r}{r-1}$, it follows from the H\"older inequality, the RHLS inequality and \eqref{assumption on a-eq-2} that
\begin{equation*}
    \begin{aligned}
        &\int_{\Omega}\frac{t^{2}}{(1+t^{2}|x-z|^{2})^{2}}u_{a}^{2}dx\\
        &\leq \left(\int_{\Omega}\frac{t^{2\theta}}{(1+t^{2}|x-z|^{2})^{2\theta}}dx\right)^{\frac{1}{\theta}}\left(\int_{\Omega}u_{a}^{2r}dx\right)^{\frac{1}{r}}\\
         &\leq \left(\int_{\Omega}\frac{t^{2\theta}}{(1+t^{2}|x-z|^{2})^{2\theta}}dx\right)^{\frac{1}{\theta}}\left(\left(\int_{\Omega}(u_{a}^{r_{1}})^{\frac{2r}{r_{1}}}dx\right)^{\frac{r_{1}}{2r}}\right)^{\frac{2}{r_{1}}}\\
        &\leq \tilde{C}_{3(\frac{r_{1}}{r}-2)}^{-\frac{1}{r_{1}}}\left(\int_{\Omega}\frac{t^{2\theta}}{(1+t^{2}|x-z|^{2})^{2\theta}}dx\right)^{\frac{1}{\theta}}\left(\int_{\Omega}\int_{\Omega}u_{a}^{r_{1}}(x)u_{a}^{r_{1}}(y)|x-y|^{3(\frac{r_{1}}{r}-2)}dxdy\right)^{\frac{1}{r_{1}}}\\
        &\leq \tilde{C}_{3(\frac{r_{1}}{r}-2)}^{-\frac{1}{r_{1}}}\left(\int_{\Omega}\frac{t^{2\theta}}{(1+t^{2}|x-z|^{2})^{2\theta}}dx\right)^{\frac{1}{\theta}}\left(\int_{\Omega}\int_{\Omega}\frac{u_{a}^{6-\alpha}(x)u_{a}^{6-\alpha}(y)}{|x-y|^{\alpha}}dxdy\right)^{\frac{1}{6-\alpha}}\\
        &\quad\times\left(\int_{\Omega}\int_{\Omega}|x-y|^{(3(\frac{r_{1}}{r}-2)+\frac{\alpha r_{1}}{6-\alpha})(\frac{6-\alpha}{6-\alpha-r_{1}})}dxdy\right)^{\frac{6-\alpha-r_{1}}{(6-\alpha)r_{1}}}\\
         &\leq \tilde{C}_{3(\frac{r_{1}}{r}-2)}^{-\frac{1}{r_{1}}}\left(\int_{\Omega}\frac{t^{\frac{2r}{r-1}}}{(1+t^{2}|x-z|^{2})^{\frac{2r}{r-1}}}dx\right)^{\frac{r-1}{r}}\left(\int_{\Omega}\int_{\Omega}|x-y|^{(3(\frac{r_{1}}{r}-2)+\frac{\alpha r_{1}}{6-\alpha})(\frac{6-\alpha}{6-\alpha-r_{1}})}dxdy\right)^{\frac{6-\alpha-r_{1}}{(6-\alpha)r_{1}}}.
    \end{aligned}
\end{equation*}
Let
\begin{equation*}
    r=3-2\alpha\text{~and~}r_{1}=6-2\alpha.
\end{equation*}
As $\alpha\to0$, it follows that
\begin{equation*}
    \begin{aligned}
        \tilde{C}_{3(\frac{r_{1}}{r}-2)}^{-\frac{1}{r_{1}}}=\left(\pi^{-\frac{3(\frac{r_{1}}{r}-2)}{2}}\frac{\Gamma(\frac{3+3(\frac{r_{1}}{r}-2)}{2})}{\Gamma(3+\frac{3(\frac{r_{1}}{r}-2)}{2})}\left(\frac{\Gamma(3)}{\Gamma(\frac{3}{2})}\right)^{\frac{3+3(\frac{r_{1}}{r}-2)}{3}}\right)^{-\frac{1}{r_{1}}}\to 1,
    \end{aligned}
\end{equation*}
\begin{equation*}
    \begin{aligned}
        &\left(\int_{\Omega}\frac{t^{\frac{2r}{r-1}}}{(1+t^{2}|x-z|^{2})^{\frac{2r}{r-1}}}dx\right)^{\frac{r-1}{r}}\\
        &=\left(\int_{\R^{3}}\frac{t^{\frac{2r}{r-1}}}{(1+t^{2}|x-z|^{2})^{\frac{2r}{r-1}}}dx\right)^{\frac{r-1}{r}}-\left(\int_{\R^{3}\setminus\Omega}\frac{t^{\frac{2r}{r-1}}}{(1+t^{2}|x-z|^{2})^{\frac{2r}{r-1}}}dx\right)^{\frac{r-1}{r}}\\
        &\to\left(\frac{\pi}{2}\right)^{\frac{4}{3}}-\left(\int_{\R^{3}\setminus{t(\Omega-z)}}\frac{1}{(1+|x|^{2})^{3}}dx\right)^{\frac{2}{3}},
    \end{aligned}
\end{equation*}
\begin{equation*}
    \left(\int_{\Omega}\int_{\Omega}|x-y|^{(3(\frac{r_{1}}{r}-2)+\frac{\alpha r_{1}}{6-\alpha})(\frac{6-\alpha}{6-\alpha-r_{1}})}dxdy\right)^{\frac{6-\alpha-r_{1}}{(6-\alpha)r_{1}}}\to 1,
\end{equation*}
\begin{equation*}
    \begin{aligned}
        12S_{HL}^{-1}=12S^{-1}C_{\alpha}^{\frac{1}{6-\alpha}}=4\left( \frac\pi 2 \right)^{-4/3}\left(\pi^{\frac{\alpha}{2}}\frac{\Gamma(\frac{3-\alpha}{2})}{\Gamma(3-\frac{\alpha}{2})}\left(\frac{\Gamma(3)}{\Gamma(\frac{3}{2})}\right)^{\frac{3-\alpha}{3}}\right)^{\frac{1}{6-\alpha}}\to 4\left( \frac\pi 2 \right)^{-\frac{4}{3}}.
    \end{aligned}
\end{equation*}
Thus we obtain that, as $\alpha\to0$, 
\begin{equation*}
    12S_{HL}^{-1}\int_{\Omega}\frac{t^{2}}{(1+t^{2}|x-z|^{2})^{2}}u_{a}^{2}dx\to 4- 4\left( \frac\pi 2 \right)^{-4/3}\left(\int_{\R^{3}\setminus{t(\Omega}-z)}\frac{1}{(1+|x|^{2})^{3}}dx\right)^{\frac{2}{3}}<4.
\end{equation*}
This contradicts \eqref{assumption on a-eq-67}, and the proof is complete.
\end{proof}

We now assume $S_{HL}(a)<S_{HL}$. It follows from Proposition \ref{coro for S-HL-a-delta} that there exists a critical constant $B(a)>0$ such that
\begin{equation}\label{assum-1}
    S_{HL}(a+\varepsilon)=S_{HL}\text{~for any~}\varepsilon\geq B(a)\text{~and~}S_{HL}(a+\varepsilon)<S_{HL}\text{~for any~}\varepsilon<B(a).
\end{equation}
Let $\bar{a}:=a+B(a)$. Theorem \ref{thm-1} yields that $S_{HL}(\bar{a}-\varepsilon)$ is achieved for each $\varepsilon>0$. Let $u_{\varepsilon}\in H^{1}_{0}(\Omega)$ be a minimizer for $S_{HL}(\bar{a}-\varepsilon)$. Then by the standard elliptic regularity theory, the Lagrange multiplier theorem and the maximum principle, we obtain
\begin{equation}\label{equation for u varepsilon}
    \begin{cases}
        -\Delta u_{\varepsilon}+(\bar{a}-\varepsilon) u_{\varepsilon}=\left(\int_{\Omega}\frac{u_{\varepsilon}^{6-\alpha}(y)}{|x-y|^\alpha}dy\right)u_{\varepsilon}^{5-\alpha}
  \ \  &\mbox{in}\ \Omega,\\
  u_{\varepsilon}>0 \ \ &\mbox{in}\ \Omega,\\
  u_{\varepsilon}=0
  \ \  &\mbox{on}\ \partial \Omega,
    \end{cases}
\end{equation}
and 
\begin{equation}\label{renormalization u varepsilon}
   \int_{\Omega}\int_{\Omega}\frac{u_{\varepsilon}^{6-\alpha}(x)u_{\varepsilon}^{6-\alpha}(y)}{|x-y|^{\alpha}}dxdy=S_{HL}^{\frac{6-\alpha}{5-\alpha}}(\bar{a}-\varepsilon). 
\end{equation}

\begin{Prop} \label{lemma-1}  
The sequence $\{u_{\varepsilon}\}$ is bounded in $H^{1}_{0}(\Omega)$. Moreover, up to a subsequence, we have as $\varepsilon\to 0$ 
\begin{equation*}
    \begin{aligned}
     u_{\varepsilon}&\to 0\text{~weakly but not strongly in~}H_{0}^{1}(\Omega),\\
     u_{\varepsilon}&\to 0\text{~strongly in~}L^{2}(\Omega),\\
     u_{\varepsilon}&\to 0\text{~a.e. on~}\Omega.
    \end{aligned}
\end{equation*}
Furthermore, there are sequences $\{\mu_{\varepsilon}\}\subset\R$, $\{\xi_{\varepsilon}\}\subset\Omega$, $\{\lambda_{\varepsilon}\}\subset\R^+$ and $\{w_{\varepsilon}\}\subset T_{\xi_{\varepsilon},\lambda_{\varepsilon}}^{\perp}$ such that, up to a subsequence,
\begin{equation*}
u_{\varepsilon}=\mu_{\varepsilon}(P\bar{U}_{\xi_{\varepsilon},\lambda_{\varepsilon}}+w_{\varepsilon}).
\end{equation*}
Let $d_{\varepsilon}:=\text{dist~}(\xi_{\varepsilon},\partial\Omega)$. As $\varepsilon\to0$, the following hold
\begin{equation*}
    \mu_{\varepsilon}\to 1,\ \ \xi_{\varepsilon}\to \xi_{0}\in\bar{\Omega},\ \ \lambda_{\varepsilon}d_{\varepsilon}\to\infty,\ \|w_{\varepsilon}\|_{H^{1}_{0}(\Omega)}\to 0.
\end{equation*}
\end{Prop}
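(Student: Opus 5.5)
Boundedness comes first. By \eqref{renormalization u varepsilon} and the equation \eqref{equation for u varepsilon}, testing with $u_\varepsilon$ gives
\[
\int_\Omega |\nabla u_\varepsilon|^2+(\bar a-\varepsilon)u_\varepsilon^2\,dx = S_{HL}(\bar a-\varepsilon)^{\frac{6-\alpha}{5-\alpha}}\le S_{HL}^{\frac{6-\alpha}{5-\alpha}} .
\]
Coercivity of $-\Delta+\bar a-\varepsilon$ holds uniformly for small $\varepsilon$: since $S_{HL}(\bar a)=S_{HL}>0$, the first eigenvalue of $-\Delta+\bar a$ is positive, because otherwise the Lipschitz estimate \eqref{energy estimate proof 14} and Proposition \ref{coro for S-HL-a-delta} would force $S_{HL}(\bar a)<S_{HL}$. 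This yields the $H^1_0$ bound. After extracting a subsequence, $u_\varepsilon\rightharpoonup u$ weakly in $H^1_0$, strongly in $L^2$ and a.e. I then show $u\equiv0$. If $u\not\equiv0$, I pass to the limit in the weak form of \eqref{equation for u varepsilon}; the nonlocal term converges by HLS, Brezis--Lieb and a.e. convergence. So $u$ is a nonnegative nontrivial solution with $a$ replaced by $\bar a$. Next I use the Brezis--Lieb splitting exactly as in Step 2 of Theorem \ref{thm-1}: with $v_\varepsilon=u_\varepsilon-u$, both the quadratic form and $\|\cdot\|_{HL}^{2(6-\alpha)}$ split. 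Combining this with $\|\nabla v_\varepsilon\|^2\ge S_{HL}\|v_\varepsilon\|_{HL}^2$ and $S_{HL}(\bar a-\varepsilon)\to S_{HL}(\bar a)=S_{HL}$ (Lipschitz continuity), the concavity of $t\mapsto t^{1/(6-\alpha)}$ shows that either $u$ attains $S_{HL}(\bar a)$ or $v_\varepsilon$ carries all the mass. The first alternative makes $S_{HL}(\bar a)$ achieved. To rule it out without invoking Proposition \ref{assumption on a} (which needs small $\alpha$), I compare $\bar a-\varepsilon'<\bar a$: a minimizer $u$ of $S_{HL}(\bar a)=S_{HL}$ would be a test function for $S_{HL}(\bar a+\delta)$, $\delta>0$, giving $S_{HL}(\bar a+\delta)\le S_{HL}+\delta\int u^2/\|u\|_{HL}^2$. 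This is not a contradiction, so instead I apply the Pohozaev-free monotonicity trick: $S_{HL}(\bar a+\delta)=S_{HL}$ while $\int(\bar a+\delta)u^2>\int \bar a u^2$, which gives $S_{HL}(\bar a+\delta)> S_{HL}$, impossible since $S_{HL}(\cdot)\le S_{HL}$ by Proposition \ref{prop of SHL and SHLa}. Hence $u\equiv0$. Convergence is not strong, because $\|u_\varepsilon\|_{HL}$ stays bounded below by \eqref{renormalization u varepsilon} and $S_{HL}(\bar a-\varepsilon)\to S_{HL}>0$.

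Next comes the bubble decomposition. Since $u_\varepsilon\to0$ in $L^2$, $\|\nabla u_\varepsilon\|_2^2\to S_{HL}^{(6-\alpha)/(5-\alpha)}$ and $\|u_\varepsilon\|_{HL}^{2(6-\alpha)}\to S_{HL}^{(6-\alpha)/(5-\alpha)}$. Thus $u_\varepsilon$ (extended by zero) is a minimizing sequence for $S_{HL}$ in $\dot H^1(\R^3)$. By the concentration-compactness principle for the HLS--Sobolev quotient (Lions, adapted as in \cite{Gao2016TheBT}), there exist $x_\varepsilon$ and $\lambda_\varepsilon\to\infty$ such that $\lambda_\varepsilon^{-1/2}u_\varepsilon(x_\varepsilon+\cdot/\lambda_\varepsilon)$ converges strongly in $\dot H^1$ to a minimizer, i.e.\ to some $\bar U_{0,1}$ after the normalization \eqref{renormalization u varepsilon}. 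Here vanishing is excluded because $u_\varepsilon\rightharpoonup0$, and dichotomy is excluded by strict subadditivity. Hence $\|u_\varepsilon-\bar U_{x_\varepsilon,\lambda_\varepsilon}\|_{\dot H^1(\R^3)}\to0$. Because $u_\varepsilon$ vanishes outside $\Omega$, this forces $\lambda_\varepsilon\operatorname{dist}(x_\varepsilon,\R^3\setminus\Omega)\to\infty$: otherwise a fixed fraction of the limiting bubble's $\dot H^1$-energy would lie outside a half-space in the blow-up limit.

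Finally I set up the projection. I minimize $\|\nabla(u_\varepsilon-\mu P\bar U_{\xi,\lambda})\|_2$ over $\mu>0$, $\xi\in\Omega$, $\lambda>0$. Lemma \ref{lemma PU} gives $\|\nabla(\bar U_{\xi,\lambda}-P\bar U_{\xi,\lambda})\|\lesssim(\lambda d_\xi)^{-1/2}$, so the infimum tends to $0$ and is attained in the interior of a neighborhood of $(1,x_\varepsilon,\lambda_\varepsilon)$. The standard argument of Bahri--Coron/Rey shows that the minimizer satisfies $\lambda_\varepsilon d_\varepsilon\to\infty$, $\mu_\varepsilon\to1$, and $\xi_\varepsilon-x_\varepsilon=o(1)$. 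The first-order conditions in $\mu,\xi,\lambda$ say precisely that $w_\varepsilon:=\mu_\varepsilon^{-1}u_\varepsilon-P\bar U_{\xi_\varepsilon,\lambda_\varepsilon}\in T^\perp_{\xi_\varepsilon,\lambda_\varepsilon}$, and $\|w_\varepsilon\|\to0$. Compactness of $\bar\Omega$ gives $\xi_\varepsilon\to\xi_0\in\bar\Omega$ along a subsequence. The main obstacle is the exclusion of a nonzero weak limit, i.e.\ the first step; there the monotonicity/strict-inequality argument must be made rigorous.
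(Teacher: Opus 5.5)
There is a genuine gap at the step you yourself flag as the main obstacle. The rest of your plan follows the paper's architecture: the energy identity plus coercivity for the $H^1_0$ bound, the Brezis--Lieb splitting giving the dichotomy ``$u\equiv0$ or $u$ minimizes $S_{HL}(\bar a)$'', concentration--compactness, and the Bahri--Coron projection.

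Your ``monotonicity trick'' infers $S_{HL}(\bar a+\delta)>S_{HL}$ from $\int_\Omega(\bar a+\delta)u^2>\int_\Omega\bar a u^2$. That inequality only gives $S_{HL}(\bar a+\delta)[u]>S_{HL}$ for this one function $u$. Evaluating the quotient at a particular function bounds the infimum from above, never from below. In fact $S_{HL}(\bar a+\delta)=S_{HL}$ by \eqref{assum-1}, with the infimum approached by concentrating bubbles rather than by $u$, so nothing is contradicted. Likewise, comparing with $\bar a-\varepsilon'$ only gives $S_{HL}(\bar a-\varepsilon')<S_{HL}$, which is consistent with a minimizer existing. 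Hence the alternative ``$u\not\equiv0$ attains $S_{HL}(\bar a)=S_{HL}$'' is left open. In that case strict subadditivity of $t\mapsto t^{1/(6-\alpha)}$ would even force $u_\varepsilon\to u$ strongly, destroying the blow-up picture.

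What is missing is a genuine non-attainment result at the threshold, namely Proposition \ref{assumption on a}. The paper invokes it at exactly this point, and its proof is not soft:
\begin{itemize}
\item One tests the minimality of $u_a$ against $u_a(1+\varepsilon\varphi)$.
\item Positive definiteness of the Riesz kernel (Cauchy--Schwarz, $A_1^2\le A_2$) then gives $(4-\alpha)A_3\le(4-\alpha)A_1^2+S_{HL}^{-1}\int_\Omega u_a^2|\nabla\varphi|^2$.
\item One takes four conformal functions $\varphi_i$ with $\sum_i\varphi_i^2=1$, balanced by a Brouwer argument so that $A_1=0$ for each. This yields $4-\alpha\le 12S_{HL}^{-1}\int_\Omega t^2(1+t^2|x-z|^2)^{-2}u_a^2$.
\item This is contradicted with H\"older and reversed HLS bounds whose constants are controlled only as $\alpha\to0$.
\end{itemize}
This is why the present proposition implicitly inherits the small-$\alpha$ hypothesis. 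Your attempt to bypass Proposition \ref{assumption on a} would prove more than the paper does, and the argument offered does not achieve it. Once Proposition \ref{assumption on a} is used in its place, the remainder of your plan agrees with the paper's.

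A minor imprecision in the concentration--compactness step: vanishing is excluded by the lower bound on $\|u_\varepsilon\|_{HL}$ together with the scale normalization of the concentration function, not by $u_\varepsilon\rightharpoonup0$. The weak convergence to zero is what forces $\lambda_\varepsilon\to\infty$.
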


\begin{proof}
Integrating the equation \eqref{equation for u varepsilon} against $u_{\varepsilon}$, and using \eqref{assum-1} and \eqref{renormalization u varepsilon}, we have
   \begin{equation}\label{first expansion-proof-1}
   \begin{aligned}
        \int_\Omega |\nabla u_{\varepsilon}|^2+ (\bar{a}-\varepsilon) u_{\varepsilon}^2dx&=\int_{\Omega}\int_{\Omega}\frac{u_{\varepsilon}^{6-\alpha}(x)u_{\varepsilon}^{6-\alpha}(y)}{|x-y|^{\alpha}}dxdy\\
        &=S_{HL}^{\frac{6-\alpha}{5-\alpha}}(\bar{a}-\varepsilon)\to S_{HL}^{\frac{6-\alpha}{5-\alpha}}\text{~as~}\varepsilon\to0.
   \end{aligned}    
   \end{equation}
Combining this with the coercivity of the operator $-\Delta+(\bar{a}-\varepsilon)$, we conclude that $\{u_{\varepsilon}\}$ is bounded in $H^{1}_{0}(\Omega)$. Then, up to a subsequence, there exists $u_{0}\in H^{1}_{0}(\Omega)$ such that $u_{\varepsilon}\to u_{0}$ weakly in $H^{1}_{0}(\Omega)$ as $\varepsilon\to 0$. Next, we show that $u_{0}\equiv 0$. Let $v_{\varepsilon}:=u_{\varepsilon}-u_{0}$. By Rellich theorem, up to a subsequence, we have
\begin{equation*}
    \begin{aligned}
     v_{\varepsilon}&\to 0\text{~weakly in~}H_{0}^{1}(\Omega),\\
     v_{\varepsilon}&\to 0\text{~strongly in~}L^{2}(\Omega),\\
      v_{\varepsilon}&\to 0\text{~a.e. on~}\Omega.
    \end{aligned}
\end{equation*}
It then follows from the Brezis-Lieb lemma (see \cite{BrezisLieblemma,MOROZ2013153}) that
\begin{equation*}
 \mathcal{T}+\int_\Omega |\nabla u_{0}|^2dx+\bar{a}\int_\Omega | u_{0}|^2dx=S_{HL}\left(\mathcal{M}+\|u_{0}\|_{HL}^{2(6-\alpha)}\right)^{\frac{1}{6-\alpha}}, 
\end{equation*}
where $\|\cdot\|_{HL}$ is the norm defined in \eqref{defin-norm-HL},
\begin{equation*}
    \mathcal{T}:=\lim_{\varepsilon\to 0}\int_\Omega |\nabla v_{\varepsilon}|^2dx\text{~and~}\mathcal{M}:=\lim_{\varepsilon\to 0}\|v_{\varepsilon}\|_{HL}^{2(6-\alpha)}.
\end{equation*}
Moreover, by $S_{HL}(0)=S_{HL}$ and Lemma \ref{lem inequality 1}, we find that 
\begin{equation*}
    \mathcal{T}\geq S_{HL}\mathcal{M}^{\frac{1}{6-\alpha}}
\end{equation*}
and 
\begin{equation*}
  \left(\mathcal{M}+\|u_{0}\|_{HL}^{2(6-\alpha)}\right)^{\frac{1}{6-\alpha}}\leq \mathcal{M}^{\frac{1}{6-\alpha}} + \|u_{0}\|_{HL}^{2}.
\end{equation*}
Combining the estimates above, we obtain
\begin{equation*}
    S_{HL}\|u_{0}\|_{HL}^{2}\geq\int_\Omega |\nabla u_{0}|^2dx+\bar{a}\int_\Omega | u_{0}|^2dx.
\end{equation*}
Thus, either $u_{0}\equiv 0$ or $u_{0}$ is a minimizer of $S_{HL}(\bar{a})$. By Proposition \ref{assumption on a}, we conclude that $u_{0}\equiv 0$. Now, if $u_{\varepsilon}\to 0$ strongly in $H^{1}_{0}(\Omega)$ as $\varepsilon\to 0$, then by the HLS inequality and the Sobolev inequality, we have $\|u\|_{HL}^{2(6-\alpha)}\to 0$ as $\varepsilon\to 0$, which contradicts \eqref{renormalization u varepsilon}. Therefore, $u_{\varepsilon}$ does not converge strongly to $0$ in $H^{1}_{0}(\Omega)$. 

Since $u_{\varepsilon}\to 0$ strongly in $L^{2}(\Omega)$, it follows that $u_{\varepsilon}$ is a minimizing sequence for $S_{HL}$ (see \eqref{optimization problem for shl}). Therefore, by the concentration-compactness theorem (see \cite{Yang2023BlowUpBO}), there exist sequences $\{z_{\varepsilon}\}\subset\Omega$, $\eta_{\varepsilon}\subset\R^{+}$ and $\sigma_{\varepsilon}\subset \dot H^{1}(\R^{3})$ such that
\begin{equation*}
    u_{\varepsilon}=\bar{U}_{z_{\varepsilon},\eta_{\varepsilon}}+\sigma_{\varepsilon}, 
\end{equation*}
with $\eta_{\varepsilon}\text{dist}(z_{\varepsilon},\partial\Omega)\to\infty \text{~and~}\sigma_{\varepsilon}\to0\text{~strongly in~}\dot H^{1}(\R^{3})\text{~as~}\varepsilon\to0$.
Moreover, by Lemma \ref{lemma PU}, we have
\begin{equation*}
    \|u_{\varepsilon}-P\bar{U}_{z_{\varepsilon},\eta_{\varepsilon}}\|_{H^{1}_{0}(\Omega)}\to0\text{~as~}\varepsilon\to0.
\end{equation*}
Combining this with \cite[Proposition 7]{Bahri1988CPAM}, we find that there exist sequences $\{\mu_{\varepsilon}\}\subset\R$, $\{\xi_{\varepsilon}\}\subset \Omega$, $\{\lambda_{\varepsilon}\}\subset \R^{+}$, and functions $w_{\varepsilon}\in T_{\xi_{\varepsilon},\lambda_{\varepsilon}}^{\perp}$ such that
\begin{equation*}
u_{\varepsilon}=\mu_{\varepsilon}(P\bar{U}_{\xi_{\varepsilon},\lambda_{\varepsilon}}+w_{\varepsilon}),
\end{equation*}
where $\{\mu_{\varepsilon}\}$ is bounded, $\lambda_{\varepsilon}\text{dist}(\xi_{\varepsilon},\partial\Omega)\to\infty$, and $\|w_{\varepsilon}\|_{H^{1}_{0}(\Omega)}\to 0$ as $\varepsilon\to 0$. Finally, by Lemma \ref{lemma PU}, we obtain
\begin{equation*}
    \begin{aligned}
         \int_\Omega |\nabla u_{\varepsilon}|^2+ (\bar{a}-\varepsilon) u_{\varepsilon}^2dx&=\mu_{\varepsilon}^{2}\int_{\Omega}|\nabla P\bar{U}_{\xi_{\varepsilon},\lambda_{\varepsilon}}|^{2}dx+o(1)=\mu_{\varepsilon}^{2}S_{HL}^{\frac{6-\alpha}{5-\alpha}}+o(1).
    \end{aligned}
\end{equation*}
It then follows from \eqref{first expansion-proof-1} that $\mu_{\varepsilon}\to 1$. This completes the proof.
\end{proof}

\begin{Lem}\label{lemma coercivity}
There exists constant $\rho>0$ such that 
\begin{equation} \label{coercivity}
\begin{aligned}
\int_\Omega |\nabla v|^2 + \bar{a} v^2 dx&-(6-\alpha)\int_{\Omega}\int_{\Omega}\frac{P\bar{U}_{\xi_{\varepsilon},\lambda_{\varepsilon}}^{5-\alpha}(y)v(y)P\bar{U}_{\xi_{\varepsilon},\lambda_{\varepsilon}}^{5-\alpha}(x)v(x)}{|x-y|^{\alpha}}dxdy\\
 &-(5-\alpha)\int_{\Omega}\int_{\Omega}\frac{P\bar{U}_{\xi_{\varepsilon},\lambda_{\varepsilon}}^{6-\alpha}(y)P\bar{U}_{\xi_{\varepsilon},\lambda_{\varepsilon}}^{4-\alpha}(x)v^2(x)}{|x-y|^{\alpha}}dxdy \\
 &\geq \rho \int_\Omega |\nabla v|^2dx,  
\end{aligned}
\end{equation}
for any $v\in T_{\xi_{\varepsilon},\lambda_{\varepsilon}}^\bot$ and any sufficiently small $\varepsilon>0$. 
\end{Lem}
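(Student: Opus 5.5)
We argue by contradiction combined with a blow-up rescaling, using the nondegeneracy result (Theorem \ref{non-degeneracy property}) as the key ingredient. Denote by $Q_\varepsilon(v)$ the left-hand side of \eqref{coercivity}.

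Suppose the claim fails. Then, along a subsequence $\varepsilon\to0$, there are $v_\varepsilon\in T^\perp_{\xi_\varepsilon,\lambda_\varepsilon}$ with $\|\nabla v_\varepsilon\|_{L^2(\Omega)}=1$ and $Q_\varepsilon(v_\varepsilon)\le o(1)$.

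First we remove the lower order terms. By Proposition \ref{lemma-1} we have $\lambda_\varepsilon d_\varepsilon\to\infty$, so Lemma \ref{lemma PU} gives $\|P\bar U_{\xi_\varepsilon,\lambda_\varepsilon}-\bar U_{\xi_\varepsilon,\lambda_\varepsilon}\|_{L^6}\to0$. Using the HLS inequality together with Lemma \ref{lem inequality 2}, we may replace $P\bar U$ by $\bar U$ in both nonlocal terms at a cost of $o(1)\|\nabla v\|_2^2$.

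Next we rescale. Set $\tilde v_\varepsilon(z)=\lambda_\varepsilon^{-1/2}v_\varepsilon(\xi_\varepsilon+z/\lambda_\varepsilon)$, extended by zero. This is bounded in $\dot H^1(\mathbb R^3)$, so up to a subsequence $\tilde v_\varepsilon\rightharpoonup \tilde v$ weakly in $\dot H^1$ and strongly in $L^2_{loc}$; moreover $\tilde\Omega_\varepsilon=\lambda_\varepsilon(\Omega-\xi_\varepsilon)$ exhausts $\mathbb R^3$. Under this rescaling the two nonlocal quadratic forms become the $\varepsilon$-independent forms built from $\bar U_{0,1}$. Their kernels, $\bar U^{5-\alpha}$ paired through $|x-y|^{-\alpha}$, and the weight $(|x|^{-\alpha}*\bar U^{6-\alpha})\bar U^{4-\alpha}=c\,\bar U^{4}$ by \eqref{important-identity-1}, decay fast enough to make them weakly continuous on $\dot H^1$. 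Concretely, the weight $\bar U^4\in L^{3/2}$ is integrable against $L^2_{loc}$ convergence with small tails, and the bilinear term is compact for the same reason.

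The term $\int \bar a v^2$ is handled by $\int_\Omega v_\varepsilon^2\le C$ if needed. More precisely, the condition $\int|\nabla v|^2+\bar a v^2\ge c\|\nabla v\|^2$ (coercivity of $-\Delta+\bar a$ at the level of $\bar a-\varepsilon$) already gives us a positive part, but the main mechanism is different. Since $\tilde v_\varepsilon\to\tilde v$ weakly in $\dot H^1$ and $\lambda_\varepsilon^{-2}\bar a(\xi_\varepsilon+z/\lambda_\varepsilon)$ enters the rescaled $L^2$ term, $\int_\Omega \bar a v_\varepsilon^2=\lambda_\varepsilon^{-2}\int\bar a(\cdot)\tilde v_\varepsilon^2$. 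By Hölder this is $O(\|v_\varepsilon\|_{L^2}^2)$, which can be controlled by a standard splitting argument; it is lower order and vanishes in the limit of the local analysis.

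The orthogonality conditions pass to the limit. Since $(v_\varepsilon,P\bar U)=(v_\varepsilon,\bar U)$ and likewise for the $\partial_\lambda$ and $\partial_{\xi_i}$ derivatives (by the definition of $P$), and these inner products are scale invariant after normalizing the derivatives, we obtain $\int\nabla\tilde v\cdot\nabla\bar U=\int\nabla\tilde v\cdot\nabla\partial_\lambda\bar U=\int\nabla\tilde v\cdot\nabla\partial_{\xi_i}\bar U=0$.

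Now comes the step we expect to be the main obstacle: a spectral inequality for the limiting operator $L=-\Delta-K$ on $\dot H^1(\mathbb R^3)$, where $K$ is the sum of the two nonlocal terms at $\bar U=\bar U_{0,1}$. We need $\langle L w,w\rangle\ge\rho_0\|\nabla w\|^2$ for every $w$ orthogonal to $\bar U,\partial_\lambda\bar U,\partial_{\xi_i}\bar U$. To prove it, note that $K$ is compact relative to $-\Delta$, so the form $\langle Kw,w\rangle/\|\nabla w\|^2$ has discrete eigenvalues accumulating only at $0$. The direction $\bar U$ is an eigenfunction with eigenvalue $2(5-\alpha)+1>1$, since $K\bar U=(11-2\alpha)(-\Delta\bar U)$ by \eqref{U-x-lamda for choquard equation}. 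The kernel directions for eigenvalue $1$ are exactly $\partial_\lambda\bar U,\partial_{\xi_i}\bar U$ by Theorem \ref{non-degeneracy property}. What remains is to show that no other eigenvalue is $\ge1$, i.e. that the Morse index of $\bar U$ is one. This follows because $\bar U$ is a minimizer of $S_{HL}$: the second variation of the quotient \eqref{optimization problem for shl} at $\bar U$ is nonnegative on $\{w:(w,\bar U)=0\}$, which is exactly $\langle Lw,w\rangle\ge0$ there (the $A_1^2$-type term vanishes by orthogonality). Hence every eigenvalue on $\bar U^\perp$ is $\le1$, and those equal to $1$ are excluded by nondegeneracy. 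A spectral gap then yields $\rho_0>0$.

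Finally we conclude the contradiction. Write $Q_\varepsilon(v_\varepsilon)=1+\int\bar a v_\varepsilon^2-\langle K\tilde v_\varepsilon,\tilde v_\varepsilon\rangle+o(1)$. By compactness, $\langle K\tilde v_\varepsilon,\tilde v_\varepsilon\rangle\to\langle K\tilde v,\tilde v\rangle$. Since $v_\varepsilon\to0$ in $L^2(\Omega)$ (its weak limit must be zero, because the rescaling concentrates), we get $\int\bar a v_\varepsilon^2\to0$. Indeed, $v_\varepsilon\rightharpoonup v^*$ in $H^1_0(\Omega)$, and we can absorb $v^*$ using the coercivity of $-\Delta+\bar a$. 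To do this, split $v_\varepsilon=v^*+(v_\varepsilon-v^*)$; the cross terms vanish, and the nonlocal forms of $v^*$ tend to zero as the bubble concentrates. This gives
\[
\liminf Q_\varepsilon(v_\varepsilon)\ge c\|\nabla v^*\|^2+\liminf\bigl(\|\nabla(v_\varepsilon-v^*)\|^2-\langle K\tilde v_\varepsilon,\tilde v_\varepsilon\rangle\bigr)\ge c\|\nabla v^*\|^2+\bigl(1-\|\nabla v^*\|^2\bigr)-\langle K\tilde v,\tilde v\rangle .
\]
Because $\|\nabla\tilde v\|^2\le1-\|\nabla v^*\|^2$, the spectral inequality gives $\langle K\tilde v,\tilde v\rangle\le(1-\rho_0)\|\nabla\tilde v\|^2$. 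Therefore $\liminf Q_\varepsilon\ge\min(c,\rho_0)>0$, contradicting $Q_\varepsilon(v_\varepsilon)\le o(1)$.
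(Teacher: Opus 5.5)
Your argument is correct, but it is organized differently from the paper's. The paper works in two steps. In Step~1 it drops the $\bar a$-term and shows by contradiction that $\|\Pi^{\bot}_{\xi_\varepsilon,\lambda_\varepsilon}L_\varepsilon v\|_{H^1_0}\ge\rho\|v\|_{H^1_0}$ on $T^{\bot}_{\xi_\varepsilon,\lambda_\varepsilon}$. The mechanism is that an almost-kernel element rescales to a solution of the linearized equation, which Theorem~\ref{non-degeneracy property} together with orthogonality forces to vanish, and then the nonlocal terms go to zero. The paper then asserts that this invertibility bound suffices for the quadratic-form bound. In Step~2 it reinserts $\int\bar a v^2$ through the constrained infimum $C_\varepsilon$: minimizers have a weak limit solving $-(1-L)\Delta v+\bar a v=0$ with $L=\lim C_\varepsilon\le0$, hence the limit is zero.

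You instead first prove a quantitative spectral gap for the limiting form $\|\nabla w\|^2-\langle Kw,w\rangle$ on the $\dot H^1(\R^3)$-complement of $\bar U,\partial_\lambda\bar U,\partial_{\xi_i}\bar U$. Your ingredients are compactness of $K$, the eigenvalue $11-2\alpha$ on $\bar U$, nondegeneracy at eigenvalue $1$, and Morse index one from the minimality of $\bar U$ for $S_{HL}$. You then run a single contradiction argument. It splits $v_\varepsilon$ into its weak limit $v^*$ in $H^1_0(\Omega)$, handled by coercivity of $-\Delta+\bar a$, and a concentrating part, handled by the gap.

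What your route buys is precisely the Morse-index input. A uniform lower bound on $\|\Pi^{\bot}L_\varepsilon v\|$ only keeps the spectrum of the self-adjoint operator $\Pi^{\bot}L_\varepsilon|_{T^{\bot}}$ away from $0$. It does not by itself exclude a negative direction inside $T^{\bot}$. So the paper's ``it suffices'' tacitly relies on the fact you derive from the second variation of the $S_{HL}$-quotient at $\bar U$. You also avoid the attainment and Lagrange-multiplier bookkeeping of Step~2.

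Two small repairs to your write-up:
\begin{enumerate}
\item The sentence asserting $v_\varepsilon\to0$ in $L^2(\Omega)$ because ``the rescaling concentrates'' is false in general, since an element of $T^{\bot}$ can have a nonzero weak limit. Simply delete it; your splitting does not use it.
\item The inequality $\|\nabla\tilde v\|_2^2\le1-\|\nabla v^*\|_2^2$ needs justification. Note that $\lambda_\varepsilon^{-1/2}v^*(\xi_\varepsilon+\cdot/\lambda_\varepsilon)\rightharpoonup0$ in $\dot H^1(\R^3)$, so $\tilde v$ is also the weak limit of the rescaled $v_\varepsilon-v^*$. Then apply weak lower semicontinuity.
\end{enumerate}
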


\begin{proof}
\textbf{Step 1} In this step, we will show that there exists a constant $\rho>0$ such that
 \begin{equation}\label{coercivity proof step 1-1} 
\begin{aligned}
\int_\Omega |\nabla v|^2 dx&-(6-\alpha)\int_{\Omega}\int_{\Omega}\frac{P\bar{U}_{\xi_{\varepsilon},\lambda_{\varepsilon}}^{5-\alpha}(y)v(y)P\bar{U}_{\xi_{\varepsilon},\lambda_{\varepsilon}}^{5-\alpha}(x)v(x)}{|x-y|^{\alpha}}dxdy\\
 &-(5-\alpha)\int_{\Omega}\int_{\Omega}\frac{P\bar{U}_{\xi_{\varepsilon},\lambda_{\varepsilon}}^{6-\alpha}(y)P\bar{U}_{\xi_{\varepsilon},\lambda_{\varepsilon}}^{4-\alpha}(x)v^2(x)}{|x-y|^{\alpha}}dxdy \\
 &\geq \rho \int_\Omega |\nabla v|^2 ,    
\end{aligned}
\end{equation}
for any $\varepsilon>0$ small enough and any $v\in T_{\xi_{\varepsilon},\lambda_{\varepsilon}}^\bot$. First, we define the operator $L_{\varepsilon}:H^{1}_{0}(\Omega)\to H^{1}_{0}(\Omega)$ as follows
\begin{equation*}
\begin{aligned}
     (L_{\varepsilon}v)(x):=(-\Delta v)(x)&-(6-\alpha)\left(\int_{\Omega}\frac{P\bar{U}_{\xi_{\varepsilon},\lambda_{\varepsilon}}^{5-\alpha}(y)v(y)}{|x-y|^{\alpha}}dy\right)P\bar{U}_{\xi_{\varepsilon},\lambda_{\varepsilon}}^{5-\alpha}(x)\\
     &-(5-\alpha)\left(\int_{\Omega}\frac{P\bar{U}_{\xi_{\varepsilon},\lambda_{\varepsilon}}^{6-\alpha}(y)}{|x-y|^{\alpha}}dy\right)P\bar{U}_{\xi_{\varepsilon},\lambda_{\varepsilon}}^{4-\alpha}(x)v(x).
\end{aligned}
\end{equation*}
Notice that operator $\Pi_{\xi_{\varepsilon},\lambda_{\varepsilon}}^{\bot}L_{\varepsilon}:T^{\bot}_{\xi_{\varepsilon},\lambda_{\varepsilon}}\to T^{\bot}_{\xi_{\varepsilon},\lambda_{\varepsilon}}$ is self-adjoint and for any given $u,v\in T^{\bot}_{\xi_{\varepsilon},\lambda_{\varepsilon}}$
\begin{equation*}
    \begin{aligned}
        \langle \Pi_{\xi_{\varepsilon},\lambda_{\varepsilon}}^{\bot}L_{\varepsilon}u,v\rangle&=\langle L_{\varepsilon}u,v\rangle-\langle\Pi_{\xi_{\varepsilon},\lambda_{\varepsilon}}L_{\varepsilon}u,v\rangle\\
        &=\int_\Omega \nabla u\cdot\nabla v dx-(6-\alpha)\int_{\Omega}\int_{\Omega}\frac{P\bar{U}_{\xi_{\varepsilon},\lambda_{\varepsilon}}^{5-\alpha}(y)u(y)P\bar{U}_{\xi_{\varepsilon},\lambda_{\varepsilon}}^{5-\alpha}(x)v(x)}{|x-y|^{\alpha}}dxdy\\
 &\quad-(5-\alpha)\int_{\Omega}\int_{\Omega}\frac{P\bar{U}_{\xi_{\varepsilon},\lambda_{\varepsilon}}^{6-\alpha}(y)P\bar{U}_{\xi_{\varepsilon},\lambda_{\varepsilon}}^{4-\alpha}(x)u(x)v(x)}{|x-y|^{\alpha}}dxdy.
    \end{aligned}
\end{equation*}
It then suffices to show that there exists a constant $\rho>0$ such that 
\begin{equation*}
    \|\Pi_{\xi_{\varepsilon},\lambda_{\varepsilon}}^{\bot}L_{\varepsilon}v\|_{H^{1}_{0}}\geq \rho \|v\|_{H^{1}_{0}},
\end{equation*}
for any $\varepsilon>0$ small enough and any $v\in T_{\xi_{\varepsilon},\lambda_{\varepsilon}}^\bot$. Assume by contradiction that there exists $v_{\varepsilon}\in T_{\xi_{\varepsilon},\lambda_{\varepsilon}}^\bot$ with $\|v_{\varepsilon}\|_{H^{1}_{0}(\Omega)}=1$ such that for any $\varphi\in T_{\xi_{\varepsilon},\lambda_{\varepsilon}}^\bot$
\begin{equation*}
  \begin{aligned}
\int_\Omega \nabla v_{\varepsilon}\cdot\nabla\varphi dx&-(6-\alpha)\int_{\Omega}\int_{\Omega}\frac{P\bar{U}_{\xi_{\varepsilon},\lambda_{\varepsilon}}^{5-\alpha}(y)v_{\varepsilon}(y)P\bar{U}_{\xi_{\varepsilon},\lambda_{\varepsilon}}^{5-\alpha}(x)\varphi(x)}{|x-y|^{\alpha}}dxdy\\
 &-(5-\alpha)\int_{\Omega}\int_{\Omega}\frac{P\bar{U}_{\xi_{\varepsilon},\lambda_{\varepsilon}}^{6-\alpha}(y)P\bar{U}_{\xi_{\varepsilon},\lambda_{\varepsilon}}^{4-\alpha}(x)v_{\varepsilon}(x)\varphi(x)}{|x-y|^{\alpha}}dxdy \\
 &=\langle\Pi_{\xi_{\varepsilon},\lambda_{\varepsilon}}^{\bot} L_{\varepsilon}v_{\varepsilon},\varphi \rangle\leq \|\Pi_{\xi_{\varepsilon},\lambda_{\varepsilon}}^{\bot}L_{\varepsilon}v_{\varepsilon}\|_{H^{1}_{0}}\|\varphi\|_{H^{1}_{0}}=o(1)\|v_{\varepsilon}\|_{H^{1}_{0}}\|\varphi\|_{H^{1}_{0}}, \text{~as~}\varepsilon\to0.    
\end{aligned}  
\end{equation*}
In particular, taking $\varphi=v_{\varepsilon}$, we have
\begin{equation}\label{indetity for v eps}
    \begin{aligned}
        \int_\Omega |\nabla v_{\varepsilon}|^2 dx&-(6-\alpha)\int_{\Omega}\int_{\Omega}\frac{P\bar{U}_{\xi_{\varepsilon},\lambda_{\varepsilon}}^{5-\alpha}(y)v_{\varepsilon}(y)P\bar{U}_{\xi_{\varepsilon},\lambda_{\varepsilon}}^{5-\alpha}(x)v_{\varepsilon}(x)}{|x-y|^{\alpha}}dxdy\\
 &-(5-\alpha)\int_{\Omega}\int_{\Omega}\frac{P\bar{U}_{\xi_{\varepsilon},\lambda_{\varepsilon}}^{6-\alpha}(y)P\bar{U}_{\xi_{\varepsilon},\lambda_{\varepsilon}}^{4-\alpha}(x)v_{\varepsilon}^2(x)}{|x-y|^{\alpha}}dxdy \\
 &=o(1),\text{~as~}\varepsilon\to0. 
    \end{aligned}
\end{equation}
Next, we define 
\begin{equation*}
\tilde{v}_{\varepsilon}(x):=\lambda_{\varepsilon}^{-1/2}v_{\varepsilon}(\lambda_{\varepsilon}^{-1}x+\xi_{\varepsilon}),\quad \forall x\in \Omega_{\varepsilon}:=\{x\in\R^3:\lambda_{\varepsilon}^{-1}x+\xi_{\varepsilon}\in\Omega\},
\end{equation*}
and set $\tilde{v}_{\varepsilon}(x):=0$, if $x\in\R^{3}\setminus \Omega_{\varepsilon}$. Then $\int_{\R^3}|\nabla \tilde{v}_{\varepsilon} |^2dx=\int_{\Omega}|\nabla {v}_{\varepsilon}| ^2dx=1$. Up to a subsequence, we may assume that $\tilde{v}_{\varepsilon}\to v$ weakly in $\dot{H}^{1}(\R^3)$. Similar to the proof of \cite[Lemma 3.4]{Yang2023BlowUpBO}, we obtain that $v$ satisfies
\begin{equation*}
-\Delta v-(6-\alpha)\left(\int_{\R^3}\frac{\bar{U}_{0,1}^{5-\alpha}(y)v(y)}{|x-y|^{\alpha}}dy\right)\bar{U}_{0,1}^{5-\alpha}-(5-\alpha)\left(\int_{\R^3}\frac{\bar{U}_{0,1}^{6-\alpha}(y)}{|x-y|^{\alpha}}dy\right)\bar{U}_{0,1}^{4-\alpha}v=0.    
\end{equation*}
Then Theorem \ref{non-degeneracy property} yields that there exist constants $a_{0},a_{j}\in \R$, $j=1,2,3$ such that
\begin{equation}\label{form of v}
  v= a_{0}\frac{\partial\bar{U}_{0,\lambda}}{\partial\lambda}|_{\lambda=1}+\sum_{j=1}^{3}a_{j}\frac{\partial\bar{U}_{\xi,1}}{\partial \xi_{j}}|_{\xi=0}.
\end{equation}
Since $v_{\varepsilon}\in T_{\xi_{\varepsilon},\lambda_{\varepsilon}}^{\bot}$ and $\tilde{v}_{\varepsilon}(x)=0$ if $x\in\R^{3}\setminus \Omega_{\varepsilon}$, it holds that
\begin{equation*}
\begin{aligned}
    0=\int_{\Omega} \nabla \frac{\partial P\bar{U}_{\xi_{\varepsilon},\lambda_{\varepsilon}}}{\partial\lambda}\cdot \nabla v_{\varepsilon}dx&=\int_{\Omega} \nabla \frac{\partial \bar{U}_{\xi_{\varepsilon},\lambda_{\varepsilon}}}{\partial\lambda} \cdot\nabla v_{\varepsilon}dx\\
    &=\lambda_{\varepsilon}^{-1}\int_{\Omega_{\varepsilon}}\nabla \frac{\partial \bar{U}_{0,\lambda}}{\partial\lambda}|_{\lambda=1} \cdot\nabla \tilde{v}_{\varepsilon}dx=\lambda_{\varepsilon}^{-1}\int_{\R^3}\nabla \frac{\partial \bar{U}_{0,\lambda}}{\partial\lambda}|_{\lambda=1}\cdot \nabla \tilde{v}_{\varepsilon}dx,
\end{aligned}
\end{equation*}
and for $j=1,2,3$
\begin{equation*}
\begin{aligned}
   0=\int_{\Omega} \nabla \frac{\partial P\bar{U}_{\xi_{\varepsilon},\lambda_{\varepsilon}}}{\partial \xi_{j}} \cdot\nabla v_{\varepsilon}dx&=\int_{\Omega} \nabla \frac{\partial \bar{U}_{\xi_{\varepsilon},\lambda_{\varepsilon}}}{\partial \xi_{j}}\cdot \nabla v_{\varepsilon}dx\\
   &=\lambda_{\varepsilon}\int_{\Omega_{\varepsilon}} \nabla \frac{\partial \bar{U}_{\xi,1}}{\partial \xi_{j}}|_{\xi=0} \cdot\nabla \tilde{v}_{\varepsilon}dx=\lambda_{\varepsilon}\int_{\R^3} \nabla \frac{\partial \bar{U}_{\xi,1}}{\partial \xi_{j}}|_{\xi=0} \cdot\nabla \tilde{v}_{\varepsilon}dx. 
\end{aligned}  
\end{equation*}
Notice that $\tilde{v}_{\varepsilon}\to v$ weakly in $\dot{H}^{1}(\R^3)$. Letting $\varepsilon\to0$, we have 
\begin{equation}\label{orthogonality of v}
    \int_{\R^3}\nabla \frac{\partial \bar{U}_{0,\lambda}}{\partial\lambda}|_{\lambda=1}\cdot \nabla {v}dx=\int_{\R^3} \nabla \frac{\partial \bar{U}_{\xi,1}}{\partial \xi_{j}}|_{\xi=0}\cdot \nabla {v}dx=0,\quad\text{for~}j=1,2,3.
\end{equation}
Combining \eqref{form of v} and \eqref{orthogonality of v}, we conclude that $v=0$. Moreover, by \eqref{important-identity-1}, the HLS inequality, the H\"older inequality and the Sobolev embedding theorem, we obtain
\begin{equation*}
    \begin{aligned}
&-(6-\alpha)\int_{\Omega}\int_{\Omega}\frac{P\bar{U}_{\xi_{\varepsilon},\lambda_{\varepsilon}}^{5-\alpha}(y)v_{\varepsilon}(y)P\bar{U}_{\xi_{\varepsilon},\lambda_{\varepsilon}}^{5-\alpha}(x)v_{\varepsilon}(x)}{|x-y|^{\alpha}}dxdy\\
&\quad-(5-\alpha)\int_{\Omega}\int_{\Omega}\frac{P\bar{U}_{\xi_{\varepsilon},\lambda_{\varepsilon}}^{6-\alpha}(y)P\bar{U}_{\xi_{\varepsilon},\lambda_{\varepsilon}}^{4-\alpha}(x)v_{\varepsilon}^2(x)}{|x-y|^{\alpha}}dxdy \\
 &\lesssim\left(\int_{B(0,R)}\bar{U}_{0,1}^{\frac{6(5-\alpha)}{3-\alpha}}dx\right)^{\frac{3-\alpha}{3}}\left(\int_{B(0,R)}\tilde{v}_{\varepsilon}^2dx\right)+\left(\int_{\Omega\setminus B(\xi_{\varepsilon},\lambda_{\varepsilon}^{-1}R)}\bar{U}_{\xi_{\varepsilon},\lambda_{\varepsilon}}^{6}dx\right)^{\frac{5-\alpha}{3}}\|\tilde{v}_{\varepsilon}\|_{H^{1}_{0}}^{2}\\
 &\quad +\int_{B(0,R)}\bar{U}_{0,1}^{4}\tilde{v}_{\varepsilon}^{2}dx+\left(\int_{\Omega\setminus B(\xi_{\varepsilon},\lambda_{\varepsilon}^{-1}R)}\bar{U}_{\xi_{\varepsilon},\lambda_{\varepsilon}}^{6}dx\right)^{\frac{2}{3}}\|\tilde{v}_{\varepsilon}\|_{H^{1}_{0}}^{2}\\
 &=o(1),
    \end{aligned}
\end{equation*}
for any sufficiently large $R$ and any sufficiently small $\varepsilon$. This together with \eqref{indetity for v eps} implies that
\begin{equation*}
    \int_{\Omega}|\nabla v_{\varepsilon}|^2dx=o(1),
\end{equation*}
a contradiction to the assumption $\|v_{\varepsilon}\|_{H^{1}_{0}}=1$. Thus \eqref{coercivity proof step 1-1} holds.

\textbf{Step 2} In this step, we use a compactness argument to complete the proof. We first define
 \begin{equation*}
 \begin{aligned}
C_{\varepsilon}:=\inf_{v\in T_{\xi_{\varepsilon},\lambda_{\varepsilon}}^{\bot},\|\nabla v\|_{L^{2}}=1}\left\{1+\int_{\Omega}\bar{a}v^2dx\right.&-(6-\alpha)\int_{\Omega}\int_{\Omega}\frac{P\bar{U}_{\xi_{\varepsilon},\lambda_{\varepsilon}}^{5-\alpha}(y)v(y)P\bar{U}_{\xi_{\varepsilon},\lambda_{\varepsilon}}^{5-\alpha}(x)v(x)}{|x-y|^{\alpha}}dxdy\\
 &\left.-(5-\alpha)\int_{\Omega}\int_{\Omega}\frac{P\bar{U}_{\xi_{\varepsilon},\lambda_{\varepsilon}}^{6-\alpha}(y)P\bar{U}_{\xi_{\varepsilon},\lambda_{\varepsilon}}^{4-\alpha}(x)v^2(x)}{|x-y|^{\alpha}}dxdy\right\}.
 \end{aligned}  
 \end{equation*}
Then $C_{\varepsilon}$ is bounded from below. We first claim that $C_{\varepsilon}$ is attained if $C_{\varepsilon}<1$. Indeed, fix $\varepsilon$ and let $v_{n}\in  T_{\xi_{\varepsilon},\lambda_{\varepsilon}}^{\bot}$ be a minimizing sequence for $C_{\varepsilon}$ such that, up to a subsequence, $v_{n}\to v_{\varepsilon}$ weakly in $H^{1}_{0}(\Omega)$ as $n\to\infty$. Then $\|\nabla v_{\varepsilon}\|_{L^{2}}\leq 1$. By the H\"older inequality, Sobolev embedding theorem and Rellich compact embedding theorem, we have
\begin{equation*}
    \int_{\Omega}\bar{a}(v_{n}^2-v_{\varepsilon}^2)dx\lesssim \|v_{n}+v_{\varepsilon}\|_{L^{2}}\|v_{n}-v_{\varepsilon}\|_{L^{2}}\to 0,\text{~as~}n\to \infty.
\end{equation*}
Moreover, it follows from \eqref{important-identity-1} and the HLS inequality that, as $n\to \infty$
\begin{equation*}
    \begin{aligned}
        &\left|\int_{\Omega}\int_{\Omega}\frac{P\bar{U}_{\xi_{\varepsilon},\lambda_{\varepsilon}}^{6-\alpha}(y)P\bar{U}_{\xi_{\varepsilon},\lambda_{\varepsilon}}^{4-\alpha}(x)(v_{n}^2(x)-v_{\varepsilon}^{2}(x))}{|x-y|^{\alpha}}dxdy\right|\\
        &\lesssim \|P\bar{U}_{\xi_{\varepsilon},\lambda_{\varepsilon}}^{4}\|_{L^{3}}\|v_{n}+v_{\varepsilon}\|_{L^{6}}\|v_{n}-v_{\varepsilon}\|_{L^{2}}\to 0
    \end{aligned}
\end{equation*}
and
\begin{equation*}
\begin{aligned}
   &\left|\int_{\Omega}\int_{\Omega}\frac{P\bar{U}_{\xi_{\varepsilon},\lambda_{\varepsilon}}^{5-\alpha}(y)v_{n}(y)P\bar{U}_{\xi_{\varepsilon},\lambda_{\varepsilon}}^{5-\alpha}(x)v_{n}(x)}{|x-y|^{\alpha}}dxdy\right.\\
   &\quad-\left.\int_{\Omega}\int_{\Omega}\frac{P\bar{U}_{\xi_{\varepsilon},\lambda_{\varepsilon}}^{5-\alpha}(y)v_{\varepsilon}(y)P\bar{U}_{\xi_{\varepsilon},\lambda_{\varepsilon}}^{5-\alpha}(x)v_{\varepsilon}(x)}{|x-y|^{\alpha}}dxdy\right|\\
   &\leq \left|\int_{\Omega}\int_{\Omega}\frac{P\bar{U}_{\xi_{\varepsilon},\lambda_{\varepsilon}}^{5-\alpha}(y)(v_{n}(y)-v_{\varepsilon}(y))P\bar{U}_{\xi_{\varepsilon},\lambda_{\varepsilon}}^{5-\alpha}(x)v_{n}(x)}{|x-y|^{\alpha}}dxdy\right|\\
   &\quad +\left|\int_{\Omega}\int_{\Omega}\frac{P\bar{U}_{\xi_{\varepsilon},\lambda_{\varepsilon}}^{5-\alpha}(y)v_{\varepsilon}(y)P\bar{U}_{\xi_{\varepsilon},\lambda_{\varepsilon}}^{5-\alpha}(x)(v_{n}(x)-v_{\varepsilon}(x))}{|x-y|^{\alpha}}dxdy\right|\\
   &\lesssim \|\bar{U}_{\xi_{\varepsilon},\lambda_{\varepsilon}}^{5-\alpha}\|_{L^{\frac{6}{5-\alpha}}}\|v_{n}+v_{\varepsilon}\|_{L^{6}}\|\bar{U}_{\xi_{\varepsilon},\lambda_{\varepsilon}}^{5-\alpha}\|_{L^\frac{6}{3-\alpha}}\|v_{n}-v_{\varepsilon}\|_{L^{2}}\to 0.
\end{aligned}
\end{equation*}
Combining the estimates above, we conclude that
\begin{equation*}
   \begin{aligned}
C_{\varepsilon}=1+\int_{\Omega}a(x)v_{\varepsilon}^2dx&-(6-\alpha)\int_{\Omega}\int_{\Omega}\frac{P\bar{U}_{\xi_{\varepsilon},\lambda_{\varepsilon}}^{5-\alpha}(y)v_{\varepsilon}(y)P\bar{U}_{\xi_{\varepsilon},\lambda_{\varepsilon}}^{5-\alpha}(x)v_{\varepsilon}(x)}{|x-y|^{\alpha}}dxdy\\
 &-(5-\alpha)\int_{\Omega}\int_{\Omega}\frac{P\bar{U}_{\xi_{\varepsilon},\lambda_{\varepsilon}}^{6-\alpha}(y)P\bar{U}_{\xi_{\varepsilon},\lambda_{\varepsilon}}^{4-\alpha}(x)v_{\varepsilon}^2(x)}{|x-y|^{\alpha}}dxdy.
 \end{aligned}   
\end{equation*}
Next, we define
\begin{equation*}
\begin{aligned}
   F_{\xi_{\varepsilon},\lambda_{\varepsilon}}(v_{\varepsilon}):=\int_{\Omega}a(x)v_{\varepsilon}^2dx&-(6-\alpha)\int_{\Omega}\int_{\Omega}\frac{P\bar{U}_{\xi_{\varepsilon},\lambda_{\varepsilon}}^{5-\alpha}(y)v_{\varepsilon}(y)P\bar{U}_{\xi_{\varepsilon},\lambda_{\varepsilon}}^{5-\alpha}(x)v_{\varepsilon}(x)}{|x-y|^{\alpha}}dxdy\\
 &-(5-\alpha)\int_{\Omega}\int_{\Omega}\frac{P\bar{U}_{\xi_{\varepsilon},\lambda_{\varepsilon}}^{6-\alpha}(y)P\bar{U}_{\xi_{\varepsilon},\lambda_{\varepsilon}}^{4-\alpha}(x)v_{\varepsilon}^2(x)}{|x-y|^{\alpha}}dxdy. 
\end{aligned}  
\end{equation*}
Since $C_{\varepsilon}<1$, we have $v_{\varepsilon}\neq 0$ and the inequality
\begin{equation*}
(1-C_{\varepsilon})\int_{\Omega}|\nabla v_{\varepsilon}|^2dx+F_{\xi_{\varepsilon},\lambda_{\varepsilon}}(v_{\varepsilon})\leq (1-C_{\varepsilon})+F_{\xi_{\varepsilon},\lambda_{\varepsilon}}(v_{\varepsilon})=0,
\end{equation*}
holds. It then follows from the minimality of $C_{\varepsilon}$ that the previous inequality must be an equality and $\int_{\Omega}|\nabla v_{\varepsilon}|^2=1$. Otherwise, $\bar{v}_{\varepsilon}:=\frac{v_{\varepsilon}}{\|\nabla v_{\varepsilon}\|_{L^{2}}}$ yields a contradiction to the definition of $C_{\varepsilon}$. Therefore $C_{\varepsilon}$ is achieved by $v_{\varepsilon}$ if $C_{\varepsilon}<1$.

We now show that $\liminf_{\varepsilon\to0}C_{\varepsilon}>0$. Otherwise, there exists a sequence of minimizers $v_{\varepsilon}$ for $C_{\varepsilon}$ such that $C_{\varepsilon}\to L\leq 0$ and $v_{\varepsilon}\to v$ weakly in $H^{1}_{0}(\Omega)$ as $\varepsilon\to0$. Moreover, by Lagrange multiplier theorem, $v_{\varepsilon}$ satisfies
\begin{equation*}
    \begin{aligned}
        -(1-C_{\varepsilon})\Delta v_{\varepsilon}+\bar{a} v_{\varepsilon}&-(6-\alpha)\left(\int_{\Omega}\frac{P\bar{U}_{\xi_{\varepsilon},\lambda_{\varepsilon}}^{5-\alpha}(y)v_{\varepsilon}(y)}{|x-y|^{\alpha}}dy\right)P\bar{U}_{\xi_{\varepsilon},\lambda_{\varepsilon}}^{5-\alpha}(x)\\
        &-(5-\alpha)\left(\int_{\Omega}\frac{P\bar{U}_{\xi_{\varepsilon},\lambda_{\varepsilon}}^{6-\alpha}(y)}{|x-y|^{\alpha}}dy\right) P\bar{U}_{\xi_{\varepsilon},\lambda_{\varepsilon}}^{4-\alpha}(x)v_{\varepsilon}(x)=0.
    \end{aligned}
\end{equation*}
For any given $\varphi \in C_{c}^{\infty}(\Omega)$, by \eqref{important-identity-1}, the HLS inequality, the H\"older inequality, and Sobolev embedding theorem, we have
\begin{equation*}
\begin{aligned}
    &(6-\alpha)\int_{\Omega}\int_{\Omega}\frac{P\bar{U}_{\xi_{\varepsilon},\lambda_{\varepsilon}}^{5-\alpha}(y)v_{\varepsilon}(y)P\bar{U}_{\xi_{\varepsilon},\lambda_{\varepsilon}}^{5-\alpha}(x)\varphi(x)}{|x-y|^{\alpha}}dydx\\
    &\quad+(5-\alpha) \int_{\Omega}\int_{\Omega}\frac{P\bar{U}_{\xi_{\varepsilon},\lambda_{\varepsilon}}^{6-\alpha}(y)P\bar{U}_{\xi_{\varepsilon},\lambda_{\varepsilon}}^{4-\alpha}(x)v_{\varepsilon}(x)\varphi(x)}{|x-y|^{\alpha}}dydx\\
    &\lesssim \|\bar{U}_{\xi_{\varepsilon},\lambda_{\varepsilon}}^{5-\alpha}\|_{L^{\frac{6}{5-\alpha}}}\|v_{\varepsilon}\|_{L^{6}}\|\bar{U}_{\xi_{\varepsilon},\lambda_{\varepsilon}}^{5-\alpha}\|_{L^{\frac{6}{6-\alpha}}}\|\varphi\|_{L^{\infty}}+\|\bar{U}_{\xi_{\varepsilon},\lambda_{\varepsilon}}^{4}\|_{L^{\frac{6}{5}}}\|v_{\varepsilon}\|_{L^{6}}\|\varphi\|_{L^{\infty}}\\
    &\to 0,\text{as~}\varepsilon\to 0.
\end{aligned}
\end{equation*}
We then conclude that $v$ satisfies
\begin{equation*}
    -(1-L)\Delta v+\bar{a}v=0.
\end{equation*}
This together with the coercivity of $-\Delta +\bar{a}$ yields that $v=0$. In view of the compactness of the embedding of $H^1_{0}(\Omega)$ into $L^2(\Omega)$ and \eqref{coercivity proof step 1-1}, we get that for any $\varepsilon$ small enough
\begin{equation*}
\begin{aligned}
   C_{\varepsilon}=\int_\Omega |\nabla v_{\varepsilon}|^2 dx+o(1)&-(6-\alpha)\int_{\Omega}\int_{\Omega}\frac{P\bar{U}_{\xi_{\varepsilon},\lambda_{\varepsilon}}^{5-\alpha}(y)v_{\varepsilon}(y)P\bar{U}_{\xi_{\varepsilon},\lambda_{\varepsilon}}^{5-\alpha}(x)v_{\varepsilon}(x)}{|x-y|^{\alpha}}dxdy\\
 &-(5-\alpha)\int_{\Omega}\int_{\Omega}\frac{P\bar{U}_{\xi_{\varepsilon},\lambda_{\varepsilon}}^{6-\alpha}(y)P\bar{U}_{\xi_{\varepsilon},\lambda_{\varepsilon}}^{4-\alpha}(x)v_{\varepsilon}^2(x)}{|x-y|^{\alpha}}dxdy \\
 &\geq \rho-o(1)>0,
\end{aligned}
\end{equation*}   
which derives a contradiction to $C_{\varepsilon}\to L\leq 0$. This completes the proof.
\end{proof}

\begin{Lem}\label{boundw} As $\varepsilon\to 0$, it holds that
	\begin{equation*}
		\label{bound w subsec}
		\|\nabla w_{\varepsilon}\|_{L^{2}(\Omega)}=O((\lambda_{\varepsilon}d_{\varepsilon})^{-1/2}). 
	\end{equation*}
\end{Lem}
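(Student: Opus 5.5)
The plan is to test the equation \eqref{equation for u varepsilon} against $w_{\varepsilon}$ and use the coercivity of Lemma \ref{lemma coercivity}. Write $u_{\varepsilon}=\mu_{\varepsilon}(P\bar U+w_{\varepsilon})$ with $P\bar U:=P\bar U_{\xi_{\varepsilon},\lambda_{\varepsilon}}$. Since $w_{\varepsilon}\in T^{\bot}_{\xi_{\varepsilon},\lambda_{\varepsilon}}$, we have $\int\nabla P\bar U\cdot\nabla w_{\varepsilon}=0$. Dividing the equation by $\mu_{\varepsilon}$ and testing against $w_{\varepsilon}$ then gives
\begin{equation*}
\int_\Omega|\nabla w_{\varepsilon}|^2+(\bar a-\varepsilon)w_{\varepsilon}^2\,dx+\int_\Omega(\bar a-\varepsilon)P\bar U\,w_{\varepsilon}\,dx=\mu_{\varepsilon}^{2(5-\alpha)}\int_\Omega\int_\Omega\frac{(P\bar U+w_{\varepsilon})^{6-\alpha}(y)(P\bar U+w_{\varepsilon})^{5-\alpha}(x)w_{\varepsilon}(x)}{|x-y|^{\alpha}}dxdy.
\end{equation*}

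Next, expand the right-hand side in powers of $w_{\varepsilon}$ using Lemma \ref{lem inequality 2}, in both the inner factor $(P\bar U+w)^{6-\alpha}$ and the outer factor $(P\bar U+w)^{5-\alpha}w$. This produces three kinds of terms. The zeroth-order term is $\int\int P\bar U^{6-\alpha}(y)P\bar U^{5-\alpha}(x)w(x)|x-y|^{-\alpha}$. The quadratic terms are exactly the two bilinear forms appearing in \eqref{coercivity}. The remaining terms are of order $\|\nabla w_{\varepsilon}\|^{3}$ or $\|\nabla w_{\varepsilon}\|^{\min(3,7-\alpha)}$; these are bounded by HLS and Sobolev, and they are $o(\|\nabla w_{\varepsilon}\|^2)$ because $\|\nabla w_{\varepsilon}\|\to0$ by Proposition \ref{lemma-1}.

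Since $\mu_{\varepsilon}\to1$, the quadratic terms carry a factor $\mu_{\varepsilon}^{2(5-\alpha)}=1+o(1)$. The difference from the coercive form is therefore $o(1)\|\nabla w\|^2$, and so is the $-\varepsilon\int w^2$ term. Lemma \ref{lemma coercivity} then yields
\begin{equation*}
\rho\|\nabla w_{\varepsilon}\|_2^2\le \big|\text{linear terms in } w_{\varepsilon}\big|+o(\|\nabla w_{\varepsilon}\|_2^2).
\end{equation*}

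It remains to bound the linear terms by $C(\lambda_{\varepsilon}d_{\varepsilon})^{-1/2}\|\nabla w_{\varepsilon}\|_2$; this is the main obstacle. There are two of them.

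The first is $\int(\bar a-\varepsilon)P\bar U\,w$. It is at most $\|P\bar U\|_{L^{6/5}}\|w\|_{L^6}$, and since $\|U_{\xi,\lambda}\|_{L^{6/5}(\Omega)}\lesssim\lambda^{-1/2}$ in three dimensions, this term is $O(\lambda_{\varepsilon}^{-1/2}\|\nabla w\|)$. That is at most $O((\lambda d)^{-1/2}\|\nabla w\|)$ because $d_{\varepsilon}$ is bounded.

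The second is the nonlocal linear term. By \eqref{U-x-lamda for choquard equation}, the corresponding expression with $\bar U$ in place of $P\bar U$ equals $\int(-\Delta\bar U)w=\int\nabla P\bar U\cdot\nabla w=0$, using the orthogonality. So what has to be estimated is the difference obtained by replacing $P\bar U=\bar U-\bar C_\alpha\varphi_{\xi,\lambda}$ with $\bar U$. By Lemma \ref{lem inequality 2}, this difference is controlled by two integrals:
\begin{equation*}
\int\int\frac{\bar U^{5-\alpha}\varphi(y)\,\bar U^{5-\alpha}|w|(x)}{|x-y|^\alpha}\quad\text{and}\quad\int\int\frac{\bar U^{6-\alpha}(y)\,\bar U^{4-\alpha}\varphi|w|(x)}{|x-y|^\alpha},
\end{equation*}
together with higher powers of $\varphi$, which are smaller. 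Using $0\le\varphi\le U$, we apply HLS with exponents $6/(6-\alpha)$ and then H\"older. By \eqref{important-identity-1}, the second integral reduces to $\int U^4\varphi|w|\le\|U^4\varphi\|_{L^{6/5}}\|w\|_{L^6}$. With $\|\varphi\|_\infty\lesssim\lambda^{-1/2}d^{-1}$ and $\|U^4\|_{L^{6/5}}\lesssim\lambda^{-1/2}$, this gives $\lambda^{-1}d^{-1}\le(\lambda d)^{-1/2}\cdot(\lambda d)^{-1/2}$. The first integral is treated similarly: $\|U^{5-\alpha}\varphi\|_{L^{6/(6-\alpha)}}\lesssim\lambda^{-1/2}d^{-1}\|U^{5-\alpha}\|_{L^{6/(6-\alpha)}}$, combined with $\|U^{5-\alpha}w\|_{L^{6/(6-\alpha)}}\lesssim\|w\|_6$. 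If the $L^\infty$ bound turns out to be too lossy near the boundary, I would instead use the $L^6$ bound $\|\varphi\|_6\lesssim(\lambda d)^{-1/2}$ from Lemma \ref{lemma PU} together with H\"older against $U^{4}\in L^{3/2}$. Either way we obtain $O((\lambda d)^{-1/2}\|\nabla w\|)$.

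Dividing by $\|\nabla w_{\varepsilon}\|_2$ then gives the claim.
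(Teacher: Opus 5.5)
Your proposal follows the paper's proof essentially step for step: test against $w_\varepsilon$, use orthogonality to kill the leading nonlocal linear term, expand to second order, bound $\int_\Omega(\bar a-\varepsilon)P\bar U_{\xi_\varepsilon,\lambda_\varepsilon}w_\varepsilon$ via $\|U_{\xi_\varepsilon,\lambda_\varepsilon}\|_{L^{6/5}(\Omega)}\|w_\varepsilon\|_{L^6}$, and close with Lemma~\ref{lemma coercivity}; your explicit treatment of the $\varphi_{\xi,\lambda}$-corrections in the nonlocal linear term, giving $O((\lambda_\varepsilon d_\varepsilon)^{-1}\|\nabla w_\varepsilon\|_{L^2(\Omega)})$, is more careful than the paper's write-up, which omits these terms. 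The one piece you skip, and which the paper does include, is the exterior tail: your claim that the $\bar U$-version of the linear term equals $\int_\Omega(-\Delta\bar U_{\xi_\varepsilon,\lambda_\varepsilon})w_\varepsilon=0$ needs the inner $y$-integral over all of $\R^3$, whereas the nonlinearity only contains $\int_\Omega$, so you must also bound $\int_\Omega\int_{\R^3\setminus\Omega}\frac{\bar U_{\xi_\varepsilon,\lambda_\varepsilon}^{6-\alpha}(y)\,\bar U_{\xi_\varepsilon,\lambda_\varepsilon}^{5-\alpha}(x)\,|w_\varepsilon(x)|}{|x-y|^{\alpha}}\,dy\,dx\lesssim\|U_{\xi_\varepsilon,\lambda_\varepsilon}\|_{L^6(\R^3\setminus\Omega)}^{6-\alpha}\|w_\varepsilon\|_{L^6(\Omega)}\lesssim(\lambda_\varepsilon d_\varepsilon)^{-\frac{6-\alpha}{2}}\|\nabla w_\varepsilon\|_{L^2(\Omega)}$ (by HLS and H\"older), which is harmless.
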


\begin{proof}
Since $u_{\varepsilon}=\mu_{\varepsilon}\left(P\bar{U}_{\xi_{\varepsilon},\lambda_{\varepsilon}}+w_{\varepsilon}\right)$ and
 \begin{equation*}
 \begin{cases}
     -\Delta P\bar{U}_{\xi_{\varepsilon},\lambda_{\varepsilon}}=\left(\int_{\R^{3}}\frac{\bar{U}_{\xi_{\varepsilon},\lambda_{\varepsilon}}^{6-\alpha}(y)}{|x-y|^{\alpha}}dy\right)\bar{U}_{\xi_{\varepsilon},\lambda_{\varepsilon}}^{5-\alpha}&\text{in}\ \Omega,\\
    \quad\ \  P\bar{U}_{\xi_{\varepsilon},\lambda_{\varepsilon}}=0&\text{on}\ \partial\Omega,
 \end{cases}
 \end{equation*}
from \eqref{equation for u varepsilon}, the reminder term $w_{\varepsilon}$ satisfies
\begin{equation}\label{equation w}
\begin{cases}
    \begin{aligned}
 &-\Delta w_{\varepsilon}+(\bar{a}-\varepsilon)\left(w_{\varepsilon}+P\bar{U}_{\xi_{\varepsilon},\lambda_{\varepsilon}}\right)\\
 &\quad=\mu_{\varepsilon}^{10-2\alpha}\left(\int_{\Omega}\frac{(P\bar{U}_{\xi_{\varepsilon},\lambda_{\varepsilon}}+w_{\varepsilon})^{6-\alpha}(y)}{|x-y|^{\alpha}}dy\right)(P\bar{U}_{\xi_{\varepsilon},\lambda_{\varepsilon}}+w_{\varepsilon})^{5-\alpha}\\
 &\qquad-\left(\int_{\R^{3}}\frac{\bar{U}_{\xi_{\varepsilon},\lambda_{\varepsilon}}^{6-\alpha}(y)}{|x-y|^{\alpha}}dy\right)\bar{U}_{\xi_{\varepsilon},\lambda_{\varepsilon}}^{5-\alpha}
\end{aligned}&\text{~in~}\Omega,\\
 w_{\varepsilon}=0&\text{~on~}\partial\Omega.
\end{cases}
\end{equation}
Integrating this equation against $w_{\varepsilon}$ and recalling 
\begin{equation}\label{esposito eq1-3}
 \int_{\Omega}\int_{\R^{3}}\frac{\bar{U}_{\xi_{\varepsilon},\lambda_{\varepsilon}}^{6-\alpha}(y)\bar{U}_{\xi_{\varepsilon},\lambda_{\varepsilon}}^{5-\alpha}(x)w_{\varepsilon}(x)}{|x-y|^{\alpha}}dydx=\int_{\Omega} \nabla P\bar{U}_{\xi_{\varepsilon},\lambda_{\varepsilon}}\cdot\nabla w_{\varepsilon}dx=0, 
\end{equation}
we obtain
\begin{equation}\label{esposito eq1} 
\begin{aligned}
 &\int_\Omega|\nabla w_{\varepsilon}|^2+(\bar{a}-\varepsilon)(w_{\varepsilon}^2+P\bar{U}_{\xi_{\varepsilon},\lambda_{\varepsilon}}w_{\varepsilon})dx\\
 &\quad=\mu_{\varepsilon}^{10-2\alpha}\int_{\Omega}\int_{\Omega}\frac{(P\bar{U}_{\xi_{\varepsilon},\lambda_{\varepsilon}}+w_{\varepsilon})^{6-\alpha}(y)(P\bar{U}_{\xi_{\varepsilon},\lambda_{\varepsilon}}+w_{\varepsilon})^{5-\alpha}(x)w_{\varepsilon}(x)}{|x-y|^{\alpha}}dydx.
\end{aligned}
\end{equation} 

We now turn to estimating the right-hand side of \eqref{esposito eq1}. From \eqref{esposito eq1-3} and Lemma \ref{lem inequality 2}, we find that
\begin{equation*}
    \begin{aligned}
    &\int_{\Omega}\int_{\Omega}\frac{(P\bar{U}_{\xi_{\varepsilon},\lambda_{\varepsilon}}+w_{\varepsilon})^{6-\alpha}(y)(P\bar{U}_{\xi_{\varepsilon},\lambda_{\varepsilon}}+w_{\varepsilon})^{5-\alpha}(x)w_{\varepsilon}(x)}{|x-y|^{\alpha}}dxdy\\
&=(6-\alpha)\int_{\Omega}\int_{\Omega}\frac{P\bar{U}_{\xi_{\varepsilon},\lambda_{\varepsilon}}^{5-\alpha}(y)w_{\varepsilon}(y)P\bar{U}_{\xi_{\varepsilon},\lambda_{\varepsilon}}^{5-\alpha}(x)w_{\varepsilon}(x)}{|x-y|^{\alpha}}dxdy\\
&\quad+(5-\alpha)\int_{\Omega}\int_{\Omega}\frac{P\bar{U}_{\xi_{\varepsilon},\lambda_{\varepsilon}}^{6-\alpha}(y)P\bar{U}_{\xi_{\varepsilon},\lambda_{\varepsilon}}^{4-\alpha}(x)w_{\varepsilon}^2(x)}{|x-y|^{\alpha}}dxdy\\
&\quad+O\left( \int_{\Omega}\int_{\R^{3}\setminus\Omega}\frac{P\bar{U}_{\xi_{\varepsilon},\lambda_{\varepsilon}}^{6-\alpha}(y)P\bar{U}_{\xi_{\varepsilon},\lambda_{\varepsilon}}^{5-\alpha}(x)|w_{\varepsilon}|(x)}{|x-y|^{\alpha}}dydx\right)\\
&\quad+O\left(\int_{\Omega}\int_{\Omega}\frac{PU_{\xi_{\varepsilon},\lambda_{\varepsilon}}^{6-\alpha}(y)(P\bar{U}_{\xi_{\varepsilon},\lambda_{\varepsilon}}^{3-\alpha}|w_{\varepsilon}|^{3}+|w_{\varepsilon}|^{6-\alpha})(x)}{|x-y|^{\alpha}}dxdy\right)\\
&\quad+O\left(\int_{\Omega}\int_{\Omega}\frac{(PU_{\xi_{\varepsilon},\lambda_{\varepsilon}}^{5-\alpha}|w_{\varepsilon}|)(y)(P\bar{U}_{\xi_{\varepsilon},\lambda_{\varepsilon}}^{4-\alpha}|w_{\varepsilon}|^{2}+P\bar{U}_{\xi_{\varepsilon},\lambda_{\varepsilon}}^{3-\alpha}|w_{\varepsilon}|^{3}+|w_{\varepsilon}|^{6-\alpha})(x)}{|x-y|^{\alpha}}dxdy\right)\\
&\quad+O\left(\int_{\Omega}\int_{\Omega}\frac{(PU_{\xi_{\varepsilon},\lambda_{\varepsilon}}^{4-\alpha}|w_{\varepsilon}|^{2}+|w_{\varepsilon}|^{6-\alpha})(y)(u_{\varepsilon}^{5-\alpha}|w_{\varepsilon}|)(x)}{|x-y|^{\alpha}}dxdy\right).
    \end{aligned}
\end{equation*}
It follows from \eqref{important-identity-1}, the HLS inequality, the H\"{o}lder inequality, Sobolev embedding theorem, Lemma \ref{lemma-1}, and the fact $\|w_{\varepsilon}\|_{H^{1}_{0}(\Omega)}\to0$ that
\begin{equation*}
    \begin{aligned}
        \int_{\Omega}\int_{\R^{3}\setminus\Omega}\frac{P\bar{U}_{\xi_{\varepsilon},\lambda_{\varepsilon}}^{6-\alpha}(y)P\bar{U}_{\xi_{\varepsilon},\lambda_{\varepsilon}}^{5-\alpha}(x)|w_{\varepsilon}|(x)}{|x-y|^{\alpha}}dydx&\lesssim \int_{\R^{3}\setminus\Omega}{U}_{\xi_{\varepsilon},\lambda_{\varepsilon}}^{5}|w_{\varepsilon}|dx\lesssim \|{U}_{\xi_{\varepsilon},\lambda_{\varepsilon}}\|_{L^{6}(\R^{3}\setminus\Omega)}^{5}\|w_{\varepsilon}\|_{L^{6}(\Omega)}\\
        &\lesssim (\lambda_{\varepsilon}d_{\varepsilon})^{-\frac{5}{2}}\|\nabla w_{\varepsilon}\|_{L^{2}(\Omega)},
    \end{aligned}
\end{equation*}
\begin{equation*}
\begin{aligned}
     \int_{\Omega}&\int_{\Omega}\frac{PU_{\xi_{\varepsilon},\lambda_{\varepsilon}}^{6-\alpha}(y)(P\bar{U}_{\xi_{\varepsilon},\lambda_{\varepsilon}}^{3-\alpha}|w_{\varepsilon}|^{3}+|w_{\varepsilon}|^{6-\alpha})(x)}{|x-y|^{\alpha}}dxdy\\
     &\lesssim \|U_{\xi_{\varepsilon},\lambda_{\varepsilon}}\|_{L^{6}(\Omega)}^{6-\alpha}(\|U_{\xi_{\varepsilon},\lambda_{\varepsilon}}\|_{L^{6}(\Omega)}^{3-\alpha}\|w_{\varepsilon}\|_{L^{6}(\Omega)}^{3}+\|w_{\varepsilon}\|_{L^{6}(\Omega)}^{6-\alpha})\\
     &\lesssim\|\nabla w_{\varepsilon}\|_{L^{2}(\Omega)}^{3},
\end{aligned}
\end{equation*}
\begin{equation*}
    \begin{aligned}
       &\int_{\Omega}\int_{\Omega}\frac{(PU_{\xi_{\varepsilon},\lambda_{\varepsilon}}^{5-\alpha}|w_{\varepsilon}|)(y)(P\bar{U}_{\xi_{\varepsilon},\lambda_{\varepsilon}}^{4-\alpha}|w_{\varepsilon}|^{2}+P\bar{U}_{\xi_{\varepsilon},\lambda_{\varepsilon}}^{3-\alpha}|w_{\varepsilon}|^{3}+|w_{\varepsilon}|^{6-\alpha})(x)}{|x-y|^{\alpha}}dxdy\\
       &\lesssim \|U_{\xi_{\varepsilon},\lambda_{\varepsilon}}\|_{L^{6}(\Omega)}^{5-\alpha}\|w_{\varepsilon}\|_{L^{6}(\Omega)}(\|U_{\xi_{\varepsilon},\lambda_{\varepsilon}}\|_{L^{6}(\Omega)}^{4-\alpha}\|w_{\varepsilon}\|_{L^{6}(\Omega)}^{2}+\|U_{\xi_{\varepsilon},\lambda_{\varepsilon}}\|_{L^{6}(\Omega)}^{3-\alpha}\|w_{\varepsilon}\|_{L^{6}(\Omega)}^{3}+\|w_{\varepsilon}\|_{L^{6}(\Omega)}^{6-\alpha})\\
     &\lesssim\|\nabla w_{\varepsilon}\|_{L^{2}(\Omega)}^{3},
    \end{aligned}
\end{equation*}
and
\begin{equation*} 
    \begin{aligned}
        \int_{\Omega}&\int_{\Omega}\frac{(PU_{\xi_{\varepsilon},\lambda_{\varepsilon}}^{4-\alpha}|w_{\varepsilon}|^{2}+|w_{\varepsilon}|^{6-\alpha})(y)(u_{\varepsilon}^{5-\alpha}|w_{\varepsilon}|)(x)}{|x-y|^{\alpha}}dxdy\\
        &\lesssim(\|U_{\xi_{\varepsilon},\lambda_{\varepsilon}}\|_{L^{6}(\Omega)}^{4-\alpha}\|w_{\varepsilon}\|^{2}_{L^{6}(\Omega)}+\|w_{\varepsilon}\|^{6-\alpha}_{L^{6}(\Omega)})\|u_{\varepsilon}\|_{L^{6}(\Omega)}^{5-\alpha}\|w_{\varepsilon}\|_{L^{6}(\Omega)}\\
        &\lesssim\|\nabla w_{\varepsilon}\|_{L^{2}(\Omega)}^{3}.
    \end{aligned}
\end{equation*}
Similarly, we can prove that
\begin{equation*}
    \begin{aligned}
        \int_{\Omega}&\int_{\Omega}\frac{P\bar{U}_{\xi_{\varepsilon},\lambda_{\varepsilon}}^{5-\alpha}(y)w_{\varepsilon}(y)P\bar{U}_{\xi_{\varepsilon},\lambda_{\varepsilon}}^{5-\alpha}(x)w_{\varepsilon}(x)}{|x-y|^{\alpha}}dxdy+\int_{\Omega}\int_{\Omega}\frac{P\bar{U}_{\xi_{\varepsilon},\lambda_{\varepsilon}}^{6-\alpha}(y)P\bar{U}_{\xi_{\varepsilon},\lambda_{\varepsilon}}^{4-\alpha}(x)w_{\varepsilon}^2(x)}{|x-y|^{\alpha}}dxdy\\
        &\lesssim \|\nabla w_{\varepsilon}\|_{L^{2}(\Omega)}^{2}.
    \end{aligned}
\end{equation*}
Since $\mu_{\varepsilon}\to 1$ as $\varepsilon\to0$, it follows that 
\begin{equation}\label{esposito eq1-450} 
    \begin{aligned}
        &\mu_{\varepsilon}^{10-2\alpha}\int_{\Omega}\int_{\Omega}\frac{(P\bar{U}_{\xi_{\varepsilon},\lambda_{\varepsilon}}+w_{\varepsilon})^{6-\alpha}(y)(P\bar{U}_{\xi_{\varepsilon},\lambda_{\varepsilon}}+w_{\varepsilon})^{5-\alpha}(x)w_{\varepsilon}(x)}{|x-y|^{\alpha}}dydx\\
        &=(6-\alpha)\int_{\Omega}\int_{\Omega}\frac{P\bar{U}_{\xi_{\varepsilon},\lambda_{\varepsilon}}^{5-\alpha}(y)w_{\varepsilon}(y)P\bar{U}_{\xi_{\varepsilon},\lambda_{\varepsilon}}^{5-\alpha}(x)w_{\varepsilon}(x)}{|x-y|^{\alpha}}dxdy\\
&\quad+(5-\alpha)\int_{\Omega}\int_{\Omega}\frac{P\bar{U}_{\xi_{\varepsilon},\lambda_{\varepsilon}}^{6-\alpha}(y)P\bar{U}_{\xi_{\varepsilon},\lambda_{\varepsilon}}^{4-\alpha}(x)w_{\varepsilon}^2(x)}{|x-y|^{\alpha}}dxdy+o(\|\nabla w_{\varepsilon}\|^{2}_{L^{2}(\Omega)}).
    \end{aligned}
\end{equation}

On the other hand, by the H\"{o}lder inequality and Sobolev embedding theorem, we have 
\begin{equation*} 
    \int_\Omega (\bar{a}-\varepsilon ) P\bar{U}_{\xi_{\varepsilon},\lambda_{\varepsilon}} w_{\varepsilon} dx\lesssim \|w_{\varepsilon}\|_{L^{6}(\Omega)} \|U_{\xi_{\varepsilon},\lambda_{\varepsilon}}\|_{L^{\frac{6}{5}}(\Omega)} \lesssim (\lambda_{\varepsilon}d_{\varepsilon})^{-1/2} \|\nabla w_{\varepsilon}\|_{L^{2}(\Omega)}
\end{equation*}
and
\begin{equation}\label{esposito eq1-452} 
    \varepsilon \int_\Omega  w_{\varepsilon}^2dx \lesssim \varepsilon \| w\|_{L^{6}(\Omega)}^2 = o(\|\nabla w_{\varepsilon}\|_{L^{2}(\Omega)}^2).
\end{equation}
Combining \eqref{esposito eq1}--\eqref{esposito eq1-452}, we obtain
\begin{equation*}
\begin{aligned}
 \int_\Omega |\nabla w_{\varepsilon}|^2 + \bar{a} w_{\varepsilon}^2 dx&=(6-\alpha)\int_{\Omega}\int_{\Omega}\frac{P\bar{U}_{\xi_{\varepsilon},\lambda_{\varepsilon}}^{5-\alpha}(y)w_{\varepsilon}(y)P\bar{U}_{\xi_{\varepsilon},\lambda_{\varepsilon}}^{5-\alpha}(x)w_{\varepsilon}(x)}{|x-y|^{\alpha}}dxdy\\
 &\quad+(5-\alpha)\int_{\Omega}\int_{\Omega}\frac{P\bar{U}_{\xi_{\varepsilon},\lambda_{\varepsilon}}^{6-\alpha}(y)P\bar{U}_{\xi_{\varepsilon},\lambda_{\varepsilon}}^{4-\alpha}(x)w_{\varepsilon}^2(x)}{|x-y|^{\alpha}}dxdy \\
 &={O}((\lambda_{\varepsilon}d_{\varepsilon})^{-1/2}\|\nabla w_{\varepsilon}\|_2)+o(\|\nabla w_{\varepsilon}\|_{2}^{2}).
\end{aligned}
\end{equation*}
This together with the coercivity inequality from Lemma \ref{lemma coercivity} yields that
\begin{equation*}
  \|\nabla w_{\varepsilon}\|_{L^{2}(\Omega)}=O((\lambda_{\varepsilon}d_{\varepsilon})^{-1/2}).
\end{equation*}
We complete the proof.
\end{proof}

\begin{Lem}\label{lemma-energy-1}
As $\varepsilon\to0$, it holds that
\begin{equation*}
    d_{\varepsilon}^{-1}=O(1).
\end{equation*}
\end{Lem}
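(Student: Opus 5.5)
We argue by contradiction. Suppose that, along a subsequence, $d_{\varepsilon}\to0$, i.e. $\xi_{\varepsilon}$ approaches $\partial\Omega$. The idea is to expand the energy of $u_{\varepsilon}$ to first order in the boundary interaction. The term coming from $\varphi_{\xi,\lambda}=U_{\xi,\lambda}-PU_{\xi,\lambda}$ will then produce a strictly positive correction $\gtrsim(\lambda_{\varepsilon}d_{\varepsilon})^{-1}$ to $S_{HL}$. This is incompatible with $S_{HL}(\bar a-\varepsilon)<S_{HL}$ once the remaining errors are shown to be of smaller order.

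\textbf{Step 1 (energy expansion).} We write $u_{\varepsilon}=\mu_{\varepsilon}(P\bar U_{\xi_{\varepsilon},\lambda_{\varepsilon}}+w_{\varepsilon})$ and compute the quotient $J(u_{\varepsilon})=S_{HL}(\bar a-\varepsilon)$.

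For the numerator, $\int|\nabla P\bar U|^2=\int(-\Delta\bar U)P\bar U$. By \eqref{U-x-lamda for choquard equation}, \eqref{important-identity-1} and Lemma \ref{lemma PU},
\begin{equation*}
\int|\nabla P\bar U|^2=3\bar C_\alpha^2\Big(\int_{\R^3}U^6-\int_\Omega U^5\varphi\Big)+O((\lambda d)^{-3}).
\end{equation*}
Using $\varphi=-4\pi\lambda^{-1/2}H_0(\xi,\cdot)+f$ and $\lambda^{1/2}U^5\rightharpoonup\kappa'\delta_\xi$, we get
\begin{equation*}
\int_\Omega U^5\varphi=-4\pi\kappa'\lambda^{-1}\phi_0(\xi)+o(\lambda^{-1}d^{-1}).
\end{equation*}
The cross term vanishes, since $w\perp P\bar U$. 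The mass term satisfies $\int\bar a(P\bar U)^2=O(\lambda^{-1})$. The $w$-terms are $O(\|\nabla w\|^2)=O((\lambda d)^{-1})$ by Lemma \ref{boundw}. The linear term $\int\bar a P\bar U w$ is $O(\lambda^{-1/2}\|\nabla w\|)$, which is likewise $O(\lambda^{-1}d^{-1/2})$.

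For the denominator, we expand $\|P\bar U+w\|_{HL}$ as in Step 1 of Theorem \ref{thm-1} and in Lemma \ref{boundw}. The first-order term in $\varphi$ is $-2(6-\alpha)\int\int U^{6-\alpha}U^{5-\alpha}\varphi/|x-y|^\alpha$, which reduces via \eqref{important-identity-1} to a multiple of $\int U^5\varphi$. Combining the two expansions as in Theorem \ref{thm-1}, we expect
\begin{equation*}
J(u_\varepsilon)=S_{HL}-\tfrac{64}{3}S_{HL}\,\phi_0(\xi_\varepsilon)\lambda_\varepsilon^{-1}+Q(w_\varepsilon)+O(\lambda_\varepsilon^{-1})+o((\lambda_\varepsilon d_\varepsilon)^{-1}),
\end{equation*}
where $Q(w_\varepsilon)\ge0$ is the quadratic form of Lemma \ref{lemma coercivity}, up to a positive constant.

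\textbf{Step 2 (sign and contradiction).} By Lemma \ref{lem-H-0}, $-\phi_0(\xi_\varepsilon)\sim(8\pi d_\varepsilon)^{-1}$. Hence
\begin{equation*}
J(u_\varepsilon)\ge S_{HL}+c(\lambda_\varepsilon d_\varepsilon)^{-1}-C\lambda_\varepsilon^{-1}-o((\lambda_\varepsilon d_\varepsilon)^{-1})+\rho'\|\nabla w_\varepsilon\|^2-C\lambda_\varepsilon^{-1/2}\|\nabla w_\varepsilon\|.
\end{equation*}
We absorb the linear $w$-term by Young's inequality: $C\lambda^{-1/2}\|\nabla w\|\le\rho'\|\nabla w\|^2+C'\lambda^{-1}$. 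Since $d_\varepsilon\to0$, every $O(\lambda^{-1})$ term equals $O(d_\varepsilon)\cdot(\lambda d)^{-1}$, which is $o((\lambda d)^{-1})$. We therefore obtain $J(u_\varepsilon)>S_{HL}$ for small $\varepsilon$. This contradicts $J(u_\varepsilon)=S_{HL}(\bar a-\varepsilon)<S_{HL}$, so $d_\varepsilon^{-1}=O(1)$.

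\textbf{Main obstacle.} The hard part is the nonlocal denominator expansion. We must show that all errors are $o((\lambda d)^{-1})$ and that the mixed terms involving $\varphi$ and $w$ are controlled. This includes the pieces of the double integral over $\R^3\setminus\Omega$, bounded via HLS by $(\lambda d)^{-3}$, and terms such as $\int\int U^{6-\alpha}U^{4-\alpha}\varphi w/|x-y|^\alpha$, bounded by $\|\varphi\|_{L^6}\|w\|_{L^6}\lesssim(\lambda d)^{-1/2}\|\nabla w\|$.

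This last bound is only of the same order as the main term, so it cannot simply be discarded. It must be handled with Young's inequality against the coercive part, with a small constant, so that it is absorbed rather than merely bounded. To make this possible, we would first use the sharper $L^\infty$ bound $\|\varphi\|_{\infty}\lesssim\lambda^{-1/2}d^{-1}$ together with $\|U^{4}\|_{L^{6/5}}\lesssim\lambda^{-1/2}$. This gives a factor $\lambda^{-1}d^{-1}\cdot\|\nabla w\|$, which is genuinely smaller than $(\lambda d)^{-1}$.
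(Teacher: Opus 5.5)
Your proposal is correct and follows essentially the same route as the paper. You argue by contradiction with $d_\varepsilon\to0$ and expand the numerator and denominator of $S_{HL}(\bar a-\varepsilon)$. The quadratic $w_\varepsilon$-terms are absorbed via Lemma~\ref{lemma coercivity}, and the linear denominator term is bounded by $O((\lambda_\varepsilon d_\varepsilon)^{-1}\|\nabla w_\varepsilon\|_{L^2(\Omega)})$ using the $L^\infty$ bound on $\varphi_{\xi_\varepsilon,\lambda_\varepsilon}$. Lemma~\ref{lem-H-0} then produces a positive multiple of $(\lambda_\varepsilon d_\varepsilon)^{-1}$, which contradicts $S_{HL}(\bar a-\varepsilon)<S_{HL}$. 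One point to tighten: compute $\int_\Omega U^5_{\xi_\varepsilon,\lambda_\varepsilon}\varphi_{\xi_\varepsilon,\lambda_\varepsilon}$ quantitatively rather than by weak convergence to $\kappa'\delta_\xi$, since $H_0(\xi_\varepsilon,\cdot)$ degenerates as $\xi_\varepsilon\to\partial\Omega$. As the paper does, split off $B_{d_\varepsilon}(\xi_\varepsilon)$ and use $|\nabla_y H_0(\xi_\varepsilon,y)|\lesssim d_\varepsilon^{-2}$.
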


\begin{proof}
Using the decomposition $u_{\varepsilon}=\mu_{\varepsilon}(P\bar{U}_{\xi_{\varepsilon},\lambda_{\varepsilon}}+w_{\varepsilon})$, we obtain
\begin{equation}\label{lemma-energy-1-proof-0}
    S_{HL}(\bar{a}-\varepsilon)=\frac{\int_{\Omega}|\nabla (P\bar{U}_{\xi_{\varepsilon},\lambda_{\varepsilon}}+w_{\varepsilon})|^{2}dx+\int_{\Omega}(\bar{a}-\varepsilon)(P\bar{U}_{\xi_{\varepsilon},\lambda_{\varepsilon}}+w_{\varepsilon})^{2}dx}{\left(\int_{\Omega}\int_{\Omega}\frac{(P\bar{U}_{\xi_{\varepsilon},\lambda_{\varepsilon}}+w_{\varepsilon})^{6-\alpha}(y)(P\bar{U}_{\xi_{\varepsilon},\lambda_{\varepsilon}}+w_{\varepsilon})^{6-\alpha}(x)}{|x-y|^{\alpha}}dydx\right)^{\frac{1}{6-\alpha}}}.
\end{equation}
We now estimate the numerator of $S_{HL}(\bar{a}-\varepsilon)$. First, it follows from the orthogonality that
\begin{equation}\label{lemma-energy-1-proof-1}
      \int_{\Omega}|\nabla (P\bar{U}_{\xi_{\varepsilon},\lambda_{\varepsilon}}+w_{\varepsilon})|^{2}dx=\int_{\Omega}|\nabla P\bar{U}_{\xi_{\varepsilon},\lambda_{\varepsilon}}|^{2}dx+\int_{\Omega}|\nabla w_{\varepsilon}|^{2}dx.
  \end{equation}  
From Lemma \ref{lemma PU}, we have
\begin{equation*}
\begin{aligned}
     P{U}_{\xi_{\varepsilon},\lambda_{\varepsilon}}&={U}_{\xi_{\varepsilon},\lambda_{\varepsilon}}-\varphi_{\xi_{\varepsilon},\lambda_{\varepsilon}}\\
     &={U}_{\xi_{\varepsilon},\lambda_{\varepsilon}}+4\pi\lambda_{\varepsilon}^{-\frac{1}{2}}H_{0}(\xi_{\varepsilon},\cdot)+O(\lambda_{\varepsilon}^{-\frac{5}{2}}d_{\varepsilon}^{-3}).
\end{aligned}
    \end{equation*}
It then follows that
\begin{equation}\label{lemma-energy-1-proof-3}
\begin{aligned}
    \int_{\Omega}|\nabla P\bar{U}_{\xi_{\varepsilon},\lambda_{\varepsilon}}|^{2}dx&=\int_{\Omega}P\bar{U}_{\xi_{\varepsilon},\lambda_{\varepsilon}}(-\Delta) P\bar{U}_{\xi_{\varepsilon},\lambda_{\varepsilon}} dx=3\bar{C}_{\alpha}^{2}\int_{\Omega}P{U}_{\xi_{\varepsilon},\lambda_{\varepsilon}} {U}^{5}_{\xi_{\varepsilon},\lambda_{\varepsilon}} dx\\
    &=3\bar{C}_{\alpha}^{2}\left\{\int_{\Omega}{U}^{6}_{\xi_{\varepsilon},\lambda_{\varepsilon}} dx+4\pi\lambda_{\varepsilon}^{-\frac{1}{2}}\int_{\Omega}H_{0}(\xi_{\varepsilon},\cdot){U}^{5}_{\xi_{\varepsilon},\lambda_{\varepsilon}} dx\right\}\\
    &\quad+O\left(\lambda_{\varepsilon}^{-\frac{5}{2}}d_{\varepsilon}^{-3}\int_{\Omega}{U}^{5}_{\xi_{\varepsilon},\lambda_{\varepsilon}} dx\right).
\end{aligned}
\end{equation}
Using \eqref{best sobolev constant}, \eqref{definition-of-U-bar} and Lemma \ref{lemma PU}, we have
\begin{equation*}
    \begin{aligned}
        3\bar{C}_{\alpha}^{2}\int_{\Omega}{U}^{6}_{\xi_{\varepsilon},\lambda_{\varepsilon}} dx=3\bar{C}_{\alpha}^{2}\int_{\R^{3}}{U}^{6}_{\xi_{\varepsilon},\lambda_{\varepsilon}} dx-3\bar{C}_{\alpha}^{2}\int_{\R^{3}\setminus\Omega}{U}^{6}_{\xi_{\varepsilon},\lambda_{\varepsilon}} dx=S_{HL}^{\frac{6-\alpha}{5-\alpha}}+O((\lambda_{\varepsilon}d_{\varepsilon})^{-3})
    \end{aligned}
\end{equation*}
and
\begin{equation*}
    \lambda_{\varepsilon}^{-\frac{5}{2}}d_{\varepsilon}^{-3}\int_{\Omega}{U}^{5}_{\xi_{\varepsilon},\lambda_{\varepsilon}} dx=O((\lambda_{\varepsilon}d_{\varepsilon})^{-3}).
\end{equation*}
Moreover, Taylor's expansion of $H_{0}(\xi_{\varepsilon},\cdot)$ gives that
\begin{equation}\label{lemma-energy-1-proof-6}
\begin{aligned}
     &\int_{\Omega}H_{0}(\xi_{\varepsilon},\cdot){U}^{5}_{\xi_{\varepsilon},\lambda_{\varepsilon}} dx\\
     &=\int_{B_{d_{\varepsilon}}(\xi_{\varepsilon})}H_{0}(\xi_{\varepsilon},\cdot){U}^{5}_{\xi_{\varepsilon},\lambda_{\varepsilon}} dx+\int_{\Omega\setminus B_{d_{\varepsilon}}(\xi_{\varepsilon})}H_{0}(\xi_{\varepsilon},\cdot){U}^{5}_{\xi_{\varepsilon},\lambda_{\varepsilon}} dx\\
     &=\phi_{0}(\xi_{\varepsilon})\int_{B_{d_{\varepsilon}}(\xi_{\varepsilon})}{U}^{5}_{\xi_{\varepsilon},\lambda_{\varepsilon}} dx+O\left(\|\nabla H_{0}(\xi_{\varepsilon},\cdot)\|_{L^{\infty}(B_{d_{\varepsilon}}(\xi_{\varepsilon}))}\int_{B_{d_{\varepsilon}}(\xi_{\varepsilon})}{U}^{5}_{\xi_{\varepsilon},\lambda_{\varepsilon}}|x-\xi_{\varepsilon}| dx\right)\\
     &\quad+\int_{\Omega\setminus B_{d_{\varepsilon}}(\xi_{\varepsilon})}H_{0}(\xi_{\varepsilon},\cdot){U}^{5}_{\xi_{\varepsilon},\lambda_{\varepsilon}} dx.
\end{aligned}
\end{equation}
By Lemma \ref{lem-H-0} and some direct computations, we have
\begin{equation*}
    \begin{aligned}
        &\lambda_{\varepsilon}^{-\frac{1}{2}}\phi_{0}(\xi_{\varepsilon})\int_{B_{d_{\varepsilon}}(\xi_{\varepsilon})}{U}^{5}_{\xi_{\varepsilon},\lambda_{\varepsilon}} dx\\
        &=\lambda_{\varepsilon}^{-\frac{1}{2}}\phi_{0}(\xi_{\varepsilon})\int_{\R^{3}}{U}^{5}_{\xi_{\varepsilon},\lambda_{\varepsilon}} dx-\lambda_{\varepsilon}^{-\frac{1}{2}}\phi_{0}(\xi_{\varepsilon})\int_{\R^{3}\setminus B_{d_{\varepsilon}}(\xi_{\varepsilon})}{U}^{5}_{\xi_{\varepsilon},\lambda_{\varepsilon}} dx\\
        &=\frac{4\pi}{3}\lambda_{\varepsilon}^{-1}\phi_{0}(\xi_{\varepsilon})+O(\lambda_{\varepsilon}^{-3}d_{\varepsilon}^{-3}),
    \end{aligned}
\end{equation*}
\begin{equation*}
    \begin{aligned}
        \lambda_{\varepsilon}^{-\frac{1}{2}}\|\nabla H_{0}(\xi_{\varepsilon},\cdot)\|_{L^{\infty}(B_{d_{\varepsilon}}(\xi_{\varepsilon}))}\int_{B_{d_{\varepsilon}}(\xi_{\varepsilon})}{U}^{5}_{\xi_{\varepsilon},\lambda_{\varepsilon}}|x-\xi_{\varepsilon}| dx=O((\lambda_{\varepsilon}d_{\varepsilon})^{-2}),
    \end{aligned}
\end{equation*}
and
\begin{equation}\label{lemma-energy-1-proof-8}
    \begin{aligned}
        \lambda_{\varepsilon}^{-\frac{1}{2}}\int_{\Omega\setminus B_{d_{\varepsilon}}(\xi_{\varepsilon})}H_{0}(\xi_{\varepsilon},\cdot){U}^{5}_{\xi_{\varepsilon},\lambda_{\varepsilon}} dx=O((\lambda_{\varepsilon}d_{\varepsilon})^{-3}).
    \end{aligned}
\end{equation}
Combining \eqref{lemma-energy-1-proof-3}--\eqref{lemma-energy-1-proof-8}, we obtain
\begin{equation}\label{lemma-energy-1-proof-9}
    \int_{\Omega}|\nabla P\bar{U}_{\xi_{\varepsilon},\lambda_{\varepsilon}}|^{2}dx=S_{HL}^{\frac{6-\alpha}{5-\alpha}}+16\pi^{2}\bar{C}_{\alpha}^{2}\lambda_{\varepsilon}^{-1}\phi_{0}(\xi_{\varepsilon})+O((\lambda_{\varepsilon}d_{\varepsilon})^{-2}).
\end{equation}
On the other hand, it holds that
\begin{equation*}
    \begin{aligned}
    \int_{\Omega}(\bar{a}-\varepsilon)(P\bar{U}_{\xi_{\varepsilon},\lambda_{\varepsilon}}+w_{\varepsilon})^{2}dx=\int_{\Omega}(\bar{a}-\varepsilon)(P\bar{U}_{\xi_{\varepsilon},\lambda_{\varepsilon}}^{2}+w_{\varepsilon}^{2}+2P\bar{U}_{\xi_{\varepsilon},\lambda_{\varepsilon}}w_{\varepsilon})dx.
    \end{aligned}
\end{equation*}
By the H\"{o}lder inequality and the Sobolev embedding theorem, we obtain
\begin{equation*}
    \begin{aligned}
        \int_{\Omega}(\bar{a}-\varepsilon)P\bar{U}_{\xi_{\varepsilon},\lambda_{\varepsilon}}^{2}dx\lesssim\int_{\Omega}\bar{U}_{\xi_{\varepsilon},\lambda_{\varepsilon}}^{2}dx=O(\lambda_{\varepsilon}^{-1}d_{\varepsilon}),
    \end{aligned}
\end{equation*}
\begin{equation*}
    \varepsilon\int_{\Omega}w_{\varepsilon}^{2}dx=o(\|\nabla w_{\varepsilon}\|_{L^{2}}^{2}),
\end{equation*}
and
\begin{equation}\label{lemma-energy-1-proof-12}
    \begin{aligned}
        2\int_{\Omega}(\bar{a}-\varepsilon)P\bar{U}_{\xi_{\varepsilon},\lambda_{\varepsilon}}w_{\varepsilon}dx&\lesssim \|\bar{U}_{\xi_{\varepsilon},\lambda_{\varepsilon}}\|_{L^{\frac{6}{5}}(\Omega)}\|w_{\varepsilon}\|_{L^{6}(\Omega)}\\
        &=O(\lambda_{\varepsilon}^{-\frac{1}{2}}d_{\varepsilon}^{\frac{3}{2}}\|\nabla w_{\varepsilon}\|_{L^{2}(\Omega)}).
    \end{aligned}
\end{equation}
Combining \eqref{lemma-energy-1-proof-1} and \eqref{lemma-energy-1-proof-9}--\eqref{lemma-energy-1-proof-12}, the numerator of $S_{HL}(\bar{a}-\varepsilon)$ satisfies
\begin{equation}\label{lemma-energy-1-proof-75}
\begin{aligned}
     &\int_{\Omega}|\nabla (P\bar{U}_{\xi_{\varepsilon},\lambda_{\varepsilon}}+w_{\varepsilon})|^{2}dx+\int_{\Omega}(\bar{a}-\varepsilon)(P\bar{U}_{\xi_{\varepsilon},\lambda_{\varepsilon}}+w_{\varepsilon})^{2}dx\\
     &=S_{HL}^{\frac{6-\alpha}{5-\alpha}}+16\pi^{2}\bar{C}_{\alpha}^{2}\lambda_{\varepsilon}^{-1}\phi_{0}(\xi_{\varepsilon})+\|\nabla w_{\varepsilon}\|_{L^{2}(\Omega)}^{2}+\int_{\Omega}\bar{a}w_{\varepsilon}^{2}dx\\
   &\quad+O(\lambda_{\varepsilon}^{-1}d_{\varepsilon}+\lambda_{\varepsilon}^{-\frac{1}{2}}d_{\varepsilon}^{\frac{3}{2}}\|\nabla w_{\varepsilon}\|_{L^{2}(\Omega)}+(\lambda_{\varepsilon}d_{\varepsilon})^{-2}).
\end{aligned}
\end{equation}

Next, we estimate the denominator of $S_{HL}(\bar{a}-\varepsilon)$. From the Taylor's formula, we have
\begin{equation*}
\begin{aligned}
    (P\bar{U}_{\xi_{\varepsilon},\lambda_{\varepsilon}}+w_{\varepsilon})^{6-\alpha}&=P\bar{U}_{\xi_{\varepsilon},\lambda_{\varepsilon}}^{6-\alpha}+(6-\alpha)P\bar{U}_{\xi_{\varepsilon},\lambda_{\varepsilon}}^{5-\alpha}w_{\varepsilon}+\frac{(6-\alpha)(5-\alpha)}{2}P\bar{U}_{\xi_{\varepsilon},\lambda_{\varepsilon}}^{4-\alpha}w_{\varepsilon}^{2}\\
    &\quad+O\left(P\bar{U}_{\xi_{\varepsilon},\lambda_{\varepsilon}}^{3-\alpha}|w_{\varepsilon}|^{3}+|w_{\varepsilon}|^{6-\alpha}\right).
\end{aligned}  
\end{equation*}
The HLS inequality, the H\"{o}lder inequality and the Sobolev embedding theorem yield that
\begin{equation}\label{lemma-energy-1-proof-77}
    \begin{aligned}
        &\int_{\Omega}\int_{\Omega}\frac{(P\bar{U}_{\xi_{\varepsilon},\lambda_{\varepsilon}}+w_{\varepsilon})^{6-\alpha}(x)(P\bar{U}_{\xi_{\varepsilon},\lambda_{\varepsilon}}+w_{\varepsilon})^{6-\alpha}(y)}{|x-y|^{\alpha}}dxdy\\
        &=\int_{\Omega}\int_{\Omega}\frac{P\bar{U}_{\xi_{\varepsilon},\lambda_{\varepsilon}}^{6-\alpha}(x)P\bar{U}_{\xi_{\varepsilon},\lambda_{\varepsilon}}^{6-\alpha}(y)}{|x-y|^{\alpha}}dxdy\\
        &\quad+2(6-\alpha)\int_{\Omega}\int_{\Omega}\frac{P\bar{U}_{\xi_{\varepsilon},\lambda_{\varepsilon}}^{6-\alpha}(x)P\bar{U}_{\xi_{\varepsilon},\lambda_{\varepsilon}}^{5-\alpha}(y)w_{\varepsilon}(y)}{|x-y|^{\alpha}}dxdy\\
         &\quad+(6-\alpha)^{2}\int_{\Omega}\int_{\Omega}\frac{P\bar{U}_{\xi_{\varepsilon},\lambda_{\varepsilon}}^{5-\alpha}(x)w_{\varepsilon}(x)P\bar{U}_{\xi_{\varepsilon},\lambda_{\varepsilon}}^{5-\alpha}(y)w_{\varepsilon}(y)}{|x-y|^{\alpha}}dxdy\\
        &\quad+(6-\alpha)(5-\alpha)\int_{\Omega}\int_{\Omega}\frac{P\bar{U}_{\xi_{\varepsilon},\lambda_{\varepsilon}}^{6-\alpha}(x)P\bar{U}_{\xi_{\varepsilon},\lambda_{\varepsilon}}^{4-\alpha}(y)w^{2}_{\varepsilon}(y)}{|x-y|^{\alpha}}dxdy+O(\|\nabla w_{\varepsilon}\|_{L^{2}(\Omega)}^{3}).
    \end{aligned}
\end{equation}
Notice that $P{U}_{\xi_{\varepsilon},\lambda_{\varepsilon}}={U}_{\xi_{\varepsilon},\lambda_{\varepsilon}}-\varphi_{\xi_{\varepsilon},\lambda_{\varepsilon}}$. Then
\begin{equation*}
\begin{aligned}
     P{U}_{\xi_{\varepsilon},\lambda_{\varepsilon}}^{6-\alpha}&=({U}_{\xi_{\varepsilon},\lambda_{\varepsilon}}-\varphi_{\xi_{\varepsilon},\lambda_{\varepsilon}})^{6-\alpha}\\
     &={U}_{\xi_{\varepsilon},\lambda_{\varepsilon}}^{6-\alpha}-(6-\alpha)U_{\xi_{\varepsilon},\lambda_{\varepsilon}}^{5-\alpha}\varphi_{\xi_{\varepsilon},\lambda_{\varepsilon}}+O(U_{\xi_{\varepsilon},\lambda_{\varepsilon}}^{4-\alpha}\varphi^{2}_{\xi_{\varepsilon},\lambda_{\varepsilon}})
\end{aligned}  
\end{equation*}
and
\begin{equation*}
    \begin{aligned}
        &\frac{1}{\bar{C}_{\alpha}^{2(6-\alpha)}}\int_{\Omega}\int_{\Omega}\frac{P\bar{U}_{\xi_{\varepsilon},\lambda_{\varepsilon}}^{6-\alpha}(x)P\bar{U}_{\xi_{\varepsilon},\lambda_{\varepsilon}}^{6-\alpha}(y)}{|x-y|^{\alpha}}dxdy\\
        &=\int_{\Omega}\int_{\Omega}\frac{{U}_{\xi_{\varepsilon},\lambda_{\varepsilon}}^{6-\alpha}(x){U}_{\xi_{\varepsilon},\lambda_{\varepsilon}}^{6-\alpha}(y)}{|x-y|^{\alpha}}dxdy-2(6-\alpha)\int_{\Omega}\int_{\Omega}\frac{{U}_{\xi_{\varepsilon},\lambda_{\varepsilon}}^{6-\alpha}(x){U}_{\xi_{\varepsilon},\lambda_{\varepsilon}}^{5-\alpha}(y)\varphi_{\xi_{\varepsilon},\lambda_{\varepsilon}}(y)}{|x-y|^{\alpha}}dxdy\\
        &\quad+O\left(\int_{\Omega}\int_{\Omega}\frac{{U}_{\xi_{\varepsilon},\lambda_{\varepsilon}}^{6-\alpha}(x){U}_{\xi_{\varepsilon},\lambda_{\varepsilon}}^{4-\alpha}(y)\varphi^{2}_{\xi_{\varepsilon},\lambda_{\varepsilon}}(y)}{|x-y|^{\alpha}}dxdy\right.\\
        &\quad\quad\left.+\int_{\Omega}\int_{\Omega}\frac{{U}_{\xi_{\varepsilon},\lambda_{\varepsilon}}^{5-\alpha}(x)\varphi_{\xi_{\varepsilon},\lambda_{\varepsilon}}(x){U}_{\xi_{\varepsilon},\lambda_{\varepsilon}}^{5-\alpha}(y)\varphi_{\xi_{\varepsilon},\lambda_{\varepsilon}}(y)}{|x-y|^{\alpha}}dxdy\right).
    \end{aligned}
\end{equation*}
By \eqref{important-identity-1}, \eqref{lemma-energy-1-proof-6}, Lemma \ref{lemma PU}, the HLS inequality and the H\"{o}lder inequality, we have
\begin{equation*}
    \begin{aligned}
        &\int_{\Omega}\int_{\Omega}\frac{{U}_{\xi_{\varepsilon},\lambda_{\varepsilon}}^{6-\alpha}(x){U}_{\xi_{\varepsilon},\lambda_{\varepsilon}}^{6-\alpha}(y)}{|x-y|^{\alpha}}dxdy\\
        &= \int_{\R^{3}}\int_{\Omega}\frac{{U}_{\xi_{\varepsilon},\lambda_{\varepsilon}}^{6-\alpha}(x){U}_{\xi_{\varepsilon},\lambda_{\varepsilon}}^{6-\alpha}(y)}{|x-y|^{\alpha}}dxdy- \int_{\R^{3}\setminus\Omega}\int_{\Omega}\frac{{U}_{\xi_{\varepsilon},\lambda_{\varepsilon}}^{6-\alpha}(x){U}_{\xi_{\varepsilon},\lambda_{\varepsilon}}^{6-\alpha}(y)}{|x-y|^{\alpha}}dxdy\\
        &=\frac{3}{\bar{C}_{\alpha}^{2(5-\alpha)}}\int_{\R^{3}}{U}_{\xi_{\varepsilon},\lambda_{\varepsilon}}^{6}dx+O(\|{U}_{\xi_{\varepsilon},\lambda_{\varepsilon}}\|^{6}_{L^{6}(\R^{3}\setminus\Omega)})\\
        &=\frac{3\pi^{2}}{4\bar{C}_{\alpha}^{2(5-\alpha)}}+O((\lambda_{\varepsilon}d_{\varepsilon})^{-3}),
    \end{aligned}
\end{equation*}
\begin{equation*}
    \begin{aligned}
        &\int_{\Omega}\int_{\Omega}\frac{{U}_{\xi_{\varepsilon},\lambda_{\varepsilon}}^{6-\alpha}(x){U}_{\xi_{\varepsilon},\lambda_{\varepsilon}}^{5-\alpha}(y)\varphi_{\xi_{\varepsilon},\lambda_{\varepsilon}}(y)}{|x-y|^{\alpha}}dxdy\\
        &=\int_{\Omega}\int_{\R^{3}}\frac{{U}_{\xi_{\varepsilon},\lambda_{\varepsilon}}^{6-\alpha}(x){U}_{\xi_{\varepsilon},\lambda_{\varepsilon}}^{5-\alpha}(y)\varphi_{\xi_{\varepsilon},\lambda_{\varepsilon}}(y)}{|x-y|^{\alpha}}dxdy-\int_{\Omega}\int_{\R^{3}\setminus\Omega}\frac{{U}_{\xi_{\varepsilon},\lambda_{\varepsilon}}^{6-\alpha}(x){U}_{\xi_{\varepsilon},\lambda_{\varepsilon}}^{5-\alpha}(y)\varphi_{\xi_{\varepsilon},\lambda_{\varepsilon}}(y)}{|x-y|^{\alpha}}dxdy\\
        &=\frac{3}{\bar{C}_{\alpha}^{2(5-\alpha)}}\int_{\Omega}{U}_{\xi_{\varepsilon},\lambda_{\varepsilon}}^{5}\varphi_{\xi_{\varepsilon},\lambda_{\varepsilon}}dy+O(\|{U}_{\xi_{\varepsilon},\lambda_{\varepsilon}}\|^{6-\alpha}_{L^{6}(\R^{3}\setminus\Omega)}\|{U}_{\xi_{\varepsilon},\lambda_{\varepsilon}}\|^{5-\alpha}_{L^{\frac{6(5-\alpha)}{6-\alpha}}(\Omega)}\|\varphi_{\xi_{\varepsilon},\lambda_{\varepsilon}}\|_{L^{\infty}(\Omega)})\\
        &=-\frac{12\pi}{\bar{C}_{\alpha}^{2(5-\alpha)}}\lambda_{\varepsilon}^{-\frac{1}{2}}\int_{\Omega}{U}_{\xi_{\varepsilon},\lambda_{\varepsilon}}^{5}H_{0}(\xi_{\varepsilon},\cdot)dy+O(\lambda_{\varepsilon}^{-\frac{5}{2}}d_{\varepsilon}^{-3}\|{U}_{\xi_{\varepsilon},\lambda_{\varepsilon}}\|^{5}_{L^{5}(\Omega)}+(\lambda_{\varepsilon}d_{\varepsilon})^{-\frac{8-\alpha}{2}})\\
         &=-\frac{16\pi^{2}}{\bar{C}_{\alpha}^{2(5-\alpha)}}\lambda_{\varepsilon}^{-1}\phi_{0}(\xi_{\varepsilon})+O((\lambda_{\varepsilon}d_{\varepsilon})^{-2}),
    \end{aligned}
\end{equation*}
and
\begin{equation*}
    \begin{aligned}
    &O\left(\int_{\Omega}\int_{\Omega}\frac{{U}_{\xi_{\varepsilon},\lambda_{\varepsilon}}^{6-\alpha}(x){U}_{\xi_{\varepsilon},\lambda_{\varepsilon}}^{4-\alpha}(y)\varphi^{2}_{\xi_{\varepsilon},\lambda_{\varepsilon}}(y)}{|x-y|^{\alpha}}dxdy\right.\\
        &\quad\quad\left.+\int_{\Omega}\int_{\Omega}\frac{{U}_{\xi_{\varepsilon},\lambda_{\varepsilon}}^{5-\alpha}(x)\varphi_{\xi_{\varepsilon},\lambda_{\varepsilon}}(x){U}_{\xi_{\varepsilon},\lambda_{\varepsilon}}^{5-\alpha}(y)\varphi_{\xi_{\varepsilon},\lambda_{\varepsilon}}(y)}{|x-y|^{\alpha}}dxdy\right)\\
        &=O\left(\|{U}_{\xi_{\varepsilon},\lambda_{\varepsilon}}\|_{L^{\frac{6(5-\alpha)}{6-\alpha}}(\Omega)}^{2(5-\alpha)}+\|{U}_{\xi_{\varepsilon},\lambda_{\varepsilon}}\|_{L^{4}(\Omega)}^{4}\right)\|\varphi_{\xi_{\varepsilon},\lambda_{\varepsilon}}\|_{L^{\infty}(\Omega)}^{2}=O((\lambda_{\varepsilon}d_{\varepsilon})^{-2}).
    \end{aligned}
\end{equation*}
It then follows that
\begin{equation}\label{lemma-energy-1-proof-83}
    \begin{aligned}
        \int_{\Omega}\int_{\Omega}\frac{P\bar{U}_{\xi_{\varepsilon},\lambda_{\varepsilon}}^{6-\alpha}(x)P\bar{U}_{\xi_{\varepsilon},\lambda_{\varepsilon}}^{6-\alpha}(y)}{|x-y|^{\alpha}}dxdy=S_{HL}^{\frac{6-\alpha}{5-\alpha}}+32(6-\alpha)\pi^{2}\bar{C}_{\alpha}^{2}\lambda_{\varepsilon}^{-1}\phi_{0}(\xi_{\varepsilon})+O((\lambda_{\varepsilon}d_{\varepsilon})^{-2}).
    \end{aligned}
\end{equation}
On the other hand, it follows from Lemma \ref{lem inequality 2} that
\begin{equation*}
    \begin{aligned}
        &\int_{\Omega}\int_{\Omega}\frac{P\bar{U}_{\xi_{\varepsilon},\lambda_{\varepsilon}}^{6-\alpha}(x)P\bar{U}_{\xi_{\varepsilon},\lambda_{\varepsilon}}^{5-\alpha}(y)w_{\varepsilon}(y)}{|x-y|^{\alpha}}dxdy\\
        &=\bar{C}_{\alpha}^{11-2\alpha}\int_{\Omega}\int_{\Omega}\frac{({U}_{\xi_{\varepsilon},\lambda_{\varepsilon}}-\varphi_{\xi_{\varepsilon},\lambda_{\varepsilon}})^{6-\alpha}(x)({U}_{\xi_{\varepsilon},\lambda_{\varepsilon}}-\varphi_{\xi_{\varepsilon},\lambda_{\varepsilon}})^{5-\alpha}(y)w_{\varepsilon}(y)}{|x-y|^{\alpha}}dxdy\\
        &=\bar{C}_{\alpha}^{11-2\alpha}\int_{\Omega}\int_{\Omega}\frac{{U}_{\xi_{\varepsilon},\lambda_{\varepsilon}}^{6-\alpha}(x){U}_{\xi_{\varepsilon},\lambda_{\varepsilon}}^{5-\alpha}(y)w_{\varepsilon}(y)}{|x-y|^{\alpha}}dxdy\\
        &\quad+O\left(\int_{\Omega}\int_{\Omega}\frac{{U}_{\xi_{\varepsilon},\lambda_{\varepsilon}}^{6-\alpha}(x){U}_{\xi_{\varepsilon},\lambda_{\varepsilon}}^{4-\alpha}(y)\varphi_{\xi_{\varepsilon},\lambda_{\varepsilon}}(y)|w_{\varepsilon}|(y)}{|x-y|^{\alpha}}dxdy\right.\\
        &\left.\qquad+\int_{\Omega}\int_{\Omega}\frac{{U}_{\xi_{\varepsilon},\lambda_{\varepsilon}}^{5-\alpha}(x)\varphi_{\xi_{\varepsilon},\lambda_{\varepsilon}}(x){U}_{\xi_{\varepsilon},\lambda_{\varepsilon}}^{5-\alpha}(y)|w_{\varepsilon}|(y)}{|x-y|^{\alpha}}dxdy\right).
    \end{aligned}
\end{equation*}
By \eqref{important-identity-1}, Lemma \ref{lemma PU}, the HLS inequality, the H\"{o}lder inequality and the orthogonality, we obtain
\begin{equation*}
    \begin{aligned}
        &\int_{\Omega}\int_{\Omega}\frac{{U}_{\xi_{\varepsilon},\lambda_{\varepsilon}}^{6-\alpha}(x){U}_{\xi_{\varepsilon},\lambda_{\varepsilon}}^{5-\alpha}(y)w_{\varepsilon}(y)}{|x-y|^{\alpha}}dxdy\\
        &=\int_{\Omega}\int_{\R^{3}}\frac{{U}_{\xi_{\varepsilon},\lambda_{\varepsilon}}^{6-\alpha}(x){U}_{\xi_{\varepsilon},\lambda_{\varepsilon}}^{5-\alpha}(y)w_{\varepsilon}(y)}{|x-y|^{\alpha}}dxdy-\int_{\Omega}\int_{\R^{3}\setminus\Omega}\frac{{U}_{\xi_{\varepsilon},\lambda_{\varepsilon}}^{6-\alpha}(x){U}_{\xi_{\varepsilon},\lambda_{\varepsilon}}^{5-\alpha}(y)w_{\varepsilon}(y)}{|x-y|^{\alpha}}dxdy\\
        &=\frac{3}{\bar{C}_{\alpha}^{2(5-\alpha)}}\int_{\Omega}{U}_{\xi_{\varepsilon},\lambda_{\varepsilon}}^{5}(y)w_{\varepsilon}(y)dy+O(\|{U}_{\xi_{\varepsilon},\lambda_{\varepsilon}}\|_{L^{6}(\R^{3}\setminus\Omega)}^{6-\alpha}\|w_{\varepsilon}\|_{L^{6}(\Omega)})\\
        &=O((\lambda_{\varepsilon}d_{\varepsilon})^{-\frac{6-\alpha}{2}}\|\nabla w_{\varepsilon}\|_{L^{2}(\Omega)})
    \end{aligned}
\end{equation*}
and
\begin{equation*}
    \begin{aligned}
        &O\left(\int_{\Omega}\int_{\Omega}\frac{{U}_{\xi_{\varepsilon},\lambda_{\varepsilon}}^{6-\alpha}(x){U}_{\xi_{\varepsilon},\lambda_{\varepsilon}}^{4-\alpha}(y)\varphi_{\xi_{\varepsilon},\lambda_{\varepsilon}}(y)|w_{\varepsilon}|(y)}{|x-y|^{\alpha}}dxdy\right.\\
        &\left.\qquad+\int_{\Omega}\int_{\Omega}\frac{{U}_{\xi_{\varepsilon},\lambda_{\varepsilon}}^{5-\alpha}(x)\varphi_{\xi_{\varepsilon},\lambda_{\varepsilon}}(x){U}_{\xi_{\varepsilon},\lambda_{\varepsilon}}^{5-\alpha}(y)|w_{\varepsilon}|(y)}{|x-y|^{\alpha}}dxdy\right)\\
        &=\|\varphi_{\xi_{\varepsilon},\lambda_{\varepsilon}}\|_{L^{\infty}(\Omega)}O\left(\int_{\Omega}{U}_{\xi_{\varepsilon},\lambda_{\varepsilon}}^{4}(y)|w_{\varepsilon}|(y)dy+\|{U}_{\xi_{\varepsilon},\lambda_{\varepsilon}}\|_{L^{\frac{6(5-\alpha)}{(6-\alpha)}}(\Omega)}^{5-\alpha}\|w_{\varepsilon}\|_{L^{6}(\Omega)}\right)\\
        &=\|\varphi_{\xi_{\varepsilon},\lambda_{\varepsilon}}\|_{L^{\infty}(\Omega)}\|w_{\varepsilon}\|_{L^{6}(\Omega)}O\left(\|{U}_{\xi_{\varepsilon},\lambda_{\varepsilon}}\|_{L^{\frac{24}{5}}(\Omega)}^{4}+\|{U}_{\xi_{\varepsilon},\lambda_{\varepsilon}}\|_{L^{\frac{6(5-\alpha)}{(6-\alpha)}}(\Omega)}^{5-\alpha}\right)\\
         &=O((\lambda_{\varepsilon}d_{\varepsilon})^{-1}\|\nabla w_{\varepsilon}\|_{L^{2}(\Omega)}).
    \end{aligned}
\end{equation*}
Then it holds that
\begin{equation*}
    \begin{aligned}
        \int_{\Omega}\int_{\Omega}\frac{P\bar{U}_{\xi_{\varepsilon},\lambda_{\varepsilon}}^{6-\alpha}(x)P\bar{U}_{\xi_{\varepsilon},\lambda_{\varepsilon}}^{5-\alpha}(y)w_{\varepsilon}(y)}{|x-y|^{\alpha}}dxdy=O((\lambda_{\varepsilon}d_{\varepsilon})^{-1}\|\nabla w_{\varepsilon}\|_{L^{2}(\Omega)}).
    \end{aligned}
\end{equation*}
This together with \eqref{lemma-energy-1-proof-77} and \eqref{lemma-energy-1-proof-83} gives that
\begin{equation*}
    \begin{aligned}
        &\int_{\Omega}\int_{\Omega}\frac{(P\bar{U}_{\xi_{\varepsilon},\lambda_{\varepsilon}}+w_{\varepsilon})^{6-\alpha}(x)(P\bar{U}_{\xi_{\varepsilon},\lambda_{\varepsilon}}+w_{\varepsilon})^{6-\alpha}(y)}{|x-y|^{\alpha}}dxdy\\
        &=S_{HL}^{\frac{6-\alpha}{5-\alpha}}+32(6-\alpha)\pi^{2}\bar{C}_{\alpha}^{2}\lambda_{\varepsilon}^{-1}\phi_{0}(\xi)\\
        &\quad+(6-\alpha)^{2}\int_{\Omega}\int_{\Omega}\frac{P\bar{U}_{\xi_{\varepsilon},\lambda_{\varepsilon}}^{5-\alpha}(x)w_{\varepsilon}(x)P\bar{U}_{\xi_{\varepsilon},\lambda_{\varepsilon}}^{5-\alpha}(y)w_{\varepsilon}(y)}{|x-y|^{\alpha}}dxdy\\
        &\quad+(6-\alpha)(5-\alpha)\int_{\Omega}\int_{\Omega}\frac{P\bar{U}_{\xi_{\varepsilon},\lambda_{\varepsilon}}^{6-\alpha}(x)P\bar{U}_{\xi_{\varepsilon},\lambda_{\varepsilon}}^{4-\alpha}(y)w^{2}_{\varepsilon}(y)}{|x-y|^{\alpha}}dxdy\\
        &\quad+O((\lambda_{\varepsilon}d_{\varepsilon})^{-2}+(\lambda_{\varepsilon}d_{\varepsilon})^{-1}\|\nabla w_{\varepsilon}\|_{L^{2}(\Omega)}+\|\nabla w_{\varepsilon}\|_{L^{2}(\Omega)}^{3}).
    \end{aligned}
\end{equation*}
Using the Taylor's formula and Lemma \ref{boundw}, we find that the denominator of $S_{HL}(\bar{a}-\varepsilon)$ satisfies
\begin{equation}\label{lemma-energy-1-proof-89}
    \begin{aligned}
        &\left(\int_{\Omega}\int_{\Omega}\frac{(P\bar{U}_{\xi_{\varepsilon},\lambda_{\varepsilon}}+w_{\varepsilon})^{6-\alpha}(x)(P\bar{U}_{\xi_{\varepsilon},\lambda_{\varepsilon}}+w_{\varepsilon})^{6-\alpha}(y)}{|x-y|^{\alpha}}dxdy\right)^{-\frac{1}{6-\alpha}}\\
        &=S_{HL}^{-\frac{1}{5-\alpha}}-S_{HL}^{-\frac{7-\alpha}{5-\alpha}}32\pi^{2}\bar{C}_{\alpha}^{2}\lambda_{\varepsilon}^{-1}\phi_{0}(\xi_{\varepsilon})\\
        &\quad-S_{HL}^{-\frac{7-\alpha}{5-\alpha}}(6-\alpha)\int_{\Omega}\int_{\Omega}\frac{P\bar{U}_{\xi_{\varepsilon},\lambda_{\varepsilon}}^{5-\alpha}(x)w_{\varepsilon}(x)P\bar{U}_{\xi_{\varepsilon},\lambda_{\varepsilon}}^{5-\alpha}(y)w_{\varepsilon}(y)}{|x-y|^{\alpha}}dxdy\\
        &\quad-S_{HL}^{-\frac{7-\alpha}{5-\alpha}}(5-\alpha)\int_{\Omega}\int_{\Omega}\frac{P\bar{U}_{\xi_{\varepsilon},\lambda_{\varepsilon}}^{6-\alpha}(x)P\bar{U}_{\xi_{\varepsilon},\lambda_{\varepsilon}}^{4-\alpha}(y)w^{2}_{\varepsilon}(y)}{|x-y|^{\alpha}}dxdy\\
        &\quad+o((\lambda_{\varepsilon}d_{\varepsilon})^{-1}).
    \end{aligned}
\end{equation}
Suppose that $d_{\varepsilon}\to0$ as $\varepsilon\to0$. Then Lemma \ref{lem-H-0} gives that
\begin{equation}\label{lemma-energy-1-proof-90}
    \phi_{0}(\xi_{\varepsilon})=-\frac{1}{8\pi d_{\varepsilon}}(1+O(d_{\varepsilon})). 
\end{equation}
Moreover, the HLS inequality and the Sobolev embedding theorem imply that
\begin{equation*}
    \begin{aligned}
        &(6-\alpha)\int_{\Omega}\int_{\Omega}\frac{P\bar{U}_{\xi_{\varepsilon},\lambda_{\varepsilon}}^{5-\alpha}(x)w_{\varepsilon}(x)P\bar{U}_{\xi_{\varepsilon},\lambda_{\varepsilon}}^{5-\alpha}(y)w_{\varepsilon}(y)}{|x-y|^{\alpha}}dxdy\\
        &\quad+(5-\alpha)\int_{\Omega}\int_{\Omega}\frac{P\bar{U}_{\xi_{\varepsilon},\lambda_{\varepsilon}}^{6-\alpha}(x)P\bar{U}_{\xi_{\varepsilon},\lambda_{\varepsilon}}^{4-\alpha}(y)w^{2}_{\varepsilon}(y)}{|x-y|^{\alpha}}dxdy\\
        &=O(\|\nabla w_{\varepsilon}\|_{L^{2}}^{2})=O((\lambda_{\varepsilon}d_{\varepsilon})^{-1}).
    \end{aligned}
\end{equation*}
Combining \eqref{lemma-energy-1-proof-0}, \eqref{lemma-energy-1-proof-75}, \eqref{lemma-energy-1-proof-89} and Lemma \ref{boundw}, we have
\begin{equation*}
    \begin{aligned}
        S_{HL}(\bar{a}-\varepsilon)&=S_{HL}-S_{HL}^{-\frac{1}{5-\alpha}}16\pi^{2}\bar{C}_{\alpha}^{2}\lambda_{\varepsilon}^{-1}\phi_{0}(\xi_{\varepsilon})\\
         &\quad-S_{HL}^{-\frac{1}{5-\alpha}}(6-\alpha)\int_{\Omega}\int_{\Omega}\frac{P\bar{U}_{\xi_{\varepsilon},\lambda_{\varepsilon}}^{5-\alpha}(x)w_{\varepsilon}(x)P\bar{U}_{\xi_{\varepsilon},\lambda_{\varepsilon}}^{5-\alpha}(y)w_{\varepsilon}(y)}{|x-y|^{\alpha}}dxdy\\
        &\quad-S_{HL}^{-\frac{1}{5-\alpha}}(5-\alpha)\int_{\Omega}\int_{\Omega}\frac{P\bar{U}_{\xi_{\varepsilon},\lambda_{\varepsilon}}^{6-\alpha}(x)P\bar{U}_{\xi_{\varepsilon},\lambda_{\varepsilon}}^{4-\alpha}(y)w^{2}_{\varepsilon}(y)}{|x-y|^{\alpha}}dxdy\\
        &\quad+S_{HL}^{-\frac{1}{5-\alpha}}\left(\|\nabla w_{\varepsilon}\|_{L^{2}(\Omega)}^{2}+\int_{\Omega}\bar{a}w_{\varepsilon}^{2}dx\right)\\
       &\quad+o((\lambda_{\varepsilon}d_{\varepsilon})^{-1}).
    \end{aligned}
\end{equation*}
This together with Lemma \ref{lemma coercivity} and \eqref{lemma-energy-1-proof-90} yields that
\begin{equation*}
    \begin{aligned}
        S_{HL}>S_{HL}(\bar{a}-\varepsilon)&\geq S_{HL}-S_{HL}^{-\frac{1}{5-\alpha}}16\pi^{2}\bar{C}_{\alpha}^{2}\lambda_{\varepsilon}^{-1}\phi_{0}(\xi_{\varepsilon})+o((\lambda_{\varepsilon}d_{\varepsilon})^{-1})\\
        &=S_{HL}+S_{HL}^{-\frac{1}{5-\alpha}}2\pi\bar{C}_{\alpha}^{2}(\lambda_{\varepsilon}d_{\varepsilon})^{-1}+o((\lambda_{\varepsilon}d_{\varepsilon})^{-1}).\\
    \end{aligned}
\end{equation*}
This leads to a contradiction. Therefore, we have $d_{\varepsilon}^{-1}=O(1)$.
\end{proof}

\begin{Prop}\label{prop-energy-expansion}
As $\varepsilon\to 0$, it holds that
    \begin{equation*}
   S_{HL}(\bar{a}-\varepsilon)=S_{HL}-\frac{64}{3}S_{HL}\lambda_{\varepsilon}^{-1}\phi_{\bar{a}}(\xi_{0})+o(\lambda_{\varepsilon}^{-1}).
\end{equation*}
\end{Prop}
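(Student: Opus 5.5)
The plan is to refine the expansion from the proof of Lemma \ref{lemma-energy-1}, now knowing $d_{\varepsilon}^{-1}=O(1)$, so that $\xi_0\in\Omega$ and $\lambda_\varepsilon d_\varepsilon\sim\lambda_\varepsilon\to\infty$. The difficulty is that the crude bound $\|\nabla w_\varepsilon\|_{L^2}=O(\lambda_\varepsilon^{-1/2})$ from Lemma \ref{boundw} makes the quadratic form in $w_\varepsilon$ of order $\lambda_\varepsilon^{-1}$, the same order as the main term. Following the three-dimensional strategy of Frank--K\"onig--Kova\v{r}\'ik, I will split
\[
w_\varepsilon=4\pi\bar C_\alpha\lambda_\varepsilon^{-1/2}\Pi^{\bot}_{\xi_\varepsilon,\lambda_\varepsilon}\bigl(H_{\bar a}(\xi_\varepsilon,\cdot)-H_0(\xi_\varepsilon,\cdot)\bigr)+q_\varepsilon, \qquad q_\varepsilon\in T^{\bot}_{\xi_\varepsilon,\lambda_\varepsilon}.
\]
The first term is chosen so that $\psi_\varepsilon:=P\bar U_{\xi_\varepsilon,\lambda_\varepsilon}+4\pi\bar C_\alpha\lambda_\varepsilon^{-1/2}(H_{\bar a}-H_0)(\xi_\varepsilon,\cdot)$ satisfies, up to small errors, $(-\Delta+\bar a)\psi_\varepsilon\approx-\Delta \bar U_{\xi_\varepsilon,\lambda_\varepsilon}$. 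This mimics the test function $\phi_\varepsilon$ of Step 1 of Theorem \ref{thm-1}, since $\bar U-4\pi\bar C_\alpha\lambda^{-1/2}G_{\bar a}(\xi,\cdot)$ is small away from $\xi$. Using Lemma \ref{lemma PU}, \eqref{Ha-pde} and the elliptic estimate from Step 1 of Theorem \ref{thm-1}, the contribution of $\Pi_{\xi,\lambda}$ of $H_{\bar a}-H_0$ is of lower order, because $H_{\bar a}-H_0$ is bounded, hence $O(\lambda^{-1/2})$ in the tangent directions.

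\textbf{Step 1} is to show $\|\nabla q_\varepsilon\|_{L^2}=o(\lambda_\varepsilon^{-1/2})$. I would test the equation \eqref{equation w} for $w_\varepsilon$ against $q_\varepsilon$ and subtract the equation solved by the explicit piece. The leading linear forcing $(\bar a-\varepsilon)P\bar U$ is cancelled by the $-\Delta(H_{\bar a}-H_0)$ term, since $-\Delta_x(H_{\bar a}-H_0)=-\bar a G_{\bar a}$ and $G_{\bar a}\approx(4\pi\bar C_\alpha)^{-1}\lambda^{1/2}\bar U$. The remaining forcing is $\bar a(4\pi\bar C_\alpha\lambda^{-1/2}G_{\bar a}-P\bar U)$, which is small in $L^{6/5}$ (like $\lambda^{-1/2}g_{\xi,\lambda}$ by Lemma \ref{lemma-g}, giving $o(\lambda^{-1/2})$), together with the $\varepsilon$ terms and the nonlocal cross terms. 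The latter are controlled via \eqref{important-identity-1}, HLS and Lemma \ref{lem inequality 2} as in Lemma \ref{boundw}. The nonlocal linearisation applied to the bounded function $\lambda^{-1/2}(H_{\bar a}-H_0)$ is $O(\lambda^{-1/2}\|\bar U^{4}\|_{L^{6/5}})=O(\lambda^{-1})$. Coercivity (Lemma \ref{lemma coercivity}) then gives the bound.

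\textbf{Step 2} is to expand the quotient \eqref{lemma-energy-1-proof-0} with this decomposition. All terms involving $q_\varepsilon$ are $o(\lambda^{-1})$: the quadratic terms directly, and the cross terms with the explicit piece by Step 1 together with the orthogonality $q_\varepsilon\in T^\bot$, which kills the $P\bar U$ pairing. The explicit piece contributes $\lambda^{-1}$-order terms that combine with $16\pi^2\bar C_\alpha^2\lambda^{-1}\phi_0(\xi_\varepsilon)$ from \eqref{lemma-energy-1-proof-9} and \eqref{lemma-energy-1-proof-83}, replacing $\phi_0$ by $\phi_{\bar a}$. The effect is the same as computing $J(\psi_\varepsilon)$, which is exactly the computation of Step 1 of Theorem \ref{thm-1}, yielding $S_{HL}-\frac{64}{3}S_{HL}\lambda^{-1}\phi_{\bar a}(\xi_\varepsilon)$. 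The terms $\int\bar a P\bar U^2$, of size $O(\lambda^{-1})$, are absorbed in this combination: $\int\bar a\,\psi_\varepsilon^2$ is handled through $\int(-\Delta+\bar a)\psi_\varepsilon\cdot\psi_\varepsilon$. The $\varepsilon$ terms are $O(\varepsilon\lambda^{-1})=o(\lambda^{-1})$. Finally, continuity of $\phi_{\bar a}$ and $\xi_\varepsilon\to\xi_0\in\Omega$ give $\phi_{\bar a}(\xi_\varepsilon)=\phi_{\bar a}(\xi_0)+o(1)$.

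The main obstacle is Step 1. It requires precise bookkeeping of the nonlocal terms at the level $o(\lambda^{-1/2})$ in $H^{-1}$, in particular showing that the double-integral linearisation acting on the explicit correction, and the boundary mismatch of $U$ versus $PU$, do not produce $O(\lambda^{-1/2})$ contributions. My first attempt would use \eqref{important-identity-1} to reduce each nonlocal term to local ones of the form $\bar U^{4}\cdot(\text{bounded})$, with HLS errors supported on $\R^3\setminus\Omega$ or far from $\xi_\varepsilon$.
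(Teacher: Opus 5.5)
Your argument is correct in outline, but it takes a different route from the paper. The paper never splits $w_\varepsilon$ further and does not invoke Lemma~\ref{lemma coercivity} again. It tests \eqref{equation w} against $w_\varepsilon$ itself (via \eqref{esposito eq1} and \eqref{esposito eq1-450}--\eqref{esposito eq1-452}). This replaces $\int_\Omega|\nabla w_\varepsilon|^2+\bar a(w_\varepsilon^2+P\bar{U}_{\xi_\varepsilon,\lambda_\varepsilon}w_\varepsilon)\,dx$ by the nonlocal quadratic form in $w_\varepsilon$. That same form enters the denominator expansion \eqref{lemma-energy-1-proof-89} with the opposite sign, so it cancels exactly in the quotient. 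What survives is $S_{HL}-\frac{64}{3}S_{HL}\lambda_\varepsilon^{-1}\phi_0(\xi_\varepsilon)+S_{HL}^{-\frac{1}{5-\alpha}}\int_\Omega\bar a\,(P\bar{U}_{\xi_\varepsilon,\lambda_\varepsilon}^2+P\bar{U}_{\xi_\varepsilon,\lambda_\varepsilon}w_\varepsilon)\,dx+o(\lambda_\varepsilon^{-1})$. This is \emph{linear} in $w_\varepsilon$, so the weak limit $\lambda_\varepsilon^{1/2}w_\varepsilon\rightharpoonup\bar w_0$ suffices. The limit $\bar w_0=4\pi\bar{C}_{\alpha}(H_{\bar a}-H_0)(\xi_0,\cdot)$ is identified by testing against $\varphi\in C_c^\infty(\Omega\setminus\{\xi_0\})$, and the equation for $\bar w_0$ turns $\phi_0(\xi_0)$ into $\phi_{\bar a}(\xi_0)$.

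You instead prove strong convergence of the rescaled remainder and then reduce to the test-function computation of Step~1 of Theorem~\ref{thm-1}. Your Step~1 does go through. The zeroth-order nonlocal term is $O(\lambda_\varepsilon^{-1}\|\nabla q_\varepsilon\|_{L^2})$ by orthogonality, and the linearization on the bounded piece and the quadratic remainders are of the same size. The $\varepsilon$-terms give $O(\varepsilon\lambda_\varepsilon^{-1/2}\|\nabla q_\varepsilon\|_{L^2})$, so coercivity yields $\|\nabla q_\varepsilon\|_{L^2}=O(\lambda_\varepsilon^{-1}+\varepsilon\lambda_\varepsilon^{-1/2})$. Your route costs a second coercivity argument and more bookkeeping. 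In return it gives quantitative control of the remainder and is exactly the first step of the refined decomposition in Section~\ref{section-4}. The paper's argument uses the criticality of $u_\varepsilon$ to cancel the second-order terms for free, so it is shorter and never has to handle $\Pi_{\xi_\varepsilon,\lambda_\varepsilon}(H_{\bar a}-H_0)$.

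Two statements in your sketch need correcting.

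In Step~1, the residual forcing is not $\bar a(4\pi\bar{C}_{\alpha}\lambda^{-1/2}G_{\bar a}-P\bar{U})$. That expression still contains $4\pi\bar{C}_{\alpha}\lambda^{-1/2}\bar a(H_{\bar a}-H_0)$, which has size $\lambda^{-1/2}$ in $L^{6/5}$. Once you include the $\bar a$-part of $(-\Delta+\bar a)$ acting on the explicit piece, the residual becomes $\bar a\bigl(P\bar{U}_{\xi,\lambda}-4\pi\bar{C}_{\alpha}\lambda^{-1/2}G_0(\xi,\cdot)\bigr)=-\bar{C}_{\alpha}\bar a\,(g_{\xi,\lambda}+f_{\xi,\lambda})=O(\lambda^{-2})$ in $L^{6/5}$. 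This matches your correct remark that $(-\Delta+\bar a)\psi_\varepsilon\approx-\Delta\bar{U}$. The remaining contribution is $-\bar a\,s_\varepsilon$, where $s_\varepsilon:=4\pi\bar{C}_{\alpha}\lambda_\varepsilon^{-1/2}\Pi_{\xi_\varepsilon,\lambda_\varepsilon}(H_{\bar a}-H_0)(\xi_\varepsilon,\cdot)$. It is small in $L^2$, and its gradient pairs to zero with $q_\varepsilon$.

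In Step~2, $s_\varepsilon$ is not negligible merely because $H_{\bar a}-H_0$ is bounded. Its $P\bar{U}$-component is of order $\lambda_\varepsilon^{-1}(\phi_{\bar a}-\phi_0)(\xi_\varepsilon)P\bar{U}_{\xi_\varepsilon,\lambda_\varepsilon}$ (compare Lemma~\ref{estimate-s}). Its cross terms with $P\bar{U}$ in numerator and denominator are therefore of the same order $\lambda_\varepsilon^{-1}$ as the term you are computing. They drop out for two reasons, which you should state explicitly:
\begin{itemize}
\item the quotient is $0$-homogeneous, so that component is just a rescaling;
\item $\psi_\varepsilon$ is an approximate critical point of the quotient (its derivative is $o(1)$ in $H^{-1}$), so the $\partial_\lambda P\bar{U}$- and $\partial_\xi P\bar{U}$-components, of $H^1_0$-norm $O(\lambda_\varepsilon^{-1})$ or smaller, contribute only $o(\lambda_\varepsilon^{-1})$.
\end{itemize}
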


\begin{proof}
Since $d_{\varepsilon}^{-1}=O(1)$, we have $\|w_{\varepsilon}\|_{H^{1}_{0}(\Omega)}=O(\lambda_{\varepsilon}^{-\frac{1}{2}})$. It then follows from \eqref{lemma-energy-1-proof-9}--\eqref{lemma-energy-1-proof-12} that the numerator of $S_{HL}(\bar{a}-\varepsilon)$ satisfies
\begin{equation}\label{prop-energy-expansion-proof-1}
    \begin{aligned}
        \int_{\Omega}|\nabla P\bar{U}_{\xi_{\varepsilon},\lambda_{\varepsilon}}|^{2}dx+\int_{\Omega}|\nabla w_{\varepsilon}|^{2}dx+\int_{\Omega}\bar{a}(P\bar{U}_{\xi_{\varepsilon},\lambda_{\varepsilon}}^{2}+2P\bar{U}_{\xi_{\varepsilon},\lambda_{\varepsilon}}w_{\varepsilon}+w_{\varepsilon}^{2})dx+o(\lambda_{\varepsilon}^{-1}).
    \end{aligned}
\end{equation}
On the other hand, \eqref{esposito eq1} and \eqref{esposito eq1-450}--\eqref{esposito eq1-452} yield that
\begin{equation}\label{prop-energy-expansion-proof-2}
    \begin{aligned}
      \int_\Omega |\nabla w_{\varepsilon}|^2 + \bar{a} (w_{\varepsilon}^2+P\bar{U}_{\xi_{\varepsilon},\lambda_{\varepsilon}}w_{\varepsilon}) dx&=(6-\alpha)\int_{\Omega}\int_{\Omega}\frac{P\bar{U}_{\xi_{\varepsilon},\lambda_{\varepsilon}}^{5-\alpha}(y)w_{\varepsilon}(y)P\bar{U}_{\xi_{\varepsilon},\lambda_{\varepsilon}}^{5-\alpha}(x)w_{\varepsilon}(x)}{|x-y|^{\alpha}}dxdy\\
 &\quad+(5-\alpha)\int_{\Omega}\int_{\Omega}\frac{P\bar{U}_{\xi_{\varepsilon},\lambda_{\varepsilon}}^{6-\alpha}(y)P\bar{U}_{\xi_{\varepsilon},\lambda_{\varepsilon}}^{4-\alpha}(x)w_{\varepsilon}^2(x)}{|x-y|^{\alpha}}dxdy \\
 &\quad+o(\lambda_{\varepsilon}^{-1}).  
    \end{aligned}
\end{equation}
Combining \eqref{lemma-energy-1-proof-9}, \eqref{prop-energy-expansion-proof-1} and \eqref{prop-energy-expansion-proof-2}, the numerator of $S_{HL}(\bar{a}-\varepsilon)$ becomes
\begin{equation*}
    \begin{aligned}
        S_{HL}^{\frac{6-\alpha}{5-\alpha}}+16\pi^{2}\bar{C}_{\alpha}^{2}\lambda_{\varepsilon}^{-1}\phi_{0}(\xi_{\varepsilon})&+\int_{\Omega}\bar{a}(P\bar{U}_{\xi_{\varepsilon},\lambda_{\varepsilon}}^{2}+P\bar{U}_{\xi_{\varepsilon},\lambda_{\varepsilon}}w_{\varepsilon})dx\\
        &+(6-\alpha)\int_{\Omega}\int_{\Omega}\frac{P\bar{U}_{\xi_{\varepsilon},\lambda_{\varepsilon}}^{5-\alpha}(y)w_{\varepsilon}(y)P\bar{U}_{\xi_{\varepsilon},\lambda_{\varepsilon}}^{5-\alpha}(x)w_{\varepsilon}(x)}{|x-y|^{\alpha}}dxdy\\
 &+(5-\alpha)\int_{\Omega}\int_{\Omega}\frac{P\bar{U}_{\xi_{\varepsilon},\lambda_{\varepsilon}}^{6-\alpha}(y)P\bar{U}_{\xi_{\varepsilon},\lambda_{\varepsilon}}^{4-\alpha}(x)w_{\varepsilon}^2(x)}{|x-y|^{\alpha}}dxdy \\
 &+o(\lambda_{\varepsilon}^{-1}).  
    \end{aligned}
\end{equation*}
Recall that the denominator of $S_{HL}(\bar{a}-\varepsilon)$ (see \eqref{lemma-energy-1-proof-89}) satisfies
\begin{equation*}
    \begin{aligned}
        &\left(\int_{\Omega}\int_{\Omega}\frac{(P\bar{U}_{\xi_{\varepsilon},\lambda_{\varepsilon}}+w_{\varepsilon})^{6-\alpha}(x)(P\bar{U}_{\xi_{\varepsilon},\lambda_{\varepsilon}}+w_{\varepsilon})^{6-\alpha}(y)}{|x-y|^{\alpha}}dxdy\right)^{-\frac{1}{6-\alpha}}\\
        &=S_{HL}^{-\frac{1}{5-\alpha}}-S_{HL}^{-\frac{7-\alpha}{5-\alpha}}32\pi^{2}\bar{C}_{\alpha}^{2}\lambda_{\varepsilon}^{-1}\phi_{0}(\xi_{\varepsilon})\\
        &\quad-S_{HL}^{-\frac{7-\alpha}{5-\alpha}}(6-\alpha)\int_{\Omega}\int_{\Omega}\frac{P\bar{U}_{\xi_{\varepsilon},\lambda_{\varepsilon}}^{5-\alpha}(x)w_{\varepsilon}(x)P\bar{U}_{\xi_{\varepsilon},\lambda_{\varepsilon}}^{5-\alpha}(y)w_{\varepsilon}(y)}{|x-y|^{\alpha}}dxdy\\
        &\quad-S_{HL}^{-\frac{7-\alpha}{5-\alpha}}(5-\alpha)\int_{\Omega}\int_{\Omega}\frac{P\bar{U}_{\xi_{\varepsilon},\lambda_{\varepsilon}}^{6-\alpha}(x)P\bar{U}_{\xi_{\varepsilon},\lambda_{\varepsilon}}^{4-\alpha}(y)w^{2}_{\varepsilon}(y)}{|x-y|^{\alpha}}dxdy\\
        &\quad+o(\lambda_{\varepsilon}^{-1}).
    \end{aligned}
\end{equation*}
Then it follows that
\begin{equation}\label{prop-energy-expansion-proof-5}
    \begin{aligned}
        S_{HL}(\bar{a}-\varepsilon)=&S_{HL}-\frac{64}{3}S_{HL}\lambda_{\varepsilon}^{-1}\phi_{0}(\xi_{\varepsilon})\\
        &+S_{HL}^{-\frac{1}{5-\alpha}}\lambda_{\varepsilon}^{-1}\int_{\Omega}\bar{a}((\lambda_{\varepsilon}^{\frac{1}{2}}P\bar{U}_{\xi_{\varepsilon},\lambda_{\varepsilon}})^{2}+(\lambda_{\varepsilon}^{\frac{1}{2}}P\bar{U}_{\xi_{\varepsilon},\lambda_{\varepsilon}})(\lambda_{\varepsilon}^{\frac{1}{2}}w_{\varepsilon}))dx+o(\lambda_{\varepsilon}^{-1}).
    \end{aligned}
\end{equation}

Let $\bar{w}_{\varepsilon}:=\lambda_{\varepsilon}^{\frac{1}{2}}w_{\varepsilon}$. Then $\bar{w}_{\varepsilon}$ satisfies 
   \begin{equation*}
\begin{cases}
    \begin{aligned}
 &-\Delta \bar{w}_{\varepsilon}+(\bar{a}-\varepsilon)\left(\bar{w}_{\varepsilon}+\lambda_{\varepsilon}^{1/2}P\bar{U}_{\xi_{\varepsilon},\lambda_{\varepsilon}}\right)\\
 &\quad=(\mu_{\varepsilon})^{10-2\alpha}\lambda_{\varepsilon}^{-(5-\alpha)}\left(\int_{\Omega}\frac{(\lambda_{\varepsilon}^{1/2}P\bar{U}_{\xi_{\varepsilon},\lambda_{\varepsilon}}+\bar{w}_{\varepsilon})^{6-\alpha}(y)}{|x-y|^{\alpha}}dy\right)(\lambda_{\varepsilon}^{1/2}P\bar{U}_{\xi_{\varepsilon},\lambda_{\varepsilon}}+\bar{w}_{\varepsilon})^{5-\alpha}\\
 &\qquad-\lambda_{\varepsilon}^{-(5-\alpha)}\left(\int_{\R^{3}}\frac{(\lambda_{\varepsilon}^{1/2}\bar{U}_{\xi_{\varepsilon},\lambda_{\varepsilon}})^{6-\alpha}(y)}{|x-y|^{\alpha}}dy\right)(\lambda_{\varepsilon}^{1/2}\bar{U}_{\xi_{\varepsilon},\lambda_{\varepsilon}})^{5-\alpha}
\end{aligned}&\text{~in~}\Omega,\\
\bar{w}_{\varepsilon}=0&\text{~on~}\partial\Omega.
\end{cases}
\end{equation*}
Moreover, Lemma \ref{boundw} and Lemma \ref{lemma-energy-1} imply that $\bar{w}_{\varepsilon}$ is bounded in $H^1_{0}(\Omega)$. Thus there exists $\bar{w}_{0}\in H^1_{0}(\Omega)$ such that, up to a subsequence,
\begin{equation}\label{prop-energy-expansion-proof-7}
    \bar{w}_{\varepsilon}\to \bar{w}_{0}\text{~weakly~in~}H^{1}_{0}(\Omega).
\end{equation}
Given any $\varphi\in C_{c}^{\infty}(\Omega\setminus\{\xi_{0}\})$. By \eqref{important-identity-1}, Proposition \ref{lemma-1}, Lemma \ref{lem inequality 1}, Lemma \ref{boundw}, Lemma \ref{lemma-energy-1}, the HLS inequality and the H\"older inequality, we have
\begin{equation*}
    \begin{aligned}
        &\lambda_{\varepsilon}^{-(5-\alpha)}\int_{\Omega}\int_{\Omega}\frac{(\lambda_{\varepsilon}^{1/2}P\bar{U}_{\xi_{\varepsilon},\lambda_{\varepsilon}}+\bar{w}_{\varepsilon})^{6-\alpha}(y)(\lambda_{\varepsilon}^{1/2}P\bar{U}_{\xi_{\varepsilon},\lambda_{\varepsilon}}+\bar{w}_{\varepsilon})^{5-\alpha}(x)\varphi(x)}{|x-y|^{\alpha}}dydx\\
        &\lesssim \lambda_{\varepsilon}^{-\frac{4-\alpha}{2}}\left(\int_{\Omega}u_{\varepsilon}^{6}dy\right)^{\frac{6-\alpha}{6}}\left(\int_{\Omega\cap supp(\varphi)}(\lambda_{\varepsilon}^{1/2}\bar{U}_{\xi_{\varepsilon},\lambda_{\varepsilon}})^{6}+\bar{w}_{\varepsilon}^{6}dx\right)^{\frac{5-\alpha}{6}}\\
        &\lesssim\lambda_{\varepsilon}^{-\frac{4-\alpha}{2}}
    \end{aligned}
\end{equation*}
and
\begin{equation*}
    \begin{aligned}
        &\lambda_{\varepsilon}^{-(5-\alpha)}\int_{\Omega}\int_{\R^{3}}\frac{(\lambda_{\varepsilon}^{1/2}\bar{U}_{\xi_{\varepsilon},\lambda_{\varepsilon}})^{6-\alpha}(y)(\lambda_{\varepsilon}^{1/2}\bar{U}_{\xi_{\varepsilon},\lambda_{\varepsilon}})^{5-\alpha}(x)\varphi(x)}{|x-y|^{\alpha}}dydx\\
        &=\frac{3}{\bar{C}_{\alpha}^{4}}\lambda_{\varepsilon}^{-2}\int_{\Omega\cap supp(\varphi)}(\lambda_{\varepsilon}^{1/2}\bar{U}_{\xi_{\varepsilon},\lambda_{\varepsilon}})^{5}(x)\varphi(x)dx\\
        &\lesssim \lambda_{\varepsilon}^{-2}.
    \end{aligned}
\end{equation*}
On the other hand, Lemma \ref{lemma PU} yields that
\begin{equation*}
    \begin{aligned}
         \lambda_{\varepsilon}^{\frac{1}{2}}P{U}_{\xi_{\varepsilon},\lambda_{\varepsilon}}&=\lambda_{\varepsilon}^{\frac{1}{2}}{U}_{\xi_{\varepsilon},\lambda_{\varepsilon}}+4\pi\left(G_{0}(\xi_{\varepsilon},\cdot)-\frac{1}{4\pi|\xi_{\varepsilon}-\cdot|}\right)+O(\lambda_{\varepsilon}^{-2})\\
         &=4\pi G_{0}(\xi_{\varepsilon},\cdot)-\lambda_{\varepsilon}^{1/2}\left(\frac{\lambda_{\varepsilon}^{-1/2}}{|\xi_{\varepsilon}-\cdot|}-\frac{\lambda_{\varepsilon}^{1/2}}{(1 + \lambda_{\varepsilon}^2|\xi_{\varepsilon}-\cdot|^2)^{1/2}}\right)+O(\lambda_{\varepsilon}^{-2})\\
          &:=4\pi G_{0}(\xi_{\varepsilon},\cdot)-\lambda_{\varepsilon}^{1/2}g_{\xi_{\varepsilon},\lambda_{\varepsilon}}+O(\lambda_{\varepsilon}^{-2}).
    \end{aligned}
\end{equation*}
This together with \eqref{prop-energy-expansion-proof-7} and Lemma \ref{lemma-g} gives that
\begin{equation*}
    \begin{aligned}
        &\int_{\Omega}\nabla \bar{w}_{\varepsilon}\cdot\nabla \varphi dx+\int_{\Omega}(\bar{a}-\varepsilon)\left(\bar{w}_{\varepsilon}+\lambda_{\varepsilon}^{1/2}P\bar{U}_{\xi_{\varepsilon},\lambda_{\varepsilon}}\right)\varphi dx\\
        &=\int_{\Omega}\nabla \bar{w}_{0}\cdot\nabla \varphi dx+\int_{\Omega}\bar{a}\bar{w}_{0}\varphi dx+4\pi\int_{\Omega}\bar{a} G_{0}(\xi_{0},\cdot)\varphi dx+o(1).
    \end{aligned}
\end{equation*}
Therefore, $\bar{w}_{0}$ is a solution to
\begin{equation*}
\begin{cases}
 -\Delta\bar{w}_{0}+\bar{a}\bar{w}_{0}=-\bar{a}4\pi\bar{C}_{\alpha}G_{0}(\xi_{0},\cdot)&\text{~in~}\Omega,\\
 \bar{w}_{0}=0&\text{~on~}\partial\Omega.
\end{cases}
\end{equation*}
By the coercivity of the operator $-\Delta+\bar{a}$, we conclude that
\begin{equation}\label{convergence-w}
    \bar{w}_{0}=4\pi\bar{C}_{\alpha}(H_{\bar{a}}(\xi_{0},\cdot)-H_{0}(\xi_{0},\cdot)).
\end{equation}
It then follows that
\begin{equation*}
    \begin{aligned}       &\int_{\Omega}\bar{a}\left((\lambda_{\varepsilon}^{1/2}P\bar{U}_{\xi_{\varepsilon},\lambda_{\varepsilon}})^{2}+\lambda_{\varepsilon}^{1/2}P\bar{U}_{\xi_{\varepsilon},\lambda_{\varepsilon}}\bar{w}_{\varepsilon}\right)dx\\
       &=\int_{\Omega}\bar{a}(16\pi^{2}\bar{C}^{2}_{\alpha}G_{0}^{2}(\xi_{\varepsilon},\cdot)+4\pi\bar{C}_{\alpha}G_{0}(\xi_{\varepsilon},\cdot)\bar{w}_{0})dx+o(1)\\
       &=-4\pi\bar{C}_{\alpha}\int_{\Omega}(-\Delta\bar{w}_{0})G_{0}(\xi_{\varepsilon},\cdot)dx+o(1)\\
       &=-4\pi\bar{C}_{\alpha}\bar{w}_{0}(\xi_{0})+o(1)=-16\pi^{2}\bar{C}_{\alpha}^{2}(\phi_{\bar{a}}(\xi_{0})-\phi_{0}(\xi_{0}))+o(1)\\
       &=-\frac{64}{3}S_{HL}^{\frac{6-\alpha}{5-\alpha}}(\phi_{\bar{a}}(\xi_{0})-\phi_{0}(\xi_{0}))+o(1).
    \end{aligned}
\end{equation*}
Combining this estimate with \eqref{prop-energy-expansion-proof-5}, we obtain
\begin{equation}
    \begin{aligned}
        S_{HL}(\bar{a}-\varepsilon)=S_{HL}&-\frac{64}{3}S_{HL}\lambda_{\varepsilon}^{-1}\phi_{\bar{a}}(\xi_{0})+o(\lambda_{\varepsilon}^{-1}).
    \end{aligned}
\end{equation}
This completes the proof.
\end{proof}

\begin{proof}[Proof of Theorem \ref{thm-00}]
It suffices to prove $(2)\Rightarrow(1)$. By Proposition \ref{prop-energy-expansion}, we have
 \begin{equation}
 \begin{aligned}\label{energy-critical-1}
      S_{HL}>S_{HL}(\bar{a}-\varepsilon)=S_{HL}&-\frac{64}{3}S_{HL}\lambda_{\varepsilon}^{-1}\phi_{\bar{a}}(\xi_{0})+o(\lambda_{\varepsilon}^{-1}).
 \end{aligned}
 \end{equation}
 Letting $\varepsilon\to0$, we obtain $\phi_{\bar{a}}(\xi_{0})\geq0$. Moreover, Theorem \ref{thm-1} yields that $\phi_{\bar{a}}(\xi_{0})=0$. Since $\bar{a}=a+B(a)>a$, Lemma \ref{lem-H-a} implies that
 \begin{equation}
     \phi_{a}(\xi_{0})>\phi_{\bar{a}}(\xi_{0})=0.
 \end{equation}
 This completes the proof.
\end{proof}

\section{Asymptotic behavior}\label{section-4}
We first establish an upper bound for $S_{HL}(a+\varepsilon V)$ using the test function defined by
\begin{equation*}
    \psi_{\xi,\lambda}:=P\bar{U}_{\xi,\lambda}+\lambda^{-\frac{1}{2}}4\pi\bar{C}_{\alpha}(H_{a}(\xi,\cdot)-H_{0}(\xi,\cdot)),\quad\xi\in\Omega\text{~and~}\lambda\in \R^{+}.
\end{equation*}
Let us define
\begin{equation}
    F_{\xi,\lambda}:={U}_{\xi,\lambda}+4\pi\lambda^{-\frac{1}{2}}H_{a}(\xi,\cdot).
\end{equation}
It then follows from Lemma \ref{lemma PU} that
    \begin{equation}\label{eq-psi-1}
    \begin{aligned}
        \psi_{\xi,\lambda}&=\bar{U}_{\xi,\lambda}-\bar{C}_{\alpha}\varphi_{\xi,\lambda}+4\pi\bar{C}_{\alpha}\lambda^{-\frac{1}{2}}(H_{a}(\xi,\cdot)-H_{0}(\xi,\cdot))\\
        &=\bar{U}_{\xi,\lambda}+4\pi\bar{C}_{\alpha}\lambda^{-\frac{1}{2}}H_{a}(\xi,\cdot)-\bar{C}_{\alpha}f_{\xi,\lambda}\\
        &=\bar{C}_{\alpha}\left(F_{\xi,\lambda}-f_{\xi,\lambda}\right).
    \end{aligned}
\end{equation}
Moreover, by \eqref{Ha-pde} and \eqref{eq-pU}, the function $\psi_{\xi,\lambda}$ satisfies 
\begin{equation*}
\begin{cases}
    -\Delta \psi_{\xi,\lambda}
    =3\bar{C}_{\alpha}{U}_{\xi,\lambda}^{5}-4\pi\bar{C}_{\alpha}\lambda^{-\frac{1}{2}}aG_{a}(\xi,\cdot)&\text{~in~}\Omega,\\
   \psi_{\xi,\lambda}=0&\text{~on~}\partial\Omega.
\end{cases}
\end{equation*}
For $u\in H^{1}_{0}(\Omega)$, we define the functional
\begin{equation*}
    S_{HL}(a+\varepsilon V)[u]:=\frac{\int_{\Omega}|\nabla u|^{2}dx+\int_{\Omega}(a+\varepsilon V)u^{2}dx}{\left(\int_{\Omega}\int_{\Omega}\frac{u^{6-\alpha}(x)u^{6-\alpha}(y)}{|x-y|^{\alpha}}dxdy\right)^{\frac{1}{6-\alpha}}}.
\end{equation*}

\begin{Prop}\label{prop-upper-estimate}
As $\lambda\to\infty$, the following holds uniformly for $\xi$ in compact subsets of $\Omega$ and for $\varepsilon \geq 0$
\begin{equation}\label{estimate-psi-1}
    \begin{aligned}
        &\int_{\Omega}|\nabla\psi_{\xi,\lambda}|^{2}dx+\int_{\Omega}(a+\varepsilon V)\psi_{\xi,\lambda}^{2}dx\\
        &=S_{HL}^{\frac{6-\alpha}{5-\alpha}}+16\pi^{2}\bar{C}_{\alpha}^{2}\phi_{a}(\xi)\lambda^{-1}+2\pi(4-\pi)\bar{C}_{\alpha}^{2}a(\xi)\lambda^{-2}\\
        &\quad+16\pi^{2}\bar{C}_{\alpha}^{2}\varepsilon\lambda^{-1} Q_{V}(\xi)+o(\lambda^{-2})+o(\varepsilon\lambda^{-1})
    \end{aligned}
\end{equation}
and
\begin{equation}\label{estimate-psi-2}
    \begin{aligned}
        &\int_{\Omega}\int_{\Omega}\frac{\psi_{\xi,\lambda}^{6-\alpha}(y)\psi_{\xi,\lambda}^{6-\alpha}(x)}{|x-y|^{\alpha}}dxdy\\   
        &=S_{HL}^{\frac{6-\alpha}{5-\alpha}}+32(6-\alpha)\pi^{2}\bar{C}_{\alpha}^{2}\phi_{a}(\xi)\lambda^{-1}+8(6-\alpha)\pi\bar{C}_{\alpha}^{2}a(\xi)\lambda^{-2}\\
         &\quad+16(6-\alpha)\pi^{2}\bar{C}_{\alpha}^{2}\phi_{a}^{2}(\xi)\left((6-\alpha){C}_{1,\alpha}+(5-\alpha)3\pi^{2}\right)\lambda^{-2}+o(\lambda^{-2}),
    \end{aligned}
\end{equation}
where
\begin{equation*}
    {C}_{1,\alpha}:=\int_{\R^{3}}\int_{\R^{3}}\frac{\bar{U}_{0,1}^{5-\alpha}(x)\bar{U}_{0,1}^{5-\alpha}(y)}{|x-y|^{\alpha}}dxdy.
\end{equation*}
Moreover, we have
\begin{equation}\label{estimate-psi-3}
    \begin{aligned}
        S_{HL}(a+\varepsilon V)[\psi_{\xi,\lambda}]=&S_{HL}-\frac{64}{3}S_{HL}\phi_{a}(\xi)\lambda^{-1}+\frac{64}{3}S_{HL}Q_{V}(\xi)\varepsilon\lambda^{-1}-\frac{8}{3}S_{HL}a(\xi)\lambda^{-2}\\
         &\quad-\frac{64}{3}S_{HL}\phi_{a}^{2}(\xi)\left((6-\alpha){C}_{1,\alpha}+(5-\alpha)3\pi^{2}-\frac{128(6-\alpha)}{3}\right)\lambda^{-2}\\
         &\quad+o(\lambda^{-2})+o(\varepsilon\lambda^{-1}).
    \end{aligned}
\end{equation}
\end{Prop}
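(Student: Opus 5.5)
The plan is to follow the scheme of Frank, K\"onig and Kova\v{r}\'ik for the local Brezis--Nirenberg problem in dimension three, adapted to the nonlocal term. We work throughout with the representation \eqref{eq-psi-1}, $\psi_{\xi,\lambda}=\bar C_\alpha(F_{\xi,\lambda}-f_{\xi,\lambda})$. Here $\|f_{\xi,\lambda}\|_{L^\infty}=O(\lambda^{-5/2})$ uniformly for $\xi$ in compact sets (Lemma \ref{lemma PU}). Hence every contribution of $f_{\xi,\lambda}$ is $o(\lambda^{-2})$, and we can replace $\psi_{\xi,\lambda}$ by $\bar C_\alpha F_{\xi,\lambda}$.

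\textbf{Numerator \eqref{estimate-psi-1}.} We integrate by parts using the equation for $\psi_{\xi,\lambda}$:
\[
\int_\Omega|\nabla\psi|^2+a\psi^2=\int_\Omega\bigl(3\bar C_\alpha U^5-4\pi\bar C_\alpha\lambda^{-1/2}aG_a(\xi,\cdot)\bigr)\psi+\int_\Omega a\psi^2 .
\]
Then we insert $\psi=\bar C_\alpha(U+4\pi\lambda^{-1/2}H_a(\xi,\cdot))+O(\lambda^{-5/2})$ and $G_a=\frac{1}{4\pi|x-\xi|}+H_a$.
\begin{itemize}
\item The term $3\bar C_\alpha^2\int U^6$ gives $S_{HL}^{\frac{6-\alpha}{5-\alpha}}$ up to $O(\lambda^{-3})$.
\item The term $12\pi\bar C_\alpha^2\lambda^{-1/2}\int U^5H_a(\xi,\cdot)$ is handled with the expansion of $H_a$ in Lemma \ref{lem-H-a}. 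The constant part gives $16\pi^2\bar C_\alpha^2\phi_a\lambda^{-1}$. The gradient term vanishes by symmetry. The part $\frac{a(\xi)}{8\pi}|y-\xi|$ gives a $\lambda^{-2}$ contribution with constant $\int_{\mathbb R^3}|x|(1+|x|^2)^{-5/2}$.
\item The remaining terms are of the form $\int a\,(\bar C_\alpha U)^2-4\pi\bar C_\alpha^2\lambda^{-1/2}\int aG_aU+\dots$. Write $\lambda^{1/2}U=\frac{1}{|x-\xi|}-\lambda^{1/2}g_{\xi,\lambda}$, as in Lemma \ref{lemma-g}. The leading $\lambda^{-1}$ parts then cancel exactly, because $aG_a\cdot G_a$ and $aG_a\cdot\frac{1}{4\pi|x-\xi|}$ pair against each other. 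What survives at order $\lambda^{-2}$ is $a(\xi)\int_{\mathbb R^3}\bigl(|x|^{-2}-(1+|x|^2)^{-1}\bigr)dx$, an explicit multiple of $\pi^2$. Together with the previous bullet this should produce the coefficient $2\pi(4-\pi)\bar C_\alpha^2a(\xi)$.
\item The $\varepsilon V$ term is $\varepsilon\int V\psi^2=16\pi^2\bar C_\alpha^2\varepsilon\lambda^{-1}\int VG_a(\xi,\cdot)^2+o(\varepsilon\lambda^{-1})$, since $\lambda^{1/2}\psi\to4\pi\bar C_\alpha G_a(\xi,\cdot)$ in $L^2$.
\end{itemize}

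\textbf{Denominator \eqref{estimate-psi-2}.} We write $\psi^{6-\alpha}=\bar C_\alpha^{6-\alpha}\bigl(U^{6-\alpha}+(6-\alpha)U^{5-\alpha}h+\frac{(6-\alpha)(5-\alpha)}2U^{4-\alpha}h^2+\dots\bigr)$ with $h=4\pi\lambda^{-1/2}H_a(\xi,\cdot)$, and expand the double integral.
\begin{itemize}
\item The zeroth-order term gives the main constant.
\item The cross term is handled with identity \eqref{important-identity-1}, which turns the inner Riesz integral into $3\bar C_\alpha^{-2(5-\alpha)}U^\alpha$. It reduces to $\int U^5h$, which was already computed in the numerator and contributes the $\phi_a\lambda^{-1}$ and $a(\xi)\lambda^{-2}$ terms.
\item The term with $U^{4-\alpha}h^2$ similarly reduces to $\int U^4h^2\approx16\pi^2\lambda^{-1}\phi_a^2\int U^4$, and $\int_{\mathbb R^3}U_{0,1}^4=3\pi^2/\ldots$ rescales to $\lambda^{-1}$.
\item The genuinely nonlocal term $\iint U^{5-\alpha}hU^{5-\alpha}h|x-y|^{-\alpha}$ gives $16\pi^2\phi_a^2\lambda^{-1}$ times the rescaled integral $C_{1,\alpha}\lambda^{-1}$.
\end{itemize}
All errors must be shown to be $o(\lambda^{-2})$: the boundary tails, the domain truncation from $\mathbb R^3$ to $\Omega$, the cubic Taylor remainders, and the non-constant part of $H_a$ in the quadratic terms. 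This is done with the HLS inequality (Theorem \ref{HL}), H\"older's inequality, Lemma \ref{lem inequality 2}, and the $L^p$ bounds of $U$ on $\Omega$.

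\textbf{Quotient \eqref{estimate-psi-3}.} We expand $N\cdot D^{-1/(6-\alpha)}$ to second order in $\lambda^{-1}$; the cross term between the two $\phi_a\lambda^{-1}$ contributions produces the $-\frac{128(6-\alpha)}{3}$ correction. The constants are then simplified using \eqref{definition of SHL} and the relation $16\pi^2\bar C_\alpha^2=\frac{64}{3}S_{HL}^{\frac{6-\alpha}{5-\alpha}}$ already used in Proposition \ref{prop-energy-expansion}.

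\textbf{Main obstacle.} We expect the hardest step to be the order-$\lambda^{-2}$ bookkeeping in the numerator. The $\lambda^{-1}$-size $a$-terms must cancel exactly, and the remainder has to be organized so that only the explicit $a(\xi)$ constant survives, which requires careful use of $g_{\xi,\lambda}$ near $\xi$. The analogous issue in the nonlocal quadratic term is controlling the replacement $H_a(\xi,\cdot)\to\phi_a(\xi)$ inside the double integral; there we would split at $|x-\xi|<\lambda^{-1/2}$ and apply HLS.
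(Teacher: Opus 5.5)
Your route is the paper's. The denominator goes by a second-order expansion in $h=4\pi\lambda^{-1/2}H_a(\xi,\cdot)$, with identity \eqref{important-identity-1} to collapse the Riesz potentials and HLS for the remainders. The numerator is the Frank--K\"onig--Kova\v{r}\'ik computation, which the paper only cites. The quotient is a Taylor expansion.

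The gap is in the one numerator constant you commit to, the $a(\xi)\lambda^{-2}$ coefficient in \eqref{estimate-psi-1}: as stated, your accounting does not produce it. Drop the factor $\bar C_\alpha^2$ and write $U=U_{\xi,\lambda}$, $g=g_{\xi,\lambda}$, $f=f_{\xi,\lambda}$. Since $\psi=U+4\pi\lambda^{-1/2}H_a(\xi,\cdot)-f=4\pi\lambda^{-1/2}G_a(\xi,\cdot)-g-f$, the $a$-terms are exactly
\[
\int_\Omega a\,\psi\bigl(\psi-4\pi\lambda^{-1/2}G_a(\xi,\cdot)\bigr)\,dx=-\int_\Omega a\,\psi\,(g+f)\,dx=-\int_\Omega a\,U g\,dx+O(\lambda^{-3}\log\lambda).
\]
Rescaling then gives
\[
-\int_\Omega a\,Ug\,dx=-a(\xi)\lambda^{-2}\int_{\R^3}\frac{1}{(1+|z|^2)^{1/2}}\Bigl(\frac{1}{|z|}-\frac{1}{(1+|z|^2)^{1/2}}\Bigr)dz+o(\lambda^{-2})=(4\pi-2\pi^2)\,a(\xi)\lambda^{-2}+o(\lambda^{-2}).
\]
So the survivor is not $a(\xi)\int_{\R^3}(|x|^{-2}-(1+|x|^2)^{-1})\,dx=2\pi^2a(\xi)$. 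That piece enters with a minus sign. You have also lost the term
\[
\lambda^{-1/2}\int_\Omega a\,g\,|x-\xi|^{-1}dx=4\pi a(\xi)\lambda^{-2}+o(\lambda^{-2}),
\]
which appears when the $g$-decomposition is inserted into $-\lambda^{-1/2}\int_\Omega aU|x-\xi|^{-1}dx$.

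Adding the $4\pi a(\xi)\lambda^{-2}$ from $12\pi\lambda^{-1/2}\int_\Omega U^5H_a(\xi,\cdot)\,dx$ gives $8\pi-2\pi^2=2\pi(4-\pi)$, as claimed. Your accounting gives $4\pi+2\pi^2$ instead. This error propagates: the coefficient $-\frac{8}{3}S_{HL}a(\xi)$ in \eqref{estimate-psi-3} is exactly $\bigl(2\pi(4-\pi)-8\pi\bigr)/\frac{3\pi^2}{4}$, where the $-8\pi$ comes from the denominator.

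A smaller point concerns the quotient. The factor $\frac{128(6-\alpha)}{3}$ equals $\frac{128}{3}\bigl((7-\alpha)-1\bigr)$. The second-order Taylor term of $D^{-1/(6-\alpha)}$ supplies the $7-\alpha$. The cross term between the two $\phi_a\lambda^{-1}$ contributions supplies only the $-1$.
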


\begin{proof}
First, it follows from \eqref{eq-psi-1}, Lemma \ref{lem inequality 1} and Lemma \ref{lem inequality 2} that
    \begin{equation*}
        \begin{aligned}
            &\int_{\Omega}\int_{\Omega}\frac{\psi_{\xi,\lambda}^{6-\alpha}(y)\psi_{\xi,\lambda}^{6-\alpha}(x)}{|x-y|^{\alpha}}dxdy\\           
            &=\bar{C}_{\alpha}^{2(6-\alpha)}\int_{\Omega}\int_{\Omega}\frac{F_{\xi,\lambda}^{6-\alpha}(y)F_{\xi,\lambda}^{6-\alpha}(x)}{|x-y|^{\alpha}}dxdy+O\left\{\int_{\Omega}\int_{\Omega}\frac{F_{\xi,\lambda}^{6-\alpha}(y)F_{\xi,\lambda}^{5-\alpha}(x)f_{\xi,\lambda}(x)}{|x-y|^{\alpha}}dxdy\right.\\
           &\quad+\int_{\Omega}\int_{\Omega}\frac{F_{\xi,\lambda}^{5-\alpha}(y)f_{\xi,\lambda}(y)F_{\xi,\lambda}^{5-\alpha}(x)f_{\xi,\lambda}(x)}{|x-y|^{\alpha}}dxdy\\
           &\quad+\left.\int_{\Omega}\int_{\Omega}\frac{(F_{\xi,\lambda}^{6-\alpha}+F_{\xi,\lambda}^{5-\alpha}f_{\xi,\lambda}+f_{\xi,\lambda}^{6-\alpha})(y)f^{6-\alpha}_{\xi,\lambda}(x)}{|x-y|^{\alpha}}dxdy\right\}.
        \end{aligned}
    \end{equation*}
By Lemma \ref{lemma PU} and some direct computations, we have
\begin{equation*}
  \|f_{\xi,\lambda}\|_{L^{\infty}(\Omega)}=O(\lambda^{-\frac{5}{2}}),\quad  \|F_{\xi,\lambda}\|^{6-\alpha}_{L^{6}(\Omega)}=O(1),\quad \|F_{\xi,\lambda}\|^{5-\alpha}_{L^{\frac{6(5-\alpha)}{6-\alpha}}(\Omega)}=O(\lambda^{-\frac{1}{2}}).
\end{equation*}
This together with the HLS inequality yields that
\begin{equation*}
    \begin{aligned}
        &\int_{\Omega}\int_{\Omega}\frac{F_{\xi,\lambda}^{6-\alpha}(y)F_{\xi,\lambda}^{5-\alpha}(x)f_{\xi,\lambda}(x)}{|x-y|^{\alpha}}dxdy\\
           &\quad+\int_{\Omega}\int_{\Omega}\frac{F_{\xi,\lambda}^{5-\alpha}(y)f_{\xi,\lambda}(y)F_{\xi,\lambda}^{5-\alpha}(x)f_{\xi,\lambda}(x)}{|x-y|^{\alpha}}dxdy\\
           &\quad+\int_{\Omega}\int_{\Omega}\frac{(F_{\xi,\lambda}^{6-\alpha}+F_{\xi,\lambda}^{5-\alpha}f_{\xi,\lambda}+f_{\xi,\lambda}^{6-\alpha})(y)f^{6-\alpha}_{\xi,\lambda}(x)}{|x-y|^{\alpha}}dxdy\\
           &\lesssim \|F_{\xi,\lambda}\|^{6-\alpha}_{L^{6}(\Omega)}\|F_{\xi,\lambda}\|^{5-\alpha}_{L^{\frac{6(5-\alpha)}{6-\alpha}}(\Omega)}\|f_{\xi,\lambda}\|_{L^{\infty}(\Omega)}+\|F_{\xi,\lambda}\|^{2(5-\alpha)}_{L^{\frac{6(5-\alpha)}{6-\alpha}}(\Omega)}\|f_{\xi,\lambda}\|^{2}_{L^{\infty}(\Omega)}\\
           &\quad+\left(\|F_{\xi,\lambda}\|^{6-\alpha}_{L^{6}(\Omega)}+\|F_{\xi,\lambda}\|^{5-\alpha}_{L^{\frac{6(5-\alpha)}{6-\alpha}}(\Omega)}\|f_{\xi,\lambda}\|_{L^{\infty}(\Omega)}+\|f_{\xi,\lambda}\|_{L^{\infty}(\Omega)}^{6-\alpha}\right)\|f_{\xi,\lambda}\|_{L^{\infty}(\Omega)}^{6-\alpha}\\
           &=o(\lambda^{-2}).
    \end{aligned}
\end{equation*}
Moreover, by Lemma \ref{lem inequality 1}, Lemma \ref{lem inequality 2} and some direct computations, we find that
\begin{equation*}
    \begin{aligned}
        &\int_{\Omega}\int_{\Omega}\frac{F_{\xi,\lambda}^{6-\alpha}(y)F_{\xi,\lambda}^{6-\alpha}(x)}{|x-y|^{\alpha}}dxdy=\int_{\Omega}\int_{\Omega}\frac{{U}_{\xi,\lambda}^{6-\alpha}(y){U}_{\xi,\lambda}^{6-\alpha}(x)}{|x-y|^{\alpha}}dxdy\\
        &\quad+8(6-\alpha)\pi\lambda^{-\frac{1}{2}}\int_{\Omega}\int_{\Omega}\frac{{U}_{\xi,\lambda}^{6-\alpha}(y){U}_{\xi,\lambda}^{5-\alpha}(x)H_{a}(\xi,x)}{|x-y|^{\alpha}}dxdy\\
         &\quad+16(6-\alpha)^{2}\pi^{2}\lambda^{-1}\int_{\Omega}\int_{\Omega}\frac{{U}_{\xi,\lambda}^{5-\alpha}(y)H_{a}(\xi,y){U}_{\xi,\lambda}^{5-\alpha}(x)H_{a}(\xi,x)}{|x-y|^{\alpha}}dxdy\\
        &\quad+16(6-\alpha)(5-\alpha)\pi^{2}\lambda^{-1}\int_{\Omega}\int_{\Omega}\frac{{U}_{\xi,\lambda}^{6-\alpha}(y){U}_{\xi,\lambda}^{4-\alpha}(x)H_{a}^{2}(\xi,x)}{|x-y|^{\alpha}}dxdy\\
        &\quad+O\left\{\lambda^{-\frac{3}{2}}G^{1}_{\xi,\lambda}+\lambda^{-2}G^{2}_{\xi,\lambda}+\lambda^{-\frac{6-\alpha}{2}}\int_{\Omega}\int_{\Omega}\frac{{U}_{\xi,\lambda}^{6-\alpha}(y)H_{a}^{6-\alpha}(\xi,x)}{|x-y|^{\alpha}}dxdy\right\}+o(\lambda^{-2}),
    \end{aligned}
\end{equation*}
where
\begin{equation*}
\begin{aligned}
 G^{1}_{\xi,\lambda}&:=\int_{\Omega}\int_{\Omega}\frac{{U}_{\xi,\lambda}^{6-\alpha}(y){U}_{\xi,\lambda}^{3-\alpha}(x)H_{a}^{3}(\xi,x)}{|x-y|^{\alpha}}dxdy\\
 &\quad+\int_{\Omega}\int_{\Omega}\frac{{U}_{\xi,\lambda}^{5-\alpha}(y)H_{a}(\xi,y){U}_{\xi,\lambda}^{4-\alpha}(x)H_{a}^{2}(\xi,x)}{|x-y|^{\alpha}}dxdy   
\end{aligned}
\end{equation*}
and
\begin{equation*}
\begin{aligned}
     G^{2}_{\xi,\lambda}&:=\int_{\Omega}\int_{\Omega}\frac{{U}_{\xi,\lambda}^{5-\alpha}(y)H_{a}(\xi,y){U}_{\xi,\lambda}^{3-\alpha}(x)H_{a}^{3}(\xi,x)}{|x-y|^{\alpha}}dxdy\\
     &\quad+\int_{\Omega}\int_{\Omega}\frac{{U}_{\xi,\lambda}^{4-\alpha}(y)H^{2}_{a}(\xi,y){U}_{\xi,\lambda}^{4-\alpha}(x)H_{a}^{2}(\xi,x)}{|x-y|^{\alpha}}dxdy.
\end{aligned}
\end{equation*}
Combining \eqref{important-identity-1} and the HLS inequality, we obtain
\begin{equation*}
    \begin{aligned}
        &\int_{\Omega}\int_{\Omega}\frac{{U}_{\xi,\lambda}^{6-\alpha}(y){U}_{\xi,\lambda}^{6-\alpha}(x)}{|x-y|^{\alpha}}dxdy\\
        &=\int_{\R^{3}}\int_{\Omega}\frac{{U}_{\xi,\lambda}^{6-\alpha}(y){U}_{\xi,\lambda}^{6-\alpha}(x)}{|x-y|^{\alpha}}dxdy-\int_{\R^{3}\setminus\Omega}\int_{\Omega}\frac{{U}_{\xi,\lambda}^{6-\alpha}(y){U}_{\xi,\lambda}^{6-\alpha}(x)}{|x-y|^{\alpha}}dxdy\\
        &=\frac{3}{\bar{C}_{\alpha}^{2(5-\alpha)}}\int_{\R^{3}}{U}_{\xi,\lambda}^{6}dx+O\left(\int_{\R^{3}\setminus\Omega}{U}_{\xi,\lambda}^{6}dx\right)
        =\frac{3^{-\frac{1}{2}}S^{\frac{3}{2}}}{\bar{C}_{\alpha}^{2(5-\alpha)}}+o(\lambda^{-2})
    \end{aligned}
\end{equation*}
\begin{equation*}
    \begin{aligned}
         G^{1}_{\xi,\lambda}&\lesssim \left(\|{U}_{\xi,\lambda}\|_{L^{4}(\Omega)}^{4}+\|{U}_{\xi,\lambda}\|_{L^{\frac{6(5-\alpha)}{6-\alpha}}(\Omega)}^{5-\alpha}\|{U}_{\xi,\lambda}\|_{L^{\frac{6(4-\alpha)}{6-\alpha}}(\Omega)}^{4-\alpha}\right)\|H_{a}(\xi,x)\|^{3}_{L^{\infty}(\Omega)}\\
         &=O(\lambda^{-1})
    \end{aligned}
\end{equation*}
\begin{equation*}
    \begin{aligned}
        G^{2}_{\xi,\lambda}&\lesssim \left(\|{U}_{\xi,\lambda}\|_{L^{\frac{6(5-\alpha)}{6-\alpha}}(\Omega)}^{5-\alpha}\|{U}_{\xi,\lambda}\|_{L^{\frac{6(3-\alpha)}{6-\alpha}}(\Omega)}^{3-\alpha}+\|{U}_{\xi,\lambda}\|_{L^{\frac{6(4-\alpha)}{6-\alpha}}(\Omega)}^{2(4-\alpha)}\right)\|H_{a}(\xi,x)\|^{4}_{L^{\infty}(\Omega)}\\
        &=O(\lambda^{-2})
    \end{aligned}
\end{equation*}
and
\begin{equation*}
    \begin{aligned}
        \int_{\Omega}\int_{\Omega}\frac{{U}_{\xi,\lambda}^{6-\alpha}(y)H_{a}^{6-\alpha}(\xi,x)}{|x-y|^{\alpha}}dxdy&\lesssim\int_{\Omega}{U}_{\xi,\lambda}^{\alpha}(x)dx\|H_{a}(\xi,x)\|^{6-\alpha}_{L^{\infty}(\Omega)}\\
        &=O\left(\lambda^{-\frac{\alpha}{2}}\right).
    \end{aligned}
\end{equation*}
On the other hand, by \eqref{important-identity-1} and \cite[Lemma B.3]{Frank2021BlowupOS}, we obtain
\begin{equation*}
    \begin{aligned}
        &\int_{\Omega}\int_{\Omega}\frac{{U}_{\xi,\lambda}^{6-\alpha}(y){U}_{\xi,\lambda}^{5-\alpha}(x)H_{a}(\xi,x)}{|x-y|^{\alpha}}dxdy\\
        &=\int_{\R^{3}}\int_{\Omega}\frac{{U}_{\xi,\lambda}^{6-\alpha}(y){U}_{\xi,\lambda}^{5-\alpha}(x)H_{a}(\xi,x)}{|x-y|^{\alpha}}dxdy-\int_{\R^{3}\setminus\Omega}\int_{\Omega}\frac{{U}_{\xi,\lambda}^{6-\alpha}(y){U}_{\xi,\lambda}^{5-\alpha}(x)H_{a}(\xi,x)}{|x-y|^{\alpha}}dxdy\\
        &=\frac{3}{\bar{C}_{\alpha}^{2(5-\alpha)}}\int_{\Omega}{U}_{\xi,\lambda}^{5}(x)H_{a}(\xi,x)dx+O\left(\|{U}_{\xi,\lambda}\|^{6-\alpha}_{L^{6}(\R^{3}\setminus\Omega)}\|{U}_{\xi,\lambda}\|^{5-\alpha}_{L^{\frac{6(5-\alpha)}{6-\alpha}}(\Omega)}\|H_{a}(\xi,x)\|_{L^{\infty}(\Omega)}\right)\\
        &=\frac{1}{\bar{C}_{\alpha}^{2(5-\alpha)}}\left(4\pi\phi_{a}(\xi)\lambda^{-\frac{1}{2}}+a(\xi)\lambda^{-\frac{3}{2}}\right)+o(\lambda^{-\frac{3}{2}})
    \end{aligned}
\end{equation*}
and
\begin{equation*}
    \begin{aligned}
        &\int_{\Omega}\int_{\Omega}\frac{{U}_{\xi,\lambda}^{6-\alpha}(y){U}_{\xi,\lambda}^{4-\alpha}(x)H_{a}^{2}(\xi,x)}{|x-y|^{\alpha}}dxdy\\
        &=\int_{\R^{3}}\int_{\Omega}\frac{{U}_{\xi,\lambda}^{6-\alpha}(y){U}_{\xi,\lambda}^{4-\alpha}(x)H_{a}^{2}(\xi,x)}{|x-y|^{\alpha}}dxdy-\int_{\R^{3}\setminus\Omega}\int_{\Omega}\frac{{U}_{\xi,\lambda}^{6-\alpha}(y){U}_{\xi,\lambda}^{4-\alpha}(x)H_{a}^{2}(\xi,x)}{|x-y|^{\alpha}}dxdy\\
        &=\frac{3}{\bar{C}_{\alpha}^{2(5-\alpha)}}\int_{\Omega}{U}_{\xi,\lambda}^{4}(x)H_{a}^{2}(\xi,x)dx+O\left(\|{U}_{\xi,\lambda}\|^{6-\alpha}_{L^{6}(\R^{3}\setminus\Omega)}\|{U}_{\xi,\lambda}\|^{4-\alpha}_{L^{\frac{6(4-\alpha)}{6-\alpha}}(\Omega)}\|H_{a}(\xi,x)\|^{2}_{L^{\infty}(\Omega)}\right)\\
        &=\frac{3\pi^{2}}{\bar{C}_{\alpha}^{2(5-\alpha)}}\phi^{2}_{a}(\xi)\lambda^{-1}+o(\lambda^{-1}).
    \end{aligned}
\end{equation*}
Let $0<\rho\leq dist(\xi,\partial\Omega)$. Then it holds that
\begin{equation*}
    \begin{aligned}
        &\int_{\Omega}\int_{\Omega}\frac{{U}_{\xi,\lambda}^{5-\alpha}(y)H_{a}(\xi,y){U}_{\xi,\lambda}^{5-\alpha}(x)H_{a}(\xi,x)}{|x-y|^{\alpha}}dxdy\\
        &=\int_{B_{\rho}(\xi)}\int_{B_{\rho}(\xi)}\frac{{U}_{\xi,\lambda}^{5-\alpha}(y)H_{a}(\xi,y){U}_{\xi,\lambda}^{5-\alpha}(x)H_{a}(\xi,x)}{|x-y|^{\alpha}}dxdy\\
        &\quad+\int_{\Omega\setminus B_{\rho}(\xi)}\int_{B_{\rho}(\xi)}\frac{{U}_{\xi,\lambda}^{5-\alpha}(y)H_{a}(\xi,y){U}_{\xi,\lambda}^{5-\alpha}(x)H_{a}(\xi,x)}{|x-y|^{\alpha}}dxdy\\
        &\quad+\int_{\Omega}\int_{\Omega\setminus B_{\rho}(\xi)}\frac{{U}_{\xi,\lambda}^{5-\alpha}(y)H_{a}(\xi,y){U}_{\xi,\lambda}^{5-\alpha}(x)H_{a}(\xi,x)}{|x-y|^{\alpha}}dxdy\\
        &=\int_{B_{\rho}(\xi)}\int_{B_{\rho}(\xi)}\frac{{U}_{\xi,\lambda}^{5-\alpha}(y)H_{a}(\xi,y){U}_{\xi,\lambda}^{5-\alpha}(x)H_{a}(\xi,x)}{|x-y|^{\alpha}}dxdy\\
        &\quad+O\left(\|{U}_{\xi,\lambda}\|^{5-\alpha}_{L^{\frac{6(5-\alpha)}{6-\alpha}}(\Omega\setminus B_{\rho}(\xi))}\|{U}_{\xi,\lambda}\|^{5-\alpha}_{L^{\frac{6(5-\alpha)}{6-\alpha}}(\Omega)}\|H_{a}(\xi,\cdot)\|^{2}_{L^{\infty}(\Omega)}\right)\\
        &=\int_{B_{\rho}(\xi)}\int_{B_{\rho}(\xi)}\frac{{U}_{\xi,\lambda}^{5-\alpha}(y)H_{a}(\xi,y){U}_{\xi,\lambda}^{5-\alpha}(x)H_{a}(\xi,x)}{|x-y|^{\alpha}}dxdy+o(\lambda^{-1}).
    \end{aligned}
\end{equation*}
Moreover, by the HLS inequality and Lemma \ref{lem-H-a}, we have
\begin{equation*}
    \begin{aligned}
          &\int_{B_{\rho}(\xi)}\int_{B_{\rho}(\xi)}\frac{{U}_{\xi,\lambda}^{5-\alpha}(y)H_{a}(\xi,y){U}_{\xi,\lambda}^{5-\alpha}(x)H_{a}(\xi,x)}{|x-y|^{\alpha}}dxdy\\
          &=\phi_{a}^{2}(\xi)\int_{B_{\rho}(\xi)}\int_{B_{\rho}(\xi)}\frac{{U}_{\xi,\lambda}^{5-\alpha}(y){U}_{\xi,\lambda}^{5-\alpha}(x)}{|x-y|^{\alpha}}dxdy\\
           &\quad+O\left\{\int_{B_{\rho}(\xi)}\int_{B_{\rho}(\xi)}\frac{{U}_{\xi,\lambda}^{5-\alpha}(y){U}_{\xi,\lambda}^{5-\alpha}(x)|x-\xi|}{|x-y|^{\alpha}}dxdy\right.\\
            &\quad+\left.\int_{B_{\rho}(\xi)}\int_{B_{\rho}(\xi)}\frac{{U}_{\xi,\lambda}^{5-\alpha}(y)|y-\xi|{U}_{\xi,\lambda}^{5-\alpha}(x)|x-\xi|}{|x-y|^{\alpha}}dxdy\right\}.
    \end{aligned}
\end{equation*}
It then follows from the HLS inequality and some direct computations that
\begin{equation*}
    \begin{aligned}
        &\int_{B_{\rho}(\xi)}\int_{B_{\rho}(\xi)}\frac{{U}_{\xi,\lambda}^{5-\alpha}(y){U}_{\xi,\lambda}^{5-\alpha}(x)|x-\xi_\varepsilon|}{|x-y|^{\alpha}}dxdy\\
        &\quad+\int_{B_{\rho}(\xi)}\int_{B_{\rho}(\xi)}\frac{{U}_{\xi,\lambda}^{5-\alpha}(y)|y-\xi|{U}_{\xi,\lambda}^{5-\alpha}(x)|x-\xi|}{|x-y|^{\alpha}}dxdy\\
        &\lesssim \left(\int_{B_{\rho}(\xi)}{U}_{\xi,\lambda}^{\frac{6(5-\alpha)}{6-\alpha}}(x)dx\right)^{\frac{6-\alpha}{6}}\left(\int_{B_{\rho}(\xi)}{U}_{\xi,\lambda}^{\frac{6(5-\alpha)}{6-\alpha}}(x)|x-\xi|^{\frac{6}{6-\alpha}}dx\right)^{\frac{6-\alpha}{6}}\\
        &\quad+\left(\int_{B_{\rho}(\xi)}{U}_{\xi,\lambda}^{\frac{6(5-\alpha)}{6-\alpha}}(x)|x-\xi|^{\frac{6}{6-\alpha}}dx\right)^{\frac{6-\alpha}{3}}\\
        &=\begin{cases}
            O(\lambda^{-2}),&\text{~if~}\alpha<2,\\
            O(\lambda^{-2}\log(\lambda\rho)^{\frac{2}{3}}),&\text{~if~}\alpha=2,\\
            O(\lambda^{-2}(\lambda\rho)^{\frac{\alpha-2}{2}}),&\text{~if~}\alpha>2,\\
        \end{cases}=o(\lambda^{-1})
    \end{aligned}
\end{equation*}
and
\begin{equation*}
    \begin{aligned}
        &\int_{B_{\rho}(\xi)}\int_{B_{\rho}(\xi)}\frac{{U}_{\xi,\lambda}^{5-\alpha}(y){U}_{\xi,\lambda}^{5-\alpha}(x)}{|x-y|^{\alpha}}dxdy\\
        &=\lambda^{-1}\int_{\R^{3}}\int_{\R^{3}}\frac{{U}_{0,1}^{5-\alpha}(y){U}_{0,1}^{5-\alpha}(x)}{|x-y|^{\alpha}}dxdy\\
        &\quad-\int_{\R^{3}\setminus B_{\rho}(\xi)}\int_{B_{\rho}(\xi)}\frac{{U}_{\xi,\lambda}^{5-\alpha}(y){U}_{\xi,\lambda}^{5-\alpha}(x)}{|x-y|^{\alpha}}dxdy-\int_{\R^{3}}\int_{\R^{3}\setminus B_{\rho}(\xi)}\frac{{U}_{\xi,\lambda}^{5-\alpha}(y){U}_{\xi,\lambda}^{5-\alpha}(x)}{|x-y|^{\alpha}}dxdy\\
        &:=\frac{{C}_{1,\alpha}}{\bar{C}_{\alpha}^{2(5-\alpha)}}\lambda^{-1}+O(\lambda^{-1}(\lambda\rho)^{-\frac{4-\alpha}{2}}).
    \end{aligned}
\end{equation*}
Combining the estimates above, we have
\begin{equation*}
    \begin{aligned}
        &\int_{\Omega}\int_{\Omega}\frac{\psi_{\xi,\lambda}^{6-\alpha}(y)\psi_{\xi,\lambda}^{6-\alpha}(x)}{|x-y|^{\alpha}}dxdy\\   
        &=S_{HL}^{\frac{6-\alpha}{5-\alpha}}+32(6-\alpha)\pi^{2}\bar{C}_{\alpha}^{2}\phi_{a}(\xi)\lambda^{-1}+8(6-\alpha)\pi\bar{C}_{\alpha}^{2}a(\xi)\lambda^{-2}\\
         &\quad+16(6-\alpha)\pi^{2}\bar{C}_{\alpha}^{2}\phi_{a}^{2}(\xi)\left((6-\alpha){C}_{1,\alpha}+(5-\alpha)3\pi^{2}\right)\lambda^{-2}+o(\lambda^{-2}).
    \end{aligned}
\end{equation*}
Thus \eqref{estimate-psi-2} holds. This together with the Taylor's formula yields that
\begin{equation}\label{proof-estimate-psi-29}
    \begin{aligned}
        &\left(\int_{\Omega}\int_{\Omega}\frac{\psi_{\xi,\lambda}^{6-\alpha}(y)\psi_{\xi,\lambda}^{6-\alpha}(x)}{|x-y|^{\alpha}}dxdy\right)^{-\frac{1}{6-\alpha}}\\
        &=S_{HL}^{-\frac{1}{5-\alpha}}-\frac{128}{3}S_{HL}^{-\frac{1}{5-\alpha}}\phi_{a}(\xi)\lambda^{-1}-\frac{32}{3}S_{HL}^{-\frac{1}{5-\alpha}}\left\{\frac{1}{\pi}a(\xi)\right.\\
         &\quad+\left.2\phi_{a}^{2}(\xi)\left((6-\alpha){C}_{1,\alpha}+(5-\alpha)3\pi^{2}-\frac{128(7-\alpha)}{3}\right)\right\}\lambda^{-2}\\
         &\quad+o(\lambda^{-2}).
    \end{aligned}
\end{equation}
On the other hand, the proof of \eqref{estimate-psi-1} is similar to (2.2) in \cite{Frank2019Energythree-dimensional}; we omit the details. Combining \eqref{estimate-psi-1} and \eqref{proof-estimate-psi-29}, we have
\begin{equation*}
    \begin{aligned}
        S_{HL}(a+\varepsilon V)[\psi_{\xi,\lambda}]=&S_{HL}-\frac{64}{3}S_{HL}\phi_{a}(\xi)\lambda^{-1}+\frac{64}{3}S_{HL}Q_{V}(\xi)\varepsilon\lambda^{-1}-\frac{8}{3}S_{HL}a(\xi)\lambda^{-2}\\
         &\quad-\frac{64}{3}S_{HL}\phi_{a}^{2}(\xi)\left((6-\alpha){C}_{1,\alpha}+(5-\alpha)3\pi^{2}-\frac{128(6-\alpha)}{3}\right)\lambda^{-2}\\
         &\quad+o(\lambda^{-2})+o(\varepsilon\lambda^{-1}).
    \end{aligned}
\end{equation*}
This establishes \eqref{estimate-psi-3} and completes the proof.

\end{proof}

Taking $\varepsilon = 0$, we obtain from \eqref{estimate-psi-3} the following corollary.
\begin{Cor}\label{corollary-phi-a}
    \begin{enumerate}[label=\upshape(\arabic*)]
        \item If $S_{HL}(a)=S_{HL}$, then $\phi_{a}(x)\leq 0$ for any $x\in\Omega$.
        \item If $S_{HL}(a)=S_{HL}$ and $\phi_{a}(x_{0})=0$ for some $x_{0}\in\Omega$, then $a(x_{0})\leq 0$.
    \end{enumerate}
\end{Cor}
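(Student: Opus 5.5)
The plan is to apply the expansion \eqref{estimate-psi-3} of Proposition \ref{prop-upper-estimate} with $\varepsilon=0$. The test function $\psi_{\xi,\lambda}$ is admissible in \eqref{minimizing-problem} after normalization, since $S_{HL}(a)[\cdot]$ is $0$-homogeneous. Hence for every $\xi\in\Omega$ and every large $\lambda$ we have
\begin{equation*}
S_{HL}(a)\leq S_{HL}(a)[\psi_{\xi,\lambda}]=S_{HL}-\frac{64}{3}S_{HL}\phi_{a}(\xi)\lambda^{-1}+O(\lambda^{-2}),
\end{equation*}
and the $O(\lambda^{-2})$ term is uniform for $\xi$ in a compact set.

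For part (1), I argue by contradiction. Suppose $S_{HL}(a)=S_{HL}$ and $\phi_{a}(x)>0$ at some $x\in\Omega$. Fix $\xi=x$ in the inequality above. Then $S_{HL}\leq S_{HL}-\frac{64}{3}S_{HL}\phi_{a}(x)\lambda^{-1}+O(\lambda^{-2})$. Multiplying by $\lambda$ and letting $\lambda\to\infty$ gives $\phi_{a}(x)\leq0$, a contradiction. Alternatively, part (1) follows at once from Theorem \ref{thm-1}, since $(1)\Rightarrow(2)$ there; the test-function argument is just the direct version.

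For part (2), take $\xi=x_{0}$ with $\phi_{a}(x_{0})=0$. In \eqref{estimate-psi-3} the $\lambda^{-1}$ term and the $\phi_{a}^{2}$ term at order $\lambda^{-2}$ both vanish, because each carries a factor $\phi_{a}(x_{0})$. With $\varepsilon=0$ we are left with
\begin{equation*}
S_{HL}=S_{HL}(a)\leq S_{HL}-\frac{8}{3}S_{HL}a(x_{0})\lambda^{-2}+o(\lambda^{-2}).
\end{equation*}
Multiplying by $\lambda^{2}$ and letting $\lambda\to\infty$ gives $-\frac{8}{3}S_{HL}a(x_{0})\geq0$. Since $S_{HL}>0$, this means $a(x_{0})\leq0$.

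The only delicate point is that the remainder $o(\lambda^{-2})$ in \eqref{estimate-psi-3} really is $o(\lambda^{-2})$ at the fixed interior point $x_{0}$. Proposition \ref{prop-upper-estimate} grants exactly this, uniformly for $\xi$ in compact subsets of $\Omega$, so nothing further needs checking. One should also note that $\psi_{\xi,\lambda}\not\equiv0$ for large $\lambda$, so the quotient is well defined. This is clear, because its denominator tends to $S_{HL}^{\frac{6-\alpha}{5-\alpha}}>0$ by \eqref{estimate-psi-2}.
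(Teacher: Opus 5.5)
Your proposal is correct and is exactly the paper's argument. The paper obtains the corollary by setting $\varepsilon=0$ in \eqref{estimate-psi-3}, reading off the sign of the $\lambda^{-1}$ coefficient for (1) and, when $\phi_a(x_0)=0$, the sign of the $\lambda^{-2}$ coefficient for (2); your side remark that (1) also follows from Step 1 of Theorem \ref{thm-1} is likewise valid.
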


\begin{proof}[Proof of Theorem \ref{thm-a}]
Suppose that $a$ is critical. Then Corollary \ref{corollary-phi-a} implies that $\phi_{a}(x)\leq 0$ for all $x\in\Omega$. Moreover, from the proof of Theorem \ref{thm-00} (see \eqref{energy-critical-1}), there exists a point $\xi_{0}\in\Omega$ such that $\phi_{a}(\xi_{0})=0$. Thus, $\max_{x\in\Omega}\phi_{a}=0$.

Conversely, suppose that $\phi_{a}(\xi_{0})=\max_{x\in\Omega}\phi_{a}=0$ for some $\xi_0 \in \Omega$. Then Theorem \ref{thm-00} implies that $S_{HL}(a)=S_{HL}$. Indeed, if $S_{HL}(a) < S_{HL}$, there would exist a point $\xi_{1}\in\Omega$ such that $\phi_{a}(\xi_{1})>0$, contradicting the assumption that $\max_{x\in\Omega}\phi_{a}=0$. Furthermore, for any function $\tilde{a}$ satisfying $\tilde{a}\leq a$ and $\tilde{a}\not\equiv a$, Lemma \ref{lem-H-a} yields $\phi_{\tilde{a}}(\xi_{0})>\phi_{a}(\xi_{0})=0$. It then follows from Theorem \ref{thm-00} that $S_{HL}(\tilde{a})<S_{HL}({a})$, and hence $a$ is a critical function. This completes the proof.
\end{proof}

\begin{Prop}\label{cor-energy-upper}
    Assume that $a(x)<0$ for any $x\in\mathcal{N}_{a}$ and $\mathcal{N}_{a}(V)\neq\emptyset$. Then $S_{HL}(a+\varepsilon V)<S_{HL}$ for any $\varepsilon>0$. Moreover, as $\varepsilon\to0$, it holds that
    \begin{equation}\label{eq-upper-bound}
         \begin{aligned}
         S_{HL}(a+\varepsilon V)\leq S_{HL}-\frac{128}{3}S_{HL}\sup_{\xi\in\mathcal{N}_{a}(V)}\frac{Q_{V}(\xi)^{2}}{|a(\xi)|}\varepsilon^{2}+o(\varepsilon^{2}).
    \end{aligned}     
    \end{equation}
\end{Prop}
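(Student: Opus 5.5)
The plan is to use the test functions $\psi_{\xi,\lambda}$ and the expansion \eqref{estimate-psi-3} from Proposition \ref{prop-upper-estimate}, with $\xi$ fixed at a point of $\mathcal{N}_{a}(V)$. Fix $\xi\in\mathcal{N}_{a}(V)$. Then $\phi_{a}(\xi)=0$, $Q_{V}(\xi)<0$, and by Assumption \ref{assumption-a} $a(\xi)<0$. The $\phi_a$-terms in \eqref{estimate-psi-3} (both the $\lambda^{-1}$ and the $\phi_a^2\lambda^{-2}$ terms) vanish, so uniformly for small $\varepsilon\ge0$ and large $\lambda$,
\begin{equation*}
S_{HL}(a+\varepsilon V)[\psi_{\xi,\lambda}]=S_{HL}-\frac{64}{3}S_{HL}|Q_{V}(\xi)|\varepsilon\lambda^{-1}+\frac{8}{3}S_{HL}|a(\xi)|\lambda^{-2}+o(\lambda^{-2})+o(\varepsilon\lambda^{-1}).
\end{equation*}

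Next I optimize in $\lambda$ by taking $\lambda=\varepsilon^{-1}t$ with $t>0$ fixed. The main terms become $\varepsilon^{2}S_{HL}\bigl(-\frac{64}{3}|Q_V(\xi)|t^{-1}+\frac{8}{3}|a(\xi)|t^{-2}\bigr)$, and the errors are $o(\varepsilon^2)$. Minimizing $f(s)=-\frac{64}{3}|Q_V|s+\frac{8}{3}|a|s^{2}$ over $s=t^{-1}$ gives $s=4|Q_V|/|a|$ and
\begin{equation*}
f_{\min}=-\frac{128}{3}\frac{Q_V(\xi)^2}{|a(\xi)|}.
\end{equation*}
This matches \eqref{eq-upper-bound} for this fixed $\xi$, and also $\varepsilon\lambda\to|a(\xi)|/(4|Q_V(\xi)|)$, consistent with Theorem \ref{thm-6}. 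Taking the supremum over $\xi\in\mathcal{N}_{a}(V)$ is then immediate: for any $\eta>0$ choose $\xi$ within $\eta$ of the sup and let $\varepsilon\to0$. This gives $\limsup\varepsilon^{-2}(S_{HL}(a+\varepsilon V)-S_{HL})\le -\frac{128}{3}S_{HL}\sup(\cdots)$, which is \eqref{eq-upper-bound}.

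It remains to show $S_{HL}(a+\varepsilon V)<S_{HL}$ for every $\varepsilon>0$, not just small $\varepsilon$. The expansion above gives it for $\varepsilon\in(0,\varepsilon_0)$. For larger $\varepsilon$ I would use monotonicity in the potential, which does not hold directly since $V$ has no sign. Instead I use concavity: $\varepsilon\mapsto S_{HL}(a+\varepsilon V)$ is an infimum of affine functions of $\varepsilon$, hence concave on the interval where $-\Delta+a+\varepsilon V$ is coercive. At $\varepsilon=0$ its value is $S_{HL}(a)=S_{HL}$ since $a$ is critical. It is also $\le S_{HL}$ everywhere by Proposition \ref{prop of SHL and SHLa}, and $<S_{HL}$ at some small $\varepsilon_1>0$. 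A concave function $g$ with $g(0)=S_{HL}$, $g(\varepsilon_1)<S_{HL}$ satisfies $g(\varepsilon)\le g(0)+\frac{\varepsilon}{\varepsilon_1}(g(\varepsilon_1)-g(0))<S_{HL}$ for all $\varepsilon\ge\varepsilon_1$. Beyond the coercivity range (if $V$ is negative somewhere) the quantity degenerates and is trivially below $S_{HL}$, or is understood within that range.

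The main obstacle is justifying that the error terms $o(\lambda^{-2})+o(\varepsilon\lambda^{-1})$ in \eqref{estimate-psi-3} stay $o(\varepsilon^2)$ under the coupling $\lambda\sim\varepsilon^{-1}$. The proposition states uniformity in $\varepsilon\ge0$ and in $\xi$ on compacts, so this should follow directly, but I would double-check that the $\varepsilon V$ contribution, $\varepsilon\int V\psi^2$, is expanded as $16\pi^2\bar C_\alpha^2\varepsilon\lambda^{-1}Q_V(\xi)+o(\varepsilon\lambda^{-1})$ with the error uniform in $\varepsilon$.
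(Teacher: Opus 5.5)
Your proposal is correct and follows the paper's proof: fix $\xi\in\mathcal{N}_{a}(V)$, insert $\lambda=\frac{|a(\xi)|}{4|Q_{V}(\xi)|}\varepsilon^{-1}$ into \eqref{estimate-psi-3} to get the $-\frac{128}{3}S_{HL}\frac{Q_{V}(\xi)^{2}}{|a(\xi)|}\varepsilon^{2}$ bound, then extend strict inequality to all $\varepsilon>0$ using concavity of $\varepsilon\mapsto S_{HL}(a+\varepsilon V)$ (an infimum of affine functions). Two points the paper leaves implicit are made explicit in your write-up: the approximation argument for the supremum over $\mathcal{N}_{a}(V)$, and the fact that concavity needs $S_{HL}(a)=S_{HL}$, which is where criticality of $a$ enters.
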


\begin{proof}
Fix $\xi\in\mathcal{N}_{a}(V)$. Then by \eqref{estimate-psi-3} we have, as $\lambda\to\infty$,
    \begin{equation*}
    \begin{aligned}
         S_{HL}(a+\varepsilon V)&\leq S_{HL}(a+\varepsilon V)[\psi_{\xi,\lambda}]\\
        &=S_{HL}+\frac{8}{3}S_{HL}\left((-a(\xi)+o(1))\lambda^{-2}-8(-Q_{V}(\xi)+o(1))\varepsilon\lambda^{-1}\right).
    \end{aligned}     
    \end{equation*}
Define the function
\begin{equation*}
    f(\lambda):=\frac{A_{\xi}}{\lambda^{2}}-\varepsilon\frac{B_{\xi}}{\lambda}.
\end{equation*}
If $A_{\xi}, B_{\xi}$ are positive, then $f$ attains a unique global minimum at
\begin{equation*}
   \lambda_{0}=\left(\frac{2A_{\xi}}{B_{\xi}}\right)\varepsilon^{-1},
\end{equation*}
with the corresponding minimal value
\begin{equation*}
    \min_{\lambda>0} f(\lambda)=f(\lambda_{0})=-\frac{B_{\xi}^{2}}{4A_{\xi}}\varepsilon^{2}.
\end{equation*}
We now choose $\lambda=\frac{-a(\xi)}{-4Q_{V}(\xi)}\varepsilon^{-1}$. By the assumption and the above argument, we conclude that as $\varepsilon\to0$
\begin{equation*}
    \begin{aligned}
         S_{HL}(a+\varepsilon V) &\leq S_{HL}+\frac{8}{3}S_{HL}\left(-a(\xi)\lambda^{-2}-8(-Q_{V}(\xi))\varepsilon\lambda^{-1}\right)+o(\varepsilon^{2})\\
         &= S_{HL}-\frac{128}{3}S_{HL}\frac{Q_{V}(\xi)^{2}}{|a(\xi)|}\varepsilon^{2}+o(\varepsilon^{2}).
    \end{aligned}     
    \end{equation*}
    This establishes \eqref{eq-upper-bound}. In particular, $S_{HL}(a+\varepsilon V)<S_{HL}$ for all sufficiently small $\varepsilon>0$. Since $S_{HL}(a+\varepsilon V)$ is a concave function of $\varepsilon$ (being the infimum over $u$ of functions $S_{HL}(a+\varepsilon V)[u]$, which are linear in $\varepsilon$), it follows that $S_{HL}(a+\varepsilon V)<S_{HL}$ for all $\varepsilon>0$. This completes the proof.
\end{proof}

In what follows, we work under Assumption \ref{assumption-a}. If $\mathcal{N}_{a}(V)\neq\emptyset$, it then follows from Proposition \ref{cor-energy-upper} that $S_{HL}(a+\varepsilon V)<S_{HL}$ for any $\varepsilon>0$. This, together with Theorem \ref{thm-1}, yields that there exists $u_{\varepsilon}\in H^{1}_{0}(\Omega)$ such that $S_{HL}(a+\varepsilon V)[u_{\varepsilon}]=S_{HL}(a+\varepsilon V)$. After a suitable scaling, $u_{\varepsilon}$ satisfies the equation
\begin{equation*}
    \begin{cases}
        -\Delta u+(a+\varepsilon V)u=\displaystyle\left(\int_{\Omega}\frac{u^{6-\alpha}(y)}{|x-y|^\alpha}dy\right)u^{5-\alpha}
  \ \  &\mbox{in}\ \Omega,\\
   u>0\ \  &\mbox{in}\ \Omega,\\
  u=0\ \  &\mbox{on}\ \partial \Omega.
    \end{cases}
\end{equation*}
Similar to the argument in the proof of Theorem \ref{thm-00}, there exist sequences $\{\mu_{\varepsilon}\}\subset \R^{+}$, $\{\xi_{\varepsilon}\}\subset\Omega$, $\{\lambda_{\varepsilon}\}\subset \R^{+}$ and $\{w_{\varepsilon}\}\subset T_{\xi_{\varepsilon},\lambda_{\varepsilon}}^{\bot}$ such that 
\begin{equation*}
    u_{\varepsilon}=\mu_{\varepsilon}(P\bar{U}_{\xi_{\varepsilon},\lambda_{\varepsilon}}+w_{\varepsilon}).
\end{equation*}
Moreover, as $\varepsilon\to0$
\begin{equation}\label{eq-coefficient-convergence}
   \mu_{\varepsilon}\to 1,\quad\lambda_{\varepsilon}\to\infty,\quad\|\nabla w_{\varepsilon}\|_{L^{2}(\Omega)}=O(\lambda_{\varepsilon}^{-\frac{1}{2}}),\quad \xi_{\varepsilon}\to\xi_{0}\in\Omega,\quad \phi_{a}(\xi_{\varepsilon})\to\phi_{a}(\xi_{0})=0.
\end{equation}
In view of \eqref{convergence-w}, we further decompose the remain term $w_{\varepsilon}$ as follows 
\begin{equation*}
    w_{\varepsilon}=4\pi\bar{C}_{\alpha}\lambda_{\varepsilon}^{-\frac{1}{2}}(H_{a}(\xi_{\varepsilon},\cdot)-H_{0}(\xi_{\varepsilon},\cdot))+q_{\varepsilon},
\end{equation*}
with
\begin{equation*}
    q_{\varepsilon}=s_{\varepsilon}+r_{\varepsilon},\quad s_{\varepsilon}\in T_{\xi,\lambda_{\varepsilon}},\quad r_{\varepsilon}\in T_{\xi,\lambda_{\varepsilon}}^{\perp}.
\end{equation*}
Moreover, we have
\begin{equation}\label{eq-expansion-u}
\begin{aligned}
     u_{\varepsilon}&=\mu_{\varepsilon}(P\bar{U}_{\xi_{\varepsilon},\lambda_{\varepsilon}}+4\pi\bar{C}_{\alpha}\lambda_{\varepsilon}^{-\frac{1}{2}}(H_{a}(\xi_{\varepsilon},\cdot)-H_{0}(\xi_{\varepsilon},\cdot))+q_{\varepsilon})\\
     &=\mu_{\varepsilon}(\psi_{\xi_{\varepsilon},\lambda_{\varepsilon}}+s_{\varepsilon}+r_{\varepsilon}).
\end{aligned}
\end{equation}
Since $w_{\varepsilon}\in T_{\xi_{\varepsilon},\lambda_{\varepsilon}}^{\perp}$, it follows that
\begin{equation*}
    s_{\varepsilon}=4\pi\bar{C}_{\alpha}\lambda_{\varepsilon}^{-\frac{1}{2}}\Pi_{\xi_{\varepsilon},\lambda_{\varepsilon}}(H_{a}(\xi_{\varepsilon},\cdot)-H_{0}(\xi_{\varepsilon},\cdot)).
\end{equation*}

\begin{Lem}\label{estimate-s}
Let
\begin{equation}\label{expansion-s}
    s_{\varepsilon}:=\lambda^{-1}_{\varepsilon}\beta P\bar{U}_{\xi_{\varepsilon},\lambda_{\varepsilon}}+\gamma\partial_{\lambda}P\bar{U}_{\xi_{\varepsilon},\lambda_{\varepsilon}}+\lambda_{\varepsilon}^{-3}\sum_{i=1}^{3}\delta_{i}\partial_{\xi_{i}}P\bar{U}_{\xi_{\varepsilon},\lambda_{\varepsilon}}.
\end{equation}
As $\varepsilon\to0$, the following estimates hold
    \begin{equation*}
        \beta,\gamma,\delta_{i}=O(1)
    \end{equation*}
    \begin{equation*}
        \beta=\frac{64}{3}(\phi_{a}(\xi_{\varepsilon})-\phi_{0}(\xi_{\varepsilon}))+O(\lambda_{\varepsilon}^{-1})
    \end{equation*}
   and
    \begin{equation*}
        \|\nabla s_{\varepsilon}\|_{L^{2}(\Omega)}=O(\lambda_{\varepsilon}^{-1})\text{~~and~~}\|s_{\varepsilon}\|_{L^{2}(\Omega)}=O(\lambda_{\varepsilon}^{-\frac{3}{2}}).
    \end{equation*}
\end{Lem}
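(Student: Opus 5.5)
Since $s_{\varepsilon}=4\pi\bar{C}_{\alpha}\lambda_{\varepsilon}^{-\frac12}\Pi_{\xi_{\varepsilon},\lambda_{\varepsilon}}h_{\varepsilon}$ with $h_{\varepsilon}:=H_{a}(\xi_{\varepsilon},\cdot)-H_{0}(\xi_{\varepsilon},\cdot)$, the coefficients $\beta,\gamma,\delta_i$ solve a $5\times5$ linear system. Its matrix is the Gram matrix of the basis $P\bar U$, $\partial_\lambda P\bar U$, $\partial_{\xi_i}P\bar U$ in the inner product $(u,v)=\int_\Omega\nabla u\cdot\nabla v$. Its right-hand side consists of the products $(h_{\varepsilon},\cdot)$ against these basis functions. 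The plan is therefore:
\begin{itemize}
\item compute the Gram matrix asymptotically;
\item compute the right-hand side asymptotically;
\item invert the system;
\item read off the norms.
\end{itemize}

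\textbf{Step 1 (Gram matrix).} By Lemma \ref{lemma PU}, the fact that $d_{\varepsilon}^{-1}=O(1)$, and the identity $(P\bar U,\varphi)=\int_\Omega(-\Delta \bar U)\varphi$, we expect the following up to relative errors $O(\lambda_{\varepsilon}^{-1})$:
\begin{itemize}
\item $(P\bar U,P\bar U)=S_{HL}^{\frac{6-\alpha}{5-\alpha}}+O(\lambda^{-1})$;
\item $(\partial_\lambda P\bar U,\partial_\lambda P\bar U)\sim\lambda^{-2}$;
\item $(\partial_{\xi_i}P\bar U,\partial_{\xi_j}P\bar U)=c\,\lambda^{2}\delta_{ij}(1+o(1))$;
\item all cross terms are lower order. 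For example, $(P\bar U,\partial_\lambda P\bar U)=O(\lambda^{-2})$, because $\partial_\lambda\int|\nabla U|^2=0$ on $\R^3$ and the boundary correction is $O(\lambda^{-2})$. Similarly, the $\xi$-cross terms are $O(\lambda^{-1})$ or smaller, by oddness of $\partial_{\xi_i}U$ in $x-\xi$.
\end{itemize}
After rescaling the basis by $(1,\lambda,\lambda^{-1})$, the Gram matrix becomes a perturbation of a positive diagonal matrix. Hence it is uniformly invertible.

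\textbf{Step 2 (right-hand sides).} Since $h_\varepsilon$ is smooth, we integrate by parts: $(P\bar U,h_\varepsilon)=3\bar C_\alpha\int_\Omega U^5h_\varepsilon$. Because $h_{\varepsilon}$ is bounded and Lipschitz near $\xi_{\varepsilon}$ (Lemmas \ref{lem-H-0} and \ref{lem-H-a}), expanding $h_\varepsilon(x)=h_\varepsilon(\xi_\varepsilon)+O(|x-\xi_\varepsilon|)$ gives
\[
(P\bar U,h_\varepsilon)=3\bar C_\alpha\,\tfrac{4\pi}{3}\lambda^{-1/2}\big(\phi_a(\xi_\varepsilon)-\phi_0(\xi_\varepsilon)\big)+O(\lambda^{-3/2}).
\]
Here we use $\int U^5=\frac{4\pi}{3}\lambda^{-1/2}$ and $\int U^5|x-\xi|\lesssim\lambda^{-3/2}$, with a log-free bound since the weight decays like $|x|^{-4}$.

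In the same way:
\begin{itemize}
\item $(\partial_\lambda P\bar U,h_\varepsilon)=15\bar C_\alpha\int U^4\partial_\lambda U\,h_\varepsilon=O(\lambda^{-3/2})$;
\item $(\partial_{\xi_i}P\bar U,h_\varepsilon)$: the leading term cancels by oddness, so the gradient term of $h_\varepsilon$ gives $O(\lambda^{-1/2})$.
\end{itemize}
Multiplying by the prefactor $4\pi\bar C_\alpha\lambda^{-1/2}$ and inverting the system then yields:
\begin{itemize}
\item a $P\bar U$ coefficient equal to $\lambda^{-1}\cdot\frac{16\pi^2\bar C_\alpha^2\cdot(\phi_a-\phi_0)}{S_{HL}^{(6-\alpha)/(5-\alpha)}}+O(\lambda^{-2})$. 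Using $16\pi^2\bar C_\alpha^2=\frac{64}{3}S_{HL}^{\frac{6-\alpha}{5-\alpha}}$, which was already used in Proposition \ref{prop-energy-expansion}, this gives the stated formula for $\beta$;
\item $\gamma=O(1)$, from $\lambda^{-1/2}\cdot\lambda^{-3/2}\cdot\lambda^{2}$;
\item $\lambda^{-3}\delta_i=O(\lambda^{-1/2}\lambda^{-1/2}\lambda^{-2})$, so $\delta_i=O(1)$.
\end{itemize}
The cross terms of the Gram matrix only contribute $O(\lambda^{-1})$ corrections to $\beta$.

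\textbf{Step 3 (norms).} For the gradient norm, the $P\bar U$ part contributes $\lambda^{-1}\|\nabla P\bar U\|=O(\lambda^{-1})$, the $\partial_\lambda$ part contributes $O(\lambda^{-1})$, and the $\partial_\xi$ part contributes $\lambda^{-3}\cdot\lambda=\lambda^{-2}$. This gives $\|\nabla s_\varepsilon\|_{L^2(\Omega)}=O(\lambda^{-1})$. For the $L^2$ norm we use $\|U\|_{L^2(\Omega)}\lesssim\lambda^{-1/2}$, $\|\partial_\lambda U\|_{L^2}\lesssim\lambda^{-3/2}$, and $\|\partial_\xi U\|_{L^2}\lesssim\lambda^{1/2}$; the $P$-corrections are bounded in $L^\infty$ by Lemma \ref{lemma PU}. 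Together these give $O(\lambda^{-3/2})$.

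The main obstacle is Step 2 for $\beta$: the error must be exactly $O(\lambda^{-1})$. This requires controlling the Gram-matrix cross terms and the boundary contributions to $(P\bar U,P\bar U)$ precisely enough. The risk is that the $\partial_\xi$ rows feed back into the $\beta$ equation through $O(\lambda^{-1})$ off-diagonal entries. To handle this, the first thing I would try is to estimate those entries using Lemma \ref{lemma PU}'s bounds on $\partial_{\xi_i}\varphi$, and to check that after the rescaling in Step 1 the contribution is $O(\lambda^{-1})$.
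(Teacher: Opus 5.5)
Your proposal is correct and follows the argument the paper defers to, namely the Gram-system computation of Frank, K\"onig and Kova\v{r}\'ik: you project $h_\varepsilon$ onto $T_{\xi_\varepsilon,\lambda_\varepsilon}$, invert the rescaled Gram matrix, and use $16\pi^2\bar{C}_\alpha^2=\frac{64}{3}S_{HL}^{\frac{6-\alpha}{5-\alpha}}$. The feedback you flag is harmless: $(P\bar{U},\partial_{\xi_i}P\bar{U})=O(\lambda_\varepsilon^{-1})$ enters the $\beta$-equation only with the factor $\lambda_\varepsilon^{-3}\delta_i$.

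One small correction: $h_\varepsilon$ is not smooth, since Lemma \ref{lem-H-a} gives a term $\frac{a(\xi_\varepsilon)}{8\pi}|x-\xi_\varepsilon|$. Your argument only needs that $h_\varepsilon\in H^1_0(\Omega)$, which holds because $G_a$ and $G_0$ both vanish on $\partial\Omega$, and that $h_\varepsilon$ is Lipschitz near $\xi_\varepsilon$.
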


\begin{proof}
The proof is similar to those in \cite[Lemma 6.1]{Frank2019Energythree-dimensional} and \cite[Propositions 3.2 and 3.3]{Frank2021BlowupOS}, and we omit the details.
\end{proof}

For any $u,v\in H^{1}_{0}(\Omega)$, we define 
\begin{equation*}
    E_{\varepsilon}[u,v]:=\int_{\Omega}\nabla u\cdot\nabla vdx+\int_{\Omega}(a+\varepsilon V)uvdx
\end{equation*}
and
\begin{equation*}
    \begin{aligned}
        F[u,v]&:=2(6-\alpha)\int_{\Omega}\int_{\Omega}\frac{u^{6-\alpha}(y)u^{5-\alpha}(x)v(x)}{|x-y|^{\alpha}}dydx\\
        &\quad+(6-\alpha)^{2}\int_{\Omega}\int_{\Omega}\frac{u^{5-\alpha}(y)v(y)u^{5-\alpha}(x)v(x)}{|x-y|^{\alpha}}dydx\\
        &\quad+(6-\alpha)(5-\alpha)\int_{\Omega}\int_{\Omega}\frac{u^{6-\alpha}(y)u^{4-\alpha}(x)v^{2}(x)}{|x-y|^{\alpha}}dydx.
    \end{aligned}
\end{equation*}

\begin{Lem}\label{lem-energy-expansion-2}
As $\varepsilon\to0$, it holds that
    \begin{equation}\label{energy-expansion-2}
    \begin{aligned}
        &S_{HL}(a+\varepsilon V)[u_{\varepsilon}]\\
        &=S_{HL}(a+\varepsilon V)[\psi_{\xi_{\varepsilon},\lambda_{\varepsilon}}]+D_{0}^{-\frac{1}{6-\alpha}}\left(N_{1}-\frac{1}{6-\alpha}\frac{N_{0}D_{1}}
        {D_{0}}-\frac{1}{6-\alpha}\frac{N_{1}D_{1}}
        {D_{0}}+\frac{7-\alpha}{2(6-\alpha)^{2}}\frac{N_{0}D_{1}^{2}}{D_{0}^{2}}\right)\\
        &\quad+D_{0}^{-\frac{1}{6-\alpha}}\left(E_{0}[r_{\varepsilon}]-\frac{1}{6-\alpha}\frac{N_{0}}{D_{0}}I[r_{\varepsilon}]+o(\|\nabla r_{\varepsilon}\|_{L^{2}(\Omega)}^{2})\right)+o(\lambda_{\varepsilon}^{-2})+o(\varepsilon\lambda_{\varepsilon}^{-1}),
    \end{aligned}
\end{equation}
where
\begin{equation*}
    \begin{aligned}
        N_{0}:=E_{\varepsilon}[\psi_{\xi_{\varepsilon},\lambda_{\varepsilon}},\psi_{\xi_{\varepsilon},\lambda_{\varepsilon}}],\quad N_{1}:=2E_{0}[\psi_{\xi_{\varepsilon},\lambda_{\varepsilon}},s_{\varepsilon}]+\|\nabla s_{\varepsilon}\|_{L^{2}(\Omega)}^{2},\quad E_{0}[r_{\varepsilon}]:=E_{0}[r_{\varepsilon},r_{\varepsilon}]
    \end{aligned}
\end{equation*}
and
\begin{equation*}
    D_{0}:=\int_{\Omega}\int_{\Omega}\frac{\psi_{\xi_{\varepsilon},\lambda_{\varepsilon}}^{6-\alpha}(y)\psi_{\xi_{\varepsilon},\lambda_{\varepsilon}}^{6-\alpha}(x)}{|x-y|^{\alpha}}dydx,\quad D_{1}:=F[\psi_{\xi_{\varepsilon},\lambda_{\varepsilon}},s_{\varepsilon}],\quad I[r_{\varepsilon}]:=F[\psi_{\xi_{\varepsilon},\lambda_{\varepsilon}},r_{\varepsilon}].
\end{equation*}
\end{Lem}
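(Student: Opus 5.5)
We will expand numerator and denominator of the quotient $S_{HL}(a+\varepsilon V)[u_\varepsilon]$ around $\psi:=\psi_{\xi_\varepsilon,\lambda_\varepsilon}$, using the decomposition \eqref{eq-expansion-u}. The quotient is $0$-homogeneous, so $\mu_\varepsilon$ drops out and we study $\psi+s_\varepsilon+r_\varepsilon$. The bookkeeping rests on three size bounds:
\begin{itemize}
\item $\|\nabla s_\varepsilon\|_2=O(\lambda_\varepsilon^{-1})$ and $\|s_\varepsilon\|_2=O(\lambda_\varepsilon^{-3/2})$, from Lemma \ref{estimate-s};
\item $\|\nabla r_\varepsilon\|_2=O(\lambda_\varepsilon^{-1/2})$, from \eqref{eq-coefficient-convergence} together with $\|\nabla(H_a-H_0)\|_2=O(1)$;
\item $N_1=O(\lambda_\varepsilon^{-2})$ and $D_1=O(\lambda_\varepsilon^{-1})$.
\end{itemize}
Every error term must then be shown to be $o(\lambda_\varepsilon^{-2})+o(\varepsilon\lambda_\varepsilon^{-1})+o(\|\nabla r_\varepsilon\|_2^2)$.

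\textbf{Numerator.} Write
\[
E_\varepsilon[\psi+s+r]=N_0+2E_\varepsilon[\psi,s]+E_\varepsilon[s,s]+2E_\varepsilon[\psi+s,r]+E_\varepsilon[r].
\]
\begin{itemize}
\item The $V$-parts of the terms involving $s$ are $O(\varepsilon\|s\|_2)=o(\varepsilon\lambda^{-1})$. The term $\int a s^2$ is $O(\lambda^{-3})$. This leaves $N_1$.
\item In $E_\varepsilon[r]$, the term $\varepsilon\int V r^2=o(\|\nabla r\|^2)$, leaving $E_0[r]$.
\item For the cross term $2E_\varepsilon[\psi+s,r]$, orthogonality gives $\int\nabla P\bar U\cdot\nabla r=\int \nabla s\cdot\nabla r=0$. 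Hence $E_0[\psi,r]=4\pi\bar C_\alpha\lambda^{-1/2}\int\nabla(H_a-H_0)\cdot\nabla r+\int a\psi r$. Using \eqref{Ha-pde}, this equals $\int a(\psi-4\pi\bar C_\alpha\lambda^{-1/2}G_a(\xi,\cdot))r$. Since $\psi=\bar C_\alpha(U+4\pi\lambda^{-1/2}H_a)+O(\lambda^{-5/2})$, the bracket is $-\bar C_\alpha\lambda^{-1/2} g_{\xi,\lambda}+O(\lambda^{-5/2})$. Lemma \ref{lemma-g} with $p<3$, near $6/5$, then bounds it by $o(\lambda^{-1})\|\nabla r\|$. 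This gives $o(\lambda^{-2})+o(\|\nabla r\|^2)$.
\item The $\varepsilon V$ part of the cross term is $\le\varepsilon\|\psi\|_{6/5}\|r\|_6=O(\varepsilon\lambda^{-1/2})\|\nabla r\|$. This is $o(\varepsilon\lambda^{-1})$ once we know $\|\nabla r\|=o(\lambda^{-1/2})$. If that is not yet available, Young's inequality gives $\varepsilon^2\lambda^{-1}+o(\|\nabla r\|^2)$, and $\varepsilon^2\lambda^{-1}=o(\varepsilon\lambda^{-1})$.
\end{itemize}

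\textbf{Denominator.} Expand $(\psi+s+r)^{6-\alpha}$ to second order with Lemma \ref{lem inequality 2}. This gives $D_0+D_1+I[r]+$ (quadratic terms). The remaining terms are:
\begin{itemize}
\item the second-order terms in $s$ and the $s$–$r$ cross terms, which are $O(\lambda^{-2})$ but must be shown to be $o(\lambda^{-2})$ or absorbed;
\item cubic remainders, which are $O(\|\nabla(s+r)\|^3)$ by HLS and Hölder.
\end{itemize}
The linear term in $r$ is, by \eqref{important-identity-1}, essentially $\frac{6(6-\alpha)}{\bar C_\alpha^{2(5-\alpha)}}\int U^5 r$. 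This vanishes by orthogonality to $P\bar U$ up to $\lambda^{-1/2}\int U^4 H_a r=o(\lambda^{-1})\|\nabla r\|$, which is acceptable.

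The quadratic terms $(6-\alpha)^2\iint \psi^{5-\alpha}s\,\psi^{5-\alpha}r$ etc. are the delicate point. Since $s$ lies in $T$, these reduce via \eqref{important-identity-1} to $\int U^4 (\partial_\lambda U)\, r$-type integrals. The relation $-\Delta\partial_\lambda U=15U^4\partial_\lambda U$ combined with $r\perp T$ makes them vanish to leading order. The quadratic $r$ terms are kept inside $I[r]$, consistent with the definition of $F$.

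\textbf{Combining the two.} Finally, Taylor expand $(D_0+\Delta)^{-1/(6-\alpha)}$ to second order in $\Delta=D_1+I[r]+\dots$ and multiply by $N_0+N_1+E_0[r]$. Keeping terms up to order $\lambda^{-2}$ produces exactly the bracket in \eqref{energy-expansion-2}, namely
\[
N_1-\tfrac1{6-\alpha}\tfrac{N_0D_1}{D_0}-\tfrac1{6-\alpha}\tfrac{N_1D_1}{D_0}+\tfrac{7-\alpha}{2(6-\alpha)^2}\tfrac{N_0D_1^2}{D_0^2},
\]
together with the $r$-part $E_0[r]-\frac{1}{6-\alpha}\frac{N_0}{D_0}I[r]$. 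The remaining products are:
\begin{itemize}
\item $N_1 I[r]$ and $D_1 I[r]$, which are $o(\lambda^{-2})$;
\item $I[r]^2$, which is $O(\|\nabla r\|^4)=o(\|\nabla r\|^2)$;
\item the quadratic $I[r]$ contribution, where the linear part of $I[r]$ is of size $o(\lambda^{-1})\|\nabla r\|$.
\end{itemize}

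The main obstacle is proving that all $s$–$r$ and $\psi$–$r$ linear and cross terms are genuinely small, in both the numerator and the nonlocal denominator. This requires the exact cancellation through \eqref{Ha-pde} and orthogonality, and then the nonlocal analogue via \eqref{important-identity-1} plus HLS. I would first establish the bound $\|\nabla r_\varepsilon\|=o(\lambda^{-1/2})$ following \cite[Propositions 3.2–3.3]{Frank2021BlowupOS}, adapted with Lemma \ref{lemma coercivity}, so that the cross terms become routine.
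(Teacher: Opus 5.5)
Your proposal follows essentially the paper's route. You expand the nonlocal denominator to second order around $\psi_{\xi_\varepsilon,\lambda_\varepsilon}$ and remove the $s_\varepsilon$--$r_\varepsilon$ cross terms using \eqref{expansion-s}, the orthogonality $r_\varepsilon\in T^{\perp}_{\xi_\varepsilon,\lambda_\varepsilon}$ and \eqref{important-identity-1} (with its $\lambda$-derivative), then Taylor expand the power $-\frac{1}{6-\alpha}$. Your explicit numerator computation via \eqref{Ha-pde} and Lemma \ref{lemma-g} fills in what the paper delegates to \cite[Lemma 6.4]{Frank2019Energythree-dimensional}.

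One correction: $N_1$ is of order $\lambda_\varepsilon^{-1}$, not $\lambda_\varepsilon^{-2}$. It contains $2\lambda_\varepsilon^{-1}\beta\|\nabla P\bar U_{\xi_\varepsilon,\lambda_\varepsilon}\|_{L^2(\Omega)}^2$ with $\beta\to-\frac{64}{3}\phi_0(\xi_0)>0$, which is why $N_1D_1/D_0$ is kept in the bracket. Your product estimates still close with this bound. Also, the crude bound $I[r_\varepsilon]=O(\lambda_\varepsilon^{-1/2}\|\nabla r_\varepsilon\|_{L^2(\Omega)})$ plus Young's inequality with a vanishing weight already handles all cross terms. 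So the preliminary estimate $\|\nabla r_\varepsilon\|_{L^2(\Omega)}=o(\lambda_\varepsilon^{-1/2})$ you propose to prove first is not needed.
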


\begin{proof}
First, it follows from \eqref{eq-expansion-u}, Lemma \ref{lem inequality 2}, Lemma \ref{estimate-s}, the HLS inequality and the Sobolev embedding theorem that
\begin{equation*}
    \begin{aligned}
        &\mu_{\varepsilon}^{-2(6-\alpha)}\int_{\Omega}\int_{\Omega}\frac{u_{\varepsilon}^{6-\alpha}(y)u_{\varepsilon}^{6-\alpha}(x)}{|x-y|^{\alpha}}dydx\\
        &=D_{0}+D_{1}+I[r_{\varepsilon}]+O\left\{\int_{\Omega}\int_{\Omega}\frac{\psi_{\xi_{\varepsilon},\lambda_{\varepsilon}}^{6-\alpha}(y)\psi_{\xi_{\varepsilon},\lambda_{\varepsilon}}^{4-\alpha}(x)r_{\varepsilon}(x)s_{\varepsilon}(x)}{|x-y|^{\alpha}}dydx\right.\\
        &\quad+\left.\int_{\Omega}\int_{\Omega}\frac{\psi_{\xi_{\varepsilon},\lambda_{\varepsilon}}^{5-\alpha}(y)r_{\varepsilon}(y)\psi_{\xi_{\varepsilon},\lambda_{\varepsilon}}^{5-\alpha}(x)s_{\varepsilon}(x)}{|x-y|^{\alpha}}dydx\right\}\\
        &\quad+o(\lambda_{\varepsilon}^{-2})+o(\|\nabla r_{\varepsilon}\|_{L^{2}(\Omega)}^{2}).
    \end{aligned}
\end{equation*}
Moreover, by \eqref{important-identity-1}, \eqref{eq-coefficient-convergence}, Lemma \ref{estimate-s}, the HLS inequality and the Sobolev embedding theorem, we have
\begin{equation*}
    \begin{aligned}
        &\int_{\Omega}\int_{\Omega}\frac{\psi_{\xi_{\varepsilon},\lambda_{\varepsilon}}^{6-\alpha}(y)\psi_{\xi_{\varepsilon},\lambda_{\varepsilon}}^{4-\alpha}(x)r_{\varepsilon}(x)s_{\varepsilon}(x)}{|x-y|^{\alpha}}dydx\\
        &=\int_{\Omega}\int_{\Omega}\frac{\bar{U}_{\xi_{\varepsilon},\lambda_{\varepsilon}}^{6-\alpha}(y)\bar{U}_{\xi_{\varepsilon},\lambda_{\varepsilon}}^{4-\alpha}(x)r_{\varepsilon}(x)s_{\varepsilon}(x)}{|x-y|^{\alpha}}dydx+O\left(\lambda_{\varepsilon}^{-\frac{1}{2}}\|\nabla r_{\varepsilon}\|_{L^{2}(\Omega)}\|\nabla s_{\varepsilon}\|_{L^{2}(\Omega)}\right)\\
        &=\int_{\Omega}\int_{\R^{3}}\frac{\bar{U}_{\xi_{\varepsilon},\lambda_{\varepsilon}}^{6-\alpha}(y)\bar{U}_{\xi_{\varepsilon},\lambda_{\varepsilon}}^{4-\alpha}(x)r_{\varepsilon}(x)s_{\varepsilon}(x)}{|x-y|^{\alpha}}dydx-\int_{\Omega}\int_{\R^{3}\setminus\Omega}\frac{\bar{U}_{\xi_{\varepsilon},\lambda_{\varepsilon}}^{6-\alpha}(y)\bar{U}_{\xi_{\varepsilon},\lambda_{\varepsilon}}^{4-\alpha}(x)r_{\varepsilon}(x)s_{\varepsilon}(x)}{|x-y|^{\alpha}}dydx\\
        &\quad+O\left(\lambda_{\varepsilon}^{-\frac{1}{2}}\|\nabla r_{\varepsilon}\|_{L^{2}(\Omega)}\|\nabla s_{\varepsilon}\|_{L^{2}(\Omega)}\right)\\
        &=3\int_{\Omega}{U}_{\xi_{\varepsilon},\lambda_{\varepsilon}}^{4}r_{\varepsilon}s_{\varepsilon}dx+o(\lambda_{\varepsilon}^{-2})+o(\|\nabla r_{\varepsilon}\|_{L^{2}(\Omega)}^{2}).
    \end{aligned}
\end{equation*}
Using the expansion \eqref{expansion-s} and Lemma \ref{estimate-s}, we obtain
\begin{equation*}
    \begin{aligned}
        \int_{\Omega}{U}_{\xi_{\varepsilon},\lambda_{\varepsilon}}^{4}r_{\varepsilon}s_{\varepsilon}dx&=\lambda^{-1}_{\varepsilon}\beta \int_{\Omega}{U}_{\xi_{\varepsilon},\lambda_{\varepsilon}}^{4}P\bar{U}_{\xi_{\varepsilon},\lambda_{\varepsilon}}r_{\varepsilon}dx+\gamma\int_{\Omega}{U}_{\xi_{\varepsilon},\lambda_{\varepsilon}}^{4}\partial_{\lambda}P\bar{U}_{\xi_{\varepsilon},\lambda_{\varepsilon}}r_{\varepsilon}dx\\
        &\quad+\lambda_{\varepsilon}^{-3}\sum_{i=1}^{3}\delta_{i}\int_{\Omega}{U}_{\xi_{\varepsilon},\lambda_{\varepsilon}}^{4}\partial_{\xi_{i}}P\bar{U}_{\xi_{\varepsilon},\lambda_{\varepsilon}}r_{\varepsilon}dx.
    \end{aligned}
\end{equation*}
It then follows from the orthogonality of $r_{\varepsilon}$, the H\"{o}lder inequality and Lemma \ref{lemma PU} that
\begin{equation*}
    \begin{aligned}
        &\int_{\Omega}{U}_{\xi_{\varepsilon},\lambda_{\varepsilon}}^{4}r_{\varepsilon}s_{\varepsilon}dx\\
        &=\lambda^{-1}_{\varepsilon}\bar{C}_{\alpha}\beta \int_{\Omega}{U}_{\xi_{\varepsilon},\lambda_{\varepsilon}}^{5}r_{\varepsilon}dx-\lambda^{-1}_{\varepsilon}\bar{C}_{\alpha}\beta \int_{\Omega}{U}_{\xi_{\varepsilon},\lambda_{\varepsilon}}^{4}\varphi_{\xi_{\varepsilon},\lambda_{\varepsilon}}r_{\varepsilon}dx\\
      &\quad+\bar{C}_{\alpha}\gamma \int_{\Omega}{U}_{\xi_{\varepsilon},\lambda_{\varepsilon}}^{4}\partial_{\lambda}{U}_{\xi_{\varepsilon},\lambda_{\varepsilon}}r_{\varepsilon}dx-\bar{C}_{\alpha}\gamma \int_{\Omega}{U}_{\xi_{\varepsilon},\lambda_{\varepsilon}}^{4}\partial_{\lambda}\varphi_{\xi_{\varepsilon},\lambda_{\varepsilon}}r_{\varepsilon}dx\\
      &\quad+\lambda_{\varepsilon}^{-3}\bar{C}_{\alpha}\sum_{i=1}^{3}\delta_{i}\int_{\Omega}{U}_{\xi_{\varepsilon},\lambda_{\varepsilon}}^{4}\partial_{\xi_{i}}{U}_{\xi_{\varepsilon},\lambda_{\varepsilon}}r_{\varepsilon}dx-\lambda_{\varepsilon}^{-3}\bar{C}_{\alpha}\sum_{i=1}^{3}\delta_{i}\int_{\Omega}{U}_{\xi_{\varepsilon},\lambda_{\varepsilon}}^{4}\partial_{\xi_{i}}{\varphi}_{\xi_{\varepsilon},\lambda_{\varepsilon}}r_{\varepsilon}dx\\
      &=o(\lambda_{\varepsilon}^{-2}).
    \end{aligned}
\end{equation*}
On the other hand, from the expansion \eqref{expansion-s}, Lemma \ref{estimate-s}, the HLS inequality and the Sobolev embedding theorem, it follows that
\begin{equation*}
    \begin{aligned}
        &\int_{\Omega}\int_{\Omega}\frac{\psi_{\xi_{\varepsilon},\lambda_{\varepsilon}}^{5-\alpha}(y)r_{\varepsilon}(y)\psi_{\xi_{\varepsilon},\lambda_{\varepsilon}}^{5-\alpha}(x)s_{\varepsilon}(x)}{|x-y|^{\alpha}}dydx\\
        &=\int_{\Omega}\int_{\Omega}\frac{\bar{U}_{\xi_{\varepsilon},\lambda_{\varepsilon}}^{5-\alpha}(y)r_{\varepsilon}(y)\bar{U}_{\xi_{\varepsilon},\lambda_{\varepsilon}}^{5-\alpha}(x)s_{\varepsilon}(x)}{|x-y|^{\alpha}}dydx+O\left(\lambda_{\varepsilon}^{-\frac{1}{2}}\|\nabla r_{\varepsilon}\|_{L^{2}(\Omega)}\|\nabla s_{\varepsilon}\|_{L^{2}(\Omega)}\right)\\
        &=\lambda^{-1}_{\varepsilon}\beta \int_{\Omega}\int_{\Omega}\frac{\bar{U}_{\xi_{\varepsilon},\lambda_{\varepsilon}}^{5-\alpha}(y)r_{\varepsilon}(y)\bar{U}_{\xi_{\varepsilon},\lambda_{\varepsilon}}^{5-\alpha}(x)P\bar{U}_{\xi_{\varepsilon},\lambda_{\varepsilon}}(x)}{|x-y|^{\alpha}}dydx\\
         &\quad+\gamma\int_{\Omega}\int_{\Omega}\frac{\bar{U}_{\xi_{\varepsilon},\lambda_{\varepsilon}}^{5-\alpha}(y)r_{\varepsilon}(y)\bar{U}_{\xi_{\varepsilon},\lambda_{\varepsilon}}^{5-\alpha}(x)\partial_{\lambda}P\bar{U}_{\xi_{\varepsilon},\lambda_{\varepsilon}}(x)}{|x-y|^{\alpha}}dydx\\
          &\quad+\lambda_{\varepsilon}^{-3}\sum_{i=1}^{3}\delta_{i}\int_{\Omega}\int_{\Omega}\frac{\bar{U}_{\xi_{\varepsilon},\lambda_{\varepsilon}}^{5-\alpha}(y)r_{\varepsilon}(y)\bar{U}_{\xi_{\varepsilon},\lambda_{\varepsilon}}^{5-\alpha}(x)\partial_{\xi_{i}}P\bar{U}_{\xi_{\varepsilon},\lambda_{\varepsilon}}(x)}{|x-y|^{\alpha}}dydx\\
          &\quad+o(\lambda_{\varepsilon}^{-2})+o(\|\nabla r_{\varepsilon}\|_{L^{2}(\Omega)}^{2}). 
    \end{aligned}
\end{equation*}
Using \eqref{important-identity-1}, Lemma \ref{lemma PU}, Lemma \ref{estimate-s} and the orthogonality of $r_{\varepsilon}$, we obtain
\begin{equation*}
    \begin{aligned}
        &\lambda^{-1}_{\varepsilon}\beta \int_{\Omega}\int_{\Omega}\frac{\bar{U}_{\xi_{\varepsilon},\lambda_{\varepsilon}}^{5-\alpha}(y)r_{\varepsilon}(y)\bar{U}_{\xi_{\varepsilon},\lambda_{\varepsilon}}^{5-\alpha}(x)P\bar{U}_{\xi_{\varepsilon},\lambda_{\varepsilon}}(x)}{|x-y|^{\alpha}}dydx\\
        &=\lambda^{-1}_{\varepsilon}\beta \int_{\R^{3}}\int_{\Omega}\frac{\bar{U}_{\xi_{\varepsilon},\lambda_{\varepsilon}}^{5-\alpha}(y)r_{\varepsilon}(y)\bar{U}_{\xi_{\varepsilon},\lambda_{\varepsilon}}^{6-\alpha}(x)}{|x-y|^{\alpha}}dydx-\lambda^{-1}_{\varepsilon}\beta \int_{\R^{3}\setminus\Omega}\int_{\Omega}\frac{\bar{U}_{\xi_{\varepsilon},\lambda_{\varepsilon}}^{5-\alpha}(y)r_{\varepsilon}(y)\bar{U}_{\xi_{\varepsilon},\lambda_{\varepsilon}}^{6-\alpha}(x)}{|x-y|^{\alpha}}dydx\\
        &\quad-\lambda^{-1}_{\varepsilon}\bar{C}_{\alpha}\beta \int_{\Omega}\int_{\Omega}\frac{\bar{U}_{\xi_{\varepsilon},\lambda_{\varepsilon}}^{5-\alpha}(y)r_{\varepsilon}(y)\bar{U}_{\xi_{\varepsilon},\lambda_{\varepsilon}}^{5-\alpha}(x){\varphi}_{\xi_{\varepsilon},\lambda_{\varepsilon}}(x)}{|x-y|^{\alpha}}dydx\\
        &\lesssim \lambda_{\varepsilon}^{-1}\int_{\Omega}{U}_{\xi_{\varepsilon},\lambda_{\varepsilon}}^{5}r_{\varepsilon}dy+\lambda_{\varepsilon}^{-1}\|\nabla r_{\varepsilon}\|_{{L^{2}}(\Omega)}\|{U}_{\xi_{\varepsilon},\lambda_{\varepsilon}}\|_{L^{6}(\R^{3}\setminus\Omega)}^{6-\alpha}\\
        &\quad+\lambda_{\varepsilon}^{-1}\|{\varphi}_{\xi_{\varepsilon},\lambda_{\varepsilon}}\|_{L^{\infty}(\Omega)}\|\nabla r_{\varepsilon}\|_{{L^{2}}(\Omega)}\|{U}_{\xi_{\varepsilon},\lambda_{\varepsilon}}\|_{L^{\frac{6(5-\alpha)}{6-\alpha}}(\Omega)}^{5-\alpha}\\
         &=o(\lambda_{\varepsilon}^{-2})
    \end{aligned}
\end{equation*}

\begin{equation*}
    \begin{aligned}
       & \gamma\int_{\Omega}\int_{\Omega}\frac{\bar{U}_{\xi_{\varepsilon},\lambda_{\varepsilon}}^{5-\alpha}(y)r_{\varepsilon}(y)\bar{U}_{\xi_{\varepsilon},\lambda_{\varepsilon}}^{5-\alpha}(x)\partial_{\lambda}P\bar{U}_{\xi_{\varepsilon},\lambda_{\varepsilon}}(x)}{|x-y|^{\alpha}}dydx\\
       &=\gamma\int_{\R^{3}}\int_{\Omega}\frac{\bar{U}_{\xi_{\varepsilon},\lambda_{\varepsilon}}^{5-\alpha}(y)r_{\varepsilon}(y)\bar{U}_{\xi_{\varepsilon},\lambda_{\varepsilon}}^{5-\alpha}(x)\partial_{\lambda}\bar{U}_{\xi_{\varepsilon},\lambda_{\varepsilon}}(x)}{|x-y|^{\alpha}}dydx\\
       &\quad-\gamma\int_{\R^{3}\setminus\Omega}\int_{\Omega}\frac{\bar{U}_{\xi_{\varepsilon},\lambda_{\varepsilon}}^{5-\alpha}(y)r_{\varepsilon}(y)\bar{U}_{\xi_{\varepsilon},\lambda_{\varepsilon}}^{5-\alpha}(x)\partial_{\lambda}\bar{U}_{\xi_{\varepsilon},\lambda_{\varepsilon}}(x)}{|x-y|^{\alpha}}dydx\\
       &\quad-\gamma\bar{C}_{\alpha}\int_{\Omega}\int_{\Omega}\frac{\bar{U}_{\xi_{\varepsilon},\lambda_{\varepsilon}}^{5-\alpha}(y)r_{\varepsilon}(y)\bar{U}_{\xi_{\varepsilon},\lambda_{\varepsilon}}^{5-\alpha}(x)\partial_{\lambda}{\varphi}_{\xi_{\varepsilon},\lambda_{\varepsilon}}(x)}{|x-y|^{\alpha}}dydx\\
       &\lesssim\lambda_{\varepsilon}^{-1}\|\nabla r_{\varepsilon}\|_{{L^{2}}(\Omega)}\|{U}_{\xi_{\varepsilon},\lambda_{\varepsilon}}\|_{L^{6}(\R^{3}\setminus\Omega)}^{6-\alpha}+\|\partial_{\lambda}{\varphi}_{\xi_{\varepsilon},\lambda_{\varepsilon}}\|_{L^{\infty}(\Omega)}\|\nabla r_{\varepsilon}\|_{{L^{2}}(\Omega)}\|{U}_{\xi_{\varepsilon},\lambda_{\varepsilon}}\|_{L^{\frac{6(5-\alpha)}{6-\alpha}}(\Omega)}^{5-\alpha}\\
         &=o(\lambda_{\varepsilon}^{-2})
    \end{aligned}
\end{equation*}
and
\begin{equation*}
    \begin{aligned}
        &\lambda_{\varepsilon}^{-3}\sum_{i=1}^{3}\delta_{i}\int_{\Omega}\int_{\Omega}\frac{\bar{U}_{\xi_{\varepsilon},\lambda_{\varepsilon}}^{5-\alpha}(y)r_{\varepsilon}(y)\bar{U}_{\xi_{\varepsilon},\lambda_{\varepsilon}}^{5-\alpha}(x)\partial_{\xi_{i}}P\bar{U}_{\xi_{\varepsilon},\lambda_{\varepsilon}}(x)}{|x-y|^{\alpha}}dydx\\
        &=\lambda_{\varepsilon}^{-3}\sum_{i=1}^{3}\delta_{i}\int_{\R^{3}}\int_{\Omega}\frac{\bar{U}_{\xi_{\varepsilon},\lambda_{\varepsilon}}^{5-\alpha}(y)r_{\varepsilon}(y)\bar{U}_{\xi_{\varepsilon},\lambda_{\varepsilon}}^{5-\alpha}(x)\partial_{\xi_{i}}\bar{U}_{\xi_{\varepsilon},\lambda_{\varepsilon}}(x)}{|x-y|^{\alpha}}dydx\\
        &\quad-\lambda_{\varepsilon}^{-3}\sum_{i=1}^{3}\delta_{i}\int_{\R^{3}\setminus\Omega}\int_{\Omega}\frac{\bar{U}_{\xi_{\varepsilon},\lambda_{\varepsilon}}^{5-\alpha}(y)r_{\varepsilon}(y)\bar{U}_{\xi_{\varepsilon},\lambda_{\varepsilon}}^{5-\alpha}(x)\partial_{\xi_{i}}\bar{U}_{\xi_{\varepsilon},\lambda_{\varepsilon}}(x)}{|x-y|^{\alpha}}dydx\\
        &\quad-\lambda_{\varepsilon}^{-3}\bar{C}_{\alpha}\sum_{i=1}^{3}\delta_{i}\int_{\Omega}\int_{\Omega}\frac{\bar{U}_{\xi_{\varepsilon},\lambda_{\varepsilon}}^{5-\alpha}(y)r_{\varepsilon}(y)\bar{U}_{\xi_{\varepsilon},\lambda_{\varepsilon}}^{5-\alpha}(x)\partial_{\xi_{i}}{\varphi}_{\xi_{\varepsilon},\lambda_{\varepsilon}}(x)}{|x-y|^{\alpha}}dydx\\
         &\lesssim\lambda_{\varepsilon}^{-2}\|\nabla r_{\varepsilon}\|_{{L^{2}}(\Omega)}\|{U}_{\xi_{\varepsilon},\lambda_{\varepsilon}}\|_{L^{6}(\R^{3}\setminus\Omega)}^{6-\alpha}+\lambda_{\varepsilon}^{-3}\|\partial_{\xi}{\varphi}_{\xi_{\varepsilon},\lambda_{\varepsilon}}\|_{L^{\infty}(\Omega)}\|\nabla r_{\varepsilon}\|_{{L^{2}}(\Omega)}\|{U}_{\xi_{\varepsilon},\lambda_{\varepsilon}}\|_{L^{\frac{6(5-\alpha)}{6-\alpha}}(\Omega)}^{5-\alpha}\\
         &=o(\lambda_{\varepsilon}^{-2}).
    \end{aligned}
\end{equation*}
Combining all the estimates above, we conclude that
\begin{equation*}
    \begin{aligned}
        F_{0}&:=\mu_{\varepsilon}^{-2(6-\alpha)}\int_{\Omega}\int_{\Omega}\frac{u_{\varepsilon}^{6-\alpha}(y)u_{\varepsilon}^{6-\alpha}(x)}{|x-y|^{\alpha}}dydx\\
        &=D_{0}+D_{1}+I[r_{\varepsilon}]+o(\lambda_{\varepsilon}^{-2})+o(\|\nabla r_{\varepsilon}\|_{L^{2}(\Omega)}^{2}).
    \end{aligned}
\end{equation*}       
This estimate, together with the Taylor's expansion, yields that
\begin{equation*}
    \begin{aligned}
         F_{0}^{-\frac{1}{6-\alpha}}&=D_{0}^{-\frac{1}{6-\alpha}}\left(1-\frac{1}{6-\alpha}\frac{D_{1}+I[r_{\varepsilon}]}{D_{0}}+\frac{7-\alpha}{2(6-\alpha)^{2}}\frac{(D_{1}+I[r_{\varepsilon}])^{2}}{D_{0}^{2}}\right)\\
         &\quad+o(\lambda_{\varepsilon}^{-2})+o(\|\nabla r_{\varepsilon}\|_{L^{2}(\Omega)}^{2}).
    \end{aligned}
\end{equation*}
Notice that by employing the HLS inequality, the H\"{o}lder inequality, and the Sobolev embedding theorem, along with the orthogonality of $r_{\varepsilon}$, we obtain
\begin{equation*}
    \begin{aligned}
        D_{1}\lesssim \|\nabla s_{\varepsilon}\|_{L^{2}(\Omega)}=O(\lambda_{\varepsilon}^{-1})
    \end{aligned}
\end{equation*}
and
\begin{equation*}
    \begin{aligned}
        I[r_{\varepsilon}]&=\int_{\Omega}\int_{\Omega}\frac{\bar{U}_{\xi_{\varepsilon},\lambda_{\varepsilon}}^{6-\alpha}(y)\bar{U}_{\xi_{\varepsilon},\lambda_{\varepsilon}}^{5-\alpha}(x)r_{\varepsilon}(x)}{|x-y|^{\alpha}}dydx+O(\lambda_{\varepsilon}^{-\frac{1}{2}}\|\nabla r_{\varepsilon}\|_{L^{2}(\Omega)})\\
        &=\int_{\Omega}\int_{\R^{3}}\frac{\bar{U}_{\xi_{\varepsilon},\lambda_{\varepsilon}}^{6-\alpha}(y)\bar{U}_{\xi_{\varepsilon},\lambda_{\varepsilon}}^{5-\alpha}(x)r_{\varepsilon}(x)}{|x-y|^{\alpha}}dydx-\int_{\Omega}\int_{\R^{3}\setminus\Omega}\frac{\bar{U}_{\xi_{\varepsilon},\lambda_{\varepsilon}}^{6-\alpha}(y)\bar{U}_{\xi_{\varepsilon},\lambda_{\varepsilon}}^{5-\alpha}(x)r_{\varepsilon}(x)}{|x-y|^{\alpha}}dydx\\
        &\quad+O(\lambda_{\varepsilon}^{-\frac{1}{2}}\|\nabla r_{\varepsilon}\|_{L^{2}(\Omega)})\\
        &=3\bar{C}_{\alpha} \int_{\Omega}{U}_{\xi_{\varepsilon},\lambda_{\varepsilon}}^{5}r_{\varepsilon}dx+O(\lambda_{\varepsilon}^{-\frac{1}{2}}\|\nabla r_{\varepsilon}\|_{L^{2}(\Omega)})+O(\|{U}_{\xi_{\varepsilon},\lambda_{\varepsilon}}\|^{6-\alpha}_{L^{6}(\R^{3}\setminus\Omega)}\|\nabla r_{\varepsilon}\|_{L^{2}(\Omega)})\\
        &=O(\lambda_{\varepsilon}^{-\frac{1}{2}}\|\nabla r_{\varepsilon}\|_{L^{2}(\Omega)}).
    \end{aligned}
\end{equation*}
It then follows that
\begin{equation}\label{proof-expan-2-63}
    \begin{aligned}
          F_{0}^{-\frac{1}{6-\alpha}}&=D_{0}^{-\frac{1}{6-\alpha}}\left(1-\frac{1}{6-\alpha}\frac{D_{1}+I[r_{\varepsilon}]}{D_{0}}+\frac{7-\alpha}{2(6-\alpha)^{2}}\frac{D_{1}^{2}}{D_{0}^{2}}\right)\\
         &\quad+o(\lambda_{\varepsilon}^{-2})+o(\|\nabla r_{\varepsilon}\|_{L^{2}(\Omega)}^{2}).
    \end{aligned}
\end{equation}
On the other hand, by an argument similar to that in \cite[Lemma 6.4]{Frank2019Energythree-dimensional}, we have
\begin{equation}\label{proof-expan-2-64}
    \begin{aligned}
        &\mu_{\varepsilon}^{-2}E_{\varepsilon}[u_{\varepsilon},u_{\varepsilon}]\\
        &=E_{\varepsilon}[\psi_{\xi_{\varepsilon},\lambda_{\varepsilon}},\psi_{\xi_{\varepsilon},\lambda_{\varepsilon}}]+(2E_{0}[\psi_{\xi_{\varepsilon},\lambda_{\varepsilon}},s_{\varepsilon}]+\|\nabla s_{\varepsilon}\|_{L^{2}(\Omega)}^{2})+E_{0}[r_{\varepsilon},r_{\varepsilon}]+o(\lambda_{\varepsilon}^{-2})+o(\varepsilon\lambda_{\varepsilon}^{-1})\\
        &=N_{0}+N_{1}+E_{0}[r_{\varepsilon}]+o(\lambda_{\varepsilon}^{-2})+o(\varepsilon\lambda_{\varepsilon}^{-1}).
    \end{aligned}
\end{equation}
Combining \eqref{proof-expan-2-63} and \eqref{proof-expan-2-64}, we conclude that
\begin{equation*}
    \begin{aligned}
        &S_{HL}(a+\varepsilon V)[u_{\varepsilon}]\\
        &=N_{0}D_{0}^{-\frac{1}{6-\alpha}}+D_{0}^{-\frac{1}{6-\alpha}}\left(N_{1}-\frac{1}{6-\alpha}\frac{N_{0}D_{1}}
        {D_{0}}-\frac{1}{6-\alpha}\frac{N_{1}D_{1}}
        {D_{0}}+\frac{7-\alpha}{2(6-\alpha)^{2}}\frac{N_{0}D_{1}^{2}}{D_{0}^{2}}\right)\\
        &\quad+D_{0}^{-\frac{1}{6-\alpha}}\left(E_{0}[r_{\varepsilon}]-\frac{1}{6-\alpha}\frac{N_{0}}{D_{0}}I[r_{\varepsilon}]+o(\|\nabla r_{\varepsilon}\|_{L^{2}(\Omega)}^{2})\right)+o(\lambda_{\varepsilon}^{-2})+o(\varepsilon\lambda_{\varepsilon}^{-1}).
    \end{aligned}
\end{equation*}
Notice that $N_{0}D_{0}^{-\frac{1}{6-\alpha}}=S_{HL}(a+\varepsilon V)[\psi_{\xi_{\varepsilon},\lambda_{\varepsilon}}]$. Therefore, the expansion \eqref{energy-expansion-2} holds.
\end{proof}

\begin{Prop}\label{prop-lower-bound}
As $\varepsilon\to0$, it holds that 
   \begin{equation*}
    \begin{aligned}
        &S_{HL}(a+\varepsilon V)[u_{\varepsilon}]\\
        &=S_{HL}(a+\varepsilon V)[\psi_{\xi_{\varepsilon},\lambda_{\varepsilon}}]+S_{HL}^{-\frac{1}{5-\alpha}}\left(E_{0}[r_{\varepsilon}]-\frac{1}{6-\alpha}\frac{N_{0}}{D_{0}}I[r_{\varepsilon}]+o(\|\nabla r_{\varepsilon}\|_{L^{2}(\Omega)}^{2})\right)\\
        &\quad+o(\lambda_{\varepsilon}^{-2})+o(\varepsilon\lambda_{\varepsilon}^{-1}).
    \end{aligned}
\end{equation*}
\end{Prop}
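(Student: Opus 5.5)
The proposition follows from Lemma \ref{lem-energy-expansion-2} once we show two things: the bracket involving $s_{\varepsilon}$ is $o(\lambda_{\varepsilon}^{-2})$, and $D_{0}^{-\frac{1}{6-\alpha}}$ may be replaced by $S_{HL}^{-\frac{1}{5-\alpha}}$ in front of the $r_{\varepsilon}$-terms.

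\textbf{Step 1: the prefactor.} By \eqref{estimate-psi-2}, $D_{0}=S_{HL}^{\frac{6-\alpha}{5-\alpha}}+O(\lambda_{\varepsilon}^{-1})$. Hence $D_{0}^{-\frac{1}{6-\alpha}}=S_{HL}^{-\frac{1}{5-\alpha}}(1+O(\lambda_{\varepsilon}^{-1}))$. The $r_{\varepsilon}$-bracket is bounded as follows:
\begin{itemize}
\item $E_{0}[r_{\varepsilon}]=O(\|\nabla r_{\varepsilon}\|_{2}^{2})$;
\item $N_{0}/D_{0}=O(1)$;
\item $I[r_{\varepsilon}]=O(\lambda_{\varepsilon}^{-1/2}\|\nabla r_{\varepsilon}\|_{2})$, as shown in the proof of Lemma \ref{lem-energy-expansion-2}.
\end{itemize}
So the error from replacing the prefactor is $O(\lambda_{\varepsilon}^{-1}\|\nabla r_{\varepsilon}\|_{2}^{2}+\lambda_{\varepsilon}^{-3/2}\|\nabla r_{\varepsilon}\|_{2})$. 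Since $\|\nabla r_{\varepsilon}\|_{2}=O(\lambda_{\varepsilon}^{-1/2})$ (from \eqref{eq-coefficient-convergence} and Lemma \ref{estimate-s}), this is $o(\lambda_{\varepsilon}^{-2})+o(\|\nabla r_{\varepsilon}\|_{2}^{2})$.

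\textbf{Step 2: the $s_{\varepsilon}$-bracket.} Here we use the structure of $s_{\varepsilon}$ from \eqref{expansion-s}. By Lemma \ref{estimate-s}, $N_{1},D_{1}=O(\lambda_{\varepsilon}^{-1})$, so $N_{1}D_{1}/D_{0}$ and $N_{0}D_{1}^{2}/D_{0}^{2}$ are $O(\lambda_{\varepsilon}^{-2})$. This is only $O$, not $o$, so we must compute the leading terms exactly.

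First compute $N_{1}$. The equation for $\psi_{\xi,\lambda}$ gives
\[
E_{0}[\psi,s]=\int(3\bar C_{\alpha}U^{5}-4\pi\bar C_{\alpha}\lambda^{-1/2}aG_{a}+a\psi)s .
\]
Since $a\psi-4\pi\bar C_{\alpha}\lambda^{-1/2}aG_a=O(\lambda^{-5/2})$ away from $\xi$, bounded by $U$ near $\xi$, and $\|s\|_{L^{2}}=O(\lambda^{-3/2})$, the last two terms are $o(\lambda^{-2})$. For the $U^{5}$ term, only the $\beta$-component contributes at order $\lambda^{-2}$: $\int U^{5}\partial_\lambda U=0$ on $\mathbb R^{3}$, and the $\delta_i$ components carry $\lambda^{-3}$. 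This gives $N_{1}=2\beta\lambda^{-1}S_{HL}^{\frac{6-\alpha}{5-\alpha}}+\beta^{2}\lambda^{-2}S_{HL}^{\frac{6-\alpha}{5-\alpha}}+o(\lambda^{-2})$, up to the $\gamma^{2}\|\nabla\partial_\lambda P\bar U\|^{2}\sim\gamma^{2}\lambda^{-2}$ term.

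Next compute $D_{1}$. Using \eqref{important-identity-1} to reduce the double integrals to $3\int U^{5}(\cdot)$ and $3\int U^{4}(\cdot)^{2}$, and the nondegeneracy relation $-\Delta\partial_\lambda U=15U^{4}\partial_\lambda U$, we get $D_{1}=2(6-\alpha)\beta\lambda^{-1}S_{HL}^{\frac{6-\alpha}{5-\alpha}}+$(explicit $\lambda^{-2}$ terms in $\beta^{2},\gamma^{2}$)$+o(\lambda^{-2})$.

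\textbf{Step 3: cancellation.} The bracket $N_{1}-\frac{1}{6-\alpha}\frac{N_0D_1}{D_0}-\dots$ is the second-order Taylor expansion of $\tau\mapsto S_{HL}(a+\varepsilon V)[\psi+\tau s]$ at $\tau=1$, to order $\lambda^{-2}$. The direction $P\bar U$ is a pure scaling, along which the quotient is invariant, so the $\beta$ contributions cancel identically at both first and second order. The $\partial_\lambda P\bar U$ direction is in the kernel of the second variation of the limiting quotient, so the $\gamma^{2}$ terms cancel, with corrections $o(\lambda^{-2})$. The $\gamma\beta$ cross terms vanish by $\int U^{5}\partial_\lambda U=0$.

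I would verify this by substituting the exact leading coefficients: the linear terms $2\beta S-\frac{1}{6-\alpha}\cdot 2(6-\alpha)\beta S$ vanish, and the quadratic $\beta^{2}$ terms combine as $\beta^{2}(1-2(5-\alpha)\cdot\ldots)$. This bookkeeping of the $\gamma^{2}$ terms against the nonlocal Hessian is the main obstacle. It requires the precise constants $\int|\nabla\partial_\lambda U|^{2}$ and the HLS-type identity \eqref{important-identity-1} for $\partial_\lambda U$, analogous to \cite[Lemma 6.3]{Frank2019Energythree-dimensional}.
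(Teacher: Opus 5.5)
Your Step 1 is fine. One correction: the term $\lambda_\varepsilon^{-3/2}\|\nabla r_\varepsilon\|_{L^2(\Omega)}$ is $o(\lambda_\varepsilon^{-2})+o(\|\nabla r_\varepsilon\|_{L^2(\Omega)}^2)$ because of Young's inequality, $\lambda^{-3/2}t\le\lambda^{-1/2}(\lambda^{-2}+t^2)$. The bound $\|\nabla r_\varepsilon\|_{L^2(\Omega)}=O(\lambda_\varepsilon^{-1/2})$ alone only gives $O(\lambda_\varepsilon^{-2})$.

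The genuine gap is in Steps 2--3. It concerns the contributions to the $s_\varepsilon$-bracket that are \emph{linear} in $\beta$ and $\gamma$ at order $\lambda_\varepsilon^{-2}$.

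You claim that in $E_0[\psi_{\xi_\varepsilon,\lambda_\varepsilon},s_\varepsilon]$ only the $\beta$-component matters because $\int_{\R^3}U^5\partial_\lambda U=0$. This is wrong, because $s_\varepsilon$ is built from $\partial_\lambda P\bar U_{\xi,\lambda}$, not $\partial_\lambda\bar U_{\xi,\lambda}$. Since $\partial_\lambda\varphi_{\xi,\lambda}=2\pi\lambda^{-3/2}H_0(\xi,\cdot)+\partial_\lambda f_{\xi,\lambda}$, one gets
\[
\int_\Omega U_{\xi,\lambda}^5\partial_\lambda PU_{\xi,\lambda}\,dx=-\frac{8\pi^2}{3}\phi_0(\xi)\lambda^{-2}+o(\lambda^{-2}).
\]
Likewise $\int_\Omega U_{\xi,\lambda}^5PU_{\xi,\lambda}\,dx$ carries a correction $\frac{16\pi^2}{3}\phi_0(\xi)\lambda^{-1}$. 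Consequently:
\begin{itemize}
\item $N_1$ contains $\frac{128}{3}S_{HL}^{\frac{6-\alpha}{5-\alpha}}\phi_0(\xi_\varepsilon)\beta\lambda_\varepsilon^{-2}-\frac{64}{3}S_{HL}^{\frac{6-\alpha}{5-\alpha}}\phi_0(\xi_\varepsilon)\gamma\lambda_\varepsilon^{-2}$, and $\phi_0<0$, so these are not negligible.
\item $D_1$ contains $(6-\alpha)$ times these terms, plus terms in $\phi_a(\xi_\varepsilon)\beta\lambda_\varepsilon^{-2}$ and $\phi_a(\xi_\varepsilon)\gamma\lambda_\varepsilon^{-2}$ coming from the $4\pi\bar C_\alpha\lambda^{-1/2}H_a(\xi,\cdot)$ part of $\psi_{\xi,\lambda}$.
\end{itemize}

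Your Step 3 does not control these terms. Scaling handles $\beta$, and the kernel argument handles $\gamma^2$. The $\gamma^2$ terms are in fact the harmless part: $\frac{5}{16}S_{HL}^{\frac{6-\alpha}{5-\alpha}}\gamma^2\lambda_\varepsilon^{-2}$ appears identically in $N_1$ and in $D_1/(6-\alpha)$. But the first variation of the quotient at $\psi_{\xi_\varepsilon,\lambda_\varepsilon}$ in the direction $\partial_\lambda P\bar U_{\xi_\varepsilon,\lambda_\varepsilon}$ does not vanish for any structural reason. Doing the bookkeeping, the $\phi_0$-terms cancel and one is left with
\[
N_{1}-\frac{1}{6-\alpha}\frac{N_{0}D_{1}}{D_{0}}-\frac{1}{6-\alpha}\frac{N_{1}D_{1}}{D_{0}}+\frac{7-\alpha}{2(6-\alpha)^{2}}\frac{N_{0}D_{1}^{2}}{D_{0}^{2}}=\frac{64}{3}S_{HL}^{\frac{6-\alpha}{5-\alpha}}\phi_{a}(\xi_{\varepsilon})\gamma\lambda_{\varepsilon}^{-2}+o(\lambda_{\varepsilon}^{-2})+o(\varepsilon\lambda_{\varepsilon}^{-1}).
\]
After multiplying by $D_0^{-\frac{1}{6-\alpha}}$, this is exactly $\gamma$ times the $\lambda$-derivative of the term $-\frac{64}{3}S_{HL}\phi_a(\xi)\lambda^{-1}$ in \eqref{estimate-psi-3}. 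It is $o(\lambda_\varepsilon^{-2})$ only because $\phi_a(\xi_\varepsilon)\to0$ by \eqref{eq-coefficient-convergence}. Your proposal never uses this input, and no argument that ignores it can close the proof.

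The paper handles this by computing $N_1$ and $D_1$ to $o(\lambda_\varepsilon^{-2})$, including all $\phi_0$- and $\phi_a$-terms. It then rewrites the bracket as
\[
\Big(N_1-\frac{D_1}{6-\alpha}\Big)\Big(1-\frac{D_1}{(6-\alpha)D_0}\Big)+\frac{(5-\alpha)D_1^2}{2(6-\alpha)^2D_0}+\Big(1-\frac{N_0}{D_0}\Big)\frac{D_1}{6-\alpha}\Big(1-\frac{(7-\alpha)D_1}{2(6-\alpha)D_0}\Big).
\]
The last term is discarded using $1-N_0/D_0=O(\phi_a(\xi_\varepsilon)\lambda_\varepsilon^{-1}+\varepsilon\lambda_\varepsilon^{-1}+\lambda_\varepsilon^{-2})$. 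The $\beta^2$-coefficient $1-(11-2\alpha)+2(5-\alpha)$ vanishes. The remaining $\phi_a$-terms are dropped via $\phi_a(\xi_\varepsilon)\to0$.

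Your structural route can also be completed, as follows.
\begin{enumerate}
\item Use $0$-homogeneity to write $\psi_{\xi_\varepsilon,\lambda_\varepsilon}+s_\varepsilon=(1+\beta\lambda_\varepsilon^{-1})(\psi_{\xi_\varepsilon,\lambda_\varepsilon+\tilde\gamma}+h_\varepsilon)$, with $\tilde\gamma=\gamma/(1+\beta\lambda_\varepsilon^{-1})$ and $\|\nabla h_\varepsilon\|_{L^2(\Omega)}=O(\lambda_\varepsilon^{-3/2})$.
\item Show that the first variation of the quotient at $\psi_{\xi,\lambda}$ has $H^{-1}$-norm $o(\lambda^{-1/2})$, so that $h_\varepsilon$ costs only $o(\lambda_\varepsilon^{-2})$.
\item Compare \eqref{estimate-psi-3} at $\lambda_\varepsilon$ and at $\lambda_\varepsilon+\tilde\gamma$. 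The difference is $\frac{64}{3}S_{HL}\phi_a(\xi_\varepsilon)\tilde\gamma\lambda_\varepsilon^{-2}+o(\lambda_\varepsilon^{-2})+o(\varepsilon\lambda_\varepsilon^{-1})$.
\end{enumerate}
Again, it is $\phi_a(\xi_\varepsilon)\to0$ that closes the argument.
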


\begin{proof}
First, we need to establish a refined estimate for the term $D_{1}$
\begin{equation*}
    \begin{aligned}
        D_{1}&=2(6-\alpha)\int_{\Omega}\int_{\Omega}\frac{\psi_{\xi_{\varepsilon},\lambda_{\varepsilon}}^{6-\alpha}(y)\psi_{\xi_{\varepsilon},\lambda_{\varepsilon}}^{5-\alpha}(x)s_{\varepsilon}(x)}{|x-y|^{\alpha}}dydx\\
        &\quad+(6-\alpha)^{2}\int_{\Omega}\int_{\Omega}\frac{\psi_{\xi_{\varepsilon},\lambda_{\varepsilon}}^{5-\alpha}(y)s_{\varepsilon}(y)\psi_{\xi_{\varepsilon},\lambda_{\varepsilon}}^{5-\alpha}(x)s_{\varepsilon}(x)}{|x-y|^{\alpha}}dydx\\
        &\quad+(6-\alpha)(5-\alpha)\int_{\Omega}\int_{\Omega}\frac{\psi_{\xi_{\varepsilon},\lambda_{\varepsilon}}^{6-\alpha}(y)\psi_{\xi_{\varepsilon},\lambda_{\varepsilon}}^{4-\alpha}(x)s^{2}_{\varepsilon}(x)}{|x-y|^{\alpha}}dydx.
    \end{aligned}
\end{equation*}
Using the expansion of $s_{\varepsilon}$ (see \eqref{expansion-s}), we divide the first term in $D_{1}$ into three parts
\begin{equation}\label{eq-proof-expansion-2-2}
    \begin{aligned}
        &\int_{\Omega}\int_{\Omega}\frac{\psi_{\xi_{\varepsilon},\lambda_{\varepsilon}}^{6-\alpha}(y)\psi_{\xi_{\varepsilon},\lambda_{\varepsilon}}^{5-\alpha}(x)s_{\varepsilon}(x)}{|x-y|^{\alpha}}dydx\\
        &=\lambda^{-1}_{\varepsilon}\beta\int_{\Omega}\int_{\Omega}\frac{\psi_{\xi_{\varepsilon},\lambda_{\varepsilon}}^{6-\alpha}(y)\psi_{\xi_{\varepsilon},\lambda_{\varepsilon}}^{5-\alpha}(x)P\bar{U}_{\xi_{\varepsilon},\lambda_{\varepsilon}}(x)}{|x-y|^{\alpha}}dydx+\gamma\int_{\Omega}\int_{\Omega}\frac{\psi_{\xi_{\varepsilon},\lambda_{\varepsilon}}^{6-\alpha}(y)\psi_{\xi_{\varepsilon},\lambda_{\varepsilon}}^{5-\alpha}(x)\partial_{\lambda}P\bar{U}_{\xi_{\varepsilon},\lambda_{\varepsilon}}(x)}{|x-y|^{\alpha}}dydx\\
        &\quad+\lambda_{\varepsilon}^{-3}\sum_{i=1}^{3}\delta_{i}\int_{\Omega}\int_{\Omega}\frac{\psi_{\xi_{\varepsilon},\lambda_{\varepsilon}}^{6-\alpha}(y)\psi_{\xi_{\varepsilon},\lambda_{\varepsilon}}^{5-\alpha}(x)\partial_{\xi_{i}}P\bar{U}_{\xi_{\varepsilon},\lambda_{\varepsilon}}(x)}{|x-y|^{\alpha}}dydx
    \end{aligned}
\end{equation}
We now estimate the first term in \eqref{eq-proof-expansion-2-2}. First, Lemma \ref{lemma PU} gives that
\begin{equation*}
    \begin{aligned}
        &\lambda^{-1}_{\varepsilon}\beta\int_{\Omega}\int_{\Omega}\frac{\psi_{\xi_{\varepsilon},\lambda_{\varepsilon}}^{6-\alpha}(y)\psi_{\xi_{\varepsilon},\lambda_{\varepsilon}}^{5-\alpha}(x)P\bar{U}_{\xi_{\varepsilon},\lambda_{\varepsilon}}(x)}{|x-y|^{\alpha}}dydx\\
        &=\lambda^{-1}_{\varepsilon}\beta\int_{\Omega}\int_{\Omega}\frac{\psi_{\xi_{\varepsilon},\lambda_{\varepsilon}}^{6-\alpha}(y)\psi_{\xi_{\varepsilon},\lambda_{\varepsilon}}^{5-\alpha}(x)\bar{U}_{\xi_{\varepsilon},\lambda_{\varepsilon}}(x)}{|x-y|^{\alpha}}dydx-\lambda^{-1}_{\varepsilon}\bar{C}_{\alpha}\beta\int_{\Omega}\int_{\Omega}\frac{\psi_{\xi_{\varepsilon},\lambda_{\varepsilon}}^{6-\alpha}(y)\psi_{\xi_{\varepsilon},\lambda_{\varepsilon}}^{5-\alpha}(x){\varphi}_{\xi_{\varepsilon},\lambda_{\varepsilon}}(x)}{|x-y|^{\alpha}}dydx.
    \end{aligned}
\end{equation*}
Moreover, by \eqref{eq-psi-1}, Lemma \ref{lemma PU} and \cite[Lemma 2.5]{Frank2019Energythree-dimensional}, we obtain
\begin{equation*}
    \begin{aligned}
        &\lambda^{-1}_{\varepsilon}\beta\int_{\Omega}\int_{\Omega}\frac{\psi_{\xi_{\varepsilon},\lambda_{\varepsilon}}^{6-\alpha}(y)\psi_{\xi_{\varepsilon},\lambda_{\varepsilon}}^{5-\alpha}(x)\bar{U}_{\xi_{\varepsilon}\lambda_{\varepsilon}}(x)}{|x-y|^{\alpha}}dydx\\
        &=\lambda^{-1}_{\varepsilon}\beta\int_{\Omega}\int_{\Omega}\frac{\bar{U}_{\xi_{\varepsilon},\lambda_{\varepsilon}}^{6-\alpha}(y)\bar{U}_{\xi_{\varepsilon},\lambda_{\varepsilon}}^{6-\alpha}(x)}{|x-y|^{\alpha}}dydx+o(\lambda_{\varepsilon}^{-2})\\
        &\quad+(11-2\alpha)4\pi\bar{C}_{\alpha}\beta\lambda^{-\frac{3}{2}}_{\varepsilon}\int_{\Omega}\int_{\Omega}\frac{\bar{U}_{\xi_{\varepsilon},\lambda_{\varepsilon}}^{6-\alpha}(y)\bar{U}^{5-\alpha}_{\xi_{\varepsilon},\lambda_{\varepsilon}}(x)H_{a}(\xi_{\varepsilon},x)}{|x-y|^{\alpha}}dydx\\
        &=3\bar{C}_{\alpha}^{2}\beta\lambda^{-1}_{\varepsilon}\int_{\R^{3}}{U}_{\xi_{\varepsilon},\lambda_{\varepsilon}}^{6}(x)dx+o(\lambda_{\varepsilon}^{-2})\\
        &\quad+3(11-2\alpha)4\pi\bar{C}_{\alpha}^{2}\beta\lambda^{-\frac{3}{2}}_{\varepsilon}\int_{\Omega}{U}^{5}_{\xi_{\varepsilon},\lambda_{\varepsilon}}(x)H_{a}(\xi_{\varepsilon},x)dx\\
        &=S_{HL}^{\frac{6-\alpha}{5-\alpha}}\beta\lambda^{-1}_{\varepsilon}+\frac{64}{3}(11-2\alpha)S_{HL}^{\frac{6-\alpha}{5-\alpha}}\beta\phi_{a}(\xi_{\varepsilon})\lambda^{-2}_{\varepsilon}+o(\lambda_{\varepsilon}^{-2})
    \end{aligned}
\end{equation*}
and
\begin{equation*}
    \begin{aligned}
        &-\lambda^{-1}_{\varepsilon}\bar{C}_{\alpha}\beta\int_{\Omega}\int_{\Omega}\frac{\psi_{\xi_{\varepsilon},\lambda_{\varepsilon}}^{6-\alpha}(y)\psi_{\xi_{\varepsilon},\lambda_{\varepsilon}}^{5-\alpha}(x){\varphi}_{\xi_{\varepsilon},\lambda_{\varepsilon}}(x)}{|x-y|^{\alpha}}dydx\\
        &=4\pi\bar{C}_{\alpha}\beta\lambda_{\varepsilon}^{-\frac{3}{2}}\int_{\Omega}\int_{\Omega}\frac{\psi_{\xi_{\varepsilon},\lambda_{\varepsilon}}^{6-\alpha}(y)\psi_{\xi_{\varepsilon},\lambda_{\varepsilon}}^{5-\alpha}(x)H_{0}(\xi_{\varepsilon},x)}{|x-y|^{\alpha}}dydx+o(\lambda_{\varepsilon}^{-2})\\
        &=4\pi\bar{C}_{\alpha}\beta\lambda_{\varepsilon}^{-\frac{3}{2}}\int_{\Omega}\int_{\Omega}\frac{\bar{U}_{\xi_{\varepsilon},\lambda_{\varepsilon}}^{6-\alpha}(y)\bar{U}_{\xi_{\varepsilon},\lambda_{\varepsilon}}^{5-\alpha}(x)H_{0}(\xi_{\varepsilon},x)}{|x-y|^{\alpha}}dydx+o(\lambda_{\varepsilon}^{-2})\\
        &=4\pi\bar{C}_{\alpha}\beta\lambda_{\varepsilon}^{-\frac{3}{2}}\int_{\Omega}\int_{\R^{3}}\frac{\bar{U}_{\xi_{\varepsilon},\lambda_{\varepsilon}}^{6-\alpha}(y)\bar{U}_{\xi_{\varepsilon},\lambda_{\varepsilon}}^{5-\alpha}(x)H_{0}(\xi_{\varepsilon},x)}{|x-y|^{\alpha}}dydx+o(\lambda_{\varepsilon}^{-2})\\
        &=12\pi\bar{C}^{2}_{\alpha}\beta\lambda_{\varepsilon}^{-\frac{3}{2}}\int_{\Omega}{U}_{\xi_{\varepsilon},\lambda_{\varepsilon}}^{5}(x)H_{0}(\xi_{\varepsilon},x)dx+o(\lambda_{\varepsilon}^{-2})\\
        &=\frac{64}{3}S_{HL}^{\frac{6-\alpha}{5-\alpha}}\beta\phi_{0}(\xi_{\varepsilon})\lambda_{\varepsilon}^{-2}+o(\lambda_{\varepsilon}^{-2}).
    \end{aligned}
\end{equation*}
Thus the first term in \eqref{eq-proof-expansion-2-2} satisfies
\begin{equation}\label{eq-proof-expansion-2-76}
    \begin{aligned}
        &\lambda^{-1}_{\varepsilon}\beta\int_{\Omega}\int_{\Omega}\frac{\psi_{\xi_{\varepsilon},\lambda_{\varepsilon}}^{6-\alpha}(y)\psi_{\xi_{\varepsilon},\lambda_{\varepsilon}}^{5-\alpha}(x)P\bar{U}_{\xi_{\varepsilon},\lambda_{\varepsilon}}(x)}{|x-y|^{\alpha}}dydx\\
        &=S_{HL}^{\frac{6-\alpha}{5-\alpha}}\beta\lambda^{-1}_{\varepsilon}+\frac{64}{3}(11-2\alpha)S_{HL}^{\frac{6-\alpha}{5-\alpha}}\beta\phi_{a}(\xi_{\varepsilon})\lambda^{-2}_{\varepsilon}+\frac{64}{3}S_{HL}^{\frac{6-\alpha}{5-\alpha}}\beta\phi_{0}(\xi_{\varepsilon})\lambda_{\varepsilon}^{-2}+o(\lambda_{\varepsilon}^{-2}).
    \end{aligned}
\end{equation}
For the second term in \eqref{eq-proof-expansion-2-2}, it follows that
\begin{equation}\label{proof-eq-expansion-2-73}
    \begin{aligned}
        &\gamma\int_{\Omega}\int_{\Omega}\frac{\psi_{\xi_{\varepsilon},\lambda_{\varepsilon}}^{6-\alpha}(y)\psi_{\xi_{\varepsilon},\lambda_{\varepsilon}}^{5-\alpha}(x)\partial_{\lambda}P\bar{U}_{\xi_{\varepsilon},\lambda_{\varepsilon}}(x)}{|x-y|^{\alpha}}dydx\\
        &=\gamma\int_{\Omega}\int_{\Omega}\frac{\psi_{\xi_{\varepsilon},\lambda_{\varepsilon}}^{6-\alpha}(y)\psi_{\xi_{\varepsilon},\lambda_{\varepsilon}}^{5-\alpha}(x)\partial_{\lambda}\bar{U}_{\xi_{\varepsilon},\lambda_{\varepsilon}}(x)}{|x-y|^{\alpha}}dydx\\
        &\quad-\gamma\bar{C}_{\alpha}\int_{\Omega}\int_{\Omega}\frac{\psi_{\xi_{\varepsilon},\lambda_{\varepsilon}}^{6-\alpha}(y)\psi_{\xi_{\varepsilon},\lambda_{\varepsilon}}^{5-\alpha}(x)\partial_{\lambda}{\varphi}_{\xi_{\varepsilon},\lambda_{\varepsilon}}(x)}{|x-y|^{\alpha}}dydx.       
    \end{aligned}
\end{equation}
Using \eqref{important-identity-1}, \eqref{eq-psi-1} and \cite[Lemma B.3]{Frank2021BlowupOS}, the first term in \eqref{proof-eq-expansion-2-73} satisfies
\begin{equation*}
    \begin{aligned}
        &\gamma\int_{\Omega}\int_{\Omega}\frac{\psi_{\xi_{\varepsilon},\lambda_{\varepsilon}}^{6-\alpha}(y)\psi_{\xi_{\varepsilon},\lambda_{\varepsilon}}^{5-\alpha}(x)\partial_{\lambda}\bar{U}_{\xi_{\varepsilon},\lambda_{\varepsilon}}(x)}{|x-y|^{\alpha}}dydx\\
        &=\gamma\int_{\Omega}\int_{\Omega}\frac{\bar{U}_{\xi_{\varepsilon},\lambda_{\varepsilon}}^{6-\alpha}(y)\bar{U}_{\xi_{\varepsilon},\lambda_{\varepsilon}}^{5-\alpha}(x)\partial_{\lambda}\bar{U}_{\xi_{\varepsilon},\lambda_{\varepsilon}}(x)}{|x-y|^{\alpha}}dydx+o(\lambda_{\varepsilon}^{-2})\\
         &\quad+4(5-\alpha)\pi\bar{C}_{\alpha}\gamma\lambda_{\varepsilon}^{-\frac{1}{2}}\int_{\Omega}\int_{\Omega}\frac{\bar{U}_{\xi_{\varepsilon},\lambda_{\varepsilon}}^{6-\alpha}(y)\bar{U}_{\xi_{\varepsilon},\lambda_{\varepsilon}}^{4-\alpha}(x)H_{a}(\xi_{\varepsilon},x)\partial_{\lambda}\bar{U}_{\xi_{\varepsilon},\lambda_{\varepsilon}}(x)}{|x-y|^{\alpha}}dydx\\
          &\quad+4(6-\alpha)\pi\bar{C}_{\alpha}\gamma\lambda_{\varepsilon}^{-\frac{1}{2}}\int_{\Omega}\int_{\Omega}\frac{\bar{U}_{\xi_{\varepsilon},\lambda_{\varepsilon}}^{5-\alpha}(y)H_{a}(\xi_{\varepsilon},y)\bar{U}_{\xi_{\varepsilon},\lambda_{\varepsilon}}^{5-\alpha}(x)\partial_{\lambda}\bar{U}_{\xi_{\varepsilon},\lambda_{\varepsilon}}(x)}{|x-y|^{\alpha}}dydx\\
          &=3\bar{C}_{\alpha}^{2}\gamma\int_{\R^{3}}{U}_{\xi_{\varepsilon},\lambda_{\varepsilon}}^{5}(x)\partial_{\lambda}{U}_{\xi_{\varepsilon},\lambda_{\varepsilon}}(x)dx+o(\lambda_{\varepsilon}^{-2})\\
          &\quad+12(5-\alpha)\pi\bar{C}_{\alpha}^{2}\gamma\lambda_{\varepsilon}^{-\frac{1}{2}}\int_{\Omega}{U}_{\xi_{\varepsilon},\lambda_{\varepsilon}}^{4}(x)\partial_{\lambda}{U}_{\xi_{\varepsilon},\lambda_{\varepsilon}}(x)H_{a}(\xi_{\varepsilon},x)dx\\
          &\quad+4(6-\alpha)\pi\bar{C}_{\alpha}^{12-2\alpha}\gamma\lambda_{\varepsilon}^{-\frac{1}{2}}\int_{\Omega}\int_{\Omega}\frac{{U}_{\xi_{\varepsilon},\lambda_{\varepsilon}}^{5-\alpha}(y)H_{a}(\xi_{\varepsilon},y){U}_{\xi_{\varepsilon},\lambda_{\varepsilon}}^{5-\alpha}(x)\partial_{\lambda}{U}_{\xi_{\varepsilon},\lambda_{\varepsilon}}(x)}{|x-y|^{\alpha}}dydx\\
          &=-\frac{32}{15}(5-\alpha)S_{HL}^{\frac{6-\alpha}{5-\alpha}}\gamma\phi_{a}(\xi_{\varepsilon})\lambda_{\varepsilon}^{-2}+o(\lambda_{\varepsilon}^{-2})\\
          &\quad+4(6-\alpha)\pi\bar{C}_{\alpha}^{12-2\alpha}\gamma\lambda_{\varepsilon}^{-\frac{1}{2}}\int_{\Omega}\int_{\Omega}\frac{{U}_{\xi_{\varepsilon},\lambda_{\varepsilon}}^{5-\alpha}(y)H_{a}(\xi_{\varepsilon},y){U}_{\xi_{\varepsilon},\lambda_{\varepsilon}}^{5-\alpha}(x)\partial_{\lambda}{U}_{\xi_{\varepsilon},\lambda_{\varepsilon}}(x)}{|x-y|^{\alpha}}dydx.
    \end{aligned}
\end{equation*}
Moreover, it follows from \eqref{important-identity-1} and Lemma \ref{lem-H-a} that
\begin{equation*}
    \begin{aligned}
        &\int_{\Omega}\int_{\Omega}\frac{{U}_{\xi_{\varepsilon},\lambda_{\varepsilon}}^{5-\alpha}(y)H_{a}(\xi_{\varepsilon},y){U}_{\xi_{\varepsilon},\lambda_{\varepsilon}}^{5-\alpha}(x)\partial_{\lambda}{U}_{\xi_{\varepsilon},\lambda_{\varepsilon}}(x)}{|x-y|^{\alpha}}dydx\\
        &=\int_{B_{d_{\varepsilon}}(\xi_{\varepsilon})}\int_{B_{d_{\varepsilon}}(\xi_{\varepsilon})}\frac{{U}_{\xi_{\varepsilon},\lambda_{\varepsilon}}^{5-\alpha}(y)H_{a}(\xi_{\varepsilon},y){U}_{\xi_{\varepsilon},\lambda_{\varepsilon}}^{5-\alpha}(x)\partial_{\lambda}{U}_{\xi_{\varepsilon},\lambda_{\varepsilon}}(x)}{|x-y|^{\alpha}}dydx\\
         &\quad+\int_{\Omega\setminus B_{d_{\varepsilon}}(\xi_{\varepsilon})}\int_{B_{d_{\varepsilon}}(\xi_{\varepsilon})}\frac{{U}_{\xi_{\varepsilon},\lambda_{\varepsilon}}^{5-\alpha}(y)H_{a}(\xi_{\varepsilon},y){U}_{\xi_{\varepsilon},\lambda_{\varepsilon}}^{5-\alpha}(x)\partial_{\lambda}{U}_{\xi_{\varepsilon},\lambda_{\varepsilon}}(x)}{|x-y|^{\alpha}}dydx\\
         &\quad+\int_{\Omega}\int_{\Omega\setminus B_{d_{\varepsilon}}(\xi_{\varepsilon})}\frac{{U}_{\xi_{\varepsilon},\lambda_{\varepsilon}}^{5-\alpha}(y)H_{a}(\xi_{\varepsilon},y){U}_{\xi_{\varepsilon},\lambda_{\varepsilon}}^{5-\alpha}(x)\partial_{\lambda}{U}_{\xi_{\varepsilon},\lambda_{\varepsilon}}(x)}{|x-y|^{\alpha}}dydx\\
         &=\int_{B_{d_{\varepsilon}}(\xi_{\varepsilon})}\int_{B_{d_{\varepsilon}}(\xi_{\varepsilon})}\frac{{U}_{\xi_{\varepsilon},\lambda_{\varepsilon}}^{5-\alpha}(y)H_{a}(\xi_{\varepsilon},y){U}_{\xi_{\varepsilon},\lambda_{\varepsilon}}^{5-\alpha}(x)\partial_{\lambda}{U}_{\xi_{\varepsilon},\lambda_{\varepsilon}}(x)}{|x-y|^{\alpha}}dydx+o(\lambda_{\varepsilon}^{-\frac{3}{2}})\\
          &=\phi_{a}(\xi_{\varepsilon})\lambda_{\varepsilon}^{-\frac{3}{2}}\int_{\R^{3}}\int_{\R^{3}}\frac{{U}_{0,1}^{5-\alpha}(y){U}_{0,1}^{5-\alpha}(x)\partial_{\lambda}{U}_{0,\lambda}|_{\lambda=1}(x)}{|x-y|^{\alpha}}dydx+o(\lambda_{\varepsilon}^{-\frac{3}{2}})\\
          &=\frac{3\alpha}{(6-\alpha)\bar{C}_{\alpha}^{2(5-\alpha)}}\phi_{a}(\xi_{\varepsilon})\lambda_{\varepsilon}^{-\frac{3}{2}}\int_{\R^{3}}{U}_{0,1}^{4}\partial_{\lambda}{U}_{0,\lambda}|_{\lambda=1}dx+o(\lambda_{\varepsilon}^{-\frac{3}{2}})\\
          &=-\frac{2\alpha\pi}{5(6-\alpha)\bar{C}_{\alpha}^{2(5-\alpha)}}\phi_{a}(\xi_{\varepsilon})\lambda_{\varepsilon}^{-\frac{3}{2}}+o(\lambda_{\varepsilon}^{-\frac{3}{2}}).
    \end{aligned}
\end{equation*}
Thus the first term in \eqref{proof-eq-expansion-2-73} satisfies
\begin{equation*}
    \begin{aligned}
         \gamma\int_{\Omega}\int_{\Omega}\frac{\psi_{\xi_{\varepsilon},\lambda_{\varepsilon}}^{6-\alpha}(y)\psi_{\xi_{\varepsilon},\lambda_{\varepsilon}}^{5-\alpha}(x)\partial_{\lambda}\bar{U}_{\xi_{\varepsilon},\lambda_{\varepsilon}}(x)}{|x-y|^{\alpha}}dydx
         &=-\frac{32}{3}S_{HL}^{\frac{6-\alpha}{5-\alpha}}\gamma\phi_{a}(\xi_{\varepsilon})\lambda_{\varepsilon}^{-2}+o(\lambda_{\varepsilon}^{-2}).
    \end{aligned}
\end{equation*}
On the other hand, the second term in \eqref{proof-eq-expansion-2-73} satisfies
\begin{equation*}
    \begin{aligned}
        &-\gamma\bar{C}_{\alpha}\int_{\Omega}\int_{\Omega}\frac{\psi_{\xi_{\varepsilon},\lambda_{\varepsilon}}^{6-\alpha}(y)\psi_{\xi_{\varepsilon},\lambda_{\varepsilon}}^{5-\alpha}(x)\partial_{\lambda}{\varphi}_{\xi_{\varepsilon},\lambda_{\varepsilon}}(x)}{|x-y|^{\alpha}}dydx\\
        &=-2\pi\gamma\bar{C}_{\alpha}\lambda_{\varepsilon}^{-\frac{3}{2}}\int_{\Omega}\int_{\Omega}\frac{\bar{U}_{\xi_{\varepsilon},\lambda_{\varepsilon}}^{6-\alpha}(y)\bar{U}_{\xi_{\varepsilon},\lambda_{\varepsilon}}^{5-\alpha}(x)H_{0}(\xi_{\varepsilon},x)}{|x-y|^{\alpha}}dydx+o(\lambda_{\varepsilon}^{-2})\\
        &=-2\pi\gamma\bar{C}_{\alpha}\lambda_{\varepsilon}^{-\frac{3}{2}}\int_{\Omega}\int_{\R^{3}}\frac{\bar{U}_{\xi_{\varepsilon},\lambda_{\varepsilon}}^{6-\alpha}(y)\bar{U}_{\xi_{\varepsilon},\lambda_{\varepsilon}}^{5-\alpha}(x)H_{0}(\xi_{\varepsilon},x)}{|x-y|^{\alpha}}dydx+o(\lambda_{\varepsilon}^{-2})\\
         &=-6\pi\gamma\bar{C}_{\alpha}^{2}\lambda_{\varepsilon}^{-\frac{3}{2}}\int_{\Omega}{U}_{\xi_{\varepsilon},\lambda_{\varepsilon}}^{5}(x)H_{0}(\xi_{\varepsilon},x)dx+o(\lambda_{\varepsilon}^{-2})\\
          &=-\frac{32}{3}S_{HL}^{\frac{6-\alpha}{5-\alpha}}\phi_{0}(\xi_{\varepsilon})\gamma\lambda_{\varepsilon}^{-2}+o(\lambda_{\varepsilon}^{-2}).
    \end{aligned}
\end{equation*}
Therefore, the second term in \eqref{eq-proof-expansion-2-2} becomes
\begin{equation}\label{eq-proof-expansion-2-78}
    \begin{aligned}
         \gamma\int_{\Omega}\int_{\Omega}\frac{\psi_{\xi_{\varepsilon},\lambda_{\varepsilon}}^{6-\alpha}(y)\psi_{\xi_{\varepsilon},\lambda_{\varepsilon}}^{5-\alpha}(x)\partial_{\lambda}P\bar{U}_{\xi_{\varepsilon},\lambda_{\varepsilon}}(x)}{|x-y|^{\alpha}}dydx=-\frac{32}{3}S_{HL}^{\frac{6-\alpha}{5-\alpha}}(\phi_{a}(\xi_{\varepsilon})+\phi_{0}(\xi_{\varepsilon}))\gamma\lambda_{\varepsilon}^{-2}+o(\lambda_{\varepsilon}^{-2}).
    \end{aligned}
\end{equation}
For the third term in \eqref{eq-proof-expansion-2-2}, it follows from Lemma \ref{lemma PU} that
\begin{equation}\label{eq-proof-expansion-2-79}
    \begin{aligned}
        &\lambda_{\varepsilon}^{-3}\sum_{i=1}^{3}\delta_{i}\int_{\Omega}\int_{\Omega}\frac{\psi_{\xi_{\varepsilon},\lambda_{\varepsilon}}^{6-\alpha}(y)\psi_{\xi_{\varepsilon},\lambda_{\varepsilon}}^{5-\alpha}(x)\partial_{\xi_{i}}P\bar{U}_{\xi_{\varepsilon},\lambda_{\varepsilon}}(x)}{|x-y|^{\alpha}}dydx\\
        &=\lambda_{\varepsilon}^{-3}\sum_{i=1}^{3}\delta_{i}\int_{\Omega}\int_{\Omega}\frac{\psi_{\xi_{\varepsilon},\lambda_{\varepsilon}}^{6-\alpha}(y)\psi_{\xi_{\varepsilon},\lambda_{\varepsilon}}^{5-\alpha}(x)\partial_{\xi_{i}}\bar{U}_{\xi,\lambda_{\varepsilon}}(x)}{|x-y|^{\alpha}}dydx\\
        &\quad-\lambda_{\varepsilon}^{-3}\bar{C}_{\alpha}\sum_{i=1}^{3}\delta_{i}\int_{\Omega}\int_{\Omega}\frac{\psi_{\xi_{\varepsilon},\lambda_{\varepsilon}}^{6-\alpha}(y)\psi_{\xi_{\varepsilon},\lambda_{\varepsilon}}^{5-\alpha}(x)\partial_{\xi_{i}}{\varphi}_{\xi_{\varepsilon},\lambda_{\varepsilon}}(x)}{|x-y|^{\alpha}}dydx\\
        &=o(\lambda_{\varepsilon}^{-2}).
    \end{aligned}
\end{equation}
Combining \eqref{eq-proof-expansion-2-76}, \eqref{eq-proof-expansion-2-78} and \eqref{eq-proof-expansion-2-79}, we obtain
\begin{equation*}
    \begin{aligned}
        &\int_{\Omega}\int_{\Omega}\frac{\psi_{\xi_{\varepsilon},\lambda_{\varepsilon}}^{6-\alpha}(y)\psi_{\xi_{\varepsilon},\lambda_{\varepsilon}}^{5-\alpha}(x)s_{\varepsilon}(x)}{|x-y|^{\alpha}}dydx\\
        &=S_{HL}^{\frac{6-\alpha}{5-\alpha}}\beta\lambda^{-1}_{\varepsilon}+\frac{64}{3}(11-2\alpha)S_{HL}^{\frac{6-\alpha}{5-\alpha}}\beta\lambda^{-2}_{\varepsilon}\phi_{a}(\xi_{\varepsilon})+\frac{64}{3}S_{HL}^{\frac{6-\alpha}{5-\alpha}}\beta\lambda_{\varepsilon}^{-2}\phi_{0}(\xi_{\varepsilon})\\
         &\quad-\frac{32}{3}S_{HL}^{\frac{6-\alpha}{5-\alpha}}(\phi_{a}(\xi_{\varepsilon})+\phi_{0}(\xi_{\varepsilon}))\gamma\lambda_{\varepsilon}^{-2}+o(\lambda_{\varepsilon}^{-2}).
    \end{aligned}
\end{equation*}
From the expansion of $s_{\varepsilon}$ (see \eqref{expansion-s}), the second term in $D_{1}$ satisfies
\begin{equation*}
    \begin{aligned}
        &\int_{\Omega}\int_{\Omega}\frac{\psi_{\xi_{\varepsilon},\lambda_{\varepsilon}}^{5-\alpha}(y)s_{\varepsilon}(y)\psi_{\xi_{\varepsilon},\lambda_{\varepsilon}}^{5-\alpha}(x)s_{\varepsilon}(x)}{|x-y|^{\alpha}}dydx\\
        &=\lambda^{-2}_{\varepsilon}\beta^{2}\int_{\Omega}\int_{\Omega}\frac{\psi_{\xi_{\varepsilon},\lambda_{\varepsilon}}^{5-\alpha}(y)P\bar{U}_{\xi_{\varepsilon},\lambda_{\varepsilon}}(y)\psi_{\xi_{\varepsilon},\lambda_{\varepsilon}}^{5-\alpha}(x)P\bar{U}_{\xi_{\varepsilon},\lambda_{\varepsilon}}(x)}{|x-y|^{\alpha}}dydx\\
         &\quad+\gamma^{2}\int_{\Omega}\int_{\Omega}\frac{\psi_{\xi_{\varepsilon},\lambda_{\varepsilon}}^{5-\alpha}(y)\partial_{\lambda}P\bar{U}_{\xi_{\varepsilon},\lambda_{\varepsilon}}(y)\psi_{\xi_{\varepsilon},\lambda_{\varepsilon}}^{5-\alpha}(x)\partial_{\lambda}P\bar{U}_{\xi_{\varepsilon},\lambda_{\varepsilon}}(x)}{|x-y|^{\alpha}}dydx\\
        &\quad+2\lambda^{-1}_{\varepsilon}\beta\gamma\int_{\Omega}\int_{\Omega}\frac{\psi_{\xi_{\varepsilon},\lambda_{\varepsilon}}^{5-\alpha}(y)P\bar{U}_{\xi_{\varepsilon},\lambda_{\varepsilon}}(y)\psi_{\xi_{\varepsilon},\lambda_{\varepsilon}}^{5-\alpha}(x)\partial_{\lambda}P\bar{U}_{\xi_{\varepsilon},\lambda_{\varepsilon}}(x)}{|x-y|^{\alpha}}dydx+o(\lambda_{\varepsilon}^{-2}).
    \end{aligned}
\end{equation*}
By \eqref{important-identity-1} and some direct computations, we obtain
\begin{equation*}
    \begin{aligned}
        &\lambda^{-2}_{\varepsilon}\beta^{2}\int_{\Omega}\int_{\Omega}\frac{\psi_{\xi_{\varepsilon},\lambda_{\varepsilon}}^{5-\alpha}(y)P\bar{U}_{\xi_{\varepsilon},\lambda_{\varepsilon}}(y)\psi_{\xi_{\varepsilon},\lambda_{\varepsilon}}^{5-\alpha}(x)P\bar{U}_{\xi_{\varepsilon},\lambda_{\varepsilon}}(x)}{|x-y|^{\alpha}}dydx\\
        &=\lambda^{-2}_{\varepsilon}\beta^{2}\int_{\R^{3}}\int_{\R^{3}}\frac{\bar{U}^{6-\alpha}_{\xi,\lambda_{\varepsilon}}(y)\bar{U}^{6-\alpha}_{\xi_{\varepsilon},\lambda_{\varepsilon}}(x)}{|x-y|^{\alpha}}dydx+o(\lambda_{\varepsilon}^{-2})\\
        &=S_{HL}^{\frac{6-\alpha}{5-\alpha}}\beta^{2}\lambda^{-2}_{\varepsilon}+o(\lambda_{\varepsilon}^{-2})
    \end{aligned}
\end{equation*}
\begin{equation*}
    \begin{aligned}
        &\gamma^{2}\int_{\Omega}\int_{\Omega}\frac{\psi_{\xi_{\varepsilon},\lambda_{\varepsilon}}^{5-\alpha}(y)\partial_{\lambda}P\bar{U}_{\xi_{\varepsilon},\lambda_{\varepsilon}}(y)\psi_{\xi_{\varepsilon},\lambda_{\varepsilon}}^{5-\alpha}(x)\partial_{\lambda}P\bar{U}_{\xi_{\varepsilon},\lambda_{\varepsilon}}(x)}{|x-y|^{\alpha}}dydx\\
        &=\gamma^{2}\int_{\Omega}\int_{\Omega}\frac{\bar{U}_{\xi_{\varepsilon},\lambda_{\varepsilon}}^{5-\alpha}(y)\partial_{\lambda}\bar{U}_{\xi_{\varepsilon},\lambda_{\varepsilon}}(y)\bar{U}_{\xi_{\varepsilon},\lambda_{\varepsilon}}^{5-\alpha}(x)\partial_{\lambda}\bar{U}_{\xi_{\varepsilon},\lambda_{\varepsilon}}(x)}{|x-y|^{\alpha}}dydx+o(\lambda_{\varepsilon}^{-2})\\
        &=\gamma^{2}\bar{C}_{\alpha}^{2(6-\alpha)}\lambda_{\varepsilon}^{-2}\int_{\R^{3}}\int_{\R^{3}}\frac{{U}_{0,1}^{5-\alpha}(y)\partial_{\lambda}{U}_{0,\lambda}|_{\lambda=1}(y){U}_{0,1}^{5-\alpha}(x)\partial_{\lambda}{U}_{0,\lambda}|_{\lambda=1}(x)}{|x-y|^{\alpha}}dydx+o(\lambda_{\varepsilon}^{-2})\\
        &=\frac{3\alpha}{(6-\alpha)}\bar{C}_{\alpha}^{2}\gamma^{2}\lambda_{\varepsilon}^{-2}\int_{\R^{3}}{U}_{0,1}^{4}(\partial_{\lambda}{U}_{0,\lambda}|_{\lambda=1})^{2}dx+o(\lambda_{\varepsilon}^{-2})\\
        &=\frac{\alpha}{16(6-\alpha)}S_{HL}^{\frac{6-\alpha}{5-\alpha}}\gamma^{2}\lambda_{\varepsilon}^{-2}+o(\lambda_{\varepsilon}^{-2})\\
    \end{aligned}
\end{equation*}
and
\begin{equation*}
    \begin{aligned}
        &2\lambda^{-1}_{\varepsilon}\beta\gamma\int_{\Omega}\int_{\Omega}\frac{\psi_{\xi_{\varepsilon},\lambda_{\varepsilon}}^{5-\alpha}(y)P\bar{U}_{\xi_{\varepsilon},\lambda_{\varepsilon}}(y)\psi_{\xi_{\varepsilon},\lambda_{\varepsilon}}^{5-\alpha}(x)\partial_{\lambda}P\bar{U}_{\xi_{\varepsilon},\lambda_{\varepsilon}}(x)}{|x-y|^{\alpha}}dydx\\
        &=2\lambda^{-1}_{\varepsilon}\beta\gamma\int_{\Omega}\int_{\Omega}\frac{\bar{U}_{\xi_{\varepsilon},\lambda_{\varepsilon}}^{6-\alpha}(y)(y)\bar{U}_{\xi_{\varepsilon},\lambda_{\varepsilon}}^{5-\alpha}(x)\partial_{\lambda}\bar{U}_{\xi_{\varepsilon},\lambda_{\varepsilon}}(x)}{|x-y|^{\alpha}}dydx+o(\lambda_{\varepsilon}^{-2})\\
        &=2\lambda^{-1}_{\varepsilon}\beta\gamma\int_{\Omega}\int_{\R^{3}}\frac{\bar{U}_{\xi_{\varepsilon},\lambda_{\varepsilon}}^{6-\alpha}(y)(y)\bar{U}_{\xi_{\varepsilon},\lambda_{\varepsilon}}^{5-\alpha}(x)\partial_{\lambda}\bar{U}_{\xi_{\varepsilon},\lambda_{\varepsilon}}(x)}{|x-y|^{\alpha}}dydx+o(\lambda_{\varepsilon}^{-2})\\
        &=6\bar{C}_{\alpha}^{2}\beta\gamma\lambda^{-1}_{\varepsilon}\int_{\R^{3}}{U}_{\xi_{\varepsilon},\lambda_{\varepsilon}}^{5}\partial_{\lambda}{U}_{\xi_{\varepsilon},\lambda_{\varepsilon}}dx-6\bar{C}_{\alpha}^{2}\beta\gamma\lambda^{-1}_{\varepsilon}\int_{\R^{3}\setminus\Omega}{U}_{\xi_{\varepsilon},\lambda_{\varepsilon}}^{5}\partial_{\lambda}{U}_{\xi_{\varepsilon},\lambda_{\varepsilon}}dx+o(\lambda_{\varepsilon}^{-2})\\
        &=o(\lambda_{\varepsilon}^{-2}).
    \end{aligned}
\end{equation*}
It then follows that
\begin{equation*}
    \begin{aligned}
        \int_{\Omega}\int_{\Omega}\frac{\psi_{\xi_{\varepsilon},\lambda_{\varepsilon}}^{5-\alpha}(y)s_{\varepsilon}(y)\psi_{\xi_{\varepsilon},\lambda_{\varepsilon}}^{5-\alpha}(x)s_{\varepsilon}(x)}{|x-y|^{\alpha}}dydx=S_{HL}^{\frac{6-\alpha}{5-\alpha}}\beta^{2}\lambda^{-2}_{\varepsilon}+\frac{\alpha}{16(6-\alpha)}S_{HL}^{\frac{6-\alpha}{5-\alpha}}\gamma^{2}\lambda_{\varepsilon}^{-2}+o(\lambda_{\varepsilon}^{-2}).
    \end{aligned}
\end{equation*}
Finally, the third term in $D_{1}$ satisfies
\begin{equation*}
    \begin{aligned}
        &\int_{\Omega}\int_{\Omega}\frac{\psi_{\xi_{\varepsilon},\lambda_{\varepsilon}}^{6-\alpha}(y)\psi_{\xi_{\varepsilon},\lambda_{\varepsilon}}^{4-\alpha}(x)s^{2}_{\varepsilon}(x)}{|x-y|^{\alpha}}dydx\\
        &=\lambda^{-2}_{\varepsilon}\beta^{2}\int_{\R^{3}}\int_{\R^{3}}\frac{\bar{U}_{\xi_{\varepsilon},\lambda_{\varepsilon}}^{6-\alpha}(y)\bar{U}_{\xi_{\varepsilon},\lambda_{\varepsilon}}^{6-\alpha}(x)}{|x-y|^{\alpha}}dydx+o(\lambda_{\varepsilon}^{-2})\\ 
       &\quad+\gamma^{2}\bar{C}_{\alpha}^{2(6-\alpha)}\lambda_{\varepsilon}^{-2}\int_{\R^{3}}\int_{\R^{3}}\frac{{U}_{0,1}^{6-\alpha}(y){U}_{0,1}^{4-\alpha}(x)(\partial_{\lambda}{U}_{0,\lambda})|_{\lambda=1}^{2}(x)}{|x-y|^{\alpha}}dydx\\
       &=S_{HL}^{\frac{6-\alpha}{5-\alpha}}\lambda^{-2}_{\varepsilon}\beta^{2}+3\bar{C}_{\alpha}^{2}\gamma^{2}\lambda_{\varepsilon}^{-2}\int_{\R^{3}}{U}_{0,1}^{4}(\partial_{\lambda}{U}_{0,\lambda}|_{\lambda=1})^{2}dx+o(\lambda_{\varepsilon}^{-2})\\
       &=S_{HL}^{\frac{6-\alpha}{5-\alpha}}\lambda^{-2}_{\varepsilon}\beta^{2}+\frac{1}{16}S_{HL}^{\frac{6-\alpha}{5-\alpha}}\gamma^{2}\lambda_{\varepsilon}^{-2}+o(\lambda_{\varepsilon}^{-2}).
    \end{aligned}
\end{equation*}
Combining all the estimates above and using \eqref{eq-coefficient-convergence}, we conclude that
\begin{equation*}
    \begin{aligned}
        D_{1}
         &=2(6-\alpha)S_{HL}^{\frac{6-\alpha}{5-\alpha}}\beta\lambda^{-1}_{\varepsilon}+\frac{128}{3}(6-\alpha)S_{HL}^{\frac{6-\alpha}{5-\alpha}}\phi_{0}(\xi_{\varepsilon})\beta\lambda_{\varepsilon}^{-2}-\frac{64}{3}(6-\alpha)S_{HL}^{\frac{6-\alpha}{5-\alpha}}\phi_{0}(\xi_{\varepsilon})\gamma\lambda_{\varepsilon}^{-2}\\
         &\quad+(6-\alpha)(11-2\alpha)S_{HL}^{\frac{6-\alpha}{5-\alpha}}\beta^{2}\lambda^{-2}_{\varepsilon}+\frac{5}{16}(6-\alpha)S_{HL}^{\frac{6-\alpha}{5-\alpha}}\gamma^{2}\lambda_{\varepsilon}^{-2}\\
         &\quad+o(\lambda_{\varepsilon}^{-2}).
    \end{aligned}
\end{equation*}
On the other hand, arguing as in \cite[Appendix]{Frank2019Energythree-dimensional}, we have
\begin{equation*}
    \begin{aligned}
        N_{1}&=2S_{HL}^{\frac{6-\alpha}{5-\alpha}}\beta \lambda_{\varepsilon}^{-1}+\frac{128}{3}S_{HL}^{\frac{6-\alpha}{5-\alpha}}\phi_{0}(\xi_{\varepsilon})\beta\lambda_{\varepsilon}^{-2}-\frac{64}{3}S_{HL}^{\frac{6-\alpha}{5-\alpha}}\phi_{0}(\xi_{\varepsilon})\gamma\lambda_{\varepsilon}^{-2}\\
        &\quad+S_{HL}^{\frac{6-\alpha}{5-\alpha}}\beta^{2}\lambda_{\varepsilon}^{-2}+\frac{5}{16}S_{HL}^{\frac{6-\alpha}{5-\alpha}}\gamma^{2}\lambda_{\varepsilon}^{-2}+o(\lambda_{\varepsilon}^{-2}).
    \end{aligned}
\end{equation*}
Notice that by Proposition \ref{prop-upper-estimate}, we have
\begin{equation*}
    \left(1-\frac{N_{0}}{D_{0}}\right)\frac{D_{1}}{6-\alpha}\left(1-\frac{(7-\alpha)D_{1}}{2(6-\alpha)D_{0}}\right)=o(\lambda_{\varepsilon}^{-2})
\end{equation*}
and so
\begin{equation*}
    \begin{aligned}
        &N_{1}-\frac{1}{6-\alpha}\frac{D_{1}}
        {D_{0}}N_{1}-\frac{1}{6-\alpha}\frac{D_{1}}
        {D_{0}}N_{0}+\frac{7-\alpha}{2(6-\alpha)^{2}}\frac{D_{1}^{2}}{D_{0}^{2}}N_{0}\\
        &=\left(N_{1}-\frac{D_{1}}{6-\alpha}\right)\left(1-\frac{D_{1}}{(6-\alpha)D_{0}}\right)+\frac{(5-\alpha)D_{1}^{2}}{2(6-\alpha)^{2}D_{0}}+\left(1-\frac{N_{0}}{D_{0}}\right)\frac{D_{1}}{6-\alpha}\left(1-\frac{(7-\alpha)D_{1}}{2(6-\alpha)D_{0}}\right)\\
         &=\left(N_{1}-\frac{D_{1}}{6-\alpha}\right)\left(1-\frac{D_{1}}{(6-\alpha)D_{0}}\right)+\frac{(5-\alpha)D_{1}^{2}}{2(6-\alpha)^{2}D_{0}}+o(\lambda_{\varepsilon}^{-2}).
    \end{aligned}
\end{equation*}
Moreover, we see that
\begin{equation*}
    \begin{aligned}
        N_{1}-\frac{D_{1}}{6-\alpha}&=S_{HL}^{\frac{6-\alpha}{5-\alpha}}\beta^{2}\lambda_{\varepsilon}^{-2}
        -(11-2\alpha)S_{HL}^{\frac{6-\alpha}{5-\alpha}}\beta^{2}\lambda^{-2}_{\varepsilon}+o(\lambda_{\varepsilon}^{-2})
    \end{aligned}
\end{equation*}
and
\begin{equation*}
    \begin{aligned}
        \frac{(5-\alpha)D_{1}^{2}}{2(6-\alpha)^{2}D_{0}}&=\frac{5-\alpha}{2(6-\alpha)^{2}}\frac{4(6-\alpha)^{2}S_{HL}^{\frac{2(6-\alpha)}{5-\alpha}}\beta^{2}}{S_{HL}^{\frac{6-\alpha}{5-\alpha}}}\lambda^{-2}_{\varepsilon}+o(\lambda_{\varepsilon}^{-2})=2(5-\alpha)S_{HL}^{\frac{6-\alpha}{5-\alpha}}\beta^{2}\lambda^{-2}_{\varepsilon}+o(\lambda_{\varepsilon}^{-2}).
    \end{aligned}
\end{equation*}
It then follows that
\begin{equation*}
    N_{1}-\frac{1}{6-\alpha}\frac{N_{0}D_{1}}
        {D_{0}}-\frac{1}{6-\alpha}\frac{N_{1}D_{1}}
        {D_{0}}+\frac{7-\alpha}{2(6-\alpha)^{2}}\frac{N_{0}D_{1}^{2}}{D_{0}^{2}}=o(\lambda_{\varepsilon}^{-2}).
\end{equation*}
This, together with Lemma \ref{lem-energy-expansion-2} and Proposition \ref{prop-upper-estimate}, yields that
\begin{equation*}
    \begin{aligned}
        &S_{HL}(a+\varepsilon V)[u_{\varepsilon}]\\
        &=S_{HL}(a+\varepsilon V)[\psi_{\xi_{\varepsilon},\lambda_{\varepsilon}}]+S_{HL}^{-\frac{1}{5-\alpha}}\left(E_{0}[r_{\varepsilon}]-\frac{1}{6-\alpha}\frac{N_{0}}{D_{0}}I[r_{\varepsilon}]+o(\|\nabla r_{\varepsilon}\|_{L^{2}(\Omega)}^{2})\right)\\
        &\quad+o(\lambda_{\varepsilon}^{-2})+o(\varepsilon\lambda_{\varepsilon}^{-1}).
    \end{aligned}
\end{equation*}
This completes the proof.
\end{proof}

\begin{Lem}\label{lem-corecivity-2}
    There exists $\rho>0$ such that, as $\varepsilon\to0$,
    \begin{equation*}
    \begin{aligned}
        E_{0}[r_{\varepsilon}]-\frac{1}{6-\alpha}\frac{N_{0}}{D_{0}}I[r_{\varepsilon}]\geq \rho\|\nabla r_{\varepsilon}\|^{2}_{L^{2}(\Omega)}+o(\lambda_{\varepsilon}^{-2}).
    \end{aligned}
\end{equation*}
\end{Lem}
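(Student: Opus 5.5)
The plan is to reduce the inequality to the coercivity bound of Lemma \ref{lemma coercivity}. The point is that $r_{\varepsilon}\in T^{\perp}_{\xi_{\varepsilon},\lambda_{\varepsilon}}$, and that $\psi_{\xi_{\varepsilon},\lambda_{\varepsilon}}$ differs from $P\bar U_{\xi_{\varepsilon},\lambda_{\varepsilon}}$ only by an $O(\lambda_{\varepsilon}^{-1/2})$ term in $L^\infty$.

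First I expand $I[r_{\varepsilon}]=F[\psi_{\xi_{\varepsilon},\lambda_{\varepsilon}},r_{\varepsilon}]$ into its three pieces. The linear piece is $2(6-\alpha)\iint \psi^{6-\alpha}(y)\psi^{5-\alpha}(x)r_{\varepsilon}(x)|x-y|^{-\alpha}$. Replacing $\psi$ by $\bar U$ via \eqref{eq-psi-1} and using \eqref{important-identity-1}, it equals $6(6-\alpha)\bar C_\alpha\int U^5 r_{\varepsilon}$ plus correction terms. The leading term vanishes by orthogonality. The corrections come from $4\pi\lambda^{-1/2}H_a$, $f_{\xi,\lambda}$, and the region $\R^3\setminus\Omega$, and they have the form $O(\lambda_{\varepsilon}^{-1/2}\|\nabla r_{\varepsilon}\|)$.

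This linear term is the main obstacle, because it is only $O(\lambda_{\varepsilon}^{-1/2}\|\nabla r_{\varepsilon}\|)$ rather than $o(\lambda_{\varepsilon}^{-2})+o(\|\nabla r_{\varepsilon}\|^2)$. I would handle it as follows.
\begin{itemize}
\item The coefficient in front of it is $\frac{1}{6-\alpha}\frac{N_0}{D_0}\cdot 2(6-\alpha)=2\frac{N_0}{D_0}$.
\item By Proposition \ref{prop-upper-estimate}, $N_0/D_0=1+O(\lambda_{\varepsilon}^{-1})$ (using $\phi_a(\xi_{\varepsilon})\to0$).
\item Since $\langle\nabla\psi_{\xi_{\varepsilon},\lambda_{\varepsilon}},\nabla r_{\varepsilon}\rangle=4\pi\bar C_\alpha\lambda_{\varepsilon}^{-1/2}\langle \nabla(H_a-H_0),\nabla r_{\varepsilon}\rangle$, the correction terms of order $\lambda_{\varepsilon}^{-1/2}$ in the linear piece should cancel, up to $o(\lambda_{\varepsilon}^{-1/2}\|\nabla r_{\varepsilon}\|)$, against the cross term $2E_0[\psi,r_{\varepsilon}]$. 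That cross term is implicitly part of the expansion of Lemma \ref{lem-energy-expansion-2} through the equation for $\psi$: $-\Delta\psi=3\bar C_\alpha U^5-4\pi\bar C_\alpha\lambda^{-1/2}aG_a$.
\end{itemize}
Concretely, I would compute $\iint\bar U^{6-\alpha}(y)\bar U^{4-\alpha}(x)H_a(\xi_{\varepsilon},x)r_{\varepsilon}(x)$ and the mixed $\bar U^{5-\alpha}H_a\times\bar U^{5-\alpha}r_{\varepsilon}$ term. Taylor-expanding $H_a$ at $\xi_{\varepsilon}$ with Lemma \ref{lem-H-a}, the leading parts are $\phi_a(\xi_{\varepsilon})$ times integrals against $U^4 r_{\varepsilon}$ or $U^5 r_{\varepsilon}$. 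The $U^5 r_{\varepsilon}$ part vanishes by orthogonality. The $U^4 r_{\varepsilon}$ part has size $\phi_a(\xi_{\varepsilon})\lambda_{\varepsilon}^{-1/2}\|\nabla r_{\varepsilon}\|=o(\lambda_{\varepsilon}^{-1/2}\|\nabla r_{\varepsilon}\|)$. The gradient term $\tfrac12\nabla\phi_a\cdot(y-x)$ and the $|y-x|$ term give an extra power $\lambda_{\varepsilon}^{-1}$. Then $o(\lambda_{\varepsilon}^{-1/2}\|\nabla r_{\varepsilon}\|)\le o(\lambda_{\varepsilon}^{-1})^2+o(\|\nabla r_{\varepsilon}\|^2)$ by Young, which is $o(\lambda_{\varepsilon}^{-2})+o(\|\nabla r_{\varepsilon}\|^2)$. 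If the cancellation does not appear naturally, I would instead bound the linear residual by $C\lambda_{\varepsilon}^{-1/2}\|\nabla r_{\varepsilon}\|$ times a factor tending to zero (a small constant times $\phi_a(\xi_{\varepsilon})$ plus boundary terms of order $\lambda^{-1}$), and absorb it by Young with a small parameter.

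The quadratic pieces of $I[r_{\varepsilon}]$ come with coefficients $(6-\alpha)^2$ and $(6-\alpha)(5-\alpha)$. Multiplying by $\frac{1}{6-\alpha}\frac{N_0}{D_0}=\frac{1}{6-\alpha}(1+o(1))$ gives exactly $(6-\alpha)$ and $(5-\alpha)$ times the bilinear forms appearing in Lemma \ref{lemma coercivity}, up to $o(\|\nabla r_{\varepsilon}\|^2)$. In these forms I replace $\psi$ by $P\bar U$, with error $O(\lambda_{\varepsilon}^{-1/2})$ in $L^\infty$ and hence $o(\|\nabla r_{\varepsilon}\|^2)$ via HLS and Hölder. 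The quantity is then $E_0[r_{\varepsilon}]$ minus exactly the quadratic form of Lemma \ref{lemma coercivity}, except that $\bar a$ is replaced by $a$. I would apply Lemma \ref{lemma coercivity} with $a$ in place of $\bar a$; its proof used only coercivity of $-\Delta+a$, the nondegeneracy Theorem \ref{non-degeneracy property}, and concentration $\lambda_{\varepsilon}d_{\varepsilon}\to\infty$. This gives the lower bound $\rho\|\nabla r_{\varepsilon}\|^2$. Taking $\rho$ slightly smaller to absorb the $o(\|\nabla r_{\varepsilon}\|^2)$ errors yields the stated inequality.
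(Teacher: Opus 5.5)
Your strategy is the paper's: replace $\psi_{\xi_\varepsilon,\lambda_\varepsilon}$ by $P\bar U_{\xi_\varepsilon,\lambda_\varepsilon}$ in the two quadratic pieces of $I[r_\varepsilon]$, apply Lemma~\ref{lemma coercivity} with $a$ in place of $\bar a$, and show that the linear piece $2\frac{N_0}{D_0}\iint\psi^{6-\alpha}(y)\psi^{5-\alpha}(x)r_\varepsilon(x)|x-y|^{-\alpha}dydx$ is at most $\delta\|\nabla r_\varepsilon\|^2+o(\lambda_\varepsilon^{-2})$. However, the quantitative core of that last step, as you wrote it, does not close. The target error is $o(\lambda_\varepsilon^{-2})$. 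A residual $\theta\lambda_\varepsilon^{-1/2}\|\nabla r_\varepsilon\|$ with $\theta\to0$ cannot be made $o(\lambda_\varepsilon^{-2})+o(\|\nabla r_\varepsilon\|^2)$ by any weighting in Young's inequality unless $\theta=o(\lambda_\varepsilon^{-1/2})$. Plain Young gives only $\frac{\theta}{2}(\lambda_\varepsilon^{-1}+\|\nabla r_\varepsilon\|^2)$, an $o(\lambda_\varepsilon^{-1})$ error, so your inequality is off by a full factor $\lambda_\varepsilon$. The a priori bound $\|\nabla r_\varepsilon\|=O(\lambda_\varepsilon^{-1/2})$ does not rescue this. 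Your fallback has the same defect: at this stage $\phi_a(\xi_\varepsilon)$ is only known to tend to $0$, and $\phi_a(\xi_\varepsilon)=o(\varepsilon)$ is derived later \emph{from} this lemma.

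The missing ingredient is the extra factor $\lambda_\varepsilon^{-1/2}$ carried by the concentrated weights. For the local piece, $\int_\Omega U^4|r_\varepsilon|\le\|U_{\xi_\varepsilon,\lambda_\varepsilon}\|_{L^{24/5}}^4\|r_\varepsilon\|_{L^6}\lesssim\lambda_\varepsilon^{-1/2}\|\nabla r_\varepsilon\|$. For the mixed nonlocal piece, the HLS pairing gives $\|\bar U^{5-\alpha}\|_{L^{6/(6-\alpha)}}\lesssim\lambda_\varepsilon^{-1/2}$. So your size $\phi_a(\xi_\varepsilon)\lambda_\varepsilon^{-1/2}\|\nabla r_\varepsilon\|$ undercounts. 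Both $H_a$-corrections are in fact $|\phi_a(\xi_\varepsilon)|\,O(\lambda_\varepsilon^{-1}\|\nabla r_\varepsilon\|)+o(\lambda_\varepsilon^{-1}\|\nabla r_\varepsilon\|)$, with the Taylor remainders, the $f_{\xi,\lambda}$ terms and the $\R^3\setminus\Omega$ terms of higher order. Only with this does Young give $\delta\|\nabla r_\varepsilon\|^2+o(\lambda_\varepsilon^{-2})$.

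The paper handles the local piece with a weighted Young inequality, $3\lambda_\varepsilon^{-1/2}\int U^4H_a r_\varepsilon\le\delta\int U^4r_\varepsilon^2+C\lambda_\varepsilon^{-1}\int U^4H_a^2$. Here $\int U^4r_\varepsilon^2\lesssim\|\nabla r_\varepsilon\|^2$ and $\lambda_\varepsilon^{-1}\int_\Omega U^4H_a^2=O(\phi_a(\xi_\varepsilon)^2\lambda_\varepsilon^{-2})+o(\lambda_\varepsilon^{-2})$.

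Two smaller points. First, the leading part of the mixed term, $\phi_a(\xi_\varepsilon)\lambda_\varepsilon^{-1/2}\iint\bar U^{5-\alpha}(y)\bar U^{5-\alpha}(x)r_\varepsilon(x)|x-y|^{-\alpha}dydx$, is not an integral against $U^5r_\varepsilon$, because \eqref{important-identity-1} only converts the potential of $U^{6-\alpha}$. Orthogonality therefore does not remove it; it must be bounded using $\phi_a(\xi_\varepsilon)\to0$ together with the $\lambda_\varepsilon^{-1/2}$ above. Second, no cancellation against $2E_0[\psi_{\xi_\varepsilon,\lambda_\varepsilon},r_\varepsilon]$ is available. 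That cross term does not occur in $E_0[r_\varepsilon]-\frac{1}{6-\alpha}\frac{N_0}{D_0}I[r_\varepsilon]$; it was already disposed of as negligible in Lemma~\ref{lem-energy-expansion-2}.
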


\begin{proof}
First, it follows from Proposition \ref{prop-upper-estimate} and Lemma \ref{lem-energy-expansion-2} that
\begin{equation*}
    \begin{aligned}
        E_{0}[r_{\varepsilon}]-\frac{1}{6-\alpha}\frac{N_{0}}{D_{0}}I[r_{\varepsilon}]&=\int_{\Omega}|\nabla r_{\varepsilon}|^{2}dx+\int_{\Omega}a r_{\varepsilon}^{2}dx\\
        &\quad-(6-\alpha)\int_{\Omega}\int_{\Omega}\frac{P\bar{U}_{\xi_{\varepsilon},\lambda_{\varepsilon}}^{5-\alpha}(y)r_{\varepsilon}(y)P\bar{U}_{\xi_{\varepsilon},\lambda_{\varepsilon}}^{5-\alpha}(x)r_{\varepsilon}(x)}{|x-y|^{\alpha}}dydx\\
        &\quad-(5-\alpha)\int_{\Omega}\int_{\Omega}\frac{P\bar{U}_{\xi_{\varepsilon},\lambda_{\varepsilon}}^{6-\alpha}(y)P\bar{U}_{\xi_{\varepsilon},\lambda_{\varepsilon}}^{4-\alpha}(x)r^{2}_{\varepsilon}(x)}{|x-y|^{\alpha}}dydx\\
        &\quad-2\int_{\Omega}\int_{\Omega}\frac{\psi_{\xi_{\varepsilon},\lambda_{\varepsilon}}^{6-\alpha}(y)\psi_{\xi_{\varepsilon},\lambda_{\varepsilon}}^{5-\alpha}(x)r_{\varepsilon}(x)}{|x-y|^{\alpha}}dydx+o(\|\nabla r_{\varepsilon}\|_{L^{2}(\Omega)}^{2}).
    \end{aligned}
\end{equation*}
Moreover, Lemma \ref{lemma coercivity} yields that
\begin{equation}\label{corrvity-proof-2}
    \begin{aligned}
         E_{0}[r_{\varepsilon}]-\frac{1}{6-\alpha}\frac{N_{0}}{D_{0}}I[r_{\varepsilon}]
         &\geq\rho\|\nabla r_{\varepsilon}\|^{2}_{L^{2}(\Omega)} -2\int_{\Omega}\int_{\Omega}\frac{\psi_{\xi_{\varepsilon},\lambda_{\varepsilon}}^{6-\alpha}(y)\psi_{\xi_{\varepsilon},\lambda_{\varepsilon}}^{5-\alpha}(x)r_{\varepsilon}(x)}{|x-y|^{\alpha}}dydx,
    \end{aligned}
\end{equation}
for some $\rho>0$. Notice that by \eqref{eq-psi-1}, we obtain
\begin{equation*}
    \begin{aligned}
        &\int_{\Omega}\int_{\Omega}\frac{\psi_{\xi_{\varepsilon},\lambda_{\varepsilon}}^{6-\alpha}(y)\psi_{\xi_{\varepsilon},\lambda_{\varepsilon}}^{5-\alpha}(x)r_{\varepsilon}(x)}{|x-y|^{\alpha}}dydx\\
        &=\int_{\Omega}\int_{\Omega}\frac{\bar{U}_{\xi_{\varepsilon},\lambda_{\varepsilon}}^{6-\alpha}(y)\bar{U}_{\xi_{\varepsilon},\lambda_{\varepsilon}}^{5-\alpha}(x)r_{\varepsilon}(x)}{|x-y|^{\alpha}}dydx+O\left\{\lambda_{\varepsilon}^{-\frac{1}{2}}\int_{\Omega}\int_{\Omega}\frac{\bar{U}_{\xi_{\varepsilon},\lambda_{\varepsilon}}^{6-\alpha}(y)\bar{U}_{\xi_{\varepsilon},\lambda_{\varepsilon}}^{4-\alpha}(x)H_{a}(\xi_{\varepsilon},x)r_{\varepsilon}(x)}{|x-y|^{\alpha}}dydx\right.\\
        &\quad+\left.\lambda_{\varepsilon}^{-\frac{1}{2}}\int_{\Omega}\int_{\Omega}\frac{\bar{U}_{\xi_{\varepsilon},\lambda_{\varepsilon}}^{5-\alpha}(y)H_{a}(\xi_{\varepsilon},y)\bar{U}_{\xi_{\varepsilon},\lambda_{\varepsilon}}^{5-\alpha}(x)r_{\varepsilon}(x)}{|x-y|^{\alpha}}dydx\right\}+o(\lambda_{\varepsilon}^{-2}).
    \end{aligned}
\end{equation*}
By \eqref{important-identity-1}, the HLS inequality, the H\"{o}lder inequality and the orthogonality of $r_{\varepsilon}$, we have
\begin{equation*}
    \begin{aligned}
        &\int_{\Omega}\int_{\Omega}\frac{\bar{U}_{\xi_{\varepsilon},\lambda_{\varepsilon}}^{6-\alpha}(y)\bar{U}_{\xi_{\varepsilon},\lambda_{\varepsilon}}^{5-\alpha}(x)r_{\varepsilon}(x)}{|x-y|^{\alpha}}dydx\\
        &=\int_{\Omega}\int_{\R^{3}}\frac{\bar{U}_{\xi_{\varepsilon},\lambda_{\varepsilon}}^{6-\alpha}(y)\bar{U}_{\xi_{\varepsilon},\lambda_{\varepsilon}}^{5-\alpha}(x)r_{\varepsilon}(x)}{|x-y|^{\alpha}}dydx-\int_{\Omega}\int_{\R^{3}\setminus\Omega}\frac{\bar{U}_{\xi_{\varepsilon},\lambda_{\varepsilon}}^{6-\alpha}(y)\bar{U}_{\xi_{\varepsilon},\lambda_{\varepsilon}}^{5-\alpha}(x)r_{\varepsilon}(x)}{|x-y|^{\alpha}}dydx\\
        &=3\bar{C}_{\alpha}\int_{\Omega}{U}^{5}_{\xi_{\varepsilon},\lambda_{\varepsilon}}r_{\varepsilon}dx+O\left(\|U_{\xi_{\varepsilon},\lambda_{\varepsilon}}\|_{L^{6}(\R^{3}\setminus\Omega)}^{6-\alpha}\|\nabla r_{\varepsilon}\|_{L^{2}(\Omega)}\right)\\
        &=o(\lambda_{\varepsilon}^{-2}).
    \end{aligned}
\end{equation*}
Moreover, it follows from \eqref{eq-coefficient-convergence}, the Young inequality, Lemma \ref{lem-H-a} and \cite[Lemma B.3]{Frank2021BlowupOS} that for any sufficiently small $\delta>0$, 
\begin{equation*}
    \begin{aligned}
        &\lambda_{\varepsilon}^{-\frac{1}{2}}\int_{\Omega}\int_{\Omega}\frac{\bar{U}_{\xi_{\varepsilon},\lambda_{\varepsilon}}^{6-\alpha}(y)\bar{U}_{\xi_{\varepsilon},\lambda_{\varepsilon}}^{4-\alpha}(x)H_{a}(\xi_{\varepsilon},x)r_{\varepsilon}(x)}{|x-y|^{\alpha}}dydx\\
        &=3\lambda_{\varepsilon}^{-\frac{1}{2}}\int_{\Omega}U_{\xi_{\varepsilon},\lambda_{\varepsilon}}^{4}(x)H_{a}(\xi_{\varepsilon},x)r_{\varepsilon}(x)dx\\
        &\leq \delta \int_{\Omega}U_{\xi_{\varepsilon},\lambda_{\varepsilon}}^{4}(x)r^{2}_{\varepsilon}(x)dx+C\lambda_{\varepsilon}^{-1}\int_{\Omega}U_{\xi_{\varepsilon},\lambda_{\varepsilon}}^{4}H_{a}^{2}(\xi_{\varepsilon},x)dx\\
        &\lesssim \delta\|\nabla r_{\varepsilon}\|_{L^{2}}^{2}+o(\lambda_{\varepsilon}^{-2})
    \end{aligned}
\end{equation*}
and
\begin{equation*}
    \begin{aligned}
        &\lambda_{\varepsilon}^{-\frac{1}{2}}\int_{\Omega}\int_{\Omega}\frac{\bar{U}_{\xi_{\varepsilon},\lambda_{\varepsilon}}^{5-\alpha}(y)H_{a}(\xi_{\varepsilon},y)\bar{U}_{\xi_{\varepsilon},\lambda_{\varepsilon}}^{5-\alpha}(x)r_{\varepsilon}(x)}{|x-y|^{\alpha}}dydx\\
        &=\lambda_{\varepsilon}^{-\frac{1}{2}}\int_{\Omega}\int_{B_{d_{\varepsilon}}(\xi_{\varepsilon})}\frac{\bar{U}_{\xi_{\varepsilon},\lambda_{\varepsilon}}^{5-\alpha}(y)H_{a}(\xi_{\varepsilon},y)\bar{U}_{\xi_{\varepsilon},\lambda_{\varepsilon}}^{5-\alpha}(x)r_{\varepsilon}(x)}{|x-y|^{\alpha}}dydx\\
        &\quad+\lambda_{\varepsilon}^{-\frac{1}{2}}\int_{\Omega}\int_{\Omega\setminus B_{d_{\varepsilon}}(\xi_{\varepsilon})}\frac{\bar{U}_{\xi_{\varepsilon},\lambda_{\varepsilon}}^{5-\alpha}(y)H_{a}(\xi_{\varepsilon},y)\bar{U}_{\xi_{\varepsilon},\lambda_{\varepsilon}}^{5-\alpha}(x)r_{\varepsilon}(x)}{|x-y|^{\alpha}}dydx\\
        &=\lambda_{\varepsilon}^{-\frac{1}{2}}\phi_{a}(\xi_{\varepsilon})\int_{\Omega}\int_{B_{d_{\varepsilon}}(\xi_{\varepsilon})}\frac{\bar{U}_{\xi_{\varepsilon},\lambda_{\varepsilon}}^{5-\alpha}(y)\bar{U}_{\xi_{\varepsilon},\lambda_{\varepsilon}}^{5-\alpha}(x)r_{\varepsilon}(x)}{|x-y|^{\alpha}}dydx\\
        &\quad+O\left(\lambda_{\varepsilon}^{-\frac{1}{2}}\int_{\Omega}\int_{B_{d_{\varepsilon}}(\xi_{\varepsilon})}\frac{\bar{U}_{\xi_{\varepsilon},\lambda_{\varepsilon}}^{5-\alpha}(y)|y-\xi_{\varepsilon}|\bar{U}_{\xi_{\varepsilon},\lambda_{\varepsilon}}^{5-\alpha}(x)r_{\varepsilon}(x)}{|x-y|^{\alpha}}dydx\right)+o(\lambda_{\varepsilon}^{-2})\\
        &\lesssim \delta\|\nabla r_{\varepsilon}\|_{L^{2}(\Omega)}^{2}+o(\lambda_{\varepsilon}^{-2}).
    \end{aligned}
\end{equation*}
Therefore, for any sufficiently small $\delta>0$, it holds that
\begin{equation*}
    \begin{aligned}
        \int_{\Omega}\int_{\Omega}\frac{\psi_{\xi_{\varepsilon},\lambda_{\varepsilon}}^{6-\alpha}(y)\psi_{\xi_{\varepsilon},\lambda_{\varepsilon}}^{5-\alpha}(x)r_{\varepsilon}(x)}{|x-y|^{\alpha}}dydx\leq\delta\|\nabla r_{\varepsilon}\|_{L^{2}(\Omega)}^{2}+o(\lambda_{\varepsilon}^{-2}).
    \end{aligned}
\end{equation*}
This, together with \eqref{corrvity-proof-2}, completes the proof.
\end{proof}

Combining Proposition \ref{prop-upper-estimate}, Proposition \ref{prop-lower-bound} and Lemma \ref{lem-corecivity-2}, we obtain
\begin{equation}\label{eq-energy-low-bound}
    \begin{aligned}
        &S_{HL}(a+\varepsilon V)\\
        &=S_{HL}+\frac{64}{3}S_{HL}Q_{V}(\xi_{0})\varepsilon\lambda_{\varepsilon}^{-1}-\frac{8}{3}S_{HL}a(\xi_{0})\lambda_{\varepsilon}^{-2}\\
        &\quad-\frac{64}{3}S_{HL}\phi_{a}(\xi_{\varepsilon})\lambda_{\varepsilon}^{-1}+S_{HL}^{-\frac{1}{5-\alpha}}\left(E_{0}[r_{\varepsilon}]-\frac{1}{6-\alpha}\frac{N_{0}}{D_{0}}I[r_{\varepsilon}]+o(\|\nabla r_{\varepsilon}\|_{L^{2}(\Omega)}^{2})\right)\\
         &\quad+o(\lambda_{\varepsilon}^{-2})+o(\varepsilon\lambda_{\varepsilon}^{-1})\\
         &\geq S_{HL}+\frac{64}{3}S_{HL}(Q_{V}(\xi_{0})+o(1))\varepsilon\lambda_{\varepsilon}^{-1}+\frac{8}{3}S_{HL}(-a(\xi_{0})+o(1))\lambda_{\varepsilon}^{-2}\\
         &\quad-\frac{64}{3}S_{HL}\phi_{a}(\xi_{\varepsilon})\lambda_{\varepsilon}^{-1}+\rho S_{HL}^{-\frac{1}{5-\alpha}}\|\nabla r_{\varepsilon}\|_{L^{2}(\Omega)}^{2}.
    \end{aligned}
\end{equation}
Let us define
\begin{equation*}
    R:=-\frac{64}{3}S_{HL}\phi_{a}(\xi_{\varepsilon})\lambda_{\varepsilon}^{-1}+\rho  S_{HL}^{-\frac{1}{5-\alpha}}\|\nabla r_{\varepsilon}\|_{L^{2}(\Omega)}^{2}.
\end{equation*}
Corollary \ref{corollary-phi-a} implies that $R\geq0$, and hence
\begin{equation*}
    \begin{aligned}
        0&\geq \frac{64}{3}S_{HL}(Q_{V}(\xi_{0})+o(1))\varepsilon\lambda_{\varepsilon}^{-1}+\frac{8}{3}S_{HL}(-a(\xi_{0})+o(1))\lambda_{\varepsilon}^{-2}.
    \end{aligned}
\end{equation*}

\begin{Lem}
    If $\mathcal{N}_{a}(V)\neq\emptyset$, then $\xi_{0}\in \mathcal{N}_{a}(V)$.
\end{Lem}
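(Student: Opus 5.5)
We need to show $\xi_0\in\mathcal N_a$ and $Q_V(\xi_0)<0$. That $\xi_0\in\mathcal N_a$ is already recorded in \eqref{eq-coefficient-convergence}: $\xi_0\in\Omega$ and $\phi_a(\xi_0)=0$. So the only remaining point is the sign of $Q_V(\xi_0)$, and for this we use the inequality derived just before the statement,
\begin{equation*}
0\geq \frac{64}{3}S_{HL}(Q_{V}(\xi_{0})+o(1))\varepsilon\lambda_{\varepsilon}^{-1}+\frac{8}{3}S_{HL}(-a(\xi_{0})+o(1))\lambda_{\varepsilon}^{-2}.
\end{equation*}
By Assumption \ref{assumption-a} we have $a(\xi_0)<0$, so the second term is strictly positive for small $\varepsilon$. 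Multiplying by $\lambda_\varepsilon^{2}>0$ gives
\begin{equation*}
8(Q_V(\xi_0)+o(1))\,\varepsilon\lambda_\varepsilon\le -(-a(\xi_0)+o(1))<0 .
\end{equation*}
Since $\varepsilon\lambda_\varepsilon>0$, this forces $Q_V(\xi_0)+o(1)<0$, and letting $\varepsilon\to0$ yields $Q_V(\xi_0)\le 0$.

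Strictness needs more. If $Q_V(\xi_0)=0$, the display reads $o(1)\varepsilon\lambda_\varepsilon\le -|a(\xi_0)|/8+o(1)$, which requires $\varepsilon\lambda_\varepsilon\to\infty$; this is not a contradiction by itself. To exclude it, I compare with the upper bound of Proposition \ref{cor-energy-upper}. Since $\mathcal N_a(V)\neq\emptyset$, there is a constant $c_0>0$ with
\begin{equation*}
S_{HL}(a+\varepsilon V)\le S_{HL}-c_0\varepsilon^2+o(\varepsilon^2).
\end{equation*}
Inserting this into the lower bound \eqref{eq-energy-low-bound}, and dropping $R\ge0$, gives
\begin{equation*}
-c_0\varepsilon^2+o(\varepsilon^2)\ \ge\ \tfrac{64}{3}S_{HL}(Q_V(\xi_0)+o(1))\varepsilon\lambda_\varepsilon^{-1}+\tfrac{8}{3}S_{HL}(|a(\xi_0)|+o(1))\lambda_\varepsilon^{-2}.
\end{equation*}
Write $t=\varepsilon\lambda_\varepsilon$ and divide by $\varepsilon^2$:
\begin{equation*}
-c_0+o(1)\ \ge\ \tfrac{64}{3}S_{HL}(Q_V(\xi_0)+o(1))t^{-1}+\tfrac{8}{3}S_{HL}(|a(\xi_0)|+o(1))t^{-2}.
\end{equation*}
Now suppose $Q_V(\xi_0)=0$. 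The right side is then $o(1)t^{-1}+c\,t^{-2}$ with $c>0$. We minimize this over $t>0$. Whenever the coefficient of $t^{-1}$ is of the form $-\eta$ with $\eta\ge0$, $\eta\to0$, the minimum of $-\eta t^{-1}+c t^{-2}$ equals $-\eta^2/(4c)$, which tends to $0$. So the right side is bounded below by $-o(1)$, which contradicts $-c_0+o(1)$ on the left. Hence $Q_V(\xi_0)<0$ and $\xi_0\in\mathcal N_a(V)$.

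The main obstacle is keeping the $o(1)$ terms honest. In \eqref{eq-energy-low-bound} they are $o(\lambda_\varepsilon^{-2})+o(\varepsilon\lambda_\varepsilon^{-1})$ relative to the true scales, so after dividing by $\varepsilon^2$ they become $o(t^{-2})+o(t^{-1})$. These remain absorbable in the minimization above whatever the behaviour of $t$, so the argument should close.
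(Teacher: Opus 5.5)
Your proof is correct and is essentially the paper's argument. The paper also combines the upper bound of Proposition~\ref{cor-energy-upper} with the lower bound \eqref{eq-energy-low-bound} (dropping $R\ge 0$), and concludes with the AM--GM step $A_\varepsilon\lambda_\varepsilon^{-2}+B_\varepsilon\varepsilon^2\ge 2\sqrt{A_\varepsilon B_\varepsilon}\,\varepsilon\lambda_\varepsilon^{-1}$, which is exactly your minimization in $t=\varepsilon\lambda_\varepsilon$; that step yields $-Q_V(\xi_0)+o(1)\ge C>0$ directly, so your preliminary proof of $Q_V(\xi_0)\le 0$ and the separate treatment of the case $Q_V(\xi_0)=0$ are redundant, since the same minimization rules out $Q_V(\xi_0)\ge 0$ in one stroke.
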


\begin{proof}
First, it follows from Proposition \ref{cor-energy-upper} that
\begin{equation*}
\begin{aligned}
         S_{HL}(a+\varepsilon V)\leq S_{HL}-\frac{128}{3}S_{HL}\sup_{\xi\in\mathcal{N}_{a}(V)}\frac{Q_{V}(\xi)^{2}}{|a(\xi)|}\varepsilon^{2}+o(\varepsilon^{2}).
\end{aligned}     
\end{equation*}
This, together with \eqref{eq-energy-low-bound} and the fact $R\geq0$, yields that
    \begin{equation*}
        \begin{aligned}
            \frac{64}{3}S_{HL}(-Q_{V}(\xi_{0})+o(1))\varepsilon\lambda_{\varepsilon}^{-1}\geq&\frac{8}{3}S_{HL}(-a(\xi_{0})+o(1))\lambda_{\varepsilon}^{-2}\\
            &+\frac{128}{3}S_{HL}\left(\sup_{\xi\in\mathcal{N}_{a}(V)}\frac{Q_{V}(\xi)^{2}}{|a(\xi)|}+o(1)\right)\varepsilon^{2}\\
            &:=A_{\varepsilon}\lambda_{\varepsilon}^{-2}+B_{\varepsilon}\varepsilon^{2}.
            \end{aligned}
    \end{equation*}
From \eqref{eq-coefficient-convergence} and Assumption \ref{assumption-a}, we find that $\phi_{a}(\xi_{0})=0$ and $a(\xi_{0})<0$. Thus $A_{\varepsilon}$ and $B_{\varepsilon}$ tend to some positive quantities as $\varepsilon\to 0$. It then follows that there exists a constant $C>0$ such that
\begin{equation*}
    \begin{aligned}
         \frac{64}{3}S_{HL}(-Q_{V}(\xi_{0})+o(1))\varepsilon\lambda_{\varepsilon}^{-1}\geq A_{\varepsilon}\lambda_{\varepsilon}^{-2}+B_{\varepsilon}\varepsilon^{2}\geq 2\sqrt {A_{\varepsilon}B_{\varepsilon}}\varepsilon \lambda_{\varepsilon}^{-1}\geq C\varepsilon \lambda_{\varepsilon}^{-1}.
    \end{aligned}
\end{equation*}
Thus $Q_{V}(\xi_{0})<0$ and the proof is completed.
\end{proof}

\begin{proof}[Proof of Theorems \ref{thm-3} and \ref{thm-6}]
Assume that $\mathcal{N}_{a}(V)\neq\emptyset$. Notice that
    \begin{equation*}
        \begin{aligned}
            &\frac{8}{3}S_{HL}\left(8Q_{V}(\xi_{0})\varepsilon\lambda_{\varepsilon}^{-1}-a(\xi_{0})\lambda_{\varepsilon}^{-2}\right)+o(\lambda_{\varepsilon}^{-2})+o(\varepsilon\lambda_{\varepsilon}^{-1})\\
            &=-\frac{8}{3}S_{HL}\frac{(4Q_{V}(\xi_{0})+o(1))^{2}}{(|a(\xi_{0})|+o(1))}\varepsilon^{2}\\
            &\quad+\frac{8}{3}S_{HL}\left(\frac{(4Q_{V}(\xi_{0})+o(1))}{(|a(\xi_{0})|+o(1))^{\frac{1}{2}}}\varepsilon+(|a(\xi_{0})|+o(1))^{\frac{1}{2}}\lambda_{\varepsilon}^{-1}\right)^{2}.
        \end{aligned}
    \end{equation*}
It then follows from \eqref{eq-energy-low-bound} and $R\geq0$ that
    \begin{equation}\label{eq-proof-energy-asym-0}
    \begin{aligned}
        S_{HL}(a+\varepsilon V)&= S_{HL}-\frac{8}{3}S_{HL}\frac{(4Q_{V}(\xi_{0})+o(1))^{2}}{(|a(\xi_{0})|+o(1))}\varepsilon^{2}\\
            &\quad+\frac{8}{3}S_{HL}\left(\frac{(4Q_{V}(\xi_{0})+o(1))}{(|a(\xi_{0})|+o(1))^{\frac{1}{2}}}\varepsilon+(|a(\xi_{0})|+o(1))^{\frac{1}{2}}\lambda_{\varepsilon}^{-1}\right)^{2}+R\\
            &\geq S_{HL}-\frac{8}{3}S_{HL}\frac{(4Q_{V}(\xi_{0})+o(1))^{2}}{(|a(\xi_{0})|+o(1))}\varepsilon^{2}.
    \end{aligned}
\end{equation}
Thus we have
\begin{equation}\label{eq-proof-energy-asym-1}
    \begin{aligned}
    S_{HL}-S_{HL}(a+\varepsilon V)
    &\leq\frac{128}{3}S_{HL}\frac{Q_{V}(\xi_{0})^{2}}{|a(\xi_{0})|}\varepsilon^{2}+o(\varepsilon^{2})\\
    &\leq \frac{128}{3}S_{HL}\sup_{\xi\in\mathcal{N}_{a}(V)}\frac{Q_{V}(\xi)^{2}}{|a(\xi)|}\varepsilon^{2}+o(\varepsilon^{2}).
    \end{aligned}
\end{equation}
On the other hand, Proposition \ref{cor-energy-upper} gives that
\begin{equation}\label{eq-proof-energy-asym-2}
\begin{aligned}
          S_{HL}-S_{HL}(a+\varepsilon V)\geq\frac{128}{3}S_{HL}\sup_{\xi\in\mathcal{N}_{a}(V)}\frac{Q_{V}(\xi)^{2}}{|a(\xi)|}\varepsilon^{2}+o(\varepsilon^{2}).
\end{aligned}     
\end{equation}
Combining \eqref{eq-proof-energy-asym-1} and \eqref{eq-proof-energy-asym-2}, we obtain \eqref{eq-energy-asym} and
    \begin{equation}\label{eq-location}
        \begin{aligned}
            \frac{Q_{V}(\xi_{0})^{2}}{|a(\xi_{0})|}=\sup_{\xi\in\mathcal{N}_{a}(V)}\frac{Q_{V}(\xi)^{2}}{|a(\xi)|}.
        \end{aligned}
    \end{equation}
Moreover, by \eqref{eq-energy-asym} and \eqref{eq-proof-energy-asym-0}, we obtain
 \begin{equation*}
    \begin{aligned}
        o(\varepsilon^{2})=\frac{8}{3}S_{HL}\left(\frac{(4Q_{V}(\xi_{0})+o(1))}{(|a(\xi_{0})|+o(1))^{\frac{1}{2}}}\varepsilon+(|a(\xi_{0})|+o(1))^{\frac{1}{2}}\lambda_{\varepsilon}^{-1}\right)^{2}+R
    \end{aligned}
\end{equation*}
It follows that
    \begin{equation*}
        \begin{aligned}
             R=o(\varepsilon^{2}),\quad \lambda_{\varepsilon}^{-1}=-\frac{4Q_{V}(\xi_{0})}{|a(\xi_{0})|}\varepsilon+o(\varepsilon)
        \end{aligned}
    \end{equation*}
and thus
    \begin{equation*}
      \|\nabla r_{\varepsilon}\|_{L^{2}}^{2}=o(\varepsilon^{2}),\quad \phi_{a}(\xi_{\varepsilon})=o(\varepsilon).
    \end{equation*}
Finally, by Lemma \ref{lem-energy-expansion-2} and Proposition \ref{prop-lower-bound}, we obtain
    \begin{equation}\label{eq-mu-var-1}
        \begin{aligned}
            S_{HL}^{\frac{6-\alpha}{5-\alpha}}(a+\varepsilon V)&=\int_{\Omega}\int_{\Omega}\frac{u_{\varepsilon}^{6-\alpha}(y)u_{\varepsilon}^{6-\alpha}(x)}{|x-y|^{\alpha}}dydx\\
            &=\mu_{\varepsilon}^{2(6-\alpha)}\left\{S_{HL}^{\frac{6-\alpha}{5-\alpha}}+2(6-\alpha)S_{HL}^{\frac{6-\alpha}{5-\alpha}}\beta\lambda^{-1}_{\varepsilon}\right\}+o(\varepsilon)\\
            &=\mu_{\varepsilon}^{2(6-\alpha)}\left\{S_{HL}^{\frac{6-\alpha}{5-\alpha}}+8(6-\alpha)S_{HL}^{\frac{6-\alpha}{5-\alpha}}\beta\frac{|Q_{V}(\xi_{0})|}{|a(\xi_{0})|}\varepsilon\right\}+o(\varepsilon).
        \end{aligned}
    \end{equation}
On the other hand, \eqref{eq-energy-asym} and \eqref{eq-location} imply that
\begin{equation}\label{eq-mu-var-2}
    \begin{aligned}
        S_{HL}^{\frac{6-\alpha}{5-\alpha}}(a+\varepsilon V) &=\left(S_{HL}-\frac{128}{3}S_{HL}\frac{Q_{V}(\xi_{0})^{2}}{|a(\xi_{0})|}\varepsilon^{2}+o(\varepsilon^{2})\right)^{\frac{6-\alpha}{5-\alpha}}\\
            &=S_{HL}^{\frac{6-\alpha}{5-\alpha}}-\frac{128}{3}\frac{6-\alpha}{5-\alpha}S_{HL}^{\frac{6-\alpha}{5-\alpha}}\frac{Q_{V}(\xi_{0})^{2}}{|a(\xi_{0})|}\varepsilon^{2}+o(\varepsilon^{2}).
    \end{aligned}
\end{equation}
Combining \eqref{eq-mu-var-1} and \eqref{eq-mu-var-2}, we conclude that
    \begin{equation*}
        \mu_{\varepsilon}=1-4\beta\frac{|Q_{V}(\xi_{0})|}{|a(\xi_{0})|}\varepsilon+o(\varepsilon)=1+\frac{256}{3}\phi_{0}(\xi_{0})\frac{|Q_{V}(\xi_{0})|}{|a(\xi_{0})|}\varepsilon+o(\varepsilon).
    \end{equation*}
This completes the proof.
\end{proof}

\begin{proof}[Proof of Theorem \ref{thm-4}]
Proposition \ref{prop of SHL and SHLa} shows that
\begin{equation*}
   S_{HL}(a+\varepsilon V)\leq S_{HL}.
\end{equation*}
\textbf{Case 1.} $ S_{HL}(a+\varepsilon V)<S_{HL}$ for any sufficiently small $\varepsilon>0$. Given $Q_{V}(\xi_{0})\geq0$, $a(\xi_{0})<0$, $R\geq0$, together with \eqref{eq-energy-low-bound} and the Young inequality, we have
\begin{equation}\label{eq-degeneracy-case-1}
    \begin{aligned}
        &S_{HL}(a+\varepsilon V)\\
         &\geq S_{HL}+\frac{64}{3}S_{HL}(Q_{V}(\xi_{0})+o(1))\varepsilon\lambda_{\varepsilon}^{-1}+\frac{8}{3}S_{HL}(-a(\xi_{0})+o(1))\lambda_{\varepsilon}^{-2}+R\\
         &\geq S_{HL}+\frac{8}{3}S_{HL}(|a(\xi_{0})|+o(1))\lambda_{\varepsilon}^{-2}+o(\varepsilon\lambda_{\varepsilon}^{-1})\\
         &\geq S_{HL}+o(\varepsilon^{2}).
    \end{aligned}
\end{equation} 
Thus we obtain $S_{HL}=S_{HL}(a+\varepsilon V)+o(\varepsilon^{2})$. \textbf{Case 2.} There exists $\{\varepsilon_{n}\}\subset(0,\infty)$ such that $ S_{HL}(a+\varepsilon V)=S_{HL}$ and $\varepsilon_{n}\to0$ as $n\to\infty$. Since $S_{HL}(a+\varepsilon V)$ is concave in $\varepsilon$, it follows that $S_{HL}(a+\varepsilon V)=S_{HL}$ for any sufficiently small $\varepsilon>0$. Moreover, when $Q_{V}(\xi_{0})>0$, if $S_{HL}(a+\varepsilon V)<S_{HL}$, then \eqref{eq-degeneracy-case-1} implies that $S_{HL}(a+\varepsilon V)\geq S_{HL}$, which is a contradiction. Therefore, {Case 1} cannot occur. This completes the proof.
\end{proof}

\noindent\textbf{Data Availability} This manuscript has no associated data

\noindent\textbf{Conflicts of Interest} The author has no conflicts of interest to declare that are relevant to this article.


\end{document}